\documentclass[11pt]{article}

\title{
Representation stability and finite linear groups}
\author{Andrew Putman\thanks{Supported in part by NSF grant DMS-1255350 and the Alfred P.\ Sloan Foundation} \and Steven V Sam\thanks{Supported by a Miller research fellowship}}

\date{January 13, 2017}

\usepackage{tocloft}
\usepackage{bbm}
\setlength{\cftbeforesecskip}{0.5ex}
\newcommand{\arxiv}[1]{\href{http://arxiv.org/abs/#1}{{\tt arXiv:#1}}}

\usepackage{amsmath,amssymb,amsthm,amscd,amsfonts}
\usepackage{epsfig,pinlabel,paralist}
\usepackage[vmargin=0.9in, hmargin=1.2in]{geometry}
\usepackage[font=small,format=plain,labelfont=bf,up,textfont=it,up]{caption}
\usepackage{type1cm}
\usepackage{calc}
\usepackage{enumerate}
\usepackage{bm}
\usepackage{microtype}
\usepackage[all,cmtip]{xy}

\usepackage{lmodern}
\usepackage[T1]{fontenc}

\usepackage[dvipdfm, 
bookmarks, bookmarksdepth=2, colorlinks=true, linkcolor=blue, citecolor=blue, urlcolor=blue]{hyperref}

\usepackage{etoolbox}
\apptocmd{\thebibliography}{\raggedright}{}{}

\numberwithin{equation}{section}

\theoremstyle{plain}
\newtheorem{theorem}{Theorem}[section]
\newtheorem{maintheorem}{Theorem}
\newtheorem{proposition}[theorem]{Proposition}
\newtheorem{lemma}[theorem]{Lemma}

\newtheorem{corollary}[theorem]{Corollary}

\newtheorem{observation}[theorem]{Observation}
\newtheorem*{claim}{Claim}

\theoremstyle{definition}

\newtheorem{defn}[theorem]{Definition}
\newenvironment{definition}[1][]{\begin{defn}[#1]\pushQED{\qed}}{\popQED \end{defn}}

\newtheorem{rmk}[theorem]{Remark}
\newenvironment{remark}[1][]{\begin{rmk}[#1] \pushQED{\qed}}{\popQED \end{rmk}}

\newtheorem{eg}[theorem]{Example}
\newenvironment{example}[1][]{\begin{eg}[#1] \pushQED{\qed}}{\popQED \end{eg}}

\DeclareMathOperator{\Hom}{Hom}
\DeclareMathOperator{\Iso}{Iso}

\DeclareMathOperator{\Ker}{ker}

\DeclareMathOperator{\Image}{Im}

\DeclareMathOperator{\Mod}{Mod}
\DeclareMathOperator{\MCG}{MCG}

\DeclareMathOperator{\Sp}{Sp}
\DeclareMathOperator{\GL}{GL}
\DeclareMathOperator{\SL}{SL}


\newcommand\R{\ensuremath{\mathbb{R}}}

\newcommand\Z{\ensuremath{\mathbb{Z}}}
\newcommand\Q{\ensuremath{\mathbb{Q}}}

\newcommand\Field{\ensuremath{\mathbb{F}}}

\DeclareMathOperator{\HH}{H}

\newcommand\Chain{\ensuremath{{\rm C}}}

\DeclareMathOperator{\Max}{max}

\DeclareMathOperator{\Aut}{Aut}

\DeclareMathOperator{\Interior}{Int}

\newcommand\Set[2]{\ensuremath{\{\text{#1 $|$ #2}\}}}

\newcommand\Figure[4]{
\begin{figure}[t]
\centering
\centerline{\psfig{file=#2,scale=#4}}
\caption{#3}
\label{#1}
\end{figure}}

\newcommand\Surface[2]{\ensuremath{\mathcal{S}_{#1}^{#2}}}
\renewcommand{\O}{\mathcal{O}}
\DeclareMathOperator{\SSset}{\text{\tt SS}}

\DeclareMathOperator{\Ind}{Ind}
\DeclareMathOperator{\FI}{\text{\tt FI}}
\DeclareMathOperator{\VI}{\text{\tt VI}}
\DeclareMathOperator{\VIC}{\text{\tt VIC}}
\DeclareMathOperator{\SI}{\text{\tt SI}}
\DeclareMathOperator{\OVIC}{\text{\tt OVIC}}
\DeclareMathOperator{\OSI}{\text{\tt OSI}}
\DeclareMathOperator{\Fr}{\text{\tt Fr}}
\DeclareMathOperator{\Surf}{\text{\tt Surf}}

\DeclareMathOperator{\bFI}{\ensuremath{\text{\tt \bf FI}}}
\DeclareMathOperator{\bVI}{\text{\tt \bf VI}}
\DeclareMathOperator{\btV}{\text{\tt \bf V}}
\DeclareMathOperator{\bwV}{\ensuremath{\text{\tt \bf V}^{\boldsymbol{\prime}}}}
\DeclareMathOperator{\bVIC}{\text{\tt \bf VIC}}
\DeclareMathOperator{\bSI}{\text{\tt \bf SI}}
\DeclareMathOperator{\bOVIC}{\text{\tt \bf OVIC}}
\DeclareMathOperator{\bOSI}{\text{\tt \bf OSI}}

\DeclareMathOperator{\bSurf}{\text{\tt \bf Surf}}

\DeclareMathOperator{\ialg}{\hat{\imath}}
\DeclareMathOperator{\OrderedShift}{\Sigma}

\DeclareMathOperator{\ShiftSet}{\mathfrak{I}}
\DeclareMathOperator{\HHH}{\mathcal{H}}
\DeclareMathOperator{\HHHH}{\mathfrak{H}}

\DeclareMathOperator{\Conf}{Conf}
\DeclareMathOperator{\PConf}{PConf}
\DeclareMathOperator{\Emb}{Emb}

\DeclareMathOperator{\Unit}{\mathfrak{U}}

\DeclareMathOperator{\HDelta}{\widehat{\Delta}}

\DeclareMathOperator{\PartialBases}{{\rm PB}}
\DeclareMathOperator{\PartialSpBases}{{\rm PSB}}

\DeclareMathOperator{\TetheredChains}{{\rm TC}}
\DeclareMathOperator{\TetheredTori}{{\rm TT}}

\newcommand{\tilf}{\ensuremath{\widetilde{f}}}
\newcommand{\tilg}{\ensuremath{\widetilde{g}}}
\newcommand{\tilh}{\ensuremath{\widetilde{h}}}
\newcommand{\tilV}{\ensuremath{\widetilde{V}}}
\newcommand{\tilW}{\ensuremath{\widetilde{W}}}
\newcommand{\tilC}{\ensuremath{\widetilde{C}}}

\newcommand{\monpro}{\ensuremath{\circledast}}

\newcommand{\bb}{\mathbf{b}}

\newcommand{\bk}{\mathbf{k}}

\newcommand{\tA}{{\tt A}}
\newcommand{\tB}{{\tt B}}
\newcommand{\tC}{{\tt C}}
\newcommand{\tD}{{\tt D}}

\newcommand{\tV}{{\tt V}}
\DeclareMathOperator{\wV}{\ensuremath{\text{\tt V}^\prime}}
\newcommand{\cP}{\mathcal{P}}

\newcommand{\Sym}{\mathfrak{S}}
\newcommand{\init}{\mathrm{init}}

\newcommand{\Rect}{\mathfrak{R}}

\newcommand{\mybigwedge}[1]{\bigwedge\nolimits^{\!#1}}

\usepackage{ulem}

\newskip\bigskipamount \bigskipamount =5pt plus 3pt minus 2pt
\renewcommand{\paragraph}[1]{\bigskip \noindent {\bf #1}}

\begin{document}

\maketitle

\vspace{-24pt}
\begin{abstract}
\noindent
We study analogues of FI-modules where the role of the symmetric group is played by the general linear groups and the symplectic groups over finite rings and prove basic structural properties such as local Noetherianity. Applications include a proof of the Lannes--Schwartz Artinian conjecture in the generic representation theory of finite fields, very general homological stability theorems with twisted coefficients for the general linear and symplectic groups over finite rings, and representation-theoretic versions of homological stability for congruence subgroups of the general linear group, the automorphism group of a free group, the symplectic group, and the mapping class group.
\end{abstract}

\setcounter{tocdepth}{1}
\tableofcontents

\section{Introduction}

The language of {\bf representation stability} was introduced by
Church--Farb \cite{ChurchFarbRepStability} to describe representation-theoretic
patterns they observed in many parts of mathematics.  In later work
with Ellenberg \cite{ChurchEllenbergFarb}, they introduced {\bf $\FI$-modules},
which gave a clearer framework for these patterns for
the representation theory of the symmetric group.  The key technical innovation
of their work was a certain {\bf Noetherian} property of $\FI$-modules, which
they later established in great generality with Nagpal 
\cite{ChurchEllenbergFarbNagpal}.  This Noetherian property allowed them
to prove an {\bf asymptotic structure theorem} for finitely generated
$\FI$-modules, giving an elegant mechanism behind the patterns
observed earlier in \cite{ChurchFarbRepStability}.
The theory of $\FI$-modules fits into the larger picture of functor categories, which
play an important role in algebraic topology and algebraic K-theory;
see \cite{FFPSBook, FTBook} for surveys.  

In this paper, we study functor categories that can be viewed as analogues of $\FI$-modules where the role of the
symmetric group is played by the classical finite groups of matrices, namely
the general linear groups and the symplectic groups over finite rings.
We establish analogues of the aforementioned Noetherian and asymptotic
structure results.  A byproduct of our work here is a proof of the
Lannes--Schwartz Artinian conjecture (see Theorem \ref{maintheorem:artinianconj}).  
We have two families of applications.
The first are very general homological stability theorems with twisted coefficients
for the general linear groups and symplectic groups over finite rings.
The second are representation-theoretic analogues of homological stability for congruence subgroups
of the general linear group, the automorphism group of a free group,
the symplectic group, and the mapping class group.

\subsection{Review of \texorpdfstring{$\bFI$}{FI}-modules}
\label{section:fireview}

Since our theorems parallel results about $\FI$-modules, we begin by reviewing the theory of $\FI$-modules.
Unless otherwise specified, {\it all rings in this paper are commutative and contain $1$}. Fix a Noetherian ring $\bk$.

\paragraph{Configuration spaces.}
We start with an example. Let $X$ be a compact oriented manifold with $\partial X \neq \emptyset$. Let
\[
\PConf_n(X) = \{ (x_1,\dots,x_n) \in X^n \mid x_i \ne x_j \text{ for } i \ne j\}
\]
be the configuration space of $n$ ordered points in $X$. The symmetric group $\Sym_n$ acts freely on $\PConf_n(X)$ by
permuting points.  Let
\[
\Conf_n(X) = \PConf_n(X) / \Sym_n
\]
be the configuration space of $n$ unordered points in $X$.
McDuff \cite{McDuffConfig} proved that $\Conf_n(X)$ satisfies
{\bf cohomological stability}: for $k \geq 1$, we have
$\HH^k(\Conf_{n}(X);\bk) \cong \HH^k(\Conf_{n+1}(X);\bk)$ for $n \gg 0$.
For $\PConf_n(X)$, easy examples show that cohomological stability does not hold.  Observe that $\HH^k(\PConf_n(X);\bk)$
is a representation of $\Sym_n$.  Building on work of Church \cite{ChurchConfiguration},
Church--Ellenberg--Farb--Nagpal \cite{ChurchEllenbergFarbNagpal} proved that
$\HH^k(\PConf_n(X);\bk)$ stabilizes as a representation of $\Sym_n$ in a natural sense.
The key to this is the theory of $\FI$-modules.

\paragraph{Basic definitions.}
Let $\FI$ be the category of finite sets and injections.
An {\bf $\FI$-module} over a ring $\bk$ is a functor $M\colon \FI \rightarrow \Mod_{\bk}$.
More concretely, $M$ consists of the following.
\begin{compactitem}
\item For each finite set $I$, a $\bk$-module $M_I$.
\item For each injection $f\colon I \hookrightarrow J$ between finite sets $I,J$,
a homomorphism $M_f\colon M_I \rightarrow M_J$.  These homomorphisms must satisfy
the obvious compatibility conditions.
\end{compactitem}
For $I \in \FI$, the $\FI$-endomorphisms of $I$ are exactly the symmetric
group $\Sym_I$, so $\Sym_I$ acts on $M_I$.
In particular, setting $[n] = \{1,\ldots,n\}$ and $M_n = M_{[n]}$,
the group $\Sym_n$ acts on $M_n$.

\begin{example}
\label{example:configspace}
Given a finite set $I$ (with the discrete topology), let $\Emb(I,X)$ be the space of injective functions $I \to X$. There is an $\FI$-module $M$ with $M_I = \HH^k(\Emb(I,X);\bk)$ for $I \in \FI$. Observe that $M_{n} = \HH^k(\PConf_n(X);\bk)$.
\end{example}

\paragraph{General situation.}
More generally, let $\tC$ be any category.  A {\bf $\tC$-module} over
a ring $\bk$ is a functor $M$ from $\tC$ to $\Mod_{\bk}$ (see \cite{FFPSBook, FTBook} for surveys).  For each
$C \in \tC$, we thus have a $\bk$-module $M_C$, and for each $\tC$-morphism $f\colon C \rightarrow C'$ we
have a $\bk$-module morphism $M_f\colon M_{C} \rightarrow M_{C'}$.  These morphisms satisfy compatibility
conditions coming from the composition law in $\tC$.  The class
of $\tC$-modules over $\bk$ forms a category whose morphisms are natural transformations.  A morphism 
$\phi\colon M \rightarrow N$ of $\tC$-modules thus consists of $\bk$-module morphisms $\phi_C\colon M_C \rightarrow N_C$
for each $C \in \tC$ such that the diagram
\[\begin{CD}
M_C @>{\phi_C}>> N_C \\
@V{M_f}VV        @VV{N_f}V \\
M_{C'} @>{\phi_{C'}}>> N_{C'}\end{CD}\]
commutes for all $\tC$-morphisms $f\colon C \rightarrow C'$.
We will often omit the $\bk$ and simply speak of $\tC$-modules.

Notions such as $\tC$-submodules, surjective/injective morphisms, etc.\ are
defined pointwise. For example, a $\tC$-module morphism $\phi \colon M \to N$ 
is surjective/injective if for all objects $C$, the linear map $\phi_C \colon M_C \to N_C$ 
is surjective/injective.  A $\tC$-submodule is the image of an injective morphism of $\tC$-modules.
A $\tC$-module $M$ is {\bf finitely generated} if there exist $C_1,\ldots,C_n \in \tC$ and elements
$x_i \in M_{C_i}$ such that the smallest $\tC$-submodule
of $M$ containing the $x_i$ is $M$ itself.  We will say that the category
of $\tC$-modules over $\bk$ is {\bf locally Noetherian} if all submodules of finitely generated
$\tC$-modules over $\bk$ are finitely generated.  We will say
that the category of $\tC$-modules is {\bf locally Noetherian} if the category of
$\tC$-modules over $\bk$ is locally Noetherian for all Noetherian rings $\bk$.

\paragraph{Asymptotic structure.}
Let $M$ be a finitely generated $\FI$-module over a Noetherian ring. Church--Ellenberg--Farb--Nagpal \cite{ChurchEllenbergFarbNagpal} proved that
the category of $\FI$-modules is locally Noetherian, and used this to prove three things about $M$.
For $N \geq 0$, let $\FI^N$ be the full subcategory of $\FI$ spanned by $I \in \FI$ with
$|I| \leq N$.
\begin{compactitem}
\item (Injective representation stability) If $f\colon I \rightarrow J$ is an $\FI$-morphism,
then the homomorphism $M_f\colon M_I \rightarrow M_J$ is injective when $|I| \gg 0$.
\item (Surjective representation stability) If $f\colon I \rightarrow J$ is an $\FI$-morphism,
then the $\Sym_J$-orbit of the image of $M_f\colon M_I \rightarrow M_J$ spans $M_J$ when $|I| \gg 0$.
\item (Central stability) For $N \gg 0$, the functor $M$ is the left Kan extension
to $\FI$ of the restriction of $M$ to $\FI^N$.
\end{compactitem}
Informally, central stability says
that for $I \in \FI$ with $|I| > N$, the $\bk$-module
$M_I$ can be constructed from $M_{I'}$ for $|I'| \leq N$ using the ``obvious''
generators and the ``obvious'' relations.
It should be viewed as a sort of finite presentability condition.

\begin{remark}
The statement of central stability in \cite{ChurchEllenbergFarbNagpal} 
does not refer to Kan extensions, but is easily seen to be equivalent
to the above.  The original definition of central stability in
the first author's paper \cite{PutmanRepStabilityCongruence} appears to be
very different: it gives a presentation for $M_I$ in terms of those $M_{I'}$ with $|I'| \in \{|I|-1, |I|-2\}$.  One can show that
this is equivalent to the above statement.  We will
prove a more general result during the proof of 
Theorem \ref{maintheorem:asymptoticstructure} below.
\end{remark}

\paragraph{Conclusion.}
The upshot is that the locally Noetherian property implies that to describe how $\HH^k(\PConf_n(X);\bk)$ changes as $n \rightarrow \infty$, it was enough for Church--Ellenberg--Farb--Nagpal to prove that the $\FI$-module in Example \ref{example:configspace} is finitely generated.  Our goal is to generalize this kind of result to other settings.

\begin{remark}
In a different direction, Wilson \cite{WilsonFIW} has generalized the theory of $\FI$-modules
to the other classical families of Weyl groups.
\end{remark}

\subsection{Introduction to the rest of the introduction}

It will take some time to state our results, so to help orient the reader
we give a brief outline of the rest of the introduction and provide a summary of
our four main families of results.  We begin in \S \ref{section:introcategories} by introducing
the four main categories we study in this paper.  They take as input a commutative ring $R$ and
are called  $\VI(R)$, $\tV(R)$, $\wV(R)$, $\VIC(R,\Unit)$, and $\SI(R)$.  This section
states our first main family of theorems.
\begin{compactitem}
\item {\bf Theorems \ref{maintheorem:vinoetherian}/\ref{maintheorem:artinianconj}/\ref{maintheorem:wvnoetherian}/\ref{maintheorem:vicnoetherian}/\ref{maintheorem:sinoetherian}}: When $R$ is a finite
commutative ring, the categories of $\VI(R)$-, $\tV(R)$-, $\wV(R)$-, $\VIC(R,\Unit)$-, and $\SI(R)$-modules are locally Noetherian.
\end{compactitem}

\begin{remark}
Our proofs of Theorems \ref{maintheorem:vinoetherian}--\ref{maintheorem:sinoetherian} use analogues of the theory
of Gr\"{o}bner bases.  Such techniques are developed systematically by Sam--Snowden in \cite{tca-gb}; however, the results
of this paper can {\bf not} be proved using the main result of \cite{tca-gb}.  See Remark~\ref{rmk:no-term-order}
for more on this.
\end{remark}

\begin{remark}
\label{remark:nonnoetherian}
The condition that $R$ is finite is necessary; indeed, in Theorem \ref{maintheorem:nonnoetherian} we prove that
none of the above categories of modules are locally Noetherian when $R$ is infinite.
\end{remark}

In \S \ref{section:introasymptotic}, we introduce the language of complemented categories, which
provides a common framework for stating and proving our results.  Examples include $\FI$ and $\VIC(R,\Unit)$
and $\SI(R)$ (though not $\VI(R)$ or $\tV(R)$ or $\wV(R)$; we will explain how to handle
these in Remark \ref{remark:handlenotcomplemented} below).  This section states our second main theorem:
\begin{compactitem}
\item {\bf Theorem \ref{maintheorem:asymptoticstructure}}: If $\tC$ is a complemented category such that
the category of $\tC$-modules is locally Noetherian, then finitely generated $\tC$-modules have a simple
asymptotic structure analogous to Church--Farb's theorem from the previous section.
\end{compactitem}
The next section (\S \ref{section:introrepresentation}) provides large numbers of examples of
finitely generated $\tC$-modules to which we can apply the previous result and obtain interesting
asymptotic structure theorems.  Letting $\O_K$ be the ring of integers in an algebraic number field, the main theorems of this section are as follows:
\begin{compactitem}
\item {\bf Theorems \ref{maintheorem:glfinite}/\ref{maintheorem:slfinite}/\ref{maintheorem:autfinite}/\ref{maintheorem:spfinite}/\ref{maintheorem:modfinite}}: The homology groups of congruence subgroups in $\GL_n(\O_K)$, $\SL_n(\O_K)$, the
automorphism group of a free group, $\Sp_{2n}(\O_K)$, and the mapping class group form finitely generated
modules over appropriate categories.
\end{compactitem}
Combining these theorems with Theorem \ref{maintheorem:asymptoticstructure}, we obtain 
a version of representation stability for these homology groups.

\begin{remark}
The proofs of Theorems \ref{maintheorem:glfinite}--\ref{maintheorem:modfinite} all use a machine we develop later in this
paper.  The key input to this machine is an appropriate local Noetherianity theorem (like Theorems 
\ref{maintheorem:vinoetherian}--\ref{maintheorem:sinoetherian} above).  See Theorem \ref{theorem:finitegen} for a statement of how this machine works.
\end{remark}

The final section (\S \ref{section:introtwisted}) of our introduction concerns twisted homological stability.  Its main
theorems can be summarized as follows:
\begin{compactitem}
\item {\bf Theorems \ref{maintheorem:gltwisted}/\ref{maintheorem:sptwisted}}: If $R$ is a finite commutative ring, then
the twisted homology groups of $\GL_n(R)$, $\SL_n(R)$, and $\Sp_{2n}(R)$ with coefficients in a finitely
generated module over an appropriate category stabilize.
\end{compactitem}

\begin{remark}
The proofs of Theorems \ref{maintheorem:gltwisted} and \ref{maintheorem:sptwisted} both use another machine we develop later in this
paper to prove twisted homological stability theorems from appropriate local Noetherianity theorems.  See Theorem 
\ref{theorem:twistedstability} for a statement of how this machine works.
\end{remark}

\begin{remark}
Some readers may notice a parallel between Theorems \ref{theorem:finitegen} 
and \ref{theorem:twistedstability} and a program outlined independently by Church 
(in lectures and personal communication, but never published).  Church's program 
had similar inputs and would have obtained these same conclusions (finite generation 
for homology of congruence subgroups, and stability for twisted homology), but with 
one key difference: if successful, it would have yielded effective quantitative 
bounds (on the degree of generators in the former case, and on the stable range in the 
latter). However, Church informs us that technical difficulties arose that mean his 
proposed program cannot be carried out, and so it will not appear.  He points out that, 
although the results in this paper were obtained independently, they can be viewed after 
the fact as a demonstration that the framework of his approach can be salvaged by using 
our local Noetherianity theorems to avoid these technical difficulties, at the cost of 
giving up effective bounds.
\end{remark}

The introduction closes with some comments and questions together with an outline of the remainder of the paper.  This
last section also contains Theorem \ref{maintheorem:nonnoetherian}, which is the non-Noetherian result discussed
in Remark \ref{remark:nonnoetherian}.

\subsection{Categories of free modules: \texorpdfstring{$\bVI$}{VI}, 
\texorpdfstring{$\btV$}{V}, \texorpdfstring{$\bwV$}{V'},
\texorpdfstring{$\bVIC$}{VIC}, and \texorpdfstring{$\bSI$}{SI}}
\label{section:introcategories}

We now introduce the categories we will consider in this paper.  Fix a ring $R$ (assumed as always to be commutative).

\paragraph{The category $\bVI$.}
A linear map between free $R$-modules is {\bf splittable} if it has a left inverse.
Let $\VI(R)$ be the category whose objects are finite-rank free $R$-modules and
whose morphisms are splittable injections.  For $V \in \VI(R)$, the monoid of
$\VI(R)$-endomorphisms of $V$ is $\GL(V)$.  Thus if $M$ is a $\VI(R)$-module, then $M_V$ is a representation of $\GL(V)$ for all $V \in \VI(R)$.
The category $\VI(R)$ was introduced by Scorichenko in his thesis (\cite{ScorichenkoThesis}; see \cite{FFPSBook} for a published account).
Our main theorem about $\VI(R)$ is as follows.

\begin{maintheorem}
\label{maintheorem:vinoetherian}
Let $R$ be a finite ring.  Then the category of $\VI(R)$-modules is locally Noetherian.
\end{maintheorem}

\begin{remark}
In contrast, $\VI(\Z)$ is not locally Noetherian, and in fact $\VI(R)$ is not locally Noetherian whenever $R$ is an infinite ring; see Theorem~\ref{maintheorem:nonnoetherian} below.
\end{remark}

\begin{remark}
While preparing this article, we learned that Gan and Li \cite{ganli} have recently independently
shown that the category of functors $\VI(R) \to \Mod_{\bk}$ is locally Noetherian when $R$ is a finite field and $\bk$ is a field of characteristic $0$.  For our applications, it is important that both $R$ and especially $\bk$ be more general.
\end{remark}

\paragraph{A variant: the Artinian conjecture.}
Define $\tV(R)$ to be the category whose objects are finite-rank free $R$-modules and
whose morphisms are all splittable linear maps (not necessarily injective).  We then have the following.

\begin{maintheorem}[Artinian conjecture]
\label{maintheorem:artinianconj}
Let $R$ be a finite ring.  Then the category of $\tV(R)$-modules is
locally Noetherian.
\end{maintheorem}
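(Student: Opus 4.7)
The strategy is to deduce Theorem~\ref{maintheorem:artinianconj} from Theorem~\ref{maintheorem:vinoetherian} by exploiting the inclusion $\VI(R) \subseteq \tV(R)$ (both categories have the same objects, and every splittable injection is a splittable linear map). Since this inclusion is the identity on objects, the associated restriction functor on module categories is the identity on underlying $\bk$-modules. Consequently, a $\tV(R)$-submodule $N \subseteq M$ coincides as a collection of $\bk$-modules with the $\VI(R)$-submodule $N|_{\VI(R)} \subseteq M|_{\VI(R)}$, and any $\VI(R)$-generating set for $N|_{\VI(R)}$ is automatically a $\tV(R)$-generating set for $N$. Thus the theorem reduces to the claim that if $M$ is finitely generated as a $\tV(R)$-module, then $M|_{\VI(R)}$ is finitely generated as a $\VI(R)$-module.

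The crucial input is a structural description of $\tV(R)$-morphisms. Because $R$ is finite commutative, the decomposition $R \cong \prod_i R_i$ into local Artinian factors implies that any projective direct summand of a finite-rank free $R$-module admitting a free complement is itself free, of a well-defined rank. Hence, for any $f \colon V \to W$ in $\tV(R)$ with $n := \rank V$ and $k := \rank \Image(f)$, both $\Image(f)$ and $\ker(f)$ are free (of ranks $k$ and $n - k$), and $f$ factors as
\[
V \twoheadrightarrow \Image(f) \cong R^k \hookrightarrow W,
\]
the first arrow being a splittable surjection and the second a splittable injection. Moreover, $\GL(V)$ acts transitively, modulo post-composition by $\GL_k(R)$, on splittable surjections $V \twoheadrightarrow R^k$: any two such correspond to decompositions $V = K \oplus C$ into free summands of ranks $n - k$ and $k$, which are interchanged by a suitable change of basis.

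Assume now that $M$ is $\tV(R)$-generated by $x_1, \ldots, x_r$ with $x_j \in M_{V_j}$ and $n_j := \rank V_j$. The finiteness of $R$ ensures that each $\GL(V_j)$ is a finite group. Fix for every pair $(j, k)$ with $0 \le k \le n_j$ a splittable surjection $\pi_{j,k} \colon V_j \twoheadrightarrow R^k$, and let
\[
\mathcal{Y} \;=\; \bigl\{\, (\pi_{j,k})_*(\sigma_*(x_j)) \;:\; 1 \le j \le r,\ 0 \le k \le n_j,\ \sigma \in \GL(V_j) \,\bigr\}.
\]
The set $\mathcal{Y}$ is finite, and I claim it generates $M$ as a $\VI(R)$-module. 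Indeed, given any $\tV(R)$-morphism $f \colon V_j \to W$, the structural lemma lets us write $f = \iota \circ \tau \circ \pi_{j, k} \circ \sigma$ for some $\sigma \in \GL(V_j)$, $\tau \in \GL_k(R)$, and a splittable injection $\iota \colon R^k \hookrightarrow W$. Then
\[
f_*(x_j) \;=\; (\iota\tau)_* \bigl( (\pi_{j,k})_*(\sigma_*(x_j)) \bigr)
\]
lies in the $\VI(R)$-submodule generated by $\mathcal{Y}$, since $\iota\tau$ is the composite of a $\GL_k(R)$-element with a splittable injection and is therefore a $\VI(R)$-morphism. Applying Theorem~\ref{maintheorem:vinoetherian} to $M|_{\VI(R)}$ then yields that every $\tV(R)$-submodule of $M$ is finitely generated.

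The main obstacle is the structural lemma of the second paragraph: the freeness of kernels and images of splittable linear maps, together with the transitivity of $\GL(V)$ on splittable surjections to $R^k$. These are precisely the places where the finiteness hypothesis on $R$ enters, consistent with the failure of Noetherianity for $\tV(\Z)$.
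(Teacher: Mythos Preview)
Your proof is correct and follows essentially the same route as the paper: both reduce to Theorem~\ref{maintheorem:vinoetherian} via the inclusion $\VI(R)\hookrightarrow\tV(R)$, using the factorization of any splittable linear map as a surjection onto its (free) image followed by a splittable injection, together with the finiteness of $R$ to control the surjective part. The paper packages this by showing the inclusion functor is \emph{finite} in the sense of Lemma~\ref{lem:finite-functor}, exhibiting the decomposition $\iota^*(P_{\tV(R),V})=\bigoplus_{V\twoheadrightarrow W} P_{\VI(R),W}$ over the finitely many free quotients of $V$; you instead unpack this by hand, using transitivity of $\GL(V_j)$ on splittable surjections $V_j\twoheadrightarrow R^k$ and finiteness of $\GL(V_j)$ to produce an explicit finite $\VI(R)$-generating set from a finite $\tV(R)$-generating set. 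These are two presentations of the same idea.
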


Theorem \ref{maintheorem:artinianconj} generalizes an old
conjecture of Jean Lannes and Lionel Schwartz which asserts that the category
of $\tV(\Field_q)$-modules over $\Field_q$ is locally Noetherian.
This is often stated in a dual form and is thus
known as the Artinian conjecture.  It first appeared in print
in \cite[Conjecture 3.12]{KuhnGenericII} and now has a large literature
containing many partial results (mostly focused
on the case $q=2$, though even there nothing close to the full conjecture
was previously known).  See, e.g.,
\cite{KuhnInvariant, PowellArtinian, PowellStructure, PowellArtinianObjects,
DjamentFoncteurs, DjamentLeFoncteur}.  See also \cite[Chapter 5]{SchwartzBook}
for connections to the Steenrod algebra. A proof of this conjecture also appears in \cite{tca-gb}, see Remark~\ref{rmk:schwartz-alt-proof}.

A variant of $\tV(R)$ is the category $\wV(R)$ whose objects are finite-rank 
free $R$-modules and whose morphisms are all linear maps (not just the splittable ones). 
Of course, $\tV(R) = \wV(R)$ if $R$ is a field, but they differ in general.  For
$\wV(R)$, we have the following theorem.

\begin{maintheorem}
\label{maintheorem:wvnoetherian}
Let $R$ be a finite ring.  Then the category of $\wV(R)$-modules is
locally Noetherian.
\end{maintheorem}

\paragraph{Problems with $\bVI$.}
The category $\VI(R)$ is not rich enough to allow many natural constructions.  For instance,
the map $V \mapsto \GL(V)$ does not extend to a functor from $\VI(R)$ to
the category of groups.  The problem is that if $V$ is a submodule of $W$, then
there is no canonical way to extend an automorphism of $V$ to an automorphism of $W$.
We overcome this problem by introducing a richer category.

\paragraph{The category $\bVIC$, take I.}
Define $\VIC(R)$ to be the following category.
The objects of $\VIC(R)$ are finite-rank free $R$-modules $V$.  
If $V, W \in \VIC(R)$, then a $\VIC(R)$-morphism from
$V$ to $W$ consists of a pair $(f,C)$ as follows.
\begin{compactitem}
\item $f \colon  V \rightarrow W$ is an $R$-linear injection.
\item $C \subset W$ is a free $R$-submodule such that $W = f(V) \oplus C$. 
\end{compactitem}
If $(f,C) \colon V \to W$ and $(g,D) \colon W \to X$ are two $\VIC(R)$-morphisms, then the composition
$(g,D) \circ (f,C)$ equals $(g \circ f, g(C) \oplus D)$.

The monoid of $\VIC(R)$-endomorphisms of $V \in \VIC(R)$ is $\GL(V)$.  Observe that in contrast to $\VI(R)$, the assignment $V \mapsto \GL(V)$ extends to a functor from $\VIC(R)$ to
the category of groups: if $(f,C) \colon V \rightarrow W$ is a morphism, then there is an induced map $\GL(V) \rightarrow \GL(W)$ which extends automorphisms over $C$ by the identity, and this assignment is functorial.  The category $\VIC(R)$ was first introduced by Djament \cite{DjamentUnitary}.

\begin{remark}
\label{remark:vicalternate}
We will sometimes use an alternative description of the morphisms in $\VIC(R)$ which works whenever all stably free
$R$-modules are free (for
instance, when $R$ is a finite commutative ring).
Namely, the data $(f,C)$ which defines a morphism $V \to W$ is equivalent 
to a pair of $R$-linear maps $V \to W \to V$ whose composition is the identity.  Here 
the first map is $f$ and the second map is the projection $W \to V$ away from $C$.
\end{remark}

\paragraph{The category $\bVIC$, take II (cheap).}
Let ${\Unit} \subset R^\times$ be a subgroup of the group of multiplicative units of $R$ and let $\SL^{\Unit}(V) = \Set{$f \in \GL(V)$}{$\det(f) \in {\Unit}$}$.
For our applications, we will need a variant of $\VIC(R)$ whose automorphism groups
are $\SL^{\Unit}(V)$ rather than $\GL(V)$.  The category $\VIC(R)$ is equivalent to the full
subcategory generated by $\Set{$R^n$}{$n \geq 0$}$.  Define $\VIC(R,{\Unit})$ to be the category
whose objects are $\Set{$R^n$}{$n \geq 0$}$ and whose morphisms from $R^n$ to $R^{n'}$ are
$\VIC(R)$-morphisms $(f,C)\colon R^n \rightarrow R^{n'}$ such that $\det(f) \in {\Unit}$ if $n = n'$.
Thus the monoid of $\VIC(R,{\Unit})$-endomorphisms of $R^n$ is $\SL_n^{\Unit}(R) := \SL^{\Unit}(R^n)$.

\paragraph{Orientations.}
For many of our constructions, it will be useful to extend $\VIC(R,{\Unit})$ to a category whose
objects are all finite-rank free $R$-modules rather than just the $R^n$.
An {\bf orientation} of a rank $n$ free $R$-module $V$ is a generator
for the rank $1$ free $R$-module $\wedge^n V$.  The group of units of $R$ acts
transitively on the set of orientations on $V$, and a {\bf ${\Unit}$-orientation} on $V$
consists of an orbit under the action of ${\Unit}$.  A {\bf ${\Unit}$-oriented free $R$-module}
is a finite-rank free $R$ module $V$ equipped with a ${\Unit}$-orientation.
If $W_1$ and $W_2$ are ${\Unit}$-oriented free $R$-modules, then $W_1 \oplus W_2$ is 
in a natural way a ${\Unit}$-oriented free $R$-module.  A {\bf ${\Unit}$-oriented free $R$-submodule}
of $V$ is an $R$-submodule $V'$ of $V$ which is a free $R$-module 
equipped with a ${\Unit}$-orientation; if $V'=V$, then
the ${\Unit}$-orientations on $V'$ and $V$ must be the same, and if $V' \subsetneqq V$ 
then there is no condition on the ${\Unit}$-orientation of $V'$.

\paragraph{The category $\bVIC$, take II (better).}
The category $\VIC(R,{\Unit})$ can now be defined as follows (this yields 
a category equivalent to the
previous one).  The objects of $\VIC(R,{\Unit})$ are ${\Unit}$-oriented
free $R$-modules $V$.  If $V, W \in \VIC(R,\Unit)$, then a morphism of $\VIC(R,\Unit)$ 
from $V$ to $W$ consists of a pair $(f,C)$ as follows.
\begin{compactitem}
\item $f \colon  V \rightarrow W$ is an $R$-linear injection.
\item $C \subset W$ is a ${\Unit}$-oriented free $R$-submodule such that $W = f(V) \oplus C$.
\end{compactitem}
In the second bullet point, $f(V)$ is equipped with the ${\Unit}$-orientation $f_{\ast}(\omega)$, where
$\omega$ is the ${\Unit}$-orientation on $V$, and the direct sum decomposition $f(V) \oplus C$ is 
as ${\Unit}$-oriented free $R$-modules.
Note that when $C$ is nonzero, there is a unique ${\Unit}$-orientation on $C$ such that the ${\Unit}$-orientations on $f(V) \oplus C$ and $W$ are the same, so the morphisms in $\VIC(R,{\Unit})$ and $\VIC(R)$ are only different when $\dim(V) = \dim(W)$.
The monoid of $\VIC(R,{\Unit})$-endomorphisms of $V \in \VIC(R,{\Unit})$ is
$\SL^{\Unit}(V)$.  Observe that $\VIC(R,{\Unit}) \subseteq \VIC(R)$ with equality when ${\Unit}$ is the entire
group of units.
Our main theorem about $\VIC(R,\Unit)$ is as follows.  For $\VIC(\Field)$ with $\Field$ a finite
field, this theorem was conjectured by Church \cite{ChurchHomologicalAlgebra}.

\begin{maintheorem}
\label{maintheorem:vicnoetherian}
Let $R$ be a finite ring and let ${\Unit} \subset R$ be a subgroup of the group of units.  
Then the category of $\VIC(R,{\Unit})$-modules is locally Noetherian.
\end{maintheorem}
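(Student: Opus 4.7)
The plan is to deduce Theorem~\ref{maintheorem:vicnoetherian} from Theorem~\ref{maintheorem:vinoetherian} via the forgetful functor $\pi \colon \VIC(R, \Unit) \to \VI(R)$ sending $(f, C) \mapsto f$. When $R$ is finite, $\pi$ is surjective on morphism sets with finite fibers: the fiber over a splittable injection $f \colon V \hookrightarrow W$ is the set of $\Unit$-oriented complements to $f(V)$ in $W$, which is finite because $R$ is.

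The first reduction is to the case $\Unit = R^\times$. Using the equivalent description of $\VIC(R, \Unit)$ with objects $\{R^n : n \geq 0\}$, the inclusion $\iota \colon \VIC(R, \Unit) \hookrightarrow \VIC(R, R^\times)$ is identity on objects, and the difference lies in the endomorphism groups, where $\SL^\Unit(V) \subseteq \GL(V)$ is a finite-index subgroup of index $|R^\times/\Unit|$. Pointwise induction along $\iota$ is a left adjoint that sends finitely generated $\VIC(R, \Unit)$-modules $M$ to finitely generated $\VIC(R, R^\times)$-modules, with values $(\Ind_\iota M)_V = \Ind_{\SL^\Unit(V)}^{\GL(V)} M_V$, and it is exact and faithful for finite-index inclusions. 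Hence an infinite ascending chain of $\VIC(R, \Unit)$-submodules of $M$ would yield one in $\Ind_\iota M$, contradicting Noetherianity of the latter.

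For the main case $\Unit = R^\times$, write $\VIC(R) := \VIC(R, R^\times)$. Here the essential tool is a submodule-reflection property of the pullback $\pi^* \colon \VI(R)\text{-Mod} \to \VIC(R)\text{-Mod}$: $\VIC(R)$-submodules of $\pi^* N$ coincide with $\VI(R)$-submodules of $N$, since the $(f, C)$-action on $\pi^* N$ equals the $f$-action and is therefore independent of $C$. In particular, pullbacks of finitely generated $\VI(R)$-modules are Noetherian as $\VIC(R)$-modules by Theorem~\ref{maintheorem:vinoetherian}.

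The main obstacle is that finitely generated $\VIC(R)$-modules, such as the principal projective $P^{\VIC}_{V_0}$, are not themselves pullbacks from $\VI(R)$ (distinct complements really do give distinct actions in $\VIC$). My plan is to construct a finite filtration of $P^{\VIC}_{V_0}$ whose associated graded consists of pullbacks of finitely generated $\VI(R)$-modules, using the finiteness of each fiber $\pi^{-1}(f)$ together with the finiteness of $R$ to organize this filtration by the \emph{orbit type} of the complement. A devissage via Theorem~\ref{maintheorem:vinoetherian} then yields Noetherianity of $P^{\VIC}_{V_0}$, and hence of every finitely generated $\VIC(R)$-module.
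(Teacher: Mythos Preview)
Your route is genuinely different from the paper's.  The paper does not go through $\VI(R)$ at all; instead it introduces an auxiliary category $\OVIC(R)\subset\VIC(R,\Unit)$ with no nontrivial automorphisms, proves directly (via a Gr\"obner-type argument using a well partial order on morphism sets) that $\OVIC(R)$-modules are Noetherian, and then uses the finiteness of the inclusion $\OVIC(R)\hookrightarrow\VIC(R,\Unit)$ together with Lemma~\ref{lem:finite-functor} to conclude.  In the paper's logical order, Theorem~\ref{maintheorem:vinoetherian} is a \emph{consequence} of Theorem~\ref{maintheorem:vicnoetherian} (via the forgetful functor $\VIC(R)\to\VI(R)$), so invoking it as an input here is circular unless you supply an independent proof, e.g.\ the one in \cite{tca-gb} cited in Remark~\ref{rmk:schwartz-alt-proof}.

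Even granting Theorem~\ref{maintheorem:vinoetherian} independently, the argument has a real gap at the key step.  Your submodule-reflection observation for $\pi^\ast N$ is correct (since $\pi$ is full, any $\VIC(R)$-submodule of $\pi^\ast N$ is automatically a $\VI(R)$-submodule of $N$), but the filtration of $P^{\VIC}_{V_0}$ is only asserted, not constructed.  The phrase ``orbit type of the complement'' does not pick out a $\VIC$-stable filtration: under a morphism $(g,D)\colon W\to W'$ the complement $C$ is sent to $g(C)\oplus D$, so the ``type'' of the resulting complement depends on $D$, and in particular the kernel of the surjection $P^{\VIC}_{V_0}\twoheadrightarrow \pi^\ast P^{\VI}_{V_0}$ is \emph{not} itself a pullback from $\VI(R)$.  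You would need to exhibit a further finite filtration of that kernel with $\VI$-pullback subquotients, and nothing in the proposal indicates how to do this or why it should terminate.  The reduction to $\Unit=R^\times$ also needs more care: the formula $(\Ind_\iota M)_V=\Ind_{\SL^{\Unit}(V)}^{\GL(V)}M_V$ is not obvious for a Kan extension along a functor that is the identity on non-isomorphism morphisms, and exactness/faithfulness of $\Ind_\iota$ is not a formal consequence of ``finite index'' in this categorical setting.
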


\begin{remark}
Just like for $\VI(R)$, it is necessary for $R$ to be finite; see
Theorem \ref{maintheorem:nonnoetherian}.
\end{remark}

\paragraph{Symplectic forms.}
We now turn to the symplectic analogue of $\VI(R)$-modules.
A {\bf symplectic form} on a finite-rank free $R$-module $V$ is 
a bilinear form $\ialg \colon  V \times V \rightarrow R$ which is alternating (i.e., $\ialg(v,v)=0$ for all $v \in V$) and which induces an isomorphism from $V$ to its dual
$V^{\ast} = \Hom_R(V,R)$.  A {\bf symplectic $R$-module} is a finite-rank free $R$-module equipped with a symplectic form.
A {\bf symplectic $R$-submodule} of a symplectic $R$-module $V$ is an $R$-submodule $W$
such that the symplectic form on $V$ restricts to a symplectic form on $W$.
A {\bf symplectic map} between
symplectic $R$-modules is an $R$-linear map that preserves the symplectic form.  Observe that symplectic maps must be injective.

\paragraph{The category $\bSI$.}
Define $\SI(R)$ to be the category whose objects are symplectic $R$-modules and
whose morphisms are symplectic maps.  The monoid of
$\SI(R)$-endomorphisms of $V \in \SI(R)$ is $\Sp(V)$, i.e., the
group of $R$-linear isomorphisms of $V$ that preserve the symplectic form.
Just like for $\VIC(R)$, the map
$V \mapsto \Sp(V)$ extends to a functor from $\SI(R)$ to the category of
groups: a symplectic map $f \colon V \rightarrow W$ induces a map $\Sp(V) \rightarrow \Sp(W)$ which extends automorphisms over $f(V)^{\perp}$ by the identity.
We then have the following.

\begin{maintheorem}
\label{maintheorem:sinoetherian}
Let $R$ be a finite ring.  Then the category of $\SI(R)$-modules is
locally Noetherian.
\end{maintheorem}

\begin{remark}
Just like for $\VI(R)$, it is necessary for $R$ to be finite; see
Theorem \ref{maintheorem:nonnoetherian}.
\end{remark}

\subsection{Complemented categories and their asymptotic structure}
\label{section:introasymptotic}

Our next result says that finitely generated modules over the above categories have
a simple asymptotic structure (like the one for $\FI$-modules we discussed earlier).  To
avoid having to give separate proofs for all our categories, we introduce an abstract
framework.

\paragraph{Complemented categories.}
A {\bf weak complemented category} is a monoidal category $(\tA,\monpro)$ satisfying:

\begin{compactitem}
\item Every morphism in $\tA$ is a monomorphism.  Thus
for all morphisms $f \colon V \rightarrow V'$,
it makes sense to talk about the subobject $f(V)$ of $V'$.
\item The identity object $\mathbbm{1}$ of $(\tA,\monpro)$ is initial.  Thus
for $V,V' \in \tA$ there exist natural morphisms $V \rightarrow V \monpro V'$
and $V' \rightarrow V \monpro V'$, namely the compositions
\[V \xrightarrow{\cong} V \monpro \mathbbm{1} \longrightarrow V \monpro V' \quad \quad \text{and} \quad \quad V' \xrightarrow{\cong} \mathbbm{1} \monpro V' \longrightarrow V \monpro V'.\]
We will call these the {\bf canonical morphisms}.
\item For $V,V',W \in \tA$, the map $\Hom_{\tA}(V \monpro V',W) \rightarrow \Hom_{\tA}(V,W) \times \Hom_{\tA}(V',W)$
obtained by composing morphisms with the canonical morphisms is an injection.
\item Every subobject $C$ of an object $V$ has a unique {\bf complement}, that is, a unique subobject $D$ of $V$ such that there is an isomorphism $C \monpro D \xrightarrow{\cong} V$ where the compositions $C \rightarrow C \monpro D \xrightarrow{\cong} V$ and
$D \rightarrow C \monpro D \xrightarrow{\cong} V$ of the isomorphism with the canonical morphisms are the inclusion morphisms.
\item The canonical map $\Sym_p \wr \Aut(X) \to \Aut(X^p)$ is injective. (Here $\wr$ denotes wreath product.)
\end{compactitem}
\noindent
A {\bf complemented category} is a weak complemented category $(\tA,\monpro)$ 
whose monoidal structure is symmetric (more precisely, which is equipped
with a symmetry).

\begin{remark}
Things like complemented categories go back to work of Quillen; see \cite[p.\ 3]{GraysonKTheoryII}.
After a version of this paper was distributed, Nathalie Wahl (and later with Oscar Randal-Williams) posted her paper \cite{WahlStability}, which uses an axiomatization similar to complemented categories to prove homological stability results of a more classical flavor than ours. A related axiomatization also appears in work of Djament--Vespa \cite{DjamentVespa1}.
\end{remark}

\paragraph{Generators for a category.}
If $(\tA,\monpro)$ is a monoidal category, then a {\bf generator} for $\tA$ is an object $X$ of $\tA$ such that all objects $V$ of $\tA$ are isomorphic to $X^i$ for some unique $i \geq 0$.  We will call $i$ the
{\bf $X$-rank} of $V$.

\begin{example}
\label{example:fi}
The category $\FI$ of finite sets and injections is a complemented category whose monoidal structure is the disjoint union. A generator is the set $\{1\}$.
\end{example}

\begin{example}
\label{example:vic}
The category $\VIC(R)$ is a complemented category whose monoidal structure is the direct sum. The subobject attached to a morphism $V \to V'$ can be interpreted as the pair $(W, W')$ where $W$ is the image of the linear injection $V \to V'$ and $W'$ is the chosen complement. The complement to this subobject is the pair $(W', W)$. A generator is $R^1$.
\end{example}

\begin{example}
\label{example:si}
The category $\SI(R)$ is a complemented category whose monoidal structure is the orthogonal direct sum.  The complement of a symplectic submodule $W$ of a symplectic $R$-module $(V, \ialg)$ is $W^{\perp} = \Set{$v \in V$}{$\ialg(v,w)=0$ for all $w \in W$}$. A generator is $R^2$ equipped with the standard symplectic form.
\end{example}

\begin{remark}
\label{remark:handlenotcomplemented}
The category $\VI(R)$ is not a complemented category.  However, there is
a forgetful functor $\Psi\colon \VIC(R) \rightarrow \VI(R)$.  If $R$ is a finite ring and
$M$ is a finitely generated $\VI(R)$-module, then the pullback $\Psi^{\ast}(M)$ is a finitely generated $\VIC(R)$-module (see the proof of Theorem~\ref{maintheorem:vinoetherian} in \S\ref{section:noetherianvect}) which can be analyzed using the technology we will discuss below.  A similar remark applies to $\tV(R)$ and $\wV(R)$.
\end{remark}

\paragraph{Asymptotic structure theorem.}
The following theorem generalizes the aforementioned asymptotic structure theorem for $\FI$-modules. 

\begin{maintheorem}
\label{maintheorem:asymptoticstructure}
Let $(\tA,\monpro)$ be a complemented category with generator $X$.
Assume that the category of $\tA$-modules is locally Noetherian,
and let $M$ be a finitely generated $\tA$-module.  Then the following hold.
For $N \geq 0$, let $\tA^N$ denote the full subcategory of $\tA$
spanned by elements whose $X$-ranks are at most $N$.
\begin{compactitem}
\item (Injective representation stability) If $f\colon V \rightarrow W$ is an $\tA$-morphism,
then the homomorphism $M_f\colon M_V \rightarrow M_W$ is injective when the $X$-rank of $V$ is
sufficiently large.
\item (Surjective representation stability) If $f\colon V \rightarrow W$ is an $\tA$-morphism,
then the orbit under $\Aut_{\tA}(W)$ of the image of $M_f\colon M_V \rightarrow M_W$ spans $M_W$ when the $X$-rank of $V$ is sufficiently large.
\item (Central stability)
For $N \gg 0$, the functor $M$ is the left Kan extension
to $\tA$ of the restriction of $M$ to $\tA^N$.
\end{compactitem}
\end{maintheorem}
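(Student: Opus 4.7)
The plan is to adapt the framework that Church--Ellenberg--Farb--Nagpal developed for $\FI$-modules to an arbitrary complemented category, with the only non-formal input being Noetherianity. The main tool is the shift endofunctor $\Sigma$ on $\tA$-modules defined by $(\Sigma M)_V = M_{V \monpro X}$, together with the natural transformation $\iota\colon M \to \Sigma M$ coming from the canonical morphisms $V \to V \monpro X$. A preliminary transitivity lemma, which I would prove first, asserts that $\Aut_{\tA}(W)$ acts transitively on $\Hom_{\tA}(V,W)$: given two morphisms $f_1, f_2 \colon V \to W$, the uniqueness of complements provides canonical decompositions $W \cong f_i(V) \monpro D_i$, which can be spliced into an automorphism of $W$ sending $f_1$ to $f_2$. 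A consequence is that every morphism $f\colon V \to W$ is of the form $\sigma \circ \iota^r$ for some $\sigma \in \Aut_{\tA}(W)$ and canonical morphism $\iota^r\colon V \to V \monpro X^r$, where $r$ is the difference of $X$-ranks.

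For injective stability, I would analyze the torsion $T(M) = \bigcup_k \ker(M \to \Sigma^k M)$. The ascending chain of submodules $\ker(M \to \Sigma^k M) \subseteq M$ must stabilize by Noetherianity, so $T(M) = \ker(M \to \Sigma^{k_0} M)$ for some $k_0$; in particular $T(M)$ is finitely generated, with generators in $X$-ranks $\leq n_0$. Transitivity then forces $T(M)_V = 0$ whenever $\rank(V) > n_0 + k_0$: each generator $t$ of $T(M)$ satisfies $M_{\iota^{k_0}}(t) = 0$, and every morphism from its object into $V$ factors (up to $\Aut_{\tA}(V)$-action) through $\iota^{k_0}$, so the image of $t$ in $M_V$ is zero. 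Therefore $M/T(M)$ has injective structure maps in the relevant range, from which injective stability for $M$ itself follows.

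Surjective stability is then essentially immediate from finite generation together with transitivity: for any $V$ of $X$-rank at least the generation rank $n_0$ and any $f\colon V \to W$, each generator $m_i \in M_{V_i}$ factors through a composition $V_i \hookrightarrow V \xrightarrow{f} W$, and transitivity arranges every possible image of $m_i$ in $M_W$ to lie in the $\Aut_{\tA}(W)$-orbit of $\Image(M_f)$.

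The main obstacle is central stability. The approach I would take is to produce a chain complex of $\tA$-modules, built from iterated shifts of $M$, whose $H_0$ and $H_1$ together measure the failure of the counit map $\pi\colon \widetilde{M}^{(N)} \to M$ to be an isomorphism, where $\widetilde{M}^{(N)}$ is the left Kan extension of $M|_{\tA^N}$. Because $M$ is finitely generated, each term of this complex is finitely generated, so by Noetherianity its $H_0$ and $H_1$ are finitely generated; a variant of the torsion argument above then shows they have bounded support in $X$-rank. Once $N$ is chosen larger than this support and larger than the thresholds from the first two bullets, both homologies vanish and the Kan extension statement follows. The hard technical step is setting up the complex correctly and identifying its low-degree homology with the obstructions to $\pi$ being an isomorphism; for $\FI$ this is done in \cite{ChurchEllenbergFarbNagpal}, and one expects the same mechanism to work in the complemented setting provided the arguments are phrased purely in terms of the shift functor, the monoidal product $\monpro$, and the complement axiom.
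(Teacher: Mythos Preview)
Your treatment of injective and surjective stability is correct and matches the paper's approach: the paper defines the torsion submodule $T(M)$ (Lemma~3.7), proves it is finitely generated by Noetherianity and hence vanishes in high rank (Lemma~3.8), and observes that surjective stability is immediate from finite generation. Your transitivity lemma is the paper's Lemma~3.1.

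For central stability, however, there is a genuine gap. The chain complex you need is \emph{not} built from the positive shift $\Sigma$ you defined (with $(\Sigma M)_V = M_{V\monpro X}$): with that functor you have natural maps $M \to \Sigma M$, not $\Sigma M \to M$, so there is no chain complex augmenting to $M$. The paper instead constructs functors $\Sigma_p$ with $(\Sigma_p M)_V = \bigoplus_{h \in \Hom_{\tA}(X^p,V)} M_{W_h}$, where $W_h$ is the complement of $h(X^p)$; these assemble into a genuine chain complex $\cdots \to \Sigma_2 M \to \Sigma_1 M \to M$ with face maps coming from forgetting one copy of $X$ at a time (see \S 3.2).

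The more serious omission is that you assert ``a variant of the torsion argument above then shows [$H_0$ and $H_1$] have bounded support,'' but the torsion argument only shows that a finitely generated \emph{torsion} module has bounded support --- it does not show the homology \emph{is} torsion. That is the crux of the matter, and it requires a new idea: the paper produces an explicit chain homotopy between the map $I\colon (\Sigma_\ast M)_V \to (\Sigma_\ast M)_{V\monpro X}$ and zero (Lemma~3.9), thereby proving that every homology class is killed by a single application of your positive shift. Only after this does Noetherianity plus your torsion-vanishing argument apply to give exactness of $\Sigma_2 M \to \Sigma_1 M \to M \to 0$ in high rank. Finally, the paper connects this exactness to the Kan extension by an inductive diagram chase comparing $M$ to the Kan extension $M'$ (end of \S 3.3), rather than by directly identifying $H_0, H_1$ with obstructions to the counit. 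So your outline has the right architecture, but you are missing both the correct complex and the chain-homotopy step that makes the whole argument go through.
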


\begin{remark}
See Theorem \ref{theorem:resolution} below for a concrete description
of the presentation for $M$ implied by central stability; the proof of Theorem \ref{maintheorem:asymptoticstructure}
immediately follows the proof of Theorem \ref{theorem:resolution}.
\end{remark}

\subsection{Representation stability for congruence subgroups}
\label{section:introrepresentation}

Our next theorems apply the above technology to prove representation-theoretic
analogues of homological stability for certain kinds of congruence subgroups.

\paragraph{GL over number rings.}
Let $\O_K$ be the ring of integers
in an algebraic number field $K$.  Charney \cite{CharneyDedekind} proved that $\GL_n(\O_K)$
satisfies homological stability.
Rationally, these stable values were computed by Borel \cite{BorelStability}.
Let $\alpha \subset \O_K$ be a proper nonzero ideal.  The {\bf level
$\alpha$ congruence subgroup} of $\GL_n(\O_K)$,
denoted $\GL_n(\O_K,\alpha)$, is the kernel of the
map $\GL_n(\O_K) \rightarrow \GL_n(\O_K/\alpha)$.  It follows from Borel's
work \cite{BorelStability} that $\HH_k(\GL_n(\O_K,\alpha);\bk)$ stabilizes
when $\bk = \Q$; however, Lee--Szczarba
\cite{LeeSzczarbaCongruence} proved that it does not stabilize integrally,
even for $k=1$.  Few of these homology groups are known,
and the existing computations do not seem to fit into any
pattern.  However, they have an important additional piece of structure: for each
$n \geq 1$, the conjugation action of $\GL_n(\O_K)$ on $\GL_n(\O_K,\alpha)$
descends to an action of $\SL_n^{\Unit}(\O_K/\alpha)$ on $\HH_k(\GL_n(\O_K,\alpha);\bk)$, where
${\Unit} \subset \O_K/\alpha$ is the image of the group of units of $\O_K$.
Our goal is to describe how this representation changes as $n$ increases.

\paragraph{Congruence subgroups of GL as VIC-modules.}
As we discussed above, there is a functor from $\VIC(\O_K)$ to the category
of groups that takes $V$ to $\GL(V)$.  Similarly, there is a functor $\GL(\O_K,\alpha)$
from $\VIC(\O_K)$ to the category of groups defined by the formula
\[\GL(\O_K,\alpha)_V = \Ker(\GL(V) \rightarrow \GL(V \otimes_{\O_K} (\O_K/\alpha))).\]
Passing to homology, we get a $\VIC(\O_K)$-module $\HHH_k(\GL(\O_K,\alpha);\bk)$
with
\[\HHH_k(\GL(\O_K,\alpha);\bk)_V = \HH_k(\GL(\O_K,\alpha)_V;\bk) \quad \quad (V \in \VIC(\O_K)).\]
Since $\O_K$ is infinite, the category of $\VIC(\O_K)$-modules is poorly
behaved.  However,
we will prove in \S \ref{section:glfinite} that $\HHH_k(\GL(\O_K,\alpha);\bk)$
actually descends to a $\VIC(\O_K/\alpha,{\Unit})$-module, where ${\Unit} \subset \O_K/\alpha$ is the image
of the group of units of $\O_K$.  Since $\O_K/\alpha$ is finite,
our theorems apply to $\VIC(\O_K/\alpha,{\Unit})$.  All that is necessary is
to prove that the $\VIC(\O_K/\alpha,{\Unit})$-module
$\HHH_k(\GL(\O_K,\alpha);\bk)$ is finitely generated.

\paragraph{Relation to $\bFI$-modules.}
It turns out that the needed finite generation follows from known results.
The conjugation action of the set of permutation matrices
on $\GL_n(\O_K,\alpha)$
turns $\HH_k(\GL_n(\O_K,\alpha);\bk)$ into a representation of $\Sym_n$.
In \cite{PutmanRepStabilityCongruence}, the first author proved that if $\bk$ is
a field whose characteristic is sufficiently large, then the $\Sym_n$-representations
$\HH_k(\GL_n(\O_K,\alpha);\bk)$ satisfy a version of central stability.
This was generalized to arbitrary Noetherian rings $\bk$ by
Church--Ellenberg--Farb--Nagpal \cite{ChurchEllenbergFarbNagpal}, who actually
proved that the groups $\HH_k(\GL_n(\O_K,\alpha);\bk)$ form a finitely generated
$\FI$-module.  More precisely, the following defines an embedding
of categories $\Phi\colon \FI \rightarrow \VIC(\O_K/\alpha,\Unit)$.
\begin{compactitem}
\item For $I \in \FI$, set $\Phi(I) = (\O_K/\alpha)^I$.
\item For an $\FI$-morphism $\sigma\colon I \hookrightarrow J$, set $\Phi(\sigma)=(f_{\sigma},C_{\sigma})$, where
$f_{\sigma}\colon (\O_K/\alpha)^I \rightarrow (\O_K/\alpha)^J$ is defined by 
$f_{\sigma}(\bb_i) = \bb_{\sigma(i)}$ for $i \in I$ and $C_{\sigma} = \langle \text{$\bb_j$ $|$ $j \notin \sigma(I)$} \rangle$.
Here $\Set{$\bb_i$}{$i \in I$}$ and $\Set{$\bb_j$}{$j \in J$}$ are the standard bases for
$(\O_K/\alpha)^I$ and $(\O_K/\alpha)^J$, respectively.
\end{compactitem}
Then \cite{ChurchEllenbergFarbNagpal} proves that the restriction of $\VIC(\O_K/\alpha,\Unit)$ to
$\Phi(\FI) \subset \VIC(\O_K/\alpha,\Unit)$ is finitely generated, so
the $\VIC(\O_K/\alpha,{\Unit})$-module $\HHH_k(\GL(\O_K,\alpha);\bk)$ is finitely generated:

\begin{maintheorem}
\label{maintheorem:glfinite}
Let $\O_K$ be the ring of integers in an algebraic number field $K$, let
$\alpha \subset \O_K$ be a proper nonzero ideal, let ${\Unit} \subset \O_K/\alpha$ be the
image of the group of units of $\O_K$, and let $\bk$ be a Noetherian
ring.  Then the $\VIC(\O_K/\alpha,{\Unit})$-module
$\HHH_k(\GL(\O_K,\alpha);\bk)$ is finitely generated for all $k \geq 0$.
\end{maintheorem}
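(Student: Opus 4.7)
The plan is to reduce Theorem~\ref{maintheorem:glfinite} to the analogous $\FI$-module finite generation result established by Church--Ellenberg--Farb--Nagpal in \cite{ChurchEllenbergFarbNagpal}, using the embedding $\Phi\colon \FI \to \VIC(\O_K/\alpha,\Unit)$ from the excerpt as a bridge. The argument proceeds in three steps.

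First, one must verify that $\HHH_k(\GL(\O_K,\alpha);\bk)$, which a priori is only a $\VIC(\O_K)$-module, descends to $\VIC(\O_K/\alpha,\Unit)$. This is the content of \S\ref{section:glfinite}: the key points are that the map induced on $\HH_k(\GL(\O_K,\alpha)_V;\bk)$ by a $\VIC(\O_K)$-morphism $(f,C)\colon V\to W$ depends only on the reductions of $f$ and $C$ modulo $\alpha$, and that a $\VIC(\O_K)$-automorphism whose reduction modulo $\alpha$ lies in the image of $\Unit$ acts trivially on this homology. Second, one unwinds the pullback $\Phi^{\ast}\HHH_k(\GL(\O_K,\alpha);\bk)$: at $I \in \FI$ it is $\HH_k(\GL_{|I|}(\O_K,\alpha);\bk)$, and the map attached to an injection $\sigma\colon I \hookrightarrow J$ is induced by the block inclusion $\GL_{|I|}(\O_K,\alpha) \hookrightarrow \GL_{|J|}(\O_K,\alpha)$ that permutes basis vectors by $\sigma$ and extends by the identity on the remaining ones---exactly the $\FI$-module shown to be finitely generated in \cite{ChurchEllenbergFarbNagpal}.

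Third, one lifts this finite generation: a finite $\FI$-generating set $\{x_1,\ldots,x_m\}$ with $x_i \in \HHH_k(\GL(\O_K,\alpha);\bk)_{\Phi(I_i)}$ automatically generates the $\VIC(\O_K/\alpha,\Unit)$-module, since every $\FI$-morphism becomes a $\VIC(\O_K/\alpha,\Unit)$-morphism under $\Phi$ (so the $\VIC(\O_K/\alpha,\Unit)$-submodule generated by the $x_i$ contains the entire $\FI$-submodule they generate, hitting every $\HHH_k(\GL(\O_K,\alpha);\bk)_{\Phi([n])}$), and $\Phi$ is essentially surjective (so composing with a $\VIC(\O_K/\alpha,\Unit)$-isomorphism $\Phi([n]) \xrightarrow{\cong} V$ then realizes the value at an arbitrary $V$). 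The main obstacle is step one: carefully verifying the descent and identifying $\Unit$ with exactly the automorphisms acting trivially on congruence homology. Steps two and three are then essentially formal unwrappings of definitions.
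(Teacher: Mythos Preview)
Your proposal is correct and matches exactly the deduction the paper sketches in the introduction before Theorem~\ref{maintheorem:glfinite}: descend to a $\VIC(\O_K/\alpha,\Unit)$-module, restrict along $\Phi\colon \FI \hookrightarrow \VIC(\O_K/\alpha,\Unit)$, identify the result with the $\FI$-module treated in \cite{ChurchEllenbergFarbNagpal}, and then observe that a finite $\FI$-generating set is automatically a finite $\VIC(\O_K/\alpha,\Unit)$-generating set because $\Phi$ is essentially surjective and embeds $\FI$-morphisms into $\VIC(\O_K/\alpha,\Unit)$-morphisms.

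However, this is \emph{not} the proof the paper actually gives in \S\ref{section:glfinite}. There the authors deliberately bypass \cite{ChurchEllenbergFarbNagpal} and instead apply their own finite-generation machine, Theorem~\ref{theorem:finitegen}, to the highly surjective functor $\Psi\colon \VIC(\O_K) \to \VIC(\O_K/\alpha,\Unit)$. The three hypotheses are supplied by Theorem~\ref{maintheorem:vicnoetherian} (Noetherianity of $\VIC(\O_K/\alpha,\Unit)$-modules), Borel--Serre \cite{BorelSerreCorners} (pointwise finite generation over $\bk$), and Charney's acyclicity theorem for the complex of split partial bases \cite{CharneyCongruence} (identified with the space of $\O_K^1$-embeddings in Lemma~\ref{lemma:glidentify}). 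Your approach is shorter and uses nothing from the paper beyond the construction of the module; the paper's approach is self-contained and, more importantly, serves as a template for Theorems~\ref{maintheorem:autfinite}, \ref{maintheorem:spfinite}, and \ref{maintheorem:modfinite}, where no analogue of the $\FI$-module result from \cite{ChurchEllenbergFarbNagpal} is available as a shortcut (as the paper notes, it is not clear how to prove those modules are finitely generated as $\FI$-modules).
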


\noindent
We can therefore apply Theorem \ref{maintheorem:asymptoticstructure} to
$\HHH_k(\GL(\O_K,\alpha);\bk)$.  The result is an asymptotic structure
theorem for $\HH_k(\GL_n(\O_K,\alpha);\bk)$.  This partially verifies
a conjecture of Church--Farb \cite[Conjecture 8.2.1]{ChurchFarbRepStability}.
As a simple illustration of our machinery, we will give a direct proof of
Theorem \ref{maintheorem:glfinite} in this paper.

\begin{remark}
Church--Ellenberg--Farb--Nagpal's theorem gives a
central stability description of
$\HH_k(\GL_n(\O_K,\alpha);\bk)$ as an $\FI$-module; however, the central
stability conclusion from Theorem \ref{maintheorem:asymptoticstructure} gives
more information than this since it
deals with $\HH_k(\GL_n(\O_K,\alpha);\bk)$ as a representation of
$\SL_n^{\Unit}(\O_K/\alpha)$ and not merely as a representation of $\Sym_n$.
\end{remark}

\begin{remark}
The other examples that we discuss also form $\FI$-modules, but it seems
very hard to prove that they are finitely generated as $\FI$-modules.
\end{remark}

\begin{remark}
One could also consider congruence subgroups of $\SL_n(R)$ for other rings $R$.  In
fact, the first author's results in \cite{PutmanRepStabilityCongruence} work
in this level of generality; however, we have to restrict ourselves to rings
of integers so we can cite a theorem of Borel--Serre \cite{BorelSerreCorners}
which asserts that the homology groups in question are all finitely generated
$\bk$-modules.  This is the only property of rings of integers we use, and our
methods give a similar result whenever this holds (another example would be
$R$ a finite commutative ring).
\end{remark}

\paragraph{Variant: SL.}
We can define $\SL_n(\O_K,\alpha) \subset \SL_n(\O_K)$ in the obvious
way.  The homology groups $\HH_k(\SL_n(\O_K,\alpha);\bk)$ are now representations of
$\SL_n(\O_K/\alpha) = \SL_n^1(\O_K/\alpha)$.  Similarly to $\GL_n(\O_K,\alpha)$, we will 
construct a
$\VIC(\O_K/\alpha,1)$-module $\HHH_k(\SL(\O_K,\alpha);\bk)$ with
\[\HHH_k(\SL(\O_K,\alpha);\bk)_{\O_K^n} = \HH_k(\SL_n(\O_K,\alpha);\bk) \quad \quad (n \geq 0)\]
and prove the following variant of Theorem \ref{maintheorem:glfinite}.

\begin{maintheorem}
\label{maintheorem:slfinite}
Let $\O_K$ be the ring of integers in an algebraic number field $K$, let
$\alpha \subset \O_K$ be a proper nonzero ideal, and let $\bk$ be a Noetherian
ring.  Then the $\VIC(\O_K/\alpha,1)$-module
$\HHH_k(\SL(\O_K,\alpha);\bk)$ is finitely generated for all $k \geq 0$.
\end{maintheorem}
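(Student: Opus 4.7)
The plan is to prove Theorem \ref{maintheorem:slfinite} in close parallel with Theorem \ref{maintheorem:glfinite}, making the necessary adjustments to account for the determinant-$1$ constraint. Two tasks must be carried out: constructing the $\VIC(\O_K/\alpha,1)$-module structure on $\HHH_k(\SL(\O_K,\alpha);\bk)$, and then proving finite generation.

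For the module structure, I would define a functor from $\VIC(\O_K,1)$ to the category of groups by
\[
V \longmapsto \SL(\O_K,\alpha)_V := \Ker\bigl(\SL(V) \to \SL(V \otimes_{\O_K} \O_K/\alpha)\bigr).
\]
The $1$-orientation data attached to objects of $\VIC(R,1)$ is precisely what makes $\SL(V)$ functorial in this setting: a $\VIC(R,1)$-morphism $(f,C)\colon V \to W$ preserves the chosen $1$-orientation, so extending a determinant-$1$ automorphism of $V$ by the identity on $C$ yields a determinant-$1$ automorphism of $W$. Passing to group homology produces a $\VIC(\O_K,1)$-module that descends to $\VIC(\O_K/\alpha,1)$ by the same argument used for $\GL$: any two $\VIC(\O_K,1)$-lifts of a given $\VIC(\O_K/\alpha,1)$-morphism differ by an element of $\SL(\O_K,\alpha)_V$, which acts trivially on its own group homology.

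For finite generation, I would adapt the direct proof of Theorem \ref{maintheorem:glfinite} promised later in the paper. The $\GL$ argument proceeds via a highly connected simplicial complex (of split partial bases) on which the congruence subgroup acts, whose simplex stabilizers decompose as a congruence subgroup of smaller rank together with data about a complement; the resulting spectral sequence, combined with Noetherianity (Theorem \ref{maintheorem:vicnoetherian}), drives an induction on the rank that yields finite generation. Exactly the same complex carries an $\SL_n(\O_K,\alpha)$-action whose simplex stabilizers take the form of $\SL$ of a smaller module together with data about a $1$-oriented complement---precisely the structure encoded by $\VIC(\O_K/\alpha,1)$-morphisms. Invoking Theorem \ref{maintheorem:vicnoetherian} for $\VIC(\O_K/\alpha,1)$ in place of $\VIC(\O_K/\alpha,\Unit)$ then carries the inductive step to completion.

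The main obstacle is verifying that the orientation data is respected at every stage of the reduction: whereas for $\GL$ the complement of a partial basis is an unadorned free submodule, for $\SL$ each complement must carry a $1$-orientation, and simplex stabilizers in $\SL_n$ must preserve this orientation when they act. Once one checks that the spectral sequence of the action genuinely lives in the category of $\VIC(\O_K/\alpha,1)$-modules---which amounts to tracking $1$-orientations through the natural stabilizer identifications---the remainder of the $\GL$ argument transfers essentially verbatim.
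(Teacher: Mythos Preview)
Your proposal is correct and matches the paper's approach exactly: the paper states that Theorems~\ref{maintheorem:glfinite} and~\ref{maintheorem:slfinite} are proved in the same way, gives details only for $\GL$ (calling it ``slightly harder''), and leaves $\SL$ to the reader. Your outline---replacing $\VIC(\O_K) \to \VIC(\O_K/\alpha,\Unit)$ by the highly surjective functor $\VIC(\O_K,1) \to \VIC(\O_K/\alpha,1)$, invoking strong approximation for $\SL_n(\O_K) \twoheadrightarrow \SL_n(\O_K/\alpha)$, and reusing the same split-partial-basis complex (which is literally identical since $\Hom_{\VIC(R,1)}(R^m,R^n) = \Hom_{\VIC(R)}(R^m,R^n)$ for $m<n$)---is precisely the intended adaptation, and the orientation-tracking obstacle you flag dissolves for the same reason.
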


\begin{remark}
Just like for Theorem \ref{maintheorem:glfinite}, this could be deduced
from work of Church--Ellenberg--Farb--Nagpal \cite{ChurchEllenbergFarbNagpal},
though we will give a proof using our machinery.
\end{remark}

\paragraph{Automorphism groups of free groups.}
Our second example of a $\VIC(R,\Unit)$-module comes from the automorphism
group $\Aut(F_n)$ of the free group $F_n$ of rank $n$.  Just like for
$\GL_n(\O_K)$, work of Hatcher \cite{HatcherAut} and
Hatcher--Vogtmann \cite{HatcherVogtmannCerf} shows that $\Aut(F_n)$ satisfies homological stability. These stable values were computed by Galatius \cite{GalatiusAut}, who proved in particular that they rationally vanish.  For $\ell \geq 2$, the
{\bf level $\ell$ congruence subgroup} of $\Aut(F_n)$, denoted
$\Aut(F_n,\ell)$, is the kernel of the map
$\Aut(F_n) \rightarrow \SL_n^{\pm 1}(\Z/\ell)$ arising from the action
of $\Aut(F_n)$ on $\HH_1(F_n;\Z/\ell)$.
Satoh \cite{SatohCongruence} proved that $\HH_1(\Aut(F_n, \ell); \Z)$ does not stabilize.  Similarly to $\SL_n(\O_K,\alpha)$, few concrete computations are known.

\begin{remark}
Unlike for $\SL_n(\O_K,\alpha)$ it is not known in general 
if $\HH_k(\Aut(F_n,\ell);\Q)$ stabilizes,
though this does hold for $k \leq 2$ (see \cite{DayPutman}).
\end{remark}

\noindent
The conjugation action of $\Aut(F_n)$ on $\Aut(F_n,\ell)$ descends to an action
of $\SL_n^{\pm 1}(\Z/\ell)$ on $\HH_k(\Aut(F_n,\ell);\bk)$.
In \S \ref{section:autfinite}, we will construct a $\VIC(\Z/\ell,\pm 1)$-module
$\HHH_k(\Aut(\ell);\bk)$ with
\[
\HHH_k(\Aut(\ell);\bk)_{(\Z/\ell)^n} = \HH_k(\Aut(F_n,\ell);\bk) \quad \quad (n \geq 0).
\]
Our main theorem about this module is as follows.

\begin{maintheorem}
\label{maintheorem:autfinite}
Fix some $\ell \geq 2$, and let $\bk$ be a Noetherian ring.
Then the $\VIC(\Z/\ell,\pm 1)$-module $\HHH_k(\Aut(\ell);\bk)$ is
finitely generated for all $k \geq 0$.
\end{maintheorem}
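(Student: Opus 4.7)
The plan is to follow the blueprint of Theorem~\ref{maintheorem:glfinite}, combining the Noetherianity of $\VIC(\Z/\ell,\pm 1)$-modules (Theorem~\ref{maintheorem:vicnoetherian}) with a spectral sequence coming from a highly-connected complex of partial bases. The first step, carried out in \S\ref{section:autfinite}, is to define the $\VIC(\Z/\ell,\pm 1)$-module structure on $\HHH_k(\Aut(\ell);\bk)$. A morphism $(f,C)\colon (\Z/\ell)^n \to (\Z/\ell)^m$ should induce a map $\Aut(F_n,\ell) \to \Aut(F_m,\ell)$ by writing $F_m$ as a free product $F_n * F'$ with $F' \cong F_{m-n}$, chosen so that the reductions modulo $\ell$ of the obvious bases match $f((\Z/\ell)^n)$ and $C$, and then extending automorphisms by the identity on $F'$. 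Since elements of the congruence kernel act trivially on $\HH_1(F_n;\Z/\ell)$, this extension lands in $\Aut(F_m,\ell)$; the determinant $\pm 1$ condition ensures that in the endomorphism case $n = m$ the map agrees up to inner automorphism with the action coming from a lift in $\Aut(F_n)$ itself, and inner automorphisms act trivially on group homology.

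Finite generation would then be proved by induction on $k$. The base case $k=0$ is immediate, since $\HHH_0(\Aut(\ell);\bk)_{(\Z/\ell)^n} \cong \bk$ with trivial module structure, hence is generated by a single element in rank zero. For the inductive step, I would use the action of $\Aut(F_n,\ell)$ on the simplicial complex $\bB_n$ of partial free bases of $F_n$, whose $p$-simplices are ordered $(p+1)$-tuples that extend to a free basis; this complex has connectivity growing linearly in $n$, as follows from the work of Hatcher--Vogtmann and its refinement by Day--Putman. The equivariant spectral sequence
\[
E^1_{p,q} = \bigoplus_{\sigma \in \bB_n^{(p)}/\Aut(F_n,\ell)} \HH_q\bigl(\mathrm{Stab}(\sigma);\bk\bigr) \;\Longrightarrow\; \HH_{p+q}(\Aut(F_n,\ell);\bk)
\]
then converges to $\HHH_{p+q}(\Aut(\ell);\bk)_{(\Z/\ell)^n}$ in a range growing with $n$. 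The stabilizer of a $p$-simplex $(v_0,\ldots,v_p)$ is an extension of the smaller congruence group $\Aut(F_{n-p-1},\ell)$ (acting on the complementary free factor) by ``Nielsen-style'' twisting automorphisms that multiply fixed generators by words in the $v_i$. A Hochschild--Serre argument on the stabilizer terms reduces each $E^1$ entry to pieces built from $\HHH_{q'}(\Aut(\ell);\bk)$ with $q' < k$, which are finitely generated by the inductive hypothesis.

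Assembling the $E^1$ pages across varying $n$ into a $\VIC(\Z/\ell,\pm 1)$-module and applying Noetherianity (Theorem~\ref{maintheorem:vicnoetherian}) then propagates finite generation across the spectral sequence pages to the abutment $\HHH_k(\Aut(\ell);\bk)$. The main obstacle will be the control of the twisting automorphisms in the stabilizers: unlike in the $\GL$ case, where the analogous stabilizers split as $\GL_{n-p-1}(\O_K,\alpha)$ by a manageable unipotent group, the Nielsen twists in $\Aut(F_n,\ell)$ do not directly arise from $\VIC$-induced constructions, so additional bookkeeping is required to show that the resulting coefficient systems assemble functorially in $(f,C)$. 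Once this packaging is in place, the inductive mechanism closes the argument.
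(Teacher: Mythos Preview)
Your overall strategy---induction on $k$ via an equivariant spectral sequence for a highly connected complex, combined with Noetherianity---is exactly the mechanism the paper uses (packaged abstractly as Theorem~\ref{theorem:finitegen}). The gap is in your choice of complex, and it is precisely the ``main obstacle'' you flag at the end.

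You propose to act on the complex $\bB_n$ of partial free bases, i.e., ordered tuples $(v_0,\ldots,v_p)$ extending to a basis. The stabilizer of such a simplex in $\Aut(F_n)$ is genuinely an extension by a nonabelian group of Nielsen twists, and there is no evident reason the resulting Hochschild--Serre terms assemble into finitely generated $\VIC(\Z/\ell,\pm 1)$-modules. You say ``additional bookkeeping is required,'' but this is not bookkeeping: it is the heart of the matter, and as written the argument does not close.

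The paper sidesteps this entirely by acting instead on the complex $\PartialBases(F_n)$ of \emph{split} partial bases: tuples $((v_0,C_0),\ldots,(v_p,C_p))$ where each $C_i$ is a complementary free factor with $F_n = \langle v_i\rangle \ast C_i$ and $v_j \in C_i$ for $j \neq i$. This is exactly the space of $F_1$-embeddings in the sense of the category $\Fr$ (Lemma~\ref{lemma:autidentify}), and it is still highly connected by Hatcher--Vogtmann (Theorem~\ref{theorem:partialbasesfncon}). The payoff is that the stabilizer of such a simplex is \emph{precisely} $\Aut(F_{n-p-1})$ acting on the common complement $\bigcap C_i$, with no twist term at all (this is Lemma~\ref{lemma:autcomplement}(b) applied in $\Fr$). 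The $E^1$ page then literally becomes $(\OrderedShift_{p+1}\HHH_q(\Aut(\ell);\bk))_V$, and the inductive step reduces to the partial resolution Theorem~\ref{theorem:resolution} with no further analysis of stabilizers.

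In short: replace your complex by its complemented version and the obstacle you identified disappears. The remaining input is that $\HH_k(\Aut(F_n,\ell);\bk)$ is a finitely generated $\bk$-module, which follows from Culler--Vogtmann's finiteness of Outer space.
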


\noindent
Applying Theorem \ref{maintheorem:asymptoticstructure}, we 
get an asymptotic structure theorem for $\HH_k(\Aut(F_n,\ell);\bk)$,
partially verifying a conjecture of Church--Farb (see the sentence after \cite[Conjecture 8.5]{ChurchFarbRepStability}).

\begin{remark}
The kernel of $\Aut(F_n) \to \GL_n(\Z)$ is denoted ${\rm IA}_n$ and its homology forms 
a module over $\VIC(\Z)$.  It would be interesting to understand this module; however, this seems
to be a very difficult problem.  Our techniques cannot handle this example because $\Z$ is an infinite ring (c.f.\ Theorem
\ref{maintheorem:nonnoetherian}).
\end{remark}

\paragraph{Symplectic groups over number rings.}
Let $\O_K$ and $K$ and $\alpha \subset \O_K$ be as above.
The {\bf level $\alpha$ congruence
subgroup} of $\Sp_{2n}(\O_K)$, denoted $\Sp_{2n}(\O_K,\alpha)$, is the kernel of the
map $\Sp_{2n}(\O_K) \rightarrow \Sp_{2n}(\O_K/\alpha)$.
The groups $\Sp_{2n}(\O_K)$ and $\Sp_{2n}(\O_K,\alpha)$ are similar
to $\GL_n(\O_K)$ and $\GL_n(\O_K,\alpha)$.  In particular, the following hold.
\begin{compactitem}
\item Charney \cite{CharneyVogtmann} proved that $\Sp_{2n}(\O_K)$ satisfies homological stability.
\item Borel \cite{BorelStability} calculated the stable rational homology of $\Sp_{2n}(\O_K)$.  He
also proved that $\HH_k(\Sp_{2n}(\O_K,\alpha);\bk)$ stabilizes when $\bk = \Q$.
\item For general $\bk$, an argument similar to the one that Lee--Szczarba \cite{LeeSzczarbaCongruence}
used for $\SL_n(\O_K,\alpha)$ shows that $\HH_k(\Sp_{2n}(\O_K,\alpha);\bk)$ does not stabilize,
even for $k=1$.
\item The conjugation action of $\Sp_{2n}(\O_K)$ on $\Sp_{2n}(\O_K,\alpha)$ induces an
action of $\Sp_{2n}(\O_K/\alpha)$ on $\HH_k(\Sp_{2n}(\O_K,\alpha);\bk)$.
\end{compactitem}
In \S \ref{section:spfinite}, we will construct an $\SI(\O_K/\alpha)$-module
$\HHH_k(\Sp(\O_K,\alpha);\bk)$ such that
\[\HHH_k(\Sp(\O_K,\alpha);\bk)_{(\O_K/\alpha)^{2n}} = \HH_k(\Sp_{2n}(\O_K,\alpha);\bk) \quad \quad (n \geq 0).\]
Our main result concerning this functor is as follows.

\begin{maintheorem}
\label{maintheorem:spfinite}
Let $\O_K$ be the ring of integers in an algebraic number field $K$, let
$\alpha \subset \O_K$ be a proper nonzero ideal, and let $\bk$ be a Noetherian
ring.  Then the $\SI(\O_K/\alpha)$-module
$\HHH_k(\Sp(\O_K,\alpha);\bk)$ is finitely generated for all $k \geq 0$.
\end{maintheorem}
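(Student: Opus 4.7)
The plan is to parallel the proof of Theorem~\ref{maintheorem:glfinite}, replacing the complex of partial bases with its symplectic analogue and substituting Theorem~\ref{maintheorem:vicnoetherian} with Theorem~\ref{maintheorem:sinoetherian}. The argument breaks into two main parts: constructing the $\SI(\O_K/\alpha)$-module structure on $\HHH_k(\Sp(\O_K,\alpha);\bk)$, and using a spectral sequence fed by the connectivity of a symplectic analogue of $\PartialBases$ to produce finite generation by induction on the homological degree $k$.

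First, I would check that $\HHH_k(\Sp(\O_K,\alpha);\bk)$ is a well-defined $\SI(\O_K/\alpha)$-module. Given a symplectic injection $f\colon V\hookrightarrow W$ of symplectic $\O_K/\alpha$-modules, choose a symplectic injection $\tilde f\colon \tilde V \hookrightarrow \tilde W$ of canonical $\O_K$-lifts reducing to $f$ (possible because the reduction map $\Sp(\tilde W)\to \Sp(W)$ is surjective). Extending $\tilde f$ by the identity on $\tilde f(\tilde V)^\perp$ gives a homomorphism $\Sp(\tilde V,\alpha)\to \Sp(\tilde W,\alpha)$, and any two such lifts of $f$ differ by multiplication by an element of $\Sp(\tilde W,\alpha)$, which acts trivially on its own homology. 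Thus $\HH_k(-;\bk)$ yields a map depending only on $f$; functoriality then follows by the same argument given for $\VIC$ in \S\ref{section:glfinite}.

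Next I would induct on $k$. The case $k=0$ is trivial, yielding the constant module $\bk$. For the inductive step, assume $\HHH_q(\Sp(\O_K,\alpha);\bk)$ is finitely generated for every $q<k$. Letting $\Sp_{2n}(\O_K,\alpha)$ act on the symplectic analogue of the complex of partial bases (whose high connectivity I would cite from work of Mirzaii--van der Kallen or extract from the connectivity machinery developed elsewhere in the paper), the equivariant homology spectral sequence has $E^1$-page given by direct sums of homology groups of stabilizers, which are themselves congruence subgroups of symplectic groups of lower rank acting on orthogonal complements. Assembled over $n$, this yields a spectral sequence of $\SI(\O_K/\alpha)$-modules whose $E^1$-terms in total degree at most $k$ involve either $\HH_q$ for $q<k$ (finitely generated by induction) or $\HH_k$ at strictly lower symplectic rank (a shift of $\HHH_k$ itself). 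Combined with Theorem~\ref{maintheorem:sinoetherian}, this should yield the required finite generation of $\HHH_k$.

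The main obstacle will be making this last step precise: the spectral sequence exhibits $\HHH_k$ as the output rather than as the input of the resolution, so one must show that the finitely many generators produced by the lower-rank pieces actually assemble into a finite generating set for the whole module. This requires a careful analysis of induction and shift functors for $\SI(\O_K/\alpha)$-modules of exactly the type the paper develops for complemented categories in \S\ref{section:introasymptotic}, together with a sharp connectivity bound for the partial symplectic basis complex over $\O_K$ that is strong enough to cover the inductive range.
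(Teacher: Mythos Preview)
Your approach is essentially the same as the paper's: apply the finite generation machine (Theorem~\ref{theorem:finitegen}) to the highly surjective functor $\Psi\colon\SI(\O_K)\to\SI(\O_K/\alpha)$, using Noetherianity of $\SI(\O_K/\alpha)$-modules, high acyclicity of the space of partial symplectic bases, and the induction on $k$ via the equivariant spectral sequence. Two small points to fix. First, the connectivity of the complex of partial symplectic bases over $\O_K$ is due to Charney \cite{CharneyVogtmann}, not Mirzaii--van der Kallen (who prove untwisted homological stability for $\Sp$, a different input). Second, you omit one of the three hypotheses of the machine: that each individual $\bk$-module $\HH_k(\Sp_{2n}(\O_K,\alpha);\bk)$ is finitely generated, which comes from Borel--Serre \cite{BorelSerreCorners}; without this, the step ``surjectivity of $(\OrderedShift_1\HHH_k)_V\to(\HHH_k)_V$ for large $V$ implies finite generation of $\HHH_k$'' (Lemma~\ref{lemma:shiftfinitegen}) does not go through.
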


\noindent
Applying Theorem \ref{maintheorem:asymptoticstructure}, we obtain an asymptotic structure
theorem for $\HH_k(\Sp_{2n}(\O_K,\alpha);\bk)$.  This partially verifies a conjecture
of Church--Farb \cite[Conjecture 8.2.2]{ChurchFarbRepStability}.

\paragraph{The mapping class group.}
Let $\Surface{g}{b}$ be a compact oriented genus
$g$ surface with $b$ boundary components.  The {\bf mapping class group}
of $\Surface{g}{b}$, denoted $\MCG_g^b$, is
the group of homotopy classes of orientation-preserving homeomorphisms of $\Surface{g}{b}$ that
restrict to the identity on $\partial \Surface{g}{b}$.  This is one of the basic objects
in low--dimensional topology; see \cite{FarbMargalitPrimer} for a survey.
Harer \cite{HarerStability} proved that the mapping class group satisfies a form of homological stability.
If $i\colon \Surface{g}{b} \hookrightarrow \Surface{g'}{b'}$
is a subsurface inclusion, then
there is a map $i_{\ast}\colon\MCG_g^b \rightarrow \MCG_{g'}^{b'}$ that extends mapping classes by the identity.  Harer's
theorem says that for all $k \geq 1$, the induced
map $\HH_k(\MCG_g^b;\bk) \rightarrow \HH_k(\MCG_{g'}^{b'};\bk)$ is an isomorphism for
$g \gg 0$.
For $\bk = \Q$, the stable homology of the mapping
class group was calculated by Madsen--Weiss \cite{MadsenWeiss}.

\paragraph{Congruence subgroups of MCG.}
Fix some $\ell \geq 2$.  The group $\MCG_g^b$ acts on $\HH_1(\Surface{g}{b};\Z/\ell)$.  This action preserves the algebraic intersection
pairing $\ialg(\cdot,\cdot)$.  If $b=0$, then Poincar\'{e} duality implies that $\ialg(\cdot,\cdot)$ is symplectic.  If instead $b=1$, then
gluing a disc to the boundary component does not change $\HH_1(\Surface{g}{b};\Z/\ell)$, so $\ialg(\cdot,\cdot)$ is still
symplectic.  For $g \geq 0$ and $0 \leq b \leq 1$, the action of $\MCG_g^b$
on $\HH_1(\Surface{g}{b};\Z/\ell) \cong (\Z/\ell)^{2g}$ thus induces a representation
$\MCG_g^b \rightarrow \Sp_{2g}(\Z/\ell)$.  It is classical \cite[\S 6.3.2]{FarbMargalitPrimer}
that this is surjective.  Its kernel
is the {\bf level $\ell$ congruence subgroup} $\Mod_g^b(\ell)$ of $\MCG_g^b$.  It
is known that $\HH_k(\MCG_g^b(\ell);\bk)$ does not stabilize, even for $k=1$ (see
\cite[Theorem H]{PutmanPicardGroupLevel}).  Also, the conjugation action of $\MCG_g^b$ on
$\MCG_g^b(\ell)$ induces an action of $\Sp_{2g}(\Z/\ell)$ on $\HH_k(\MCG_g^b(\ell);\bk)$.  

\begin{remark}
For $b > 1$, the algebraic intersection pairing is not symplectic, so we do not get a symplectic representation.  We will not discuss this case.
\end{remark}

\paragraph{Stability.}
We will restrict ourselves to the case where $b=1$; the problem with closed surfaces is that they cannot
be embedded into larger surfaces, so one cannot ``stabilize'' them.  In \S \ref{section:modfinite}, we will
construct an $\SI(\Z/\ell)$-module $\HHH_k(\MCG(\ell);\bk)$ such that
\[\HHH_k(\MCG(\ell);\bk)_{(\Z/\ell)^{2g}} = \HH_k(\MCG_g^1(\ell);\bk) \quad \quad (g \geq 0).\]
Our main theorem concerning this module is as follows.

\begin{maintheorem}
\label{maintheorem:modfinite}
Fix some $\ell \geq 2$, and let $\bk$ be a Noetherian ring.
Then the $\SI(\Z/\ell)$-module $\HHH_k(\MCG(\ell);\bk)$ is
finitely generated for all $k \geq 0$.
\end{maintheorem}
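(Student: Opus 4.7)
The plan is an induction on the homology degree $k$, combining the Noetherianity of $\SI(\Z/\ell)$-modules from Theorem~\ref{maintheorem:sinoetherian} with a highly connected semi-simplicial complex on which the mapping class group acts.

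I would begin by verifying that $\HHH_k(\MCG(\ell);\bk)$ genuinely forms an $\SI(\Z/\ell)$-module. A symplectic injection $V \hookrightarrow W$ with orthogonal complement $V^{\perp}$ corresponds to a subsurface inclusion $\Surface{g}{1} \hookrightarrow \Surface{g'}{1}$ (with $2g = \dim V$ and $2g' = \dim W$) chosen so that the complementary subsurface realizes $V^{\perp}$ on $\HH_1$. Extending a mapping class by the identity on the complementary subsurface yields $\MCG_g^1(\ell) \to \MCG_{g'}^1(\ell)$, and the induced map on $\HH_k$ furnishes the functorial structure; well-definedness on morphisms reduces to the fact that the space of such subsurface inclusions realizing the prescribed symplectic data is connected.

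The inductive step uses a semi-simplicial complex of tethered tori in $\Surface{g}{1}$ (cf.\ the notation $\TetheredTori$), whose $p$-simplices are ordered $(p+1)$-tuples of disjoint tethered tori and whose connectivity grows linearly in $g$ by a Hatcher--Vogtmann style argument. The mapping class group $\MCG_g^1(\ell)$ acts on this complex, and the stabilizer of a $p$-simplex agrees, up to a manageable correction, with the level-$\ell$ congruence subgroup of the mapping class group of the complementary subsurface of genus $g-p-1$. The equivariant homology spectral sequence
\[
E^1_{p,q} = \bigoplus_{[\sigma]} \HH_q(\mathrm{Stab}(\sigma);\bk) \Longrightarrow \HH_{p+q}(\MCG_g^1(\ell);\bk)
\]
then, by high connectivity, converges to zero in an increasing range: for $g \gg k$, one obtains a presentation of $\HH_k(\MCG_g^1(\ell);\bk)$ as a cokernel between entries $E^1_{p,q}$ with $q \le k-1$, which by the inductive hypothesis assemble into finitely generated $\SI(\Z/\ell)$-modules (essentially shifts of the $\HHH_q(\MCG(\ell);\bk)$ for $q<k$). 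Noetherianity (Theorem~\ref{maintheorem:sinoetherian}) then yields the desired finite generation, with base case $k=0$ immediate from $\HH_0 = \bk$.

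The main obstacle will be the geometric input: arranging the tethered-tori complex so that its connectivity grows linearly in $g$, so that simplex stabilizers are honest level-$\ell$ congruence subgroups of mapping class groups of smaller subsurfaces (this requires care because cutting along curves produces surfaces with extra boundary components, whose congruence mapping class groups must be compared with the standard $\MCG_{g'}^1(\ell)$), and so that the entire equivariant spectral sequence is $\SI(\Z/\ell)$-functorial at $E^1$. The last point is exactly what allows Noetherianity to convert a spectral-sequence presentation into a finite-generation statement; without it, the inductive machine cannot close.
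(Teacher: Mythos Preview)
Your skeleton is right --- tethered tori, equivariant spectral sequence, induction on $k$, Noetherianity --- and this is exactly the route the paper takes via its general machine (Theorem~\ref{theorem:finitegen}).  But the way you say the pieces fit together contains a genuine error, and a key lemma is missing.

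You write that ``one obtains a presentation of $\HH_k(\MCG_g^1(\ell);\bk)$ as a cokernel between entries $E^1_{p,q}$ with $q \le k-1$''.  This is not what happens.  In the spectral sequence
\[
E^1_{p,q}\;\cong\;(\OrderedShift_{p+1}\,\HHH_q(\MCG(\ell);\bk))_{V}\;\Longrightarrow\;\HH_{p+q}(\MCG_g^1(\ell);\bk),
\]
the term that surjects onto $\HH_k$ is $E^1_{0,k}=(\OrderedShift_1\,\HHH_k)_V$, which is built from $\HH_k$ of stabilizers --- i.e., from the very module you are trying to control, evaluated at smaller genus.  The terms with $q\le k-1$ do not present $\HH_k$; rather, the inductive hypothesis is used to make them \emph{disappear} at $E^2$.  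This is the step you are missing: one needs the partial resolution result (Theorem~\ref{theorem:resolution}) that for a finitely generated $\tA$-module $M$ the complex $(\OrderedShift_\ast M)_V$ is exact in a range once the rank of $V$ is large.  Applied to $M=\HHH_q(\MCG(\ell);\bk)$ for $q<k$ (finitely generated by induction), this gives $E^2_{p,q}=0$ for $q<k$ and $1\le p\le k+1$, which collapses the filtration on $\HH_k$ to the single piece $E^\infty_{0,k}$, a quotient of $E^1_{0,k}$.  Noetherianity enters here not merely to pass to submodules at the end, but already to prove Theorem~\ref{theorem:resolution}.

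Once you have the surjection $(\OrderedShift_1\,\HHH_k)_V\twoheadrightarrow \HHH_k(\MCG(\ell);\bk)_V$ for $V$ of large rank, you still need one more input you did not mention: that each $\bk$-module $\HH_k(\MCG_g^1(\ell);\bk)$ is finitely generated (this comes from the quasi-projectivity of the moduli space of curves).  Only then does Lemma~\ref{lemma:shiftfinitegen} convert the surjectivity of the shift map into finite generation of the $\SI(\Z/\ell)$-module.  Your worries about stabilizers and functoriality are handled in the paper by passing through the weak complemented category $\Surf$ and the highly surjective functor $\Psi\colon \Surf\to\SI(\Z/\ell)$; the complement in $\Surf$ is a genus-$(g{-}p{-}1)$ surface with one boundary component, so no boundary-component bookkeeping is required.
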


\noindent
Applying Theorem
\ref{maintheorem:asymptoticstructure},
we get an asymptotic structure theorem for $\HH_k(\MCG_g^1(\ell);\bk)$.  This partially verifies
a conjecture of Church--Farb \cite[Conjecture 8.5]{ChurchFarbRepStability}.

\begin{remark}
The kernel of $\MCG^1_g \to \Sp_{2g}(\Z)$ is the Torelli group, denoted ${\mathcal I}^1_g$, and its homology forms a module over $\SI(\Z)$.  Just like for ${\rm IA}_n$, we would
like to understand this module but cannot do so with our techniques since
$\Z$ is an infinite ring.
\end{remark}

\subsection{Twisted homological stability}
\label{section:introtwisted}

We now discuss twisted homological stability.  We wish
to thank Aur\'elien Djament for his help with
this section.

\paragraph{Motivation: the symmetric group.}
A classical theorem of Nakaoka \cite{NakaokaStability} says that
$\Sym_n$ satisfies classical homological stability.  Church 
\cite{ChurchHomologicalAlgebra}
proved a very general version of this with twisted coefficients.  Namely, he
proved that if $M$ is a finitely generated $\FI$-module and $M_n = M_{[n]}$, then
for all $k \geq 0$ the map $\HH_k(\Sym_n;M_n) \rightarrow \HH_k(\Sym_{n+1};M_{n+1})$
is an isomorphism for $n \gg 0$ (in fact, he obtained linear bounds for when
stability occurs).  We remark that this applies to much more general
coefficient systems than earlier work of Betley \cite{BetleyStable}.
The key to Church's theorem is the fact that the category of $\FI$-modules is
locally Noetherian.  We will abstract Church's argument to other categories (though
our results will be weaker in that we will not give bounds for when stability
occurs); the possibility of doing so was one of Church's motivations
for conjecturing Theorem \ref{maintheorem:vicnoetherian}.

\paragraph{General linear group.}
In \cite{VanDerKallenStability}, van der Kallen proved that for
rings $R$ satisfying a mild condition (for instance, $R$ can be a finite
ring), $\GL_n(R)$ satisfies homological stability.
The paper \cite{VanDerKallenStability} also shows that
this holds for certain twisted coefficient systems (those satisfying a
``polynomial'' condition introduced by Dwyer \cite{DwyerStability}; see
\cite{WahlStability} for a general framework for arguments like Dwyer's using
the language of functor categories).  The arguments
in \cite{VanDerKallenStability} also work for $\SL_n^{\Unit}(R)$ for
subgroups ${\Unit} \subset R$ of the group of units.
We will prove the following generalization of van der Kallen's result (at least
for finite commutative rings).  
If $M$ is a $\VIC(R,\Unit)$-module, then denote by $M_n$ the value
$M_{R^n}$.

\begin{maintheorem}
\label{maintheorem:gltwisted}
Let $R$ be a finite ring, let $\Unit \subset R$ be a subgroup
of the group of units, and let $M$ be a finitely generated $\VIC(R,\Unit)$-module
over a Noetherian ring.
Then for all $k \geq 0$, the map
$\HH_k(\SL_n^{\Unit}(R);M_n) \rightarrow \HH_k(\SL_{n+1}^{\Unit}(R);M_{n+1})$
is an isomorphism for $n \gg 0$.
\end{maintheorem}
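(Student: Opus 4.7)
The plan is to follow the strategy used by Church \cite{ChurchHomologicalAlgebra} in the $\FI$-setting, with two main inputs: Theorem \ref{maintheorem:vicnoetherian} (Noetherianity of $\VIC(R,\Unit)$-modules) and van der Kallen's classical untwisted homological stability for the groups $\SL_n^{\Unit}(R)$ \cite{VanDerKallenStability}.

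First I would handle the representable case. For $W \in \VIC(R,\Unit)$ let $P(W)$ be the $\VIC(R,\Unit)$-module with $P(W)_V = \bk[\Hom_{\VIC(R,\Unit)}(W,V)]$. For $n \geq \rank(W)$ the group $\SL_n^{\Unit}(R)$ acts on the finite set $\Hom_{\VIC(R,\Unit)}(W,R^n)$ with finitely many orbits, and the point-stabilizer of any morphism $(f,C)\colon W \to R^n$ is the subgroup fixing $f(W)$ pointwise and preserving the recorded complement, naturally identifiable with $\SL_{n-\rank(W)}^{\Unit}(R)$. Shapiro's lemma then identifies $\HH_k(\SL_n^{\Unit}(R); P(W)_n)$ with a finite sum of copies of $\HH_k(\SL_{n-\rank(W)}^{\Unit}(R); \bk)$, and stability in $n$ follows from van der Kallen. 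The same conclusion for any finite direct sum of representables is immediate by additivity, and compatibility with the stabilization map $\SL_n^{\Unit}(R) \hookrightarrow \SL_{n+1}^{\Unit}(R)$ is routine.

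For a general finitely generated $M$, I would iteratively resolve: choose a surjection $P_0 \twoheadrightarrow M$ with $P_0$ a finite direct sum of representables, apply Theorem \ref{maintheorem:vicnoetherian} to see the kernel is again finitely generated, and repeat to produce a resolution $\cdots \to P_1 \to P_0 \to M \to 0$ whose terms are all finite sums of representables. For any fixed $k$, only the first $k+1$ terms matter, and the hyperhomology spectral sequence
\[
E^2_{p,q} = \HH_p(\SL_n^{\Unit}(R); (P_q)_n) \;\Longrightarrow\; \HH_{p+q}(\SL_n^{\Unit}(R); M_n)
\]
is compatible, via the canonical maps $\SL_n^{\Unit}(R) \hookrightarrow \SL_{n+1}^{\Unit}(R)$ and $(P_q)_n \to (P_q)_{n+1}$, with its level-$(n+1)$ counterpart. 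By the representable case, each entry in the finite rectangle $p+q \leq k$ stabilizes once $n \gg 0$; the differentials and extension classes then also stabilize, giving the desired isomorphism on the abutment.

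The main obstacle will be the bookkeeping in this spectral-sequence step, in particular justifying that the $E^\infty$-page and the extension problems stabilize in tandem with the $E^2$-page. Because the theorem asks only for qualitative stability (``$n \gg 0$'', with no explicit bound), this step is benign in principle: finitely many $E^2$-entries each stabilize at some threshold, and the max of these thresholds suffices; finitely many differentials and extension classes then stabilize at possibly larger thresholds, and one again takes the max. A secondary subtlety, requiring care in Step 1, is to verify the orbit/stabilizer analysis of $\Hom_{\VIC(R,\Unit)}(W,R^n)$ under $\SL_n^{\Unit}(R)$; here it is essential that a $\VIC(R,\Unit)$-morphism carries the data of both an injection and a chosen complement, so that the pointwise stabilizer really is an embedded copy of $\SL_{n-\rank(W)}^{\Unit}(R)$ rather than a larger parabolic.
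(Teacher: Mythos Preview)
Your proposal is correct and follows essentially the same route as the paper: reduce to representables via Shapiro's lemma, use Noetherianity to build a resolution by finite sums of representables, and compare the resulting hyperhomology spectral sequences. Two small sharpenings the paper makes: the $\SL_n^{\Unit}(R)$-action on $\Hom_{\VIC(R,\Unit)}(W,R^n)$ is in fact \emph{transitive} (Lemma~\ref{lemma:movedecomp}), so there is a single orbit rather than finitely many; and your informal ``differentials and extension classes stabilize'' step is handled cleanly in one stroke by invoking Zeeman's spectral sequence comparison theorem \cite{ZeemanComparison}.
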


\begin{remark}
This is more general than van der Kallen's result: if $\bk$ is a field, then
his hypotheses on the coefficient systems $M_n$ imply that $\dim_{\bk}(M_n)$
grows like a polynomial in $n$, which need not hold for finitely generated
$\VIC(R,\Unit)$-modules.  For instance, one can define a finitely generated
$\VIC(R,\Unit)$-module whose dimensions grow exponentially via 
\[M_n = \bk[\Hom_{\VIC(R,\Unit)}(R^1,R^n)]. \qedhere\]
\end{remark}

\paragraph{The symplectic group.}
Building on work of Charney \cite{CharneyVogtmann},
Mirzaii--van der Kallen \cite{MirzaiiVanDerKallenStability} proved that for many
rings $R$ (including all finite rings), the groups $\Sp_{2n}(R)$ satisfy
homological stability.  In fact, though Mirzaii--van der Kallen do not
explicitly say this, Charney's paper shows that the results of
\cite{MirzaiiVanDerKallenStability} also apply to twisted coefficients
satisfying an appropriate analogue of Dwyer's polynomial condition.
We will prove the following generalization of
this.  If $M$ is an $\SI(R)$-module, then denote by $M_n$ the value
$M_{R^{2n}}$.

\begin{maintheorem}
\label{maintheorem:sptwisted}
Let $R$ be a finite ring and let $M$ be a finitely generated
$\SI(R)$-module over a Noetherian ring.  Then for all $k \geq 0$, the
map
$\HH_k(\Sp_{2n}(R);M_n) \rightarrow \HH_k(\Sp_{2n+2}(R);M_{n+1})$
is an isomorphism for $n \gg 0$.
\end{maintheorem}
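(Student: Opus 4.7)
The plan is to adapt Church's strategy for $\FI$-modules and $\Sym_n$ from \cite{ChurchHomologicalAlgebra} to the symplectic setting. Three ingredients will be combined: the Noetherianity of $\SI(R)$-modules from Theorem~\ref{maintheorem:sinoetherian}, the untwisted homological stability of $\Sp_{2n}(R)$ over finite rings due to Mirzaii--van der Kallen, and a Shapiro-lemma identification for representable $\SI(R)$-modules.

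First, for each symplectic $R$-module $V_0$ of rank $2g$, let $F^{V_0}$ be the representable $\SI(R)$-module $V \mapsto \bk[\Hom_{\SI(R)}(V_0, V)]$. For $n \geq g$, I would verify that $\Sp_{2n}(R)$ acts transitively on the basis $\Hom_{\SI(R)}(V_0, R^{2n})$ with stabilizer canonically isomorphic to the symplectic group of the orthogonal complement, namely $\Sp_{2(n-g)}(R)$. Transitivity is a Witt-type extension theorem; for a finite commutative ring one decomposes via the Chinese Remainder Theorem into finite local rings and then lifts from the residue field using the classical Witt theorem together with a Nakayama argument for the nilpotent kernel. Shapiro's lemma then yields a natural isomorphism
\[\HH_k(\Sp_{2n}(R); F^{V_0}_n) \cong \HH_k(\Sp_{2(n-g)}(R); \bk),\]
under which the stabilization map on the left corresponds to the ordinary untwisted stabilization map on the right, which is an isomorphism for $n \gg 0$. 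This handles $M = F^{V_0}$ and, by additivity, any finite direct sum of representables.

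Next, I would bootstrap to an arbitrary finitely generated $M$ via a resolution by representables. Finite generation produces a surjection $F_0 \twoheadrightarrow M$ with $F_0$ a finite direct sum of representables; by Theorem~\ref{maintheorem:sinoetherian} its kernel is again finitely generated, and iterating yields a resolution $F_\bullet \to M \to 0$ by finitely generated free $\SI(R)$-modules. To prove the theorem in homological degree $k$, I would truncate $F_\bullet$ at stage $k+1$ (replacing $F_{k+1}$ by the kernel of $F_k \to F_{k-1}$, still finitely generated by iterated Noetherianity). The associated first-quadrant hyperhomology spectral sequence
\[E^1_{p,q} = \HH_p\bigl(\Sp_{2n}(R); (F_q)_n\bigr) \Longrightarrow \HH_{p+q}(\Sp_{2n}(R); M_n)\]
has only finitely many nonzero columns, and by the previous paragraph every entry and every differential stabilizes in $n$. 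Comparing the spectral sequences for $n$ and $n+1$ using the five-lemma then produces the desired isomorphism $\HH_k(\Sp_{2n}(R); M_n) \cong \HH_k(\Sp_{2n+2}(R); M_{n+1})$ for $n \gg 0$.

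The step I expect to be the main obstacle is the Witt-type transitivity statement over a finite commutative ring. The reduction to the local case is routine, but lifting the extension of a partial symplectic isomorphism through the Jacobson radical requires some care, and one must verify that the stabilizer really is the symplectic group of the orthogonal complement (rather than something larger coming from the radical). A secondary technical point is the compatibility of the Shapiro identification with the stabilization maps on both sides, but this is essentially a diagram chase once the orbit–stabilizer picture is in place. Unlike Church's argument, no explicit stability range will emerge from this approach, matching the qualitative nature of the asymptotic structure theorem (Theorem~\ref{maintheorem:asymptoticstructure}) that underlies it.
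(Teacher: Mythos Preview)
Your strategy coincides with the paper's: both establish the general Theorem~\ref{theorem:twistedstability} by identifying $\HH_k(\Aut_{\tA}(X^n);P_{\tA,X^r})$ via Shapiro's lemma (Lemma~\ref{lemma:identifyinduced}), resolving $M$ by finite direct sums of representables using Noetherianity, and comparing the resulting hyperhomology spectral sequences. Two differences in execution are worth noting.

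The step you flag as the main obstacle---transitivity of $\Sp_{2n}(R)$ on $\Hom_{\SI(R)}(V_0,R^{2n})$ and identification of the stabilizer with $\Sp_{2(n-g)}(R)$---requires no Witt theorem, Chinese remainder reduction, or Nakayama lifting in the paper's setup. Once $\SI(R)$ is known to be a complemented category (Example~\ref{example:si}), transitivity is Lemma~\ref{lemma:movedecomp} and the stabilizer identification is Lemma~\ref{lemma:autcomplement}(b), both proved from the axioms alone. Your direct approach would work but is unnecessary labor.

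Your truncation step, however, has a genuine gap. Replacing $F_{k+1}$ by the kernel $K=\ker(F_k\to F_{k-1})$ places a \emph{non-representable} module in column $q=k+1$, so the entries $E^1_{p,k+1}=\HH_p(\Sp_{2n}(R);K_n)$ are not covered by ``the previous paragraph,'' and these entries (already $E^1_{0,k+1}$, via $E^2_{0,k}$, and $E^1_{1,k+1}$, via $d^2\colon E^2_{1,k}\to E^2_{2,k-2}$) do enter the computation of $E^\infty$ on the diagonal $p+q=k$. The paper avoids this by \emph{not} truncating: with the full resolution every $E^1_{p,q}$ is the homology of a representable and hence stabilizes individually, and Zeeman's comparison theorem \cite{ZeemanComparison} then gives the isomorphism on the abutment in each fixed degree. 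An alternative repair is to truncate one step later, keeping $F_0,\dots,F_{k+1}$ representable and simply dropping the kernel; comparing the two hyperhomology spectral sequences for the bounded complex $F_{k+1}\to\cdots\to F_0$ shows that its hyperhomology agrees with $\HH_j(\Sp_{2n}(R);M_n)$ for $j\le k$, and now every $E^1$ term really does stabilize.
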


\begin{remark}
Examples similar to the one we gave after Theorem \ref{maintheorem:gltwisted} show
that this is more general than Mirzaii--van der Kallen's result.
\end{remark}

\subsection{Comments and future directions}

We now discuss some related results and open questions.

\begin{remark}[Homological stability] \label{rmk:hom-stab}
The examples in Theorems~\ref{maintheorem:glfinite}, \ref{maintheorem:slfinite}, \ref{maintheorem:autfinite}, \ref{maintheorem:spfinite}, \ref{maintheorem:modfinite} can be interpreted when $\ell=1$ or when $\alpha$ is the unit ideal. In this case, we interpret the categories $\VIC(0) = \SI(0)$ as follows: there is one object for each nonnegative integer $n \ge 0$ and there is a unique morphism $n \to n'$ whenever $n' \ge n$. Then representation stability of the relevant homology groups is the usual homological stability statements: for example, for each $i \ge 0$, and $n \gg 0$ the map $\HH_i(\SL_n(\Z); \bk) \to \HH_i(\SL_{n+1}(\Z); \bk)$ is an isomorphism.
\end{remark}

\paragraph{Infinite rings.}
We have repeatedly emphasized that it is necessary for our rings $R$ to be finite.  Our final
theorem explains why this is necessary.

\begin{maintheorem}
\label{maintheorem:nonnoetherian}
Let $R$ be an infinite ring. For ${\tt C} \in \{\VI(R), \VIC(R), \SI(R)\}$, the category of ${\tt C}$-modules is not locally Noetherian.
\end{maintheorem}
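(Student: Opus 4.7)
The plan is to exhibit, for each category $\tC \in \{\VI(\Z), \VIC(\Z), \SI(\Z)\}$, a finitely generated $\tC$-module harboring a strictly ascending infinite chain of submodules. The origin of non-Noetherianity is that when $R = \Z$, the endomorphism group at the rank-$2$ object is $\GL_2(\Z)$ for $\VI$ and $\VIC$, and $\Sp_2(\Z) = \SL_2(\Z)$ for $\SI$, and each of these groups contains a copy of the free group $F_2$ of rank $2$ (a classical fact, provable by a Ping-Pong argument on generators of $\SL_2(\Z)$).

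I treat $\VI(\Z)$ first. Take the principal projective $P = \bk[\Hom_{\VI(\Z)}(\Z^2, -)]$, which is finitely generated by $[\mathrm{id}_{\Z^2}]$, vanishes on objects of $\Z$-rank less than $2$, and evaluates at $\Z^2$ to the left regular module $\bk[\GL_2(\Z)]$. The first step is to construct an infinite strictly ascending chain of left ideals $J_1 \subsetneq J_2 \subsetneq \cdots$ in $\bk[\GL_2(\Z)]$. Setting $x = a - 1$ and $y = b - 1$ for free generators $a, b$ of $F_2$, the subalgebra of $\bk[F_2]$ generated by $x$ and $y$ is the free associative $\bk$-algebra $\bk\langle x, y\rangle$, and the chain of left ideals $I_n = (x, xy, xy^2, \ldots, xy^n)$ in $\bk\langle x, y\rangle$ strictly ascends by a simple word argument: $xy^{n+1}$ has its unique letter $x$ in position $1$, so no factorization $w \cdot xy^i$ with $i \le n$ is possible (this would require an $x$ at position $n+2-i \ge 2$). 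The strictness survives passage to $\bk[F_2]$ via the augmentation filtration, whose associated graded algebra is $\bk\langle x, y\rangle$ by the Magnus identification; and it survives further passage to $\bk[\GL_2(\Z)]$ because $\bk[\GL_2(\Z)]$ is a free left $\bk[F_2]$-module via left coset representatives, so $\bk[\GL_2(\Z)] \cdot J_i = \bigoplus_c c \cdot J_i$ is strictly monotone in $J_i$. In the second step, let $\widetilde{J_i} \subset P$ be the $\VI(\Z)$-submodule generated by $J_i$. At level $\Z^2$, no contributions come from lower ranks, and the $\mathrm{End}_{\VI(\Z)}(\Z^2) = \GL_2(\Z)$-action on $P(\Z^2)$ is left multiplication; since $J_i$ is already a left ideal, $\widetilde{J_i}(\Z^2) = J_i$. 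Strictness at level $\Z^2$ thus forces strictness of the $\widetilde{J_i}$-chain inside the finitely generated $\VI(\Z)$-module $P$.

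The $\VIC(\Z)$ case follows by pulling back along the forgetful functor $\Psi \colon \VIC(\Z) \to \VI(\Z)$, which as noted in the paper preserves finite generation and trivially preserves strict inclusions of submodules. The $\SI(\Z)$ case is obtained by applying the same construction to the principal projective $\bk[\Hom_{\SI(\Z)}((\Z^2, \ialg), -)]$ at the standard symplectic module, using $\mathrm{End}_{\SI(\Z)}(\Z^2, \ialg) = \Sp_2(\Z) = \SL_2(\Z)$ in place of $\GL_2(\Z)$. The main obstacle is verifying that the strict ascending chain survives the two promotions: from $\bk\langle x, y\rangle$ to $\bk[F_2]$ (handled by the Magnus graded identification) and from $\bk[F_2]$ up to $\bk[\GL_2(\Z)]$ or $\bk[\SL_2(\Z)]$ (handled by the free module decomposition over $\bk[F_2]$).
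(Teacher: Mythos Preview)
Your argument is correct, but the paper takes a considerably shorter and more conceptual route. Rather than building an explicit ascending chain of left ideals in $\bk\langle x,y\rangle$ and promoting it through $\bk[F_2]$ to $\bk[\GL_2(\Z)]$ via the Magnus map and coset decompositions, the paper proves a clean general lemma: if a group $\Gamma$ contains any non-finitely-generated subgroup, then $\bk[\Gamma]$ is not left Noetherian, because for subgroups $H\subsetneq H'$ the kernels of $\bk[\Gamma]\to\bk[\Gamma/H]$ give $I_H\subsetneq I_{H'}$. Since $\Aut_{\tC}(\Z^2)$ contains $\SL_2(\Z)\supset F_2$, which has non-finitely-generated subgroups (e.g., its commutator subgroup), this immediately yields the ascending chain. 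On the module side, the paper avoids the full projective and instead uses the truncated $\tC$-module $M$ with $M_V=\bk[\Aut_{\tC}(V)]$ for $V\cong\Z^2$ and $M_V=0$ otherwise; submodules of $M$ are then visibly the same thing as left ideals of $\bk[\Aut_{\tC}(\Z^2)]$, so no separate check at level $\Z^2$ is needed. Your approach has the minor advantage of producing completely explicit ideals, but at the cost of invoking the Magnus embedding and the free-module decomposition of $\bk[G]$ over $\bk[H]$, whereas the paper's argument uses nothing beyond the definition of the group ring. One small comment: your step ``strictness survives passage to $\bk[F_2]$ via the augmentation filtration'' is a bit compressed; the cleanest justification is to apply the Magnus ring homomorphism $\bk[F_2]\to\bk\langle\langle X,Y\rangle\rangle$ directly and observe that $XY^{n+1}$ cannot lie in the left ideal generated by $XY^i$ for $i\le n$ by comparing monomials of degree $n+2$.
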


\noindent
This naturally leads to the question of how to develop representation stability results for categories such as $\VIC(R)$, etc.\ when $R$ is an infinite ring. 

\paragraph{Bounds.}
Our asymptotic structure theorem does not provide bounds for when it begins.  
This is an unavoidable
artifact of our proof, which make essential use of local Noetherianity.  The results of the
first author in \cite{PutmanRepStabilityCongruence} concerning central stability for
the homology groups of congruence subgroups (as representations of $\Sym_n$) are different and
do give explicit bounds.  It would be interesting to prove versions of our theorems
with explicit bounds.

\paragraph{Orthogonal and unitary categories.}
It is also possible to include orthogonal versions of our categories, i.e., finite rank free $R$-modules equipped with a nondegenerate orthogonal form. This is a richer category than the previous ones because the rank of an orthogonal module no longer determines its isomorphism type. While many of the ideas and constructions of this paper should be relevant, it will be necessary to generalize them further to include the example of categories of orthogonal modules. Similarly, one can consider unitary versions. We have omitted discussion of these variations for
reasons of space.

\paragraph{Twisted commutative algebras.}
The category of $\FI$-modules fits into the more general framework of modules over twisted commutative algebras (these are commutative algebras in the category of functors from the groupoid of finite sets) and this perspective is used in \cite{symc1, expos, infrank} to develop the basic properties of these categories. Similarly, the categories in this paper fit into a more general framework of modules over algebras which generalize twisted commutative algebras where the groupoid of finite sets is replaced by another groupoid (see Remark~\ref{rmk:groupoid-alg}). 
When $\bk$ is a field of characteristic $0$, twisted commutative algebras have alternative concrete descriptions in terms of $\GL_\infty(\bk)$-equivariant commutative algebras (in the usual sense) thanks to Schur--Weyl duality. For the other cases in our paper, no such alternative description seems to be available, so it would be interesting to understand these more exotic algebraic structures.

\subsection{Outline and strategy of proof}

We now give an outline of the paper and comment on our proofs.  Let $\tC$ be 
one of the categories we discussed above, i.e., $\VI(R)$, $\tV(R)$, $\wV(R)$, $\VIC(R,\Unit)$, or $\SI(R)$.
The first step is to establish that the category of $\tC$-modules is locally Noetherian, i.e., that the finite generation property is inherited by submodules.
This is done in \S \ref{section:noetherian}, where we prove Theorems
\ref{maintheorem:vinoetherian}, \ref{maintheorem:artinianconj}, \ref{maintheorem:wvnoetherian},
\ref{maintheorem:vicnoetherian}, \ref{maintheorem:sinoetherian}, and
\ref{maintheorem:nonnoetherian}.  
We then prove our asymptotic structure result (Theorem \ref{maintheorem:asymptoticstructure})
in \S \ref{section:abstractnonsense}.
The brief \S \ref{section:twistedstability} shows how to use our locally Noetherian results
to prove our twisted homological stability theorems (Theorems \ref{maintheorem:gltwisted}
and \ref{maintheorem:sptwisted}).
We then give, in \S \ref{section:themachine}, a framework (adapted from the standard proof of homological stability
due to Quillen) for proving that $\tC$-modules arising from congruence subgroups
are finitely generated. Finally, \S \ref{section:finitegen} applies the above framework to prove our results that assert that modules arising from the homology groups of various congruence subgroups are finitely generated, namely Theorems \ref{maintheorem:glfinite},
\ref{maintheorem:slfinite}, \ref{maintheorem:autfinite},
\ref{maintheorem:spfinite}, and \ref{maintheorem:modfinite}.

\paragraph{Acknowledgments.} 
We wish to thank Tom Church, Yves Cornulier, Aur\'elien Djament, Jordan Ellenberg, Nicholas Kuhn, Amritanshu Prasad, Andrew Snowden, and Nathalie Wahl for helpful conversations and correspondence. 
We also thank some anonymous referees for their careful reading of the paper and numerous suggestions and improvements.

\section{Noetherian results}
\label{section:noetherian}

The goal of this section is to prove Theorems~\ref{maintheorem:vinoetherian}, \ref{maintheorem:artinianconj}, \ref{maintheorem:wvnoetherian},
\ref{maintheorem:vicnoetherian}, \ref{maintheorem:sinoetherian}, and
\ref{maintheorem:nonnoetherian}, which assert that modules over the various categories discussed in the introduction
are locally Noetherian.  We begin in \S \ref{section:noetherianprelim} with some preliminary results.  Next,
in \S \ref{section:noetherianovc} we introduce the category $\OVIC(R)$ and in \S \ref{section:noetherianovcnoetherian}
we prove that the category of $\OVIC(R)$-modules
is locally Noetherian.  We will use this in \S \ref{section:noetherianvect} to prove Theorems~\ref{maintheorem:vinoetherian}, \ref{maintheorem:artinianconj}, \ref{maintheorem:wvnoetherian},
and \ref{maintheorem:vicnoetherian}.  These cover all of our categories except $\SI(R)$, which
we deal with in \S \ref{section:noetheriansi}.
We close in \S \ref{section:noetheriannegative}
by proving Theorem \ref{maintheorem:nonnoetherian}, which asserts that our categories are {\bf not} locally Noetherian when $R$ is an infinite ring.

Since it presents no additional difficulties, in this section we allow $\bk$ to be an arbitrary ring (unital, but not necessarily commutative). 
By a $\bk$-module, we mean a left $\bk$-module, and Noetherian means left Noetherian. However, we emphasize that $R$ will always be a (unital) commutative ring.
Also, in this section we will denote the effects of functors by parentheses rather than subscripts, so for instance if $M\colon \tC \to \tD$ is a functor and $x \in \tC$, then
we will write $M(x) \in \tD$ for the image of $x$ under $M$.

\subsection{Preliminary results}
\label{section:noetherianprelim}

We first discuss some preliminary results.  Let $\bk$ be a fixed Noetherian ring.

\paragraph{Functor categories.}
Let $\tC$ be a category.  A $\tC$-module (over $\bk$) is a functor $\tC \to \Mod_\bk$ where 
$\Mod_\bk$ is the category of left $\bk$-modules.  A morphism of $\tC$-modules is a natural transformation. 
For $x \in \tC$, let $P_{\tC,x}$ denote the representable $\tC$-module generated at $x$, i.e., $P_{\tC,x}(y) = \bk[\Hom_\tC(x,y)]$. 
If $M$ is a $\tC$-module, then a morphism $P_{\tC,x} \to M$ is equivalent to a choice of element in $M(x)$. The category of $\tC$-modules is an Abelian category, with kernels, cokernels, exact sequences, etc.\ calculated
pointwise.  A $\tC$-module is {\bf finitely generated} if it is a quotient of a finite direct sum of $P_{\tC,x}$ (allowing different choices of $x$).  This agrees with the definition given in the introduction.

\begin{lemma}
\label{lemma:noetheriancrit}
Let $\tC$ be a category.  The category of $\tC$-modules is locally Noetherian if and only if for all $x \in \tC$, every
submodule of $P_{\tC,x}$ is finitely generated.
\end{lemma}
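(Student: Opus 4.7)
The plan is to prove the two directions separately, with the forward direction being essentially immediate and the backward direction proceeding by a standard reduction to submodules of finitely generated free modules followed by induction on the number of summands.

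For the forward direction, observe that each representable $P_{\tC,x}$ is itself a finitely generated $\tC$-module: it is generated by the element $\mathrm{id}_x \in P_{\tC,x}(x) = \bk[\Hom_{\tC}(x,x)]$, since the universal property of $P_{\tC,x}$ says that a morphism $P_{\tC,x} \to M$ is determined by the image of $\mathrm{id}_x$. Hence if the category of $\tC$-modules is Noetherian, every submodule of $P_{\tC,x}$ is finitely generated, giving one direction of the lemma.

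For the backward direction, assume every submodule of every $P_{\tC,x}$ is finitely generated, and let $M$ be a finitely generated $\tC$-module with a submodule $N \subseteq M$. By definition of finite generation there is a surjection $\pi \colon F \to M$ with $F = \bigoplus_{i=1}^n P_{\tC,x_i}$ for some $x_1,\ldots,x_n \in \tC$. Since $\pi(\pi^{-1}(N)) = N$, it suffices to show that $\pi^{-1}(N) \subseteq F$ is finitely generated. This reduces the problem to showing that every submodule of a finite direct sum $\bigoplus_{i=1}^n P_{\tC,x_i}$ of representables is finitely generated.

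I would then induct on $n$. The case $n=1$ is exactly the hypothesis. For the inductive step, let $N' \subseteq P_{\tC,x_1} \oplus \cdots \oplus P_{\tC,x_n}$ be a submodule and consider the short exact sequence
\[
0 \longrightarrow N' \cap \bigl(P_{\tC,x_1} \oplus \cdots \oplus P_{\tC,x_{n-1}}\bigr) \longrightarrow N' \longrightarrow \mathrm{pr}_n(N') \longrightarrow 0,
\]
where $\mathrm{pr}_n$ is projection onto the last summand. The left-hand term is finitely generated by the inductive hypothesis (applied to $n-1$ summands), and the right-hand term is a submodule of $P_{\tC,x_n}$, hence finitely generated by hypothesis. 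As finite generation is closed under extensions in any Abelian category of modules, $N'$ is finitely generated, closing the induction. There is no serious obstacle here; the only bookkeeping point is the easy fact that finite generation passes through short exact sequences, which is inherited from the pointwise $\bk$-module structure.
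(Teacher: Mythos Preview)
Your proof is correct and follows essentially the same approach as the paper, which dispatches the lemma in one line by noting that Noetherianity is preserved under quotients and finite direct sums. You have simply unpacked that standard fact via induction on the number of summands.
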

\begin{proof}
Immediate since Noetherianity is preserved by quotients and finite direct sums.
\end{proof}

\paragraph{Functors between functor categories.}
Let $\Phi \colon \tC \to \tD$ a functor.  Given a $\tD$-module $M \colon \tD \to \Mod_\bk$, we get a $\tC$-module 
$\Phi^{\ast}(M) = M \circ \Phi$.  Note that $\Phi^{\ast}$ is an exact functor from $\tD$-modules to $\tC$-modules.  We 
say that $\Phi$ is {\bf finite} if $\Phi^{\ast}(P_{\tD,y})$ is finitely generated for all $y \in \tD$.
Recall that $\Phi$ is 
{\bf essentially surjective} if every object of $\tD$ is isomorphic to an object of the form $\Phi(x)$ for $x \in \tC$.

\begin{lemma} 
\label{lem:finite-functor}
Let $\Phi \colon \tC \to \tD$ be an essentially surjective and finite functor.  Assume that the category of $\tC$-modules
is locally Noetherian.  Then the category of $\tD$-modules is locally Noetherian.
\end{lemma}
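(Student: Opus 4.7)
The plan is to apply the criterion of Lemma~\ref{lemma:noetheriancrit}: it suffices to show that for every $y \in \tD$, every submodule $N \subseteq P_{\tD,y}$ is finitely generated. Given such an $N$, I would first form the pullback $\Phi^{\ast}(N) \subseteq \Phi^{\ast}(P_{\tD,y})$, using that $\Phi^{\ast}$ is an exact functor from $\tD$-modules to $\tC$-modules (since kernels and cokernels in both categories are computed pointwise). The hypothesis that $\Phi$ is finite says precisely that $\Phi^{\ast}(P_{\tD,y})$ is finitely generated as a $\tC$-module, and the Noetherian hypothesis on $\tC$-modules then forces $\Phi^{\ast}(N)$ to be finitely generated as well. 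Fix a finite generating set $\xi_1 \in N(\Phi(x_1)), \ldots, \xi_r \in N(\Phi(x_r))$ for $\Phi^{\ast}(N)$.

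The main work is to show that the same $\xi_i$, viewed as elements of $N$ at the objects $\Phi(x_i) \in \tD$, generate $N$ as a $\tD$-module. This is where essential surjectivity of $\Phi$ enters: given any $y' \in \tD$ and any element $\eta \in N(y')$, I would choose an isomorphism $\alpha \colon \Phi(x) \xrightarrow{\cong} y'$ with $x \in \tC$ and transport $\eta$ through $N(\alpha)^{-1}$ to obtain $\eta' \in N(\Phi(x))$. By the generation of $\Phi^{\ast}(N)$, there are $\tC$-morphisms $f_i \colon x_i \to x$ and scalars $a_i \in \bk$ such that $\eta' = \sum_i a_i \, N(\Phi(f_i))(\xi_i)$. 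Applying $N(\alpha)$ to both sides gives $\eta = \sum_i a_i \, N(\alpha \circ \Phi(f_i))(\xi_i)$, which exhibits $\eta$ as a $\bk$-linear combination of images of the $\xi_i$ under $\tD$-morphisms with source $\Phi(x_i)$. Hence the $\xi_i$ generate $N$ as a $\tD$-module, and Lemma~\ref{lemma:noetheriancrit} completes the proof.

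The one subtle point worth flagging is that the $\tD$-morphism $\alpha \circ \Phi(f_i) \colon \Phi(x_i) \to y'$ need not itself lie in the image of $\Phi$, so generation as a $\tD$-module really is strictly stronger than generation as a $\tC$-module after pullback; the argument works precisely because the $\tD$-submodule generated by a set is by definition closed under \emph{all} $\tD$-morphisms, not just those coming from $\tC$. I do not anticipate any serious obstacle: the finiteness and essential surjectivity hypotheses combine cleanly with the exactness of $\Phi^{\ast}$, and no feature of the particular categories $\VI$, $\VIC$, or $\SI$ is needed for this abstract lemma.
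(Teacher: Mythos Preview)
Your proof is correct and uses the same ingredients as the paper's: Lemma~\ref{lemma:noetheriancrit}, exactness of $\Phi^{\ast}$, finiteness of $\Phi$ to get $\Phi^{\ast}(P_{\tD,y})$ finitely generated, the Noetherian hypothesis on $\tC$-modules, and essential surjectivity to pass back to $\tD$. The only difference is cosmetic: the paper argues via the ascending chain condition (an increasing chain $M_1 \subseteq M_2 \subseteq \cdots$ in $P_{\tD,y}$ pulls back to one that stabilizes, and essential surjectivity means $\Phi^{\ast}$ reflects strict inclusions), whereas you argue directly that a finite generating set for $\Phi^{\ast}(N)$ also generates $N$. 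Your version is slightly more constructive; the paper's avoids having to name generators. Both are standard and equally short.
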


\begin{proof}
Pick $y \in \tD$ and let $M_1 \subseteq M_2 \subseteq \cdots$ be a chain of submodules of $P_{\tD,y}$.  By Lemma
\ref{lemma:noetheriancrit}, it is enough to show that the $M_n$ stabilize.  Since $\Phi^{\ast}$ is exact, we get a 
chain of submodules $\Phi^{\ast}(M_1) \subseteq \Phi^{\ast}(M_2) \subseteq \cdots$ of $\Phi^{\ast}(P_{\tD,y})$.  
Since $\Phi$ is finite, $\Phi^{\ast}(P_{\tD,y})$ is finitely generated, so since $\tC$ is locally Noetherian there is some $N$ 
such that $\Phi^{\ast}(M_n) = \Phi^{\ast}(M_{n+1})$ for $n \ge N$.  Since $\Phi$ is essentially surjective, it preserves strict inclusions of submodules: if $M_n \subsetneqq M_{n+1}$, then there is some $z \in \tD$ such 
that $M_n(z) \subsetneqq M_{n+1}(z)$ and we can find $x \in \tC$ such that $\Phi(x) \cong z$, which means 
that $\Phi^{\ast}(M_n)(x) \subsetneqq \Phi^{\ast}(M_{n+1})(x)$.  Thus $M_n = M_{n+1}$ for $n \geq N$,
as desired.
\end{proof}

\begin{remark}
In all of our applications of Lemma~\ref{lem:finite-functor} in this paper, essential surjectivity will be obvious, as the objects of our categories are indexed by natural numbers.
\end{remark}

\paragraph{Well partial orderings.}
A poset $\cP$ is {\bf well partially ordered} if for any sequence $x_1, x_2, \ldots$ in $\cP$, 
there exists $i < j$ such that $x_i \le x_j$.  See \cite{KruskalSurvey} for a survey. The direct product $\cP_1 \times \cP_2$ of posets is in a natural way a poset with $(x,y) \le (x',y')$ if and only if $x \le x'$ and $y \le y'$.  The following lemma is well-known, see for example \cite[\S 2]{tca-gb} for a proof.

\begin{lemma}\phantomsection \label{lem:poset-noeth}
\begin{compactenum}[\indent \rm (a)]
\item A subposet of a well partially ordered poset is well partially ordered.
\item If $\cP$ is well partially ordered and $x_1, x_2, \dots$ is a sequence, then there is an infinite increasing sequence $i_1 < i_2 < \cdots$ such that $x_{i_1} \le x_{i_2} \le \cdots$.
\item A finite direct product of well partially ordered posets is well partially ordered.
\end{compactenum}
\end{lemma}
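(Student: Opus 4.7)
The plan is to prove the three parts in order, using (b) as the main technical engine and deducing (c) from it by a subsequence argument, with (a) being essentially immediate.

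For part (a), I would simply observe that any sequence in a subposet $\cQ \subseteq \cP$ is also a sequence in $\cP$, and the defining property of well partial ordering (existence of $i < j$ with $x_i \le x_j$) depends only on the comparison relations inherited from $\cP$. There is nothing further to do.

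Part (b) is the content of the lemma and will require a small argument. The approach is to isolate the ``obstruction'' indices. Given a sequence $x_1, x_2, \ldots$ in $\cP$, call an index $i$ \emph{bad} if there is no $j > i$ with $x_i \le x_j$. If infinitely many indices $i_1 < i_2 < \cdots$ were bad, then the subsequence $x_{i_1}, x_{i_2}, \ldots$ would have no pair $k < \ell$ with $x_{i_k} \le x_{i_\ell}$, contradicting the assumption that $\cP$ is well partially ordered. Hence only finitely many indices are bad, and past the largest bad index every index has a successor dominating it. Starting from any such index we can greedily build $i_1 < i_2 < \cdots$ with $x_{i_1} \le x_{i_2} \le \cdots$. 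The one subtlety to be careful about is that we need an infinite (strictly) increasing sequence of indices whose values form a \emph{weakly} increasing chain, which the greedy construction supplies.

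For part (c), by induction it suffices to treat the case of two factors $\cP_1 \times \cP_2$. Given a sequence $(x_i, y_i)$, apply part (b) to the first coordinates to obtain an infinite subsequence of indices $i_1 < i_2 < \cdots$ with $x_{i_1} \le x_{i_2} \le \cdots$ in $\cP_1$. Now apply the well-partial-ordering property of $\cP_2$ to the sequence $y_{i_1}, y_{i_2}, \ldots$ to produce $k < \ell$ with $y_{i_k} \le y_{i_\ell}$. Then $(x_{i_k}, y_{i_k}) \le (x_{i_\ell}, y_{i_\ell})$ in the product order, as required.

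The only step that could be called ``hard'' is part (b), but it is standard folklore and the bad-index argument above dispatches it cleanly; I would expect the proof to occupy only a short paragraph, with the main role of the lemma in the paper being to enable the later Gröbner-style arguments referred to in the outline.
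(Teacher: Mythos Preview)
Your proof is correct and follows the standard route for these facts. The paper itself does not prove this lemma at all: it simply states that the result is well-known and cites \cite[\S 2]{tca-gb} for a proof. So there is no ``paper's own proof'' to compare against beyond noting that your argument is exactly the kind of short, self-contained treatment the authors chose to outsource.
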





\paragraph{Posets of words.}
Let $\Sigma$ be a finite set.  Let $\Sigma^\star$ be the set of words 
$s_1 \cdots s_n$ whose letters come from $\Sigma$.  
Define a poset structure on $\Sigma^{\star}$ by saying that $s_1 \cdots s_n \le s_1' \cdots s_m'$ if there is an increasing function $f \colon [n] \to [m]$ such that $s_i = s'_{f(i)}$ for all $i$. The following is a special case of Higman's lemma.

\begin{lemma}[Higman, \cite{HigmanLemma}] \label{lem:higman}
$\Sigma^\star$ is a well partially ordered poset.
\end{lemma}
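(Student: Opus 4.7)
The plan is to use the Nash--Williams minimal bad sequence argument. Call a sequence $w_1,w_2,\ldots$ in $\Sigma^\star$ \emph{bad} if $w_i \not\le w_j$ for all $i<j$; the claim is equivalent to there being no bad sequence. Assume for contradiction that a bad sequence exists. I would then construct a particular bad sequence $w_1,w_2,\ldots$ by choosing, inductively, $w_n \in \Sigma^\star$ of minimal length subject to the condition that $w_1,\ldots,w_n$ extends to some bad sequence. This is well-defined because at each stage the set of possible lengths is a nonempty subset of $\N$.

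Since the empty word is $\le$ every word, each $w_n$ is nonempty, so I can write $w_n = s_n v_n$ with $s_n \in \Sigma$ and $v_n \in \Sigma^\star$. Because $\Sigma$ is finite, the pigeonhole principle produces an increasing sequence $i_1 < i_2 < \cdots$ with $s_{i_1} = s_{i_2} = \cdots = s$ for some single letter $s \in \Sigma$. Now consider the hybrid sequence
\[
w_1,\; w_2,\; \ldots,\; w_{i_1-1},\; v_{i_1},\; v_{i_2},\; v_{i_3},\; \ldots
\]
The word $v_{i_1}$ is strictly shorter than $w_{i_1}$, so by the minimal choice of $w_{i_1}$ this hybrid sequence cannot be bad; there must exist indices $p<q$ in this sequence with the $p$-th entry $\le$ the $q$-th entry.

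The main work is then to rule out all three cases for the location of $(p,q)$ and derive a contradiction with badness of the original $(w_n)$. If both $p,q < i_1$, the inequality $w_p \le w_q$ already contradicts badness. If both entries lie in the tail, i.e.\ $v_{i_k} \le v_{i_l}$ with $k<l$, then prepending the common letter $s$ gives $w_{i_k} = s v_{i_k} \le s v_{i_l} = w_{i_l}$ by extending the embedding function $f$ to send $1 \mapsto 1$, again contradicting badness. If $p < i_1$ and the $q$-th entry is some $v_{i_k}$, then composing with the trivial embedding $v_{i_k} \le s v_{i_k} = w_{i_k}$ yields $w_p \le w_{i_k}$ with $p < i_1 \le i_k$, the final contradiction.

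The only nontrivial point is justifying that the minimal bad sequence exists; this is the standard use of dependent choice, and I would state it briefly rather than belabor it. Everything else is a clean case analysis on the structure of the embedding order, and no induction on $|\Sigma|$ or on word length is needed beyond what is built into the minimal-sequence construction. I expect the case analysis in the last paragraph to be the most delicate step to write cleanly, since it must carefully track indices in the hybrid sequence versus indices in the original sequence.
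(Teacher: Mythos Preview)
Your Nash--Williams minimal bad sequence argument is correct and is the standard modern proof of Higman's lemma. Note, however, that the paper does not actually prove this statement: it is stated as a lemma attributed to Higman with a citation to \cite{HigmanLemma} and no proof is given. So there is nothing in the paper to compare your argument against; you have supplied a full proof where the paper simply invokes the literature.
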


\noindent
We will also need a variant of Higman's lemma.  Define a partially
ordered set $\widetilde{\Sigma}^\star$ whose objects are the same
as $\Sigma^\star$ by saying that
$s_1 \cdots s_n \le s'_1 \cdots s'_m$ if there is an increasing function 
$f \colon [n] \to [m]$ such that $s'_{f(i)}=s_i$ for $i =1,\dots,n$ and for 
each $j=1,\dots,m$, there exists $i$ such that $f(i) \le j$ and $s'_j = s'_{f(i)}$.

\begin{lemma} 
\label{lem:word-poset}
$\widetilde{\Sigma}^\star$ is a well partially ordered poset.
\end{lemma}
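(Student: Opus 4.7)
The plan is to reduce to Higman's lemma (Lemma~\ref{lem:higman}) by exhibiting $\widetilde{\Sigma}^\star$ as a disjoint union of finitely many subposets, each of which is a finite product of Higman-ordered word posets. For a word $w = s_1 \cdots s_n \in \Sigma^\star$, define its \emph{signature} $\sigma(w) = (s_{p_1}, \ldots, s_{p_k})$ to be the tuple of distinct letters at the positions $p_1 < \cdots < p_k$ where a new letter appears for the first time in $w$. Since $\Sigma$ is finite, only finitely many signatures arise.

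The crux of the argument is the following structural claim: if $w \le w'$ in $\widetilde{\Sigma}^\star$ via an increasing map $f$, then $\sigma(w) = \sigma(w')$, and $f$ sends the first-occurrence positions $p_1 < \cdots < p_k$ of $w$ in order to the first-occurrence positions $q_1 < \cdots < q_k$ of $w'$. First, condition 2 forces each letter of $w'$ to appear in $w$, while every letter of $w$ obviously appears in $w'$, so the two underlying letter sets agree. I would then induct on $j$: at $j = 1$, applying condition 2 at position $1$ of $w'$ forces $f(1) = 1$, matching initial letters. Given $f(p_i) = q_i$ for $i < j$, applying condition 2 at position $q_j$ of $w'$ yields some $f(i^\star) = q_j$ (the case $f(i^\star) < q_j$ is ruled out because $s'_{q_j}$ is a newcomer in $w'$). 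A short case analysis excludes $i^\star \ne p_j$: the possibilities $i^\star < p_j$ and $i^\star > p_j$ each contradict the fact that positions of $w$ strictly preceding $p_j$ can only carry letters from $\{s_{p_1}, \ldots, s_{p_{j-1}}\}$, by inductive hypothesis equal to $\{s'_{q_1}, \ldots, s'_{q_{j-1}}\}$.

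Once the structural claim is established, fix a signature $(s_1, \ldots, s_k)$ and decompose $w = s_1 u_1 s_2 u_2 \cdots s_k u_k$ and $w' = s_1 u'_1 s_2 u'_2 \cdots s_k u'_k$, with gap words $u_i, u'_i \in \{s_1, \ldots, s_i\}^\star$. Then $w \le w'$ in $\widetilde{\Sigma}^\star$ is equivalent to $u_i$ being a subword of $u'_i$ in the classical Higman sense for each $i$. The forward direction is the restriction of $f$ to each gap (using that $f$ respects the block decomposition); for the reverse direction, concatenate subword embeddings $g_i \colon [|u_i|] \hookrightarrow [|u'_i|]$ into a single increasing $f$ that also sends $p_i \mapsto q_i$, and verify condition 2 at any position $\ell$ in the $i$-th gap of $w'$ by noting that $w'_\ell \in \{s_1, \ldots, s_i\}$ and its first-occurrence position in $w'$ (one of $q_1, \ldots, q_i$) lies in the image of $f$ at a position $\le \ell$.

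Thus $\widetilde{\Sigma}^\star$ is a finite disjoint union, indexed by signatures, of subposets order-isomorphic to $\prod_{i=1}^k \{s_1, \ldots, s_i\}^\star$ under componentwise Higman ordering, with elements of distinct signatures incomparable. Each factor is well partially ordered by Lemma~\ref{lem:higman}, each finite product is well partially ordered by Lemma~\ref{lem:poset-noeth}(c), and a finite disjoint union of well partially ordered posets is well partially ordered by pigeonholing any sequence into a single component and applying Lemma~\ref{lem:poset-noeth}(b). The main obstacle is the structural claim; once in hand, the reduction to Higman's lemma is routine.
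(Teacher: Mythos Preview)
Your argument is correct. The paper itself does not prove this lemma in-text; it simply cites \cite[Prop.~8.2.1]{tca-gb} and \cite[Proof of Prop.~7.5]{draismakuttler}. Your reduction --- stratifying $\widetilde{\Sigma}^\star$ by the signature (ordered tuple of first-appearing letters), showing that comparable words share a signature with $f$ forced to match first-occurrence positions, and then identifying each stratum with a product of Higman-ordered word posets --- is exactly the kind of argument those references carry out, so you have supplied a self-contained proof where the paper only gives pointers.

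One small point of phrasing: in the case $i^\star > p_j$ of your inductive step, the contradiction does not come directly from ``positions of $w$ strictly preceding $p_j$'' as you wrote, but rather from looking at $f(p_j)$: since $p_j < i^\star$ and $f$ is increasing, $f(p_j) < f(i^\star) = q_j$, so $s_{p_j} = s'_{f(p_j)}$ lies among the first $j-1$ distinct letters of $w'$, which by induction equals $\{s_{p_1},\ldots,s_{p_{j-1}}\}$, contradicting that $s_{p_j}$ is new in $w$. With that tweak the case analysis is airtight.
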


\noindent
See \cite[Prop. 8.2.1]{tca-gb}; a different proof of this is in \cite[Proof of Prop. 7.5]{draismakuttler}.

\subsection{The category \texorpdfstring{$\bOVIC$}{OVIC}}
\label{section:noetherianovc}

Fix a finite commutative ring $R$.  

\paragraph{Motivation.}
Let $\Unit \subset R^\times$ be a subgroup of the group of multiplicative units.
For the purpose of proving that the category of $\VIC(R,\Unit)$-modules is locally Noetherian,
we will use the ``cheap'' definition of $\VIC(R,\Unit)$ from the introduction and
use the alternate description of its morphisms from Remark \ref{remark:vicalternate}.
Thus $\VIC(R,\Unit)$ is the category whose objects are $\Set{$R^n$}{$n \geq 0$}$ and whose morphisms from $R^n$ to $R^{n'}$
are pairs $(f,f')$, where $f\colon R^n \rightarrow R^{n'}$ and
$f'\colon R^{n'} \rightarrow R^n$ satisfy $f' f = 1_{R^n}$ and $\det(f) \in \Unit$ if $n = n'$.

We will introduce yet another
category $\OVIC(R)$ 
that satisfies $\OVIC(R) \subset \VIC(R,\Unit)$ (the ``O'' stands for ``Ordered''; see Lemma \ref{lemma:ovickey} below).
We emphasize that $\OVIC(R)$ does {\bf not} depend on $\Unit$.  The objects of $\OVIC(R)$ are $\Set{$R^n$}{$n \geq 0$}$,
and the $\OVIC(R)$-morphisms from $R^n$ to $R^{n'}$ are $\VIC(R,\Unit)$-morphisms
$(f,f')$ such that $f'$ is what we will call a {\it column-adapted map} (see below for the definition).  
The key property of $\OVIC(R)$ is that every $\VIC(R)$-morphism 
$(f,f')\colon R^n \rightarrow R^{n'}$ (notice that we are using $\VIC(R)$ and not $\VIC(R,\Unit)$)
can be uniquely factored as
$(f,f') = (f_1,f_1')(f_2,f_2')$, where $(f_1,f_1')\colon R^n \rightarrow R^{n'}$ is
an $\OVIC(R)$-morphism and $(f_2,f_2')\colon R^{n'} \rightarrow R^{n'}$ is an isomorphism;
see Lemma \ref{lemma:ovicfactor} below.  This implies in particular that
the category $\OVIC(R)$ has no non-identity automorphisms.  We will prove
that the category of $\OVIC(R)$-modules is locally Noetherian and that the inclusion
functor $\OVIC(R) \hookrightarrow \VIC(R,\Unit)$ is finite.
The key point here is the above factorization and the fact that $\Aut_{\VIC(R)}(R^n)$ is finite.  
By Lemma \ref{lem:finite-functor},
this will imply that the category of $\VIC(R,\Unit)$-modules
is locally Noetherian.

\paragraph{Column-adapted maps, local case.}
In this paragraph, assume that $R$ is a commutative 
local ring.  Recall that the set of non-invertible elements of $R$
forms the unique maximal ideal of $R$.
An $R$-linear map $f\colon R^{n'} \rightarrow R^n$ is {\bf column-adapted} if there is a subset
$S = \{s_1 < \cdots < s_n\} \subseteq \{1, \dots, n'\}$ with the following two properties.  Regard $f$ as an $n \times n'$ matrix and
let $\bb_1,\ldots,\bb_n$ be the standard basis for $R^n$.
\begin{compactitem}
\item For $1 \leq i \leq n$, the $s_i^{\text{th}}$ column of $f$ is $\bb_i$.
\item For $1 \leq i \leq n$ and $1 \leq j < s_i$, the entry in position $(i,j)$ of $f$ is non-invertible.
\end{compactitem}
The set $S$ is unique and will be written $S_c(f)$. 

\begin{example}
An example of a column-adapted map $f\colon R^6 \rightarrow R^3$ is the linear map represented by the matrix
\[\begin{pmatrix}
\star & 1 & 0 & * & 0 & * \\
\star & 0 & 1 & * & 0 & * \\
\star & 0 & 0 & \star & 1 & *
\end{pmatrix}.
\]
Here $*$ can be any element of $R$, while $\star$ can be any non-invertible element of $R$.
\end{example}

The following observation might clarify the nature of column-adapted maps.  It follows immediately from our definitions.

\begin{observation}
All column-adapted maps are surjective, and the only
column-adapted map $f\colon R^n \rightarrow R^n$ is the identity.
\end{observation}

\paragraph{Column-adapted maps, general case.}
Now assume that $R$ is an arbitrary finite commutative ring.  This implies that
$R$ is Artinian, so there exists an isomorphism $R \cong R_1 \times \cdots \times R_q$ where the $R_i$ are finite commutative local rings \cite[Theorem 8.7]{AtiyahMacDonald}. For the remainder of this section, we fix one such isomorphism.  Observe that
\[
\Hom_R(R^{n'},R^n) = \Hom_{R_1}(R_1^{n'},R_1^n) \times \cdots \times \Hom_{R_q}(R_q^{n'},R_q^n),
\]
so given $f \in \Hom_R(R^{n'},R^n)$ we can write
$f = (f_1,\dots,f_q)$ with $f_i \in \Hom_{R_i}(R_i^{n'},R_i^n)$.
We will say that $f$ is {\bf column-adapted} if each $f_i$ is column-adapted.

\begin{lemma}
\label{lemma:adaptedclosed}
Let $R$ be a finite commutative ring and let $f\colon R^a \to R^b$ and $g \colon R^b \to R^c$ be column-adapted maps.  Then $gf\colon R^a \to R^c$ is column-adapted.
\end{lemma}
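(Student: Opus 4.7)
The plan is to reduce immediately to the case where $R$ is a finite commutative local ring via the product decomposition $R = R_1 \times \cdots \times R_q$, since both column-adaptedness and composition decompose coordinate-wise. Assume from now on that $R$ is local with maximal ideal $\mathfrak{m}$, so that $\mathfrak{m}$ is exactly the set of non-invertible elements and in particular is closed under addition and under multiplication by arbitrary elements of $R$.

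Suppose $f \colon R^a \to R^b$ is column-adapted with $S_c(f) = \{s_1 < \cdots < s_b\}$, and $g \colon R^b \to R^c$ is column-adapted with $S_c(g) = \{t_1 < \cdots < t_c\}$. The natural guess for the distinguished set of $gf$ is $S := \{s_{t_1} < \cdots < s_{t_c}\} \subseteq \{1,\ldots,a\}$, which is genuinely an increasing sequence since $t_1 < \cdots < t_c$ and $s_1 < \cdots < s_b$. The first step is to verify the ``standard column'' condition: letting $\bb_1,\ldots,\bb_a$ (resp.\ $\bb_1,\ldots,\bb_b$, resp.\ $\bb_1,\ldots,\bb_c$) denote the standard bases, we have
\[
(gf)(\bb_{s_{t_k}}) \;=\; g\bigl(f(\bb_{s_{t_k}})\bigr) \;=\; g(\bb_{t_k}) \;=\; \bb_k,
\]
for each $k = 1,\ldots,c$, which gives exactly the required column structure.

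The second step, which is the main point, is to check the non-invertibility condition: for $1 \le k \le c$ and $1 \le j < s_{t_k}$, we must show that $(gf)_{k,j} \in \mathfrak{m}$. Expanding,
\[
(gf)_{k,j} \;=\; \sum_{\ell=1}^{b} g_{k,\ell}\, f_{\ell,j},
\]
and we analyze each summand by comparing $\ell$ to $t_k$. If $\ell < t_k$, then $g_{k,\ell} \in \mathfrak{m}$ by column-adaptedness of $g$, so the product lies in $\mathfrak{m}$. If $\ell \ge t_k$, then $s_\ell \ge s_{t_k} > j$, so by column-adaptedness of $f$ the entry $f_{\ell,j}$ lies in $\mathfrak{m}$, and again the product lies in $\mathfrak{m}$. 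Since $\mathfrak{m}$ is an ideal, the sum lies in $\mathfrak{m}$, as required.

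I do not anticipate any genuine obstacle; the only thing to be careful about is the bookkeeping of row/column indices when composing the matrices, and the crucial use of the fact that $\mathfrak{m}$ is an ideal (which is where ``local'' is used and why the reduction to the local case was the right first move). After this, re-assembling the local pieces $(g_i f_i)$ back into a map over $R = R_1 \times \cdots \times R_q$ shows that $gf$ is column-adapted in the general case.
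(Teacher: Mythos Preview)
Your proof is correct and follows essentially the same approach as the paper: reduce to the local case, set $S_c(gf) = \{s_{t_1} < \cdots < s_{t_c}\}$, and verify the non-invertibility condition by splitting the sum $(gf)_{k,j} = \sum_\ell g_{k,\ell} f_{\ell,j}$ at $\ell = t_k$. If anything, you are slightly more explicit in checking the standard-column condition, which the paper simply calls ``clear.''
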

\begin{proof}
It is enough to deal with the case where $R$ is a local ring.
Write $S_c(f) = \{s_1 < \cdots < s_b\}$ and $S_c(g) = \{t_1 < \cdots < t_c\}$.  For $1 \leq i \leq c$, let $u_i = s_{t_i}$.  We
claim that $gf$ is a column-adapted map with $S_c(gf) = \{u_1 < \cdots < u_c\}$.  The first condition is clear, so we must
verify the second condition.  Consider $1 \leq i \leq n$ and $1 \leq j < u_i$.  We then have
\[(gf)_{ij} = g_{i1} f_{1j} + g_{i2} f_{2j} + \cdots + g_{ib} f_{bj}.\]
By definition, $g_{ik}$ is non-invertible for $1 \leq k < t_i$.  Also, for $t_i \leq k \leq b$ the element
$f_{kj}$ is non-invertible since $j < u_i = s_{t_i} \leq s_k$.  It follows that all the terms in our expression
for $(gf)_{ij}$ are non-invertible, so since the non-invertible elements of $R$ form an ideal we conclude that
$(gf)_{ij}$ is non-invertible, as desired.
\end{proof}

\begin{lemma}
\label{lemma:adaptedfactor}
Let $R$ be a finite commutative ring and let $f\colon R^{n'} \rightarrow R^n$ be a
surjection.  Then we can uniquely factor $f$ as $f = f_2 f_1$, where
$f_1\colon R^{n'} \rightarrow R^n$ is column-adapted and $f_2\colon R^n \rightarrow R^n$
is an isomorphism.
\end{lemma}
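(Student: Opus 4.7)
The plan is to reduce immediately to the local case via the product decomposition $R = R_1 \times \cdots \times R_q$ with each $R_i$ a finite commutative local ring. The map $f$ decomposes coordinatewise as $f = (f^{(1)}, \ldots, f^{(q)})$, and surjectivity of $f$ is equivalent to surjectivity of each $f^{(i)}$. Since column-adapted maps and the $\GL_n$ factor are defined componentwise, the factorization $f = f_2 f_1$ can be assembled from factorizations of each $f^{(i)}$, so both existence and uniqueness reduce to the local case. Assume therefore that $R$ is local with maximal ideal $\mathfrak{m}$.

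For existence, I would perform a row-reduction procedure. Let $s_1$ be the smallest column index such that column $s_1$ of $f$ contains an invertible entry; such an $s_1$ must exist, for otherwise the image of $f$ would lie in $\mathfrak{m} R^n$, contradicting the surjectivity of $f$ by Nakayama's lemma. Pick any row $i$ with $f_{i,s_1}$ invertible. The elementary row operations of swapping rows $1$ and $i$, scaling row $1$ by $f_{1,s_1}^{-1}$, and clearing the remaining entries in column $s_1$ amount to left-multiplying $f$ by some $g_1^{-1} \in \GL_n(R)$ to obtain a new matrix whose $s_1$-th column is $\bb_1$; note that columns $1, \ldots, s_1 - 1$ still contain only non-invertible entries, because the non-invertibles form the ideal $\mathfrak{m}$. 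Next, restrict to rows $2, \ldots, n$ and columns $s_1 + 1, \ldots, n'$. I claim this submatrix still surjects onto $R^{n-1}$: its image together with the images of columns $1, \ldots, s_1-1$ (which lie in $\mathfrak{m} R^{n-1}$) exhausts $R^{n-1}$, and Nakayama's lemma finishes the claim. Induction on $n$ then produces $g_2, \ldots, g_n \in \GL$ of appropriate sizes, whose combination provides the column-adapted factorization $f = f_2 f_1$.

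For uniqueness, suppose $f = f_2 f_1 = f_2' f_1'$ with $f_1, f_1'$ column-adapted and $f_2, f_2' \in \GL_n(R)$. Set $g = f_2^{-1} f_2' \in \GL_n(R)$, so $f_1 = g f_1'$. The key observation is that the pivot sequence $S_c(f_1) = \{s_1 < \cdots < s_n\}$ is intrinsically determined by $f$: column $j$ of $f_1$ contains an invertible entry if and only if the reduction of column $j$ of $f$ modulo $\mathfrak{m}$ is nonzero (since $f_2$ is invertible modulo $\mathfrak{m}$), and a recursive version of this observation---projecting successively onto the quotients by the spans of the already determined pivot columns---shows that the entire pivot sequence depends only on $f$. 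Therefore $S_c(f_1) = S_c(f_1')$. Comparing the $s_i$-th column of $f_1 = g f_1'$ then gives $\bb_i = g \bb_i$ for each $i$, which forces $g = I$, so $f_2 = f_2'$ and $f_1 = f_1'$. The main obstacle is formalizing this inductive determination of the pivots cleanly; the content is straightforward once one tracks the mod-$\mathfrak{m}$ behavior of each column, but the bookkeeping needs care.
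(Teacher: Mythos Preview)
Your approach is correct but genuinely different from the paper's. The paper avoids induction entirely: it chooses a section $g$ with $fg=1_{R^n}$, applies the Cauchy--Binet formula $1=\det(fg)=\sum_I \det(f_{[n],I})\det(g_{I,[n]})$ to produce some $n$-element subset $I\subseteq[n']$ with $\det(f_{[n],I})$ a unit, takes $I$ lexicographically minimal, and sets $f_2 = f_{[n],I}$ and $f_1 = f_2^{-1}f$. This packages existence and uniqueness together, since the lex-minimal $I$ is visibly determined by $f$ alone; that $f_1$ is column-adapted with $S_c(f_1)=I$ amounts to recognizing $\bar f_1$ as the reduced row-echelon form of $\bar f$ over the residue field $R/\mathfrak m$. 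Your Gaussian-elimination argument is more hands-on and uses Nakayama in place of the determinant criterion. It works, but your existence sketch omits one step: after recursing on rows $2,\ldots,n$, the pivot columns $s_2,\ldots,s_n$ are correct in those rows, but row $1$ of those columns has not been touched and need not vanish, so the resulting matrix is not yet column-adapted. You need a final sweep subtracting suitable multiples of rows $2,\ldots,n$ from row $1$ to clear the entries $(1,s_2),\ldots,(1,s_n)$; this is harmless for the conditions already in place, since each row $i\ge 2$ has a zero in column $s_1$ and entries in $\mathfrak m$ in columns $1,\ldots,s_1-1$. Your uniqueness argument via the recursive mod-$\mathfrak m$ characterization of the pivots is correct and is equivalent to the paper's lex-minimality description, just phrased inductively rather than through invertible minors.
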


\begin{proof}
It is enough to deal with the case where $R$ is a local ring.  Since $R^n$ is
projective, we can find $g\colon R^n \rightarrow R^{n'}$ such that
$fg = 1_{R^n}$.  We can view $f$ and $g$ as matrices, and the
Cauchy--Binet formula says that
\[
1 = \det(fg) = \sum_I \det(f_{[n], I}) \det (g_{I, [n]})
\]
where $I$ ranges over all $n$-element subsets of $[n'] = \{1,\ldots,n'\}$ and
$f_{I,J}$ means the minor of $f$ with rows $I$ and columns $J$.
Since $R$ is local, the non-units form an ideal, so there exists at least one $I$ such that
$\det(f_{[n], I})$ is a unit.  Let $I$ be the lexicographically minimal subset such that this
holds.  There is a unique $h \in \GL(R^n)$ with
$(h f)_{[n],I} = 1_{R^n}$.  This implies that $h f$
is column-adapted with $S_c(h f) = I$, and the desired factorization is
$f_1 = hf$, $f_2 = h^{-1}$.
\end{proof}

\paragraph{The category $\bOVIC$.}
Continue to let $R$ be a general finite commutative ring.  Define
$\OVIC(R)$ to be the category whose objects are $\Set{$R^n$}{$n \geq 0$}$ and where $\Hom_{\OVIC(R)}(R^n,R^{n'})$
consists of pairs $(f,f')$ as follows.
\begin{compactitem}
\item $f \in \Hom_R(R^{n},R^{n'})$ and $f' \in \Hom_R(R^{n'},R^n)$.
\item $f' f = 1_{R^n}$, so $f$ is an injection and $f'$ is a surjection.
\item $f'$ is column-adapted.
\end{compactitem}
For $(f,f') \in \Hom_{\OVIC(R)}(R^n,R^{n'})$ and $(g,g') \in \Hom_{\OVIC(R)}(R^{n'},R^{n''})$, we define
$(g,g')(f,f')$ to be $(gf,f'g') \in \Hom_{\OVIC(R)}(R^n,R^{n''})$.  Lemma \ref{lemma:adaptedclosed}
implies that this makes sense.  The key property of $\OVIC(R)$ is as follows.

\begin{lemma}
\label{lemma:ovicfactor}
Let $R$ be a finite commutative ring and let $(f,f') \in \Hom_{\VIC(R)}(R^n,R^{n'})$.
Then we can uniquely write $(f,f') = (f_1,f_1')(f_2,f_2')$, where
$(f_2,f_2') \in \Aut_{\VIC(R)}(R^n)$ and $(f_1,f_1') \in \Hom_{\OVIC(R)}(R^n,R^{n'})$.
\end{lemma}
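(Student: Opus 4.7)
The plan is to reduce this factorization lemma to Lemma \ref{lemma:adaptedfactor} applied to the surjection $f'$. Recall that the composition in $\VIC(R)$ is $(g,g')(f,f') = (gf, f' g')$, so any candidate factorization $(f, f') = (f_1, f_1')(f_2, f_2')$ with $(f_2, f_2') \in \Aut_{\VIC(R)}(R^n)$ and $(f_1, f_1') \in \Hom_{\OVIC(R)}(R^n, R^{n'})$ must satisfy $f = f_1 f_2$ and $f' = f_2' f_1'$, with $f_1'$ column-adapted and with $f_2, f_2' \colon R^n \to R^n$ mutually inverse isomorphisms. This strongly suggests that the whole lemma is just a bookkeeping translation of Lemma \ref{lemma:adaptedfactor} from the world of surjections to $\VIC(R)$.

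First I would apply Lemma \ref{lemma:adaptedfactor} to the surjection $f' \colon R^{n'} \to R^n$, obtaining a unique factorization $f' = f_2' f_1'$ with $f_1' \colon R^{n'} \to R^n$ column-adapted and $f_2' \colon R^n \to R^n$ an isomorphism. I would then set $f_2 = (f_2')^{-1}$, which gives an element $(f_2, f_2') \in \Aut_{\VIC(R)}(R^n)$, and define $f_1 = f \cdot (f_2')^{-1}$, so that automatically $f = f_1 f_2$.

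The only nontrivial verification is that $(f_1, f_1')$ is actually an $\OVIC(R)$-morphism, i.e., $f_1' f_1 = 1_{R^n}$. For this I would use the $\VIC(R)$-relation $f' f = 1_{R^n}$, which rewrites as $f_2' f_1' f = 1_{R^n}$ and hence $f_1' f = f_2$. Substituting gives $f_1' f_1 = f_1' f (f_2')^{-1} = f_2 (f_2')^{-1} = 1_{R^n}$, as desired. Since $f_1'$ is column-adapted by construction, this shows $(f_1, f_1') \in \Hom_{\OVIC(R)}(R^n, R^{n'})$.

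For uniqueness, suppose $(f, f') = (f_1, f_1')(f_2, f_2')$ is any such factorization. Then $f' = f_2' f_1'$ is a factorization of $f'$ as an isomorphism followed by a column-adapted surjection, so the uniqueness clause in Lemma \ref{lemma:adaptedfactor} pins down $f_1'$ and $f_2'$; these in turn determine $f_2 = (f_2')^{-1}$ and $f_1 = f f_2^{-1}$. I do not foresee any real obstacle here, since all the genuine content lives in Lemma \ref{lemma:adaptedfactor}; the step that deserves the most care is simply confirming that the formulas on the $f$-side (rather than the $f'$-side) are consistent, which is the short calculation $f_1' f_1 = 1_{R^n}$ above.
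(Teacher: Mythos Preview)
Your approach is exactly the paper's: apply Lemma~\ref{lemma:adaptedfactor} to the surjection $f'$ and read off both factors. One slip to fix: since $f_2=(f_2')^{-1}$, the correct definition is $f_1=f\,f_2^{-1}=f\,f_2'$ (as you in fact write in the uniqueness paragraph), not $f_1=f(f_2')^{-1}$; with that correction your check becomes $f_1'f_1=f_1'f\,f_2'=f_2f_2'=1_{R^n}$, which is precisely the paper's computation.
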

\begin{proof}
Letting $f' = f_2' f_1'$ be the factorization from Lemma \ref{lemma:adaptedfactor}, the
desired factorization is $(f,f') = (f f_2',f_1')((f_2')^{-1},f_2')$.  To see that
this first factor is in $\Hom_{\OVIC(R)}(R^n,R^{n'})$, observe that
$f_1' f f_2' = (f_2')^{-1} f' f f_2' = (f_2')^{-1} f_2' = 1_{R^n}$.
\end{proof}
 
\paragraph{Free and dependent rows.}
Assume that $R$ is local.
The condition $f'f = 1_{R^n}$ in the definition of an $\OVIC(R)$-morphism
implies that the rows of $f$ indexed by $S_c(f')$ are determined
by the other rows, which can be chosen freely.  So we will call the rows of $f$ indexed by $S_c(f')$ the {\bf dependent rows}
and the other rows the {\bf free rows}.

\begin{example} 
Assume that $R$ is a local ring, that $f'\colon R^6 \rightarrow R^3$ is a column-adapted map with
$S_c(f') = \{2,3,5\}$, and that $f\colon R^3 \rightarrow R^6$ satisfies
$f' f = 1_{R^3}$.  The possible matrices representing $f'$ and $f$ are of the form
\[
f' = \begin{pmatrix}
\star & 1 & 0 & * & 0 & * \\
\star & 0 & 1 & * & 0 & * \\
\star & 0 & 0 & \star & 1 & *
\end{pmatrix}
\quad \text{and} \quad
f = \begin{pmatrix}
* & * & * \\
\diamond & \diamond & \diamond \\
\diamond & \diamond & \diamond \\
* & * & * \\
\diamond & \diamond & \diamond \\
* & * & * 
\end{pmatrix}.
\]
Here $*$ can be any element of $R$, while $\star$ can be any non-invertible element of $R$
and each $\diamond$ is completely determined by the choices of $*$ and $\star$ plus 
the fact that $f' f = 1_{R^3}$. 
\end{example}

\subsection{The category of \texorpdfstring{$\bOVIC$}{OVIC}-modules is locally Noetherian}
\label{section:noetherianovcnoetherian}

The main result of this section is Theorem~\ref{thm:ovicnoetherian} below, which says
that the category of $\OVIC(R)$-modules is locally Noetherian when $R$ is a finite commutative ring.

\paragraph{A partial order.}
We first need the existence of a certain partial ordering.  For $d \geq 0$, define 
\[
\cP_R(d) = \bigsqcup_{n=0}^{\infty} \Hom_{\OVIC(R)}(R^d, R^n).
\]
To apply the results of Sam--Snowden in \cite{tca-gb}, we would need to find a total ordering of the set
$\cP_R(d)$ which is invariant under left composition.  However, constructing such an ordering seems difficult.
Our solution is to construct a total ordering which is invariant under left composition with ``enough'' morphisms
to make an argument in the style of \cite{tca-gb} go through (see Remark~\ref{rmk:no-term-order} for some discussion 
of this).  The following lemma gives this ordering and is the key to the proof of
Theorem~\ref{thm:ovicnoetherian}, which asserts that the category of $\OVIC(R)$-modules is locally
Noetherian when $R$ is a finite commutative ring.

\begin{lemma}
\label{lemma:ovickey}
Let $R$ be a finite commutative ring, and fix $d \geq 0$.  There exists a well partial ordering $\preceq$ on $\cP_R(d)$ together with an extension $\leq$ of $\preceq$ to a total ordering such that the following holds.
Consider $(f,f') \in \Hom_{\OVIC(R)}(R^d, R^n)$ and $(g,g') \in \Hom_{\OVIC(R)}(R^d, R^{n'})$ with 
$(f,f') \preceq (g,g')$.  Then there is some $(\phi,\phi') \in \Hom_{\OVIC(R)}(R^n,R^{n'})$ with the following
two properties.
\begin{compactenum}[\indent \rm 1.]
\item We have $(g,g') = (\phi,\phi')(f,f')$.  
\item If $(f_1,f'_1) \in \Hom_{\OVIC(R)}(R^d, R^{n''})$ satisfies $(f_1,f'_1) < (f,f')$, then 
$(\phi,\phi')(f_1,f'_1) < (g,g')$.
\end{compactenum}
\end{lemma}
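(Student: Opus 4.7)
The plan is to encode $\OVIC(R)$-morphisms as words over a finite alphabet and transfer a Higman-type well partial order via this encoding, with the required $(\phi, \phi')$ built from the word-level matching data. This follows the Gr\"obner-basis philosophy of \cite{tca-gb}.

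The first step is to reduce to the case where $R$ is finite local. The decomposition $R = R_1 \times \cdots \times R_q$ into local factors embeds $\cP_R(d)$ into $\prod_i \cP_{R_i}(d)$ (restricted to tuples of a common length $n$), and the sought-after $(\phi, \phi')$ assembles componentwise. By Lemma~\ref{lem:poset-noeth}(a) and (c), it suffices to treat finite local $R$. In that case, set $\Sigma = \{P\} \sqcup (R^d \times R^d)$ and encode $(f, f') \in \Hom_{\OVIC(R)}(R^d, R^n)$ as the word $w(f, f') = s_1 \cdots s_n$ with $s_k = P$ when $k \in S_c(f')$ and $s_k = (f'_{\bullet, k}, f_{k, \bullet})$ otherwise. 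This is injective because the dependent rows of $f$ are forced by $f'f = 1_{R^d}$. Pulling back the variant Higman order on $\widetilde{\Sigma}^\star$ from Lemma~\ref{lem:word-poset} via $w$ and applying Lemma~\ref{lem:poset-noeth}(a) yields the well partial order $\preceq$.

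Given $(f, f') \preceq (g, g')$ witnessed by an increasing $\alpha \colon [n] \to [n']$, the $d$ pivot letters $P$ match bijectively, forcing $\alpha$ to send pivots of $f'$ to pivots of $g'$ order-preservingly. The variant-Higman condition further furnishes, for each unmatched $l$, a non-pivot $k^*_l \in [n]$ with $\alpha(k^*_l) \le l$ whose letter in $(f, f')$ equals that of $l$ in $(g, g')$. Build $(\phi, \phi')$ by taking $S_c(\phi') = \alpha([n])$, $\phi'_{\bullet, \alpha(k)} = \bb_k$, and $\phi'_{\bullet, l} = \bb_{k^*_l} + v_l$ for unmatched $l$, with the correction $v_l \in \ker(f')$ chosen to enforce the consistency relation $\phi' g = f$. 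An explicit computation using the matching identities $g'_{\bullet, l} = f'_{\bullet, k^*_l}$ and $g_{l, \bullet} = f_{k^*_l, \bullet}$ shows that the entries of $v_l$ lie in the maximal ideal of $R$, so $\phi'$ remains column-adapted. With $\phi' g = f$ in hand, a section $\phi$ of $\phi'$ satisfying $\phi f = g$ exists and completes the $\OVIC(R)$-morphism.

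Finally, extend $\preceq$ to a total order by totally ordering $\Sigma$ (with $P$ smallest) and applying length-then-lex on $\Sigma^\star$. For composition compatibility, if $(f_1, f_1') < (f, f')$ in $\Hom_{\OVIC(R)}(R^d, R^n)$ first differ at position $k_0$, then the encoded words of $(\phi, \phi')(f_1, f_1')$ and $(g, g')$ agree on positions $< \alpha(k_0)$ (matched positions there depend only on the inputs at indices $< k_0$, while unmatched positions $l < \alpha(k_0)$ have $k^*_l < k_0$) and differ first at $\alpha(k_0)$ in the same direction. The main obstacle I anticipate is the construction of $(\phi, \phi')$: ensuring that the correction $v_l$ simultaneously lies in $\ker(f')$, makes $\phi' g = f$ hold, and has entries in the maximal ideal. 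This is where both the variant-Higman condition (providing the duplicated-data structure at unmatched positions) and the local structure of $R$ (so that the required correction terms land in the maximal ideal) are jointly essential.
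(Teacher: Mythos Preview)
Your overall strategy---reduce to local $R$, encode morphisms as words in $\widetilde{\Sigma}^\star$, and build $(\phi,\phi')$ from the matching data---matches the paper's, and the partial order $\preceq$ you obtain coincides with the paper's.  The construction of $(\phi,\phi')$ is also essentially correct once the corrections are worked out (though your claim that ``the entries of $v_l$ lie in the maximal ideal'' is false: the $v_l$ forced by $v_l\in\ker(f')$ and $\phi' g=f$ has $(v_l)_{k^*_l}=-1$; what is true is that $\bb_{k^*_l}+v_l$ is supported on $S_c(f')$, and \emph{that} is what makes $\phi'$ column-adapted).

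The genuine gap is in your total order and the verification of property~2.  Your claim that the encoded words of $(\phi,\phi')(f_1,f_1')$ and $(g,g')$ agree at positions $<\alpha(k_0)$ is false: at an unmatched position $l<\alpha(k_0)$ the column of $f_1'\phi'$ is $f_1'\cdot\widehat{v}^{(l)}$, which depends on the columns of $f_1'$ at the pivot positions $s_j$ of $f'$, and these may lie \emph{above} $k_0$.  Concretely, take $R=\Z/9$, $d=1$, $(f,f')=((0,0,1)^T,(3,0,1))$, $(g,g')=((0,0,0,1)^T,(3,3,0,1))$ with $\alpha=(1,3,4)$ and $k^*_2=1$, and $(f_1,f_1')=((0,1,0)^T,(3,1,2))$.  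Then $(f_1,f_1')<(f,f')$ in your length-then-lex order (first difference at position~$2$, where $P<(0,0)$), but $(\phi,\phi')(f_1,f_1')=((0,0,1,0)^T,(3,6,1,2))$ has word $[(3,0),(6,0),P,(2,0)]$, which is \emph{larger} than $(g,g')$'s word $[(3,0),(3,0),(0,0),P]$ at position~$2$.  So property~2 fails with your total order.  The paper repairs this by using a different total order that compares $S_c(f')$ as a \emph{set} (lexicographically) before looking at any column or row data; with that order, $S_c(f_1'\phi')=\{3\}<\{4\}=S_c(g')$ immediately gives the inequality, and Lemma~\ref{lemma:insertion} supplies exactly the needed control on $S_c$ under composition.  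Your length-then-lex order on words interleaves the pivot-position information with the column/row data and cannot be made to work with this $(\phi,\phi')$.
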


Since the proof of Lemma~\ref{lemma:ovickey} is lengthy, we postpone it until the end of this section and instead
show how it can be used to prove local Noetherianity.

\paragraph{Noetherianity.} 
We now come to our main result.

\begin{theorem} \label{thm:ovicnoetherian}
Let $R$ be a finite commutative ring.  Then the category of $\OVIC(R)$-modules is locally Noetherian.
\end{theorem}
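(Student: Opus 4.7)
The plan is to prove Theorem~\ref{thm:ovicnoetherian} by a Gr\"obner-basis style argument that exploits the well partial order on morphism spaces supplied by Lemma~\ref{lemma:ovickey}. By Lemma~\ref{lemma:noetheriancrit} it suffices, for each fixed $d \geq 0$, to show that every submodule $M \subseteq P := P_{\OVIC(R), R^d}$ is finitely generated. Since $P(R^n) = \bk[\Hom_{\OVIC(R)}(R^d, R^n)]$ is free as a left $\bk$-module on $\Hom_{\OVIC(R)}(R^d, R^n) \subseteq \cP_R(d)$, every nonzero $m \in M(R^n)$ has a unique expansion $m = \sum_i c_i q_i$ with $c_i \in \bk \setminus \{0\}$ and the $q_i$ distinct. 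Using the total order $\leq$ on $\cP_R(d)$ furnished by Lemma~\ref{lemma:ovickey}, define the \emph{leading term} $\mathrm{lt}(m)$ to be the largest such $q_i$ and the \emph{leading coefficient} $\mathrm{lc}(m) \in \bk$ to be its coefficient. Because $\preceq$ is a well partial order and $\leq$ extends $\preceq$, the order $\leq$ admits no infinite strictly descending chain, so $\leq$ is in fact a well-order on $\cP_R(d)$.

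Suppose for contradiction that $M$ is not finitely generated. I will construct a sequence $m_1, m_2, \ldots$ in $M$ inductively by choosing $m_j \in M \setminus \langle m_1, \ldots, m_{j-1} \rangle$ (nonempty by assumption) so that $p_j := \mathrm{lt}(m_j)$ is minimal under $\leq$ among the leading terms of elements in $M \setminus \langle m_1, \ldots, m_{j-1}\rangle$; such a minimum exists because $\leq$ is a well-order. Write $c_j := \mathrm{lc}(m_j)$ and $m_j \in M(R^{n_j})$. Since $\cP_R(d)$ is well partially ordered, Lemma~\ref{lem:poset-noeth}(b) yields indices $i_1 < i_2 < \cdots$ with $p_{i_1} \preceq p_{i_2} \preceq \cdots$. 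For each pair $k < \ell$, Lemma~\ref{lemma:ovickey} produces a morphism $(\phi_{k\ell}, \phi'_{k\ell}) \in \Hom_{\OVIC(R)}(R^{n_{i_k}}, R^{n_{i_\ell}})$ with $(\phi_{k\ell}, \phi'_{k\ell}) \cdot p_{i_k} = p_{i_\ell}$ and sending every $q < p_{i_k}$ strictly below $p_{i_\ell}$; consequently $(\phi_{k\ell}, \phi'_{k\ell}) \cdot m_{i_k} \in M(R^{n_{i_\ell}})$ has leading term $p_{i_\ell}$ and leading coefficient $c_{i_k}$.

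The ascending chain of left ideals $\bk c_{i_1} \subseteq \bk c_{i_1} + \bk c_{i_2} \subseteq \cdots$ of $\bk$ stabilizes since $\bk$ is (left) Noetherian, so for some $K$ we can write $c_{i_{K+1}} = \sum_{k=1}^{K} a_k c_{i_k}$ with $a_k \in \bk$. Setting
\[
m' := m_{i_{K+1}} - \sum_{k=1}^{K} a_k \cdot \bigl((\phi_{k,K+1}, \phi'_{k,K+1}) \cdot m_{i_k}\bigr) \in M(R^{n_{i_{K+1}}}),
\]
the $p_{i_{K+1}}$-coefficient of the correction equals $\sum_k a_k c_{i_k} = c_{i_{K+1}}$, cancelling the leading coefficient of $m_{i_{K+1}}$, while every remaining monomial in the correction lies strictly below $p_{i_{K+1}}$. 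The correction itself is in $\langle m_{i_1}, \ldots, m_{i_K}\rangle \subseteq \langle m_1, \ldots, m_{i_{K+1}-1}\rangle$, so either $m' = 0$, which forces $m_{i_{K+1}}$ into $\langle m_1, \ldots, m_{i_{K+1}-1}\rangle$ contrary to its choice, or $0 \neq m' \in M \setminus \langle m_1, \ldots, m_{i_{K+1}-1}\rangle$ has $\mathrm{lt}(m') < p_{i_{K+1}}$, contradicting the minimality of $p_{i_{K+1}}$. The main obstacle has been absorbed into Lemma~\ref{lemma:ovickey}: producing a well partial order on $\cP_R(d)$ whose total extension is compatible with post-composition is the crux, and once this is in hand the remaining argument is a classical Gr\"obner-style reduction controlled by the Noetherianity of $\bk$.
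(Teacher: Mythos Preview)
Your proof is correct and follows essentially the same Gr\"obner-basis approach as the paper: both reduce via Lemma~\ref{lemma:noetheriancrit} to the representable $P_{\OVIC(R),R^d}$, define leading/initial terms using the total order $\leq$ from Lemma~\ref{lemma:ovickey}, extract a $\preceq$-increasing subsequence of leading terms, and use Noetherianity of $\bk$ together with property~2 of Lemma~\ref{lemma:ovickey} to manufacture a contradiction. The only cosmetic difference is that the paper phrases the contradiction via an ascending chain of submodules and their initial modules, whereas you directly build a sequence of elements with minimal leading terms outside the span of the previous ones; your observation that any total extension of a well partial order is a well-order is what makes this direct choice possible, and it is a clean way to organize the same argument.
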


\begin{proof}
Fix a Noetherian ring $\bk$.
For $d \geq 0$, define $P_d$ to be the $\OVIC(R)$-module $P_{\OVIC(R), R^d}$, where the notation is as in \S\ref{section:noetherianprelim}, so $P_d(R^n) = \bk[\Hom_{\OVIC(R)}(R^d,R^n)]$.
By Lemma~\ref{lemma:noetheriancrit}, it is enough to show that every submodule of $P_d$ is finitely generated.  

Let $\preceq$ and $\leq$ be the orderings on $\cP_R(d)$ given by Lemma \ref{lemma:ovickey}.
For an element $(f,f') \in \Hom_{\OVIC(R)}(R^d,R^n)$, denote by $e_{f,f'}$ the associated basis element of $P_d(R^n)$.
Given nonzero $x \in P_d(R^n)$, define its {\bf initial term} $\init(x)$ as follows. First, set $\init(0)=0$. For $x\ne 0$, write $x = \sum \alpha_{f,f'} e_{f,f'}$ with $\alpha_{f,f'} \in \bk$, and let $(f_0,f'_0)$ be the $\leq$-largest element such that $\alpha_{f_0,f'_0} \neq 0$. We then define $\init(x) = \alpha_{f_0,f'_0} e_{f_0,f'_0}$. Next, given a submodule $M \subset P_d$, define its {\bf initial module} $\init(M)$ to be the function that takes $n \geq 0$ to the $\bk$-module $\bk \Set{$\init(x)$}{$x \in M(R^n)$}$.  Warning: $\init(M)$ need not be an $\OVIC(R)$-submodule of $P_d$, see Remark~\ref{rmk:no-term-order}.

\begin{claim}
If $N,M \subset P_d$ are submodules with $N \subset M$ and $\init(N) = \init(M)$, then $N=M$.
\end{claim}

\begin{proof}[Proof of Claim]
Assume that $M \ne N$.  Let $n \geq 0$ be such that $M(R^n) \neq N(R^n)$.  Pick $y \in M(R^n) \setminus N(R^n)$ such that $\init(y) = \alpha e_t$ where $t$ is $\leq$-minimal.  By assumption, there exists $z \in N(R^n)$ with $\init(z) = \init(y)$.  But then
$y-z \in M(R^n) \setminus N(R^n)$ and $\init(y-z) = \beta e_{t'}$ where $t' < t$, a contradiction.
\end{proof}

We now turn to the proof that every submodule of $P_d$ is finitely generated.  Assume otherwise, so there is a strictly increasing sequence $M_0 \subsetneqq M_1 \subsetneqq \cdots$ of submodules. By the claim, we must have $\init(M_i) \neq \init(M_{i-1})$ for all $i \geq 1$, so we can find some $n_i \geq 0$ and some $\lambda_i e_{f_i,f_i'} \in \init(M_i)(R^{n_i})$ with $\lambda_i e_{f_i,f_i'} \notin \init(M_{i-1})(R^{n_i})$.  Since
$\preceq$ is a well partial ordering, we can apply Lemma \ref{lem:poset-noeth}(b) and find $i_0 < i_1 < \cdots$ such that
\[
(f_{i_0},f_{i_0}') \preceq (f_{i_1},f_{i_1}') \preceq (f_{i_2},f_{i_2}') \preceq \cdots
\]
Since $\bk$ is Noetherian, there exists some $m \geq 0$ such that 
$\lambda_{i_{m+1}}$ is in the left $\bk$-ideal generated by $\lambda_{i_0},\ldots,\lambda_{i_m}$, i.e., we can write $\lambda_{i_{m+1}} = \sum_{j=0}^m c_j \lambda_{i_j}$ with $c_j \in \bk$.
Consider some $0 \leq j \leq m$.  Let $x_j \in M_{i_j}(R^{n_{i_j}})$ be such that 
$\init(x_j) = \lambda_{i_j} e_{f_{i_j},f_{i_j}'}$. By Lemma \ref{lemma:ovickey}, there exists some
$(\phi_j,\phi'_j) \in \Hom_{\OVIC(R)}(R^{n_{i_j}},R^{n_{i_{m+1}}})$ such that 
$(f_{i_{m+1}},f'_{i_{m+1}}) = (\phi_j,\phi'_j)(f_{i_j},f'_{i_j})$.  Setting 
$y = \sum_{j=0}^m c_j (\phi_j,\phi'_j) x_j$, an element of $M_{i_{m+1} - 1}(R^{n_{i_{m+1}}})$,
Lemma~\ref{lemma:ovickey} implies that $\init(y) = \lambda_{i_{m+1}} e_{f_{i_{m+1}},f'_{i_{m+1}}}$, a contradiction.
\end{proof}

\paragraph{Insertion maps.}
For the proof of Lemma \ref{lemma:ovickey}, we will need the following lemma.

\begin{lemma}
\label{lemma:insertion}
Let $R$ be a commutative local ring.  Fix some $1 \leq k \leq \ell \leq n$ and
some $S = \{s_1 < s_2 < \cdots < s_d\} \subset \{1,\ldots,n\}$.  Set 
$\widehat{\ell} = \Max(\Set{$i$}{$s_i<\ell$} \cup \{0\})$.  Let
$v = (v_1,\ldots,v_d) \in R^d$ be such that $v_i$ 
is non-invertible for all $i > \widehat{\ell}$. 
Define 
\[
T = \{s_1 < \cdots < s_{\widehat{\ell}} < s_{\widehat{\ell}}+1 < \cdots < s_d+1\} \subset \{1,\ldots,n+1\}.
\]
There exists $(\phi,\phi') \in \Hom_{\OVIC(R)}(R^{n},R^{n+1})$ with the following properties.  

\begin{compactitem}
\item Consider $(f,f') \in \Hom_{\OVIC(R)}(R^d,R^n)$ with $S_c(f') = S$.
\begin{compactitem}
\item The matrix $f'\phi'$ is obtained from $f'$ by 
inserting the column vector $v$ between the $(\ell-1)^{\text{st}}$ and $\ell^{\text{th}}$ columns.  Observe that $S_c(f'\phi') = T$.
\item Let $w \in R^d$ be the $k^{\text{th}}$
row of $f$.  Then $\phi f$ is obtained from $f$ by inserting $w$
between the $(\ell-1)^{\text{st}}$ and $\ell^{\text{th}}$ rows (observe that this
new row is a free row!) and then modifying the dependent rows so that
$(\phi f,f' \phi') \in \Hom_{\OVIC(R)}(R^{d},R^{n+1})$. 
More precisely, the $\widehat{s}_i^{\text{th}}$ row of $\phi f$ is the $s_i^{\text{th}}$ row of $f$ minus $v_i w$.
\end{compactitem}
\item Consider $(g,g') \in \Hom_{\OVIC(R)}(R^d,R^n)$ with $S_c(g') < S$ in the lexicographic order.  Then $S_c(g' \phi') < T$ in the lexicographic order.
\end{compactitem}
\end{lemma}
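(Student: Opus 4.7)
The plan is to construct $(\phi,\phi')$ explicitly on the standard basis and then verify the two bulleted properties by direct matrix computation.

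First, I will define $\phi' \colon R^{n+1} \to R^n$ by the formulas $\phi'(e_j) = e_j$ for $j < \ell$, $\phi'(e_\ell) = \tilde{v}$, and $\phi'(e_j) = e_{j-1}$ for $j > \ell$, where $\tilde{v} := \sum_{i=1}^{d} v_i\, e_{s_i} \in R^n$. I claim $\phi'$ is column-adapted with $S_c(\phi') = \{1,\ldots,\ell-1,\ell+1,\ldots,n+1\}$. The basis columns of $\phi'$ manifestly sit in the stated positions, and the only potentially nonzero entries to the left of a basis column lie in column $\ell$, which is $\tilde{v}$. Thus the non-invertibility condition reduces to $\tilde{v}_i$ being non-invertible for $i \geq \ell$. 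But $\tilde{v}_i = 0$ unless $i = s_j$ for some $j$, in which case $\tilde{v}_i = v_j$; and $i = s_j \geq \ell$ forces $j > \widehat{\ell}$, so $v_j$ is non-invertible by hypothesis.

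Next, I will define $\phi \colon R^n \to R^{n+1}$ as the unique linear map whose row at position $\ell$ is the row vector $e_k^{\top}$ and which satisfies $\phi'\phi = 1_{R^n}$. Expanding $(\phi'\phi)_{ij} = \delta_{ij}$ using the entries of $\phi'$ forces the other rows of $\phi$ uniquely: row $\phi_i$ equals $e_i^{\top} - \tilde{v}_i e_k^{\top}$ for $i < \ell$, and row $\phi_i$ equals $e_{i-1}^{\top} - \tilde{v}_{i-1} e_k^{\top}$ for $i > \ell$. Hence $(\phi,\phi')$ is a well-defined $\OVIC(R)$-morphism from $R^n$ to $R^{n+1}$.

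For the first bulleted property, I will multiply out $f'\phi'$ and $\phi f$ directly. Since $f'$ is column-adapted with $S_c(f') = S$, we have $f'(e_{s_i}) = \bb_i$ and therefore $f'(\tilde{v}) = \sum_i v_i \bb_i = v$; combined with the observation that the non-$\ell$ columns of $\phi'$ are just the standard basis vectors of $R^n$ in order, this shows $f'\phi'$ is precisely $f'$ with the column $v$ inserted at position $\ell$, so $S_c(f'\phi') = T$ as required. For $\phi f$, row $\ell$ equals $e_k^{\top} f$ which is the $k$-th row of $f$; for a free position $i < \ell$ of $f$ we have $i \notin S$ and hence $\tilde{v}_i = 0$, giving $(\phi f)_i = f_i$; the same observation handles the shifted free positions $i > \ell$ with $i-1 \notin S$. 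The dependent rows of $\phi f$ acquire the required correction terms, which is automatic because $(\phi f, f'\phi') = (\phi,\phi')(f,f')$ is an $\OVIC(R)$-morphism.

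Finally, for the second bulleted property I will apply Lemma~\ref{lemma:adaptedclosed} to the composition $g'\phi'$. Writing $S_c(\phi') = \{q_1 < \cdots < q_n\}$ with $q_j = j$ for $j < \ell$ and $q_j = j+1$ for $j \geq \ell$, and $S_c(g') = \{t_1 < \cdots < t_d\}$, the lemma yields $S_c(g'\phi') = \{q_{t_1} < \cdots < q_{t_d}\}$, while the same formula with $S_c(f') = S$ in place of $S_c(g')$ yields $T = \{q_{s_1} < \cdots < q_{s_d}\}$. Since $j \mapsto q_j$ is strictly increasing, applying it coordinatewise preserves the lexicographic order, so $S_c(g') < S$ implies $S_c(g'\phi') < T$. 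The main obstacle is simply the bookkeeping for the row computation of $\phi f$ in the edge cases where $k$ coincides with $\ell$ or lies in $S$, but these cause no trouble: in each such case the affected row of $\phi f$ is itself a dependent row of $\phi f$, whose value is uniquely determined by the $\OVIC(R)$ structure and thus need not equal any particular row of $f$.
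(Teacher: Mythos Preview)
Your construction of $(\phi,\phi')$ is exactly the one in the paper: insert the column $\tilde v=\widehat v$ into the identity to build $\phi'$, and let the single free row $\ell$ of $\phi$ be $e_k^\top$. The paper dismisses the verifications as ``an easy calculation,'' whereas you carry them out in full and, for the second bullet, cleanly invoke Lemma~\ref{lemma:adaptedclosed} to read off $S_c(g'\phi')=\{q_{t_1}<\cdots<q_{t_d}\}$ with $q_j$ strictly increasing; this is a nice touch, but the overall approach is the same.
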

\begin{proof}
Define $\widehat{v} \in R^n$ to be the column vector with $v_i$ in position $s_i$
for $1 \leq i \leq d$ and zeros elsewhere.  Next,
define $\phi' \colon R^{n+1} \rightarrow R^n$ to be the matrix obtained from the 
$n \times n$ identity matrix ${\rm Id}_{R^n}$ by inserting $\widehat{v}$ between the $(\ell-1)^{\text{st}}$ and $\ell^{\text{th}}$ columns.  
The condition on the $v_i$ ensures that $\phi'$ is a column-adapted map. 
The possible choices of $\phi\colon R^n \rightarrow R^{n+1}$ such that 
$(\phi,\phi') \in \Hom_{\OVIC(R)}(R^{n},R^{n+1})$
have a single free row, namely the $\ell^{\text{th}}$.  Let that row have a 
$1$ in position $k$ and zeros elsewhere.  The other dependent rows are determined by this.  For a
formula for this matrix, letting $e$ be the $k^{\text{th}}$ standard basis row vector, the matrix $\phi$ is 
the result of inserting $e$ between the $\ell^{\text{th}}$ and $(\ell+1)^{\text{st}}$ rows of ${\rm Id}_{R^n} - \widehat{v} e$.
We have thus constructed an element $(\phi,\phi') \in \Hom_{\OVIC(R)}(R^{n},R^{n+1})$; that it satisfies the 
conclusions of the lemma is an easy calculation.
\end{proof}

\paragraph{Constructing the orders, local case.}
We now prove Lemma \ref{lemma:ovickey} in the special case where $R$ is a local ring.

\begin{proof}[Proof of Lemma \ref{lemma:ovickey}, local case]
Let $R$ be a finite commutative local ring.  The first step is to construct $\preceq$.  
Consider
$(f,f') \in \Hom_{\OVIC(R)}(R^d,R^n)$ and $(g,g') \in \Hom_{\OVIC(R)}(R^d,R^{n'})$.  We will say that
$(f,f') \preceq (g,g')$ if there exists a sequence
\[(f,f') = (f_0,f_0'), (f_1,f_1'),\ldots,(f_{n'-n},f_{n'-n}') = (g,g')\]
with the following properties:
\begin{compactitem}
\item For $0 \leq i \leq n'-n$, we have $(f_i,f_i') \in \Hom_{\OVIC(R)}(R^d,R^{n+i})$.
\item For $0 \leq i < n'-n$, the morphism $(f_{i+1},f_{i+1}')$ is obtained from 
$(f_{i},f_{i}')$ as follows.
Choose some $1 \leq k \leq \ell \leq n+i$ such that $k \notin S_c(f_i')$.  Then $f_{i+1}'$ is obtained from $f_i'$ by inserting
a copy of the $k^{\text{th}}$ column of $f_i'$ between the $(\ell-1)^{\text{st}}$ and $\ell^{\text{th}}$ columns of $f_i'$.
Also, $f_{i+1}$ is obtained from $f_i'$ by first inserting a copy of the $k^{\text{th}}$ row of $f_i$ (observe that this is a free row!) between the $(\ell-1)^{\text{st}}$ and $\ell^{\text{th}}$ rows of $f_i$ (this new row is 
also a free row!) and then modifying
the dependent rows to ensure that $(f_{i+1},f_{i+1}') \in \Hom_{\OVIC(R)}(R^d,R^{n+i+1})$.
\end{compactitem}
This clearly defines a partial order on $\cP_R(d)$.

\begin{claim}
The partially ordered set $(\cP_R(d), \preceq)$ is well partially ordered.
\end{claim}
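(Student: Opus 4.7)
The plan is to encode each element of $\cP_R(d)$ as a word in a finite alphabet and reduce the claim to the variant of Higman's lemma on $\widetilde{\Sigma}^\star$ provided by Lemma~\ref{lem:word-poset}.

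Let $\Sigma = \{\sigma_1, \ldots, \sigma_d\} \sqcup (R^d \times R^d)$; this is a finite alphabet since $R$ is a finite ring. For $(f,f') \in \cP_R(d)$ with $f' \colon R^n \to R^d$ and $S_c(f') = \{s_1 < \cdots < s_d\}$, I would define $\Psi(f,f') \in \Sigma^\star$ to be the length-$n$ word whose $j$-th letter is $\sigma_i$ when $j = s_i$, and is otherwise the pair consisting of the $j$-th column of $f'$ and the $j$-th row of $f$. Because the dependent rows of $f$ are determined by $S$, the free rows, and the free columns via $f'f = 1_{R^d}$, the map $\Psi$ is injective.

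The key step is to show that $\Psi(f_1,f_1') \le \Psi(f_2,f_2')$ in $\widetilde{\Sigma}^\star$ implies $(f_1,f_1') \preceq (f_2,f_2')$ in $\cP_R(d)$. Granting this, given any sequence in $\cP_R(d)$, Lemma~\ref{lem:word-poset} applied to the corresponding words produces indices $i < j$ with $\Psi(f_i,f_i') \le \Psi(f_j,f_j')$, hence $(f_i,f_i') \preceq (f_j,f_j')$, which proves that $\preceq$ is well partially ordered. To prove the key implication, let $\varphi \colon [n_1] \to [n_2]$ be an increasing map witnessing $\Psi(f_1,f_1') \le \Psi(f_2,f_2')$; since the $\sigma_i$'s are pairwise distinct letters, $\varphi$ is forced to send the standard positions of $w_1 := \Psi(f_1,f_1')$ to the standard positions of $w_2 := \Psi(f_2,f_2')$ in order. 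I would then construct $(f_2,f_2')$ from $(f_1,f_1')$ by a sequence of insertions indexed by the positions $j^\ast \in [n_2] \setminus \varphi([n_1])$, processed from smallest to largest: at each such step, the defining property of $\widetilde{\Sigma}^\star$ supplies some $k \in [n_1]$ with $\varphi(k) \le j^\ast$ and $w_2[\varphi(k)] = w_2[j^\ast]$, and $j^\ast \notin \varphi([n_1])$ forces both $j^\ast$ and $\varphi(k)$ to be free (non-$\sigma_i$) positions. I would then invoke Lemma~\ref{lemma:insertion} with this $k$ and $\ell = j^\ast$ to insert a copy of the free column at position $\varphi(k)$ of the current word at position $\ell$.

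The main obstacle is checking that this insertion procedure stays inside $\OVIC(R)$, namely that the non-invertibility hypothesis of Lemma~\ref{lemma:insertion} on the inserted column is always met. This should reduce to the observation that $\varphi(k) \le \ell$ forces $\widehat{\varphi(k)} \le \widehat{\ell}$, so the column-adapted condition satisfied at the source position $\varphi(k)$ is at least as strong as what is required at position $\ell$; and the requirement $k \le \ell$ from the definition of $\preceq$ is precisely $\varphi(k) \le j^\ast$. Processing insertions in left-to-right order keeps the current word synchronized with $w_2$ (agreeing on the prefix $[1, j^\ast - 1]$ before each step, with the as-yet-unmatched letters of $w_1$ appearing in order as suffix), so after $n_2 - n_1$ insertions the procedure terminates at $(f_2, f_2')$, establishing $(f_1,f_1') \preceq (f_2,f_2')$.
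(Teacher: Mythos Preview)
Your encoding and reduction to Lemma~\ref{lem:word-poset} is exactly the paper's approach; the paper uses a single symbol $(\spadesuit,\spadesuit)$ at the dependent positions where you use $\sigma_1,\dots,\sigma_d$, but since each word has exactly $d$ such letters the two encodings are equivalent. You are also right that the crux is the order-\emph{reflecting} direction $\Psi(f_1,f_1')\le\Psi(f_2,f_2')\Rightarrow(f_1,f_1')\preceq(f_2,f_2')$, which the paper's one-line argument does not spell out.

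There is, however, a genuine gap: that implication is \emph{false} with $\preceq$ as defined. Take $d=1$, $R=\Field_2$, distinct free letters $a\ne b$, and $w_1=[\sigma_1,a,b]$, $w_2=[\sigma_1,a,b,a]$; then $\varphi=(1,2,3)$ witnesses $w_1\le w_2$ in $\widetilde{\Sigma}^\star$ (for $j=4$ use $i=2$), but every admissible one-step insertion from $w_1$ has $\ell\le 3$ and hence preserves the last letter $b$, producing only $[\sigma_1,a,a,b]$ or $[\sigma_1,a,b,b]$, never $w_2$. Your procedure hits exactly this wall: once $\varphi([n_1])\subseteq[1,j^\ast-1]$, the instruction to insert at $\ell=j^\ast$ asks for a position one beyond the current length, which the constraint $1\le k\le\ell\le n+i$ in the definition of $\preceq$ and in Lemma~\ref{lemma:insertion} forbids. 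The simplest repair is to relax that constraint to $\ell\le n+i+1$, i.e.\ allow appending a duplicated free letter at the end; Lemma~\ref{lemma:insertion} extends to $\ell=n+1$ (with $\widehat{\ell}=d$ the non-invertibility hypothesis is vacuous), and with this change your left-to-right insertion argument goes through since one always has $j_t^\ast\le n_1+t$.
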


\begin{proof}[{Proof of Claim}]
Define 
\[
\Sigma = \left(R^{d} \amalg \{\clubsuit\}\right) \times \left(R^{d} \amalg \{\spadesuit\}\right),
\]
where $\clubsuit$ and $\spadesuit$ are formal symbols.  We will
prove that $(\cP_R(d),\preceq)$ is isomorphic to a subposet of the well partially ordered poset $\widetilde{\Sigma}^\star$ from Lemma~\ref{lem:word-poset}, so by Lemma~\ref{lem:poset-noeth}(a) is itself well partially ordered.  
Consider $(f,f') \in \Hom_{\OVIC(R)}(R^d, R^n)$.  For $1 \leq i \leq n$, let $(r_i,c_i) \in \Sigma$
be as follows.
\begin{compactitem}
\item If $i \in S_c(f')$, then $(r_i,c_i) = (\clubsuit,\spadesuit)$.
Observe that in this case the $i^{\text{th}}$ row of $f$ is a dependent row.
\item If $i \notin S_c(f')$, then $r_i$ is the $i^{\text{th}}$ row of $f$ and
$c_i$ is the $i^{\text{th}}$ column of $f'$.  Observe that in this case the $i^{\text{th}}$ row of $f$ is a free row.
\end{compactitem}
We get an element $(r_1,c_1)\cdots(r_n,c_n) \in \widetilde{\Sigma}^\star$.  We claim that the map taking
the pair $(f,f')$ to $(r_1,c_1)\cdots(r_n,c_n)$ is an injection from 
$\cP_R(d)$ to $\widetilde{\Sigma}^{\star}$.
To see this, consider an element $(r_1,c_1)\cdots(r_n,c_n) \in \widetilde{\Sigma}^\star$ in its image.  Our
goal is to reconstruct $(f,f')$.  First, we can read off all the free rows of $f$.  Second,
if the $i^{\text{th}}$ row of $f$ is free, then we can also read off the $i^{\text{th}}$ column of $f'$.  The
other columns of $f'$ are determined by the fact that they are precisely the standard basis vectors
in $R^d$, appearing in their natural order.  But now that we have all of $f'$ and the free rows of $f$, the dependent rows of $f$ are determined since $f'f = 1_{R^d}$.  The claim follows.  That
this injection is order-preserving is immediate from the definitions.  The only subtle point is the reason
we required that $k \notin S_c(f_i')$ in the second case of the definition of the ordering on
$\cP_R(d)$.  This is needed to ensure that no dependent rows of $f$ are copied when producing a larger element.
\end{proof}

We now extend $\preceq$ to a total order $\leq$.  Fix some arbitrary total order on $R^d$, and
consider $(f,f') \in \Hom_{\OVIC(R)}(R^d,R^n)$ and $(g,g') \in \Hom_{\OVIC(R)}(R^d,R^{n'})$.  We then determine if $(f,f') \leq (g,g')$ via the following
procedure.
\begin{compactitem}
\item If $n < n'$, then $(f,f')<(g,g')$.
\item Otherwise, $n=n'$.  If $S_c(f') < S_c(g')$ in lexicographic ordering, set $(f,f')<(g,g')$.
\item Otherwise, $S_c(f') = S_c(g')$.  If $f' \neq g'$, then compare the sequences
of elements of $R^d$ which form the columns of $f'$ and $g'$ using the lexicographic ordering
and the fixed total ordering on $R^d$.
\item Finally, if $f'=g'$, then compare the sequences of elements of $R^d$ which form
the free rows of $f$ and $g$ using the lexicographic ordering and the fixed total
ordering on $R^d$.
\end{compactitem}
It is clear that this determines a total order $\leq$ on $\cP_R(d)$ that extends $\preceq$.

It remains to check that these orders have the property claimed by the lemma.
Consider $(f,f') \in \Hom_{\OVIC(R)}(R^d, R^n)$ and $(g,g') \in \Hom_{\OVIC(R)}(R^d, R^{n'})$ with
$(f,f') \preceq (g,g')$.  By repeated applications of
Lemma \ref{lemma:insertion}, there
exists $(\phi,\phi') \in \Hom_{\OVIC(R)}(R^n,R^{n'})$ such that $(g,g') = (\phi,\phi')(f,f')$.
Now pick $(f_1,f'_1) \in \Hom_{\OVIC(R)}(R^d, R^n)$ such that $(f_1,f'_1) < (f,f')$.
We want to show that $(\phi,\phi')(f_1,f'_1) < (\phi,\phi')(f,f')$.  
If $S_c(f'_1) < S_c(f')$ in the lexicographic order, then this is an immediate consequence
of Lemma \ref{lemma:insertion}, so we can assume that $S_c(f_1') = S_c(f')$.
In this case, it
follows from Lemma \ref{lemma:insertion} that $f_1' \phi'$ and $f' \phi'$ 
are obtained by inserting
the same columns into $f_1'$ and $f'$, respectively.  If $f_1' \neq f'$, then it follows
immediately that $(\phi,\phi')(f_1,f'_1) < (\phi,\phi')(f,f')$.  Otherwise, it follows
from Lemma \ref{lemma:insertion} that $\phi f_1$ and $\phi f$ are obtained by duplicating
the same rows in $f_1$ and $f$, respectively, and it follows that 
$(\phi,\phi')(f_1,f'_1) < (\phi,\phi')(f,f')$, as desired.
\end{proof}

\begin{remark} \label{rmk:no-term-order}
The total ordering that we define for the proof of Lemma~\ref{lemma:ovickey} need not be compatible with all compositions in the sense that $f' < f$ implies that $gf' < gf$ for all $g$. This assumption is in place in \cite{tca-gb} which is why we cannot use the machinery from that paper.  To see this, consider the example $R = \Z/16$ and define
\begin{align*}
f &= \begin{pmatrix} 0 \\ 1 \end{pmatrix}, &f'_1 &= \begin{pmatrix} 2 & 1 \end{pmatrix}, &f'_2 &= \begin{pmatrix} 6 & 1 \end{pmatrix},\\
g &= \begin{pmatrix} 0 & 0 \\ 1 & 0 \\ 0 & 1 \end{pmatrix},  &g'_1 &= \begin{pmatrix} 2 & 1 & 0 \\ 0 & 0 & 1 \end{pmatrix}, &g'_2 &= \begin{pmatrix} 6 & 1 & 0\\ 0 & 0 & 1 \end{pmatrix}.
\end{align*}
Then $(f,f'_i) \in \Hom_{\OVIC(R)}(R^1,R^2)$ and $(g,g'_i) \in \Hom_{\OVIC(R)}(R^2,R^3)$ for $i=1,2$. Also, 
\begin{align*}
g'_1f'_1 &= \begin{pmatrix} 4 & 2 & 1 \end{pmatrix},  &g'_1 f'_2 &= \begin{pmatrix} 12 & 6 & 1 \end{pmatrix},\\
g'_2f'_1 &= \begin{pmatrix} 12 & 2 & 1 \end{pmatrix}, &g'_2f'_2  &= \begin{pmatrix} 4 & 6 & 1 \end{pmatrix}.
\end{align*}
Using our ordering, we need to fix a total ordering on $\Z/16$. If we choose $2<6$, then we would have $f'_1 < f'_2$. This forces $4<12$ by considering $g'_1f'_1 < g'_1 f'_2$, but it also forces $12<4$ by considering $g'_2 f'_1 < g'_2 f'_2$. Hence our ordering is not compatible with all compositions.

In particular, we cannot guarantee that $\init(M)$ is an $\OVIC(R)$-submodule of $P_d$. It is possible that there exist total orderings that are compatible with all compositions.
\end{remark}

\paragraph{Constructing the orders, general case.}
We finally come to the general case.

\begin{proof}[Proof of Lemma \ref{lemma:ovickey}, general case]
Let $R$ be an arbitrary finite commutative ring.  As we mentioned above, $R$ is Artinian, so we can
write $R \cong R_1 \times \cdots \times R_q$ with $R_i$ a finite local ring \cite[Theorem 8.7]{AtiyahMacDonald}.  Indeed,
we have fixed one such isomorphism.
For all $n,n' \geq 0$,
we then have
\[\Hom_R(R^{n},R^{n'}) = \Hom_{R_1}(R_1^{n},R_1^{n'}) \times \cdots \times \Hom_{R_q}(R_q^{n},R_q^{n'}).\]
The set $\cP_R(d)$ can thus be identified with the set of tuples
\[((f_1,f_1'),\ldots,(f_q,f_q')) \in \cP_{R_1}(d) \times \cdots \times \cP_{R_q}(d)\]
such that there exists some single $n \geq 0$ with $(f_i,f_i') \in \Hom_{\OVIC(R_i)}(R_i^d,R_i^n)$ for
all $1 \leq i \leq q$.  Above we constructed the desired orders $\preceq$ and $\leq$
on each $\cP_{R_i}(d)$.  Define a partial order $\preceq$ on their product using the product partial order
and define a total order $\leq$ on their product using the lexicographic order.  This restricts to
orders $\preceq$ and $\leq$ on $\cP_R(d)$.  Lemma \ref{lem:poset-noeth} implies that $\preceq$ is well partially ordered,
and the remaining conclusions are immediate.
\end{proof}

\subsection{The categories of \texorpdfstring{$\bVI$-, $\btV$-, $\bwV$-, and $\bVIC$-}{VI-, V-, V'-, and VIC-}modules
are locally Noetherian}
\label{section:noetherianvect}

We now prove several of our main theorems.

\begin{proof}[{Proof of Theorem~\ref{maintheorem:vicnoetherian}}]
We wish to prove that the category of $\VIC(R,\Unit)$-modules is locally Noetherian.
By the discussion at the beginning of \S \ref{section:noetherianovc} (and Theorem \ref{thm:ovicnoetherian}), it
is enough to show that the inclusion functor
$\Phi\colon \OVIC(R) \rightarrow \VIC(R,\Unit)$ is finite.  Fix $d \geq 0$
and set $M = P_{\VIC(R,\Unit),R^d}$, so $M(R^n) = \bk[\Hom_{\VIC(R,\Unit)}(R^d,R^n)]$.
Our goal is to prove that the $\OVIC(R)$-module $\Phi^{\ast}(M)$ is finitely generated.
For every $\varphi \in \SL_d^{\Unit}(R)$,
we get an element $(\varphi,\varphi^{-1})$ of $\Phi^{\ast}(M)(R^d)$, and thus a morphism
$P_{\OVIC(R),R^d} \rightarrow \Phi^{\ast}(M)$.  For
$\varphi \in \GL_d(R)$ and $(f,f') \in \Hom_{\OVIC(R)}(R^d,R^{d+1})$, the element $(f,f')(\varphi,\varphi^{-1})$ is in $\Phi^{\ast}(M)(R^{d+1})$, and thus determines a morphism
$P_{\OVIC(R),R^{d+1}} \rightarrow \Phi^{\ast}(M)$.  Lemma \ref{lemma:ovicfactor} implies that the resulting map
\[
\left(\bigoplus_{\SL^{\Unit}_d(R)} P_{\OVIC(R),R^d}\right) \oplus
\left(\bigoplus_{\GL_d(R) \times \Hom_{\OVIC(R)}(R^d,R^{d+1})} P_{\OVIC(R),R^{d+1}}\right)
\longrightarrow \Phi^{\ast}(M)
\]
is a surjection.  Since $R$ is finite, this proves that 
$\Phi^{\ast}(M)$ is finitely generated.
\end{proof}

\begin{proof}[{Proof of Theorem~\ref{maintheorem:vinoetherian}}]
We wish to prove that the category of $\VI(R)$-modules is locally Noetherian.  
The forgetful functor $\VIC(R) \to \VI(R)$ is essentially surjective since it is the identity map on the objects. To see that it is finite, consider the pullback of $P_{\VI(R), R^n}$ to $\VIC(R)$. Since it is generated as a $\VI(R)$-module by the identity map ${\rm id}_{R^n} \in P_{\VI(R), R^n}(R^n) = \bk[\hom_{\VI(R)}(R^n, R^n)]$, the same is true as a $\VIC(R)$-module since the forgetful map is surjective on morphism sets.  This surjectivity uses the fact that $R$ is finite.  Indeed, this
implies that every stably free $R$-module is free, so the cokernel of a splittable $R$-linear map $R^m \rightarrow R^n$ is free.  The desired statement now follows from Lemma~\ref{lem:finite-functor} and Theorem~\ref{maintheorem:vicnoetherian}.
\end{proof}

\begin{proof}[{Proof of Theorem~\ref{maintheorem:artinianconj}}]
We wish to prove that the category of $\tV(R)$-modules is locally Noetherian.  By Lemma~\ref{lem:finite-functor} and
Theorem~\ref{maintheorem:vinoetherian}, it is enough to prove that the inclusion functor
$\iota \colon \VI(R) \to \tV(R)$ is essentially surjective and finite.  Essential surjectivity is clear, so we must only prove finiteness.
Let $V$ be a free $R$-module.  By considering its image, every splittable $R$-linear map canonically factors as a composition of a surjective map followed by a splittable injective map. So we have the decomposition of $\VI(R)$-modules
\[
\iota^*(P_{\tV(R), V}) = \bigoplus_{V \to W \to 0} P_{\VI(R), W}
\]
where the sum is over all free quotients $W$ of $V$.
\end{proof}

\begin{proof}[{Proof of Theorem~\ref{maintheorem:wvnoetherian}}]
We wish to prove that the category of $\wV(R)$-modules is locally Noetherian.
By Lemma~\ref{lem:finite-functor} and Theorem~\ref{maintheorem:vinoetherian}, 
it is enough to prove that the inclusion functor $\iota \colon \VI(R) \to \wV(R)$ is 
essentially surjective and finite.  Essential surjectivity is clear, so we must only 
prove finiteness.  Consider some $d \geq 0$.  We must prove that
$\iota^{\ast}(P_{\wV(R),R^d})$ is a finitely generated $\VI(R)$-module.  
Let $N$ be the number of elements of $R^d$.  
Consider some $n \geq N$ and some $\wV(R)$-morphism $f\colon R^d \rightarrow R^n$.  
Viewing $f$ as an $n \times d$ matrix, there are at most $N$ distinct rows occurring
in $f$.  This implies that there
exists some $g \in \GL_d(R)$ such that the last $(n-N)$ rows of $f \circ g$ are 
identically $0$.  From this, we see that $f$ can be factored as
\[R^d \xrightarrow{f'} R^N \xrightarrow{f''} R^n,\]
where $f''$ is splittable and $f'$ is arbitrary.  This implies that
we have the decomposition of $\VI(R)$-modules
\[
\iota^*(P_{\wV(R), R^d}) = \bigoplus_{R^d \to R^{N'}} P_{\VI(R), R^{N'}},
\]
where the sum is over all $R$-linear maps $R^d \rightarrow R^{N'}$ with $N' \leq N$.
\end{proof}

\begin{remark} \label{rmk:schwartz-alt-proof}
A shorter proof of Theorem~\ref{maintheorem:vinoetherian} (and hence of Theorems~\ref{maintheorem:artinianconj} and~\ref{maintheorem:wvnoetherian}) is given in \cite[\S 8.3]{tca-gb} that deduces it from the locally Noetherian property of the opposite of the category of finite sets and surjective functions. We include the proof above since it follows easily from Theorem~\ref{maintheorem:vicnoetherian} which is the result that we actually need for our applications, and because the ideas for the proof were only made possible by work from both projects.
\end{remark}

\subsection{The category of \texorpdfstring{$\bSI$}{SI}-modules is locally Noetherian}
\label{section:noetheriansi}

Fix a finite commutative ring $R$.  Our goal in this section is to adapt the proof that the category
of $\VIC(R)$-modules is locally Noetherian to prove that the category of $\SI(R)$-modules is locally Noetherian.  The
proof of this is immediately after Theorem \ref{thm:osinoetherian} below.

\paragraph{Row-adapted maps.}
An $R$-linear map $f\colon R^{n} \rightarrow R^{m}$ is {\bf row-adapted} if the transpose $f^t$ of the matrix
representing $f$ is column-adapted in the sense of \S \ref{section:noetherianovc}, in which case
we define $S_r(f) = S_c(f^t)$.  Lemma \ref{lemma:adaptedclosed}
implies that the composition of two row-adapted maps is row-adapted.

\paragraph{The category $\bOSI$.}
Define $\OSI'(R)$ to be the category whose objects are pairs $(R^{2n},\omega)$, where $\omega$ is a symplectic
form on $R^{2n}$, and whose morphisms from $(R^{2n},\omega)$ to $(R^{2n'},\omega')$ are $R$-linear
maps $f\colon R^{2n} \rightarrow R^{2n'}$ which are symplectic and row-adapted.
Also, define $\OSI(R)$ to be the full 
subcategory of $\OSI'(R)$ spanned by the $(R^{2n},\omega)$ such that $\omega$
is the standard symplectic form on $R^{2n}$, i.e., if $\{\bb_1,\ldots,\bb_{2n}\}$ is the standard basis for $R^{2n}$, then
\[
\omega(\bb_{2i-1},\bb_{2j-1}) = \omega(\bb_{2i},\bb_{2j}) = 0 \quad \text{and} \quad \omega(\bb_{2i-1},\bb_{2j}) = \delta_{ij} \quad \quad (1 \leq i,j \leq n),
\]
where $\delta_{ij}$ is the Kronecker delta.  We will frequently omit the $\omega$ from objects of $\OSI(R)$.

\paragraph{Factorization of $\bSI$-morphisms.}
The following is the analogue for $\SI(R)$ of Lemma \ref{lemma:ovicfactor}.

\begin{lemma}
\label{lemma:osifactor}
Let $R$ be a finite commutative ring and $f \in \Hom_{\SI(R)}((R^{2n},\omega),(R^{2n'},\omega'))$.
Then we can uniquely write $f = f_1 f_2$, where $f_2 \in \Iso_{\SI(R)}((R^{2n},\omega),(R^{2n},\lambda))$
for some symplectic form $\lambda$ on $R^{2n}$ and $f_1 \in \Hom_{\OSI'(R)}((R^{2n},\lambda),(R^{2n'},\omega'))$.
\end{lemma}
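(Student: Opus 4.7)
The plan is to reduce this to the $\VIC$-style factorization of Lemma \ref{lemma:adaptedfactor} and then transport the symplectic structure across the resulting linear isomorphism. The role played there by column-adapted maps will now be played by their transposes (row-adapted maps).

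First, I would note that any symplectic map $f\colon(R^{2n},\omega)\to(R^{2n'},\omega')$ is automatically a splittable $R$-linear injection, since $R^{2n'}=f(R^{2n})\oplus f(R^{2n})^{\perp}$. Dualizing Lemma \ref{lemma:adaptedfactor} (by transposing matrices), every splittable $R$-linear injection $R^{2n}\hookrightarrow R^{2n'}$ factors uniquely as $f=f_1\,h$, where $f_1\colon R^{2n}\to R^{2n'}$ is row-adapted and $h\in\GL(R^{2n})$. This is the linear decomposition I will promote to a symplectic decomposition.

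Next, I would define a bilinear form $\lambda$ on $R^{2n}$ by
\[
\lambda(u,v) = \omega(h^{-1}u,h^{-1}v) \qquad (u,v\in R^{2n}).
\]
Since $\omega$ is symplectic and $h$ is an $R$-linear isomorphism, $\lambda$ is alternating and induces an isomorphism $R^{2n}\to(R^{2n})^{\ast}$, so $\lambda$ is a symplectic form. By construction $h$ is a symplectic isomorphism $(R^{2n},\omega)\to(R^{2n},\lambda)$, which I take to be $f_2$. I then check that $f_1$ is symplectic from $(R^{2n},\lambda)$ to $(R^{2n'},\omega')$: for any $u,v\in R^{2n}$,
\[
\omega'(f_1(u),f_1(v)) = \omega'(f(h^{-1}u),f(h^{-1}v)) = \omega(h^{-1}u,h^{-1}v) = \lambda(u,v),
\]
where the middle equality uses that $f$ is symplectic. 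Thus $f_1\in\Hom_{\OSI'(R)}((R^{2n},\lambda),(R^{2n'},\omega'))$ and $f=f_1 f_2$ is a factorization of the required form.

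Finally, for uniqueness, suppose $f=f_1'f_2'$ is another such factorization with $f_2'\in\Iso_{\SI(R)}((R^{2n},\omega),(R^{2n},\lambda'))$ and $f_1'$ row-adapted and symplectic. Forgetting the symplectic structure, this gives a factorization of $f$ as (row-adapted)$\circ$(invertible), so the uniqueness clause of the dualized Lemma \ref{lemma:adaptedfactor} forces $f_1=f_1'$ and $f_2=f_2'$ as $R$-linear maps; the form $\lambda'$ is then forced to equal $\lambda$ by the defining identity $\lambda'(u,v)=\omega((f_2')^{-1}u,(f_2')^{-1}v)$. The main (minor) obstacle is simply verifying that the dualization of Lemma \ref{lemma:adaptedfactor} indeed gives the row-adapted factorization of splittable injections; beyond that, the symplectic content of the argument is essentially transport of structure along $h$.
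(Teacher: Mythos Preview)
Your proof is correct and essentially identical to the paper's: both transpose Lemma~\ref{lemma:adaptedfactor} to get the unique row-adapted factorization $f=f_1 f_2$ of the underlying linear map, then push $\omega$ forward along $f_2$ to obtain $\lambda$ and observe that $f_1$ is automatically symplectic. Your version is slightly more explicit about why the dualized lemma applies (splittability via the orthogonal complement) and about uniqueness, but the argument is the same.
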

\begin{proof}
Applying Lemma \ref{lemma:adaptedfactor} to the transpose of $f$ and then transposing the result, we can uniquely write
$f = f_1 f_2$, where $f_2\colon R^{2n} \rightarrow R^{2n}$ is an isomorphism and $f_1\colon R^{2n}\rightarrow R^{2n'}$
is a row-adapted map.  There exists a unique symplectic form $\lambda$ on $R^{2n}$
such that $f_2\colon (R^{2n},\omega) \rightarrow (R^{2n},\lambda)$ is a symplectic map.  Since
$f\colon (R^{2n},\omega) \rightarrow (R^{2n'},\omega')$ 
is a symplectic map and $f_1 = f f_2^{-1}$, it follows that $f_1\colon (R^{2n},\lambda) \rightarrow (R^{2n'},\omega')$
is a symplectic map, as desired.
\end{proof}

\paragraph{A partial order.}
Our goal now is to prove Theorem~\ref{thm:osinoetherian} below, which says
that the category of $\OSI(R)$-modules is locally Noetherian.  This requires the
existence of a certain partial ordering.  For $d \geq 0$ and $\omega$ a symplectic
form on $R^{2d}$, define
\[
\cP_R(d,\omega) = \bigsqcup_{n=0}^{\infty} \Hom_{\OSI'(R)}((R^{2d},\omega), R^{2n});
\]
here we are using our convention that $R^{2n}$ is given the standard symplectic form.

\begin{lemma} 
\label{lemma:osikey}
Let $R$ be a finite commutative ring.  Fix $d \geq 0$ and a symplectic form $\omega$ on $R^{2d}$.
There exists a well partial ordering $\preceq$ on $\cP_R(d,\omega)$ together with an extension 
$\leq$ of $\preceq$ to a total ordering such that the following holds.
Consider $f \in \Hom_{\OSI'(R)}((R^{2d}, \omega), R^{2n})$ and $g \in \Hom_{\OSI'(R)}((R^{2d}, \omega), R^{2n'})$ 
with $f \preceq g$.  Then there is some $\phi \in \Hom_{\OSI(R)}(R^{2n},R^{2n'})$ with the following two properties.
\begin{compactenum}[\indent \rm 1.]
\item We have $g = \phi f$.
\item If $f_1 \in \Hom_{\OSI'(R)}((R^{2d}, \omega), R^{2n''})$ satisfies $f_1 < f$, then
$\phi f_1 < g$.
\end{compactenum}
\end{lemma}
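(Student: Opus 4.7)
The plan is to closely mirror the proof of Lemma \ref{lemma:ovickey}, adjusting the constructions to accommodate the symplectic structure: elementary row insertions will occur in symplectic pairs rather than singly, and the new rows must be chosen compatibly with both the row-adapted condition and the symplecticity of $f$.

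As in the $\OVIC$ case, I would first reduce to the situation where $R$ is a finite commutative local ring. The Artinian decomposition $R = R_1 \times \cdots \times R_q$ splits $\omega$ as $(\omega_1, \ldots, \omega_q)$ and gives an embedding $\cP_R(d,\omega) \hookrightarrow \prod_{i=1}^q \cP_{R_i}(d,\omega_i)$; taking the product partial order and the lexicographic total order on the factors yields the desired orderings on $\cP_R(d,\omega)$, with well-partial-orderedness following from Lemma \ref{lem:poset-noeth}. For local $R$, I would define $\preceq$ via elementary symplectic insertions, each going from $\Hom_{\OSI'(R)}((R^{2d},\omega), R^{2n})$ to $\Hom_{\OSI'(R)}((R^{2d},\omega), R^{2n+2})$: pick $\ell \in \{0, \ldots, n\}$ and a pair of free-row indices $k_1, k_2 \in \{1, \ldots, 2n\} \setminus S_r(f)$, then insert a new pair of rows at positions $(2\ell+1, 2\ell+2)$ modelled on rows $k_1$ and $k_2$ of $f$, and finally modify the dependent rows in the unique way that restores both row-adaptedness and symplecticity. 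Set $f \preceq g$ iff $g$ is reachable from $f$ by such a sequence.

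Well-partial-orderedness of $\preceq$ would follow by embedding $\cP_R(d,\omega)$ into the poset $\widetilde{\Sigma}^\star$ of Lemma \ref{lem:word-poset}, where $\Sigma$ is the finite alphabet encoding, for each symplectic pair of rows in the codomain, either a dependent marker $(\spadesuit, \spadesuit)$ or the pair of free-row entries in $R^{2d}$ together with the corresponding column data of $f^T$ (which captures the non-invertibility data needed for row-adaptedness). The total extension $\leq$ is defined lexicographically after fixing a total order on $R^{2d}$: first compare codomain dimension, then $S_r(f)$ in lexicographic order on $\{1, \ldots, 2n\}$, then the columns of $f^T$ outside $S_r(f)$, and finally the free rows of $f$. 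The existence of $\phi \in \Hom_{\OSI(R)}(R^{2n}, R^{2n'})$ with $g = \phi f$, together with the property that $f_1 < f$ implies $\phi f_1 < g$, is then obtained by iterating a symplectic analogue of Lemma \ref{lemma:insertion}.

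The main obstacle will be establishing this symplectic insertion lemma: given an insertion position $\ell$ and free-row indices $k_1, k_2$, one must construct $\phi \in \Hom_{\OSI(R)}(R^{2n}, R^{2n+2})$ that is simultaneously row-adapted, symplectic, and behaves correctly with respect to $\leq$ (so that composing with an $f_1 < f$ of the same $S_r$-type produces a smaller composite). The symplecticity constraint rigidly links the two inserted rows at positions $(2\ell+1, 2\ell+2)$, since their contribution to the symplectic form must cancel, and one must verify that the resulting forced structure is consistent with the non-invertibility conditions of the row-adapted property and that $\phi$ genuinely lies in $\OSI(R)$ rather than merely in $\OSI'(R)$. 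Once this symplectic insertion lemma is in hand, the iterated construction of $\phi$ and the verification of order-compatibility proceed by the structural argument of Lemma \ref{lemma:ovickey} essentially verbatim.
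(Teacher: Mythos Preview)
Your proposal has a genuine gap: it transplants the $\OVIC$ machinery (row duplication, the poset $\widetilde{\Sigma}^\star$, and ``modify the dependent rows'') into a setting where none of these notions survive.  In $\OSI'(R)$ a morphism is a \emph{single} row-adapted symplectic map $f$; there is no companion map $f'$ and hence no free/dependent dichotomy among rows.  The rows indexed by $S_r(f)$ are forced by row-adaptedness to be standard basis vectors, so they cannot be ``modified to restore symplecticity'' without destroying row-adaptedness.  More seriously, your elementary step of inserting copies of two existing rows $r_{k_1},r_{k_2}$ at a new symplectic pair of positions almost never produces a symplectic map: writing the columns of $f$ as $c_1,\ldots,c_{2d}$, one computes that the inserted pair changes each pairing by $(r_{k_1})_i (r_{k_2})_j-(r_{k_2})_i(r_{k_1})_j$, so the result is symplectic only when $r_{k_1}\wedge r_{k_2}=0$.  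Thus your $\preceq$, restricted to $\cP_R(d,\omega)$, has strictly fewer relations than the pullback of the $\widetilde{\Sigma}^\star$ order along your encoding, and the embedding into $\widetilde{\Sigma}^\star$ does \emph{not} certify that your $\preceq$ is a well partial order.  You flag the symplectic insertion lemma as the main obstacle, but as formulated (duplicating a pair of free rows and then fixing things up) it is simply false.

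The paper avoids this by changing the shape of the argument rather than patching it.  It defines $f\preceq g$ to mean that $f$ is obtained from $g$ by deleting a set of row-pairs $\{2i-1,2i\}$ disjoint from $S_r(g)$; the ``new'' rows of $g$ are then arbitrary (subject only to $g$ itself being symplectic and row-adapted), so no duplication constraint is imposed.  Accordingly the well-partial-ordering is obtained from the ordinary Higman poset $\Sigma^\star$ (Lemma~\ref{lem:higman}), not $\widetilde{\Sigma}^\star$, via the encoding $f\mapsto \theta_1\cdots\theta_n$ with $\theta_i=(r_{2i-1},r_{2i})$ and $r_j=\spadesuit$ when $j\in S_r(f)$.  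The insertion map $\phi$ of Lemma~\ref{lemma:insertionsi} is then built from the \emph{actual} deleted rows of $g$, and its symplecticity is verified by playing the symplectic identities for $f$ and for $g$ against each other; there is no step in which one freely chooses inserted rows and then tries to repair the form.  Your reduction to the local case and your total order (drop the redundant ``columns of $f^T$'' versus ``free rows'' distinction --- just compare $n$, then $S_r(f)$, then the rows lexicographically) are fine, but the core construction must be redone along these lines.
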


\noindent
We postpone the proof of Lemma \ref{lemma:osikey} until the end of this section.

\paragraph{Noetherian theorems.}
For $d \geq 0$ and $\omega$ a symplectic form
on $R^{2d}$, define $Q_{d,\omega}$ to be the $\OSI(R)$-module obtained by pulling back the $\OSI'(R)$-module
$P_{\OSI'(R),(R^{2d},\omega)}$, so
\[Q_{d,\omega}(R^{2n}) = \bk[\Hom_{\OSI'(R)}((R^{2d},\omega),R^{2n})].\]
The main consequence of Lemma \ref{lemma:osikey} is as follows.

\begin{theorem}
\label{thm:osinoetherian}
Let $R$ be a finite commutative ring.  Then the category of $\OSI(R)$-modules is locally Noetherian.  Moreover,
for $d \geq 0$ and $\omega$ a symplectic form on $R^{2d}$, the $\OSI(R)$-module $Q_{d,\omega}$ is finitely
generated.
\end{theorem}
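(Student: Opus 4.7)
The plan is to mirror the Gröbner-style proof of Theorem~\ref{thm:ovicnoetherian} using the well partial order $\preceq$ and its total extension $\leq$ on $\cP_R(d,\omega)$ supplied by Lemma~\ref{lemma:osikey}. Since the standard symplectic form $\omega_{\rm std}$ gives $P_{\OSI(R),R^{2d}} = Q_{d,\omega_{\rm std}}$, Lemma~\ref{lemma:noetheriancrit} reduces the Noetherian assertion to showing that every submodule of $Q_{d,\omega}$ is finitely generated, for every $d$ and every symplectic form $\omega$ on $R^{2d}$. I will handle this together with finite generation of $Q_{d,\omega}$ itself.

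For nonzero $x = \sum \alpha_f e_f \in Q_{d,\omega}(R^{2n})$, define $\init(x) = \alpha_{f_0} e_{f_0}$ where $f_0$ is the $\leq$-largest index with $\alpha_{f_0} \neq 0$, and for a submodule $M \subseteq Q_{d,\omega}$ put $\init(M)(R^{2n}) = \bk\{\init(x) : x \in M(R^{2n})\}$. The usual ``subtract off the leading term'' argument shows that if $N \subseteq M$ with $\init(N) = \init(M)$ then $N = M$. Now given a strictly ascending chain $M_0 \subsetneqq M_1 \subsetneqq \cdots$, pick $\lambda_i e_{f_i} \in \init(M_i)(R^{2n_i}) \setminus \init(M_{i-1})(R^{2n_i})$. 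Since $\preceq$ is a well partial order, Lemma~\ref{lem:poset-noeth}(b) yields $i_0 < i_1 < \cdots$ with $f_{i_0} \preceq f_{i_1} \preceq \cdots$, and Noetherianity of $\bk$ furnishes a relation $\lambda_{i_{m+1}} = \sum_{j=0}^m c_j \lambda_{i_j}$ for some $m$. Property~(1) of Lemma~\ref{lemma:osikey} provides $\phi_j \in \Hom_{\OSI(R)}(R^{2n_{i_j}},R^{2n_{i_{m+1}}})$ with $f_{i_{m+1}} = \phi_j f_{i_j}$; choosing $x_j \in M_{i_j}(R^{2n_{i_j}})$ with $\init(x_j) = \lambda_{i_j} e_{f_{i_j}}$, the element $y = \sum_{j=0}^m c_j \phi_j x_j$ lies in $M_{i_{m+1}}(R^{2n_{i_{m+1}}})$, and property~(2) of Lemma~\ref{lemma:osikey} guarantees $\init(y) = \lambda_{i_{m+1}} e_{f_{i_{m+1}}}$, contradicting that this term was absent from $\init(M_{i_{m+1}-1})$ by construction of the sequence.

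To establish that $Q_{d,\omega}$ is finitely generated as an $\OSI(R)$-module, I use that in a well partially ordered set the subset of $\preceq$-minimal elements is a finite antichain. Let $f_1,\ldots,f_N$ enumerate these minima; for any $f \in \cP_R(d,\omega)$ there is some $k$ with $f_k \preceq f$, and property~(1) of Lemma~\ref{lemma:osikey} furnishes $\phi \in \Hom_{\OSI(R)}(R^{2n_k},R^{2n_f})$ with $f = \phi f_k$, so $e_f$ lies in the $\OSI(R)$-submodule generated by $e_{f_k}$. Hence $\{e_{f_1},\ldots,e_{f_N}\}$ generates $Q_{d,\omega}$.

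All nontrivial content has been deferred to Lemma~\ref{lemma:osikey}, so I expect the main obstacle of the full proof of the theorem to lie entirely in the construction and verification of the partial and total orders there; the argument above is a mechanical adaptation of the $\OVIC(R)$ proof. The only feature requiring care is that although $Q_{d,\omega}$ is indexed by morphisms in the larger category $\OSI'(R)$ (so that in general $(R^{2d},\omega)$ is not itself an object of $\OSI(R)$), Lemma~\ref{lemma:osikey} is formulated so that the comparison morphism $\phi$ lives in $\OSI(R)$, which is precisely what makes both the cancellation step and the generation step well-defined as $\OSI(R)$-module computations.
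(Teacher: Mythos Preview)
Your proof is correct and follows the same approach as the paper, which simply says to adapt the proof of Theorem~\ref{thm:ovicnoetherian} by replacing Lemma~\ref{lemma:ovickey} with Lemma~\ref{lemma:osikey}; you have written out this adaptation explicitly and correctly identified that the key point is that the comparison morphism $\phi$ lands in $\OSI(R)$ rather than merely $\OSI'(R)$. One small remark: your third paragraph (the antichain argument) is unnecessary, since your second paragraph already shows that \emph{every} submodule of $Q_{d,\omega}$ is finitely generated, and $Q_{d,\omega}$ is a submodule of itself.
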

\begin{proof}
The proof of Theorem~\ref{thm:ovicnoetherian} can easily be adapted to prove that in fact every submodule
of $Q_{d,\omega}$ is finitely generated (replace Lemma \ref{lemma:ovickey} with Lemma \ref{lemma:osikey}).
If $\omega$ is the standard symplectic form on $R^{2d}$, then $Q_{d,\omega} = P_{\OSI(R),R^{2d}}$.  We can
thus use Lemma \ref{lemma:noetheriancrit} to deduce that the category of $\OSI(R)$-modules is locally Noetherian.
\end{proof}

\noindent
We now use Theorem \ref{thm:osinoetherian} to deduce Theorem \ref{maintheorem:sinoetherian}, which asserts
that the category of $\SI(R)$-modules is locally Noetherian when $R$ is a finite commutative ring.

\begin{proof}[{Proof of Theorem \ref{maintheorem:sinoetherian}}]
By Lemma \ref{lem:finite-functor} and Theorem \ref{thm:osinoetherian}, 
it is enough to show that the inclusion functor
$\Phi\colon \OSI(R) \rightarrow \SI(R)$ is finite.  Fix $d \geq 0$ and $\omega$ a symplectic
form on $R^{2d}$.  Set $M = P_{\SI(R),(R^{2d},\omega)}$, so $M(R^{2n}) = \bk[\Hom_{\SI(R)}((R^{2d},\omega),R^{2n})]$;
here we remind the reader of our convention that $R^{2n}$ is endowed with its standard symplectic form.
Our goal is to prove that the $\OSI(R)$-module $\Phi^{\ast}(M)$ is finitely generated.
For every symplectic form $\lambda$ on $R^{2d}$ and every element of $\Iso_{\SI(R)}((R^{2d},\omega),(R^{2d},\lambda))$,
we get a morphism $Q_{d,\lambda} \rightarrow \Phi^{\ast}(M)$ in the obvious way.  Lemma \ref{lemma:osifactor}
implies that the resulting map
\[
\bigoplus_{\substack{\lambda\text{ a symplectic}\\\text{form on }R^{2d}}} \left(
\bigoplus_{\Iso_{\SI(R)}((R^{2d},\omega),(R^{2d},\lambda))} Q_{d,\lambda}\right) \longrightarrow \Phi^{\ast}(M)
\]
is a surjection.  Theorem \ref{thm:osinoetherian} implies that each $Q_{d,\lambda}$ is a finitely
generated $\OSI(R)$-module.  Since $R$ is finite, this proves that
$\Phi^{\ast}(M)$ is finitely generated.
\end{proof}

\paragraph{Insertion maps.}
For the proof of Lemma \ref{lemma:osikey}, we will need the following analogue of
Lemma \ref{lemma:insertion}.

\begin{lemma}
\label{lemma:insertionsi}
Let $R$ be a commutative local ring.  Fix $d \geq 0$, a symplectic form $\omega$ on $R^{2d}$, and
some $f \in \Hom_{\OSI'(R)}((R^{2d},\omega),R^{2n})$ and $g \in \Hom_{\OSI'(R)}((R^{2d},\omega),R^{2n'})$. 
Assume that there exists some $I \subset \{1,\ldots,n'\} \setminus \Set{$i$}{either $2i-1 \in S_r(g)$ or $2i \in S_r(g)$}$ 
such that $f$ can be obtained from $g$
by deleting the rows $J:=\Set{$2i-1$, $2i$}{$i \in I$}$ (observe that this implies that $S_r(f)$ is equal to $S_r(g)$ after removing $J$ from $\{1,\dots,2n'\}$ and renumbering the rows in order).
Let the rows of $g$ be $r_1,\ldots,r_{2n'}$. Then there exists $\phi \in \Hom_{\OSI(R)}(R^{2n},R^{2n'})$ with the following properties.
\begin{compactitem}
\item We have $g = \phi f$.
\item Let $h \in \Hom_{\OSI(R)}((R^{2d},\omega),R^{2n})$ be such that $S_r(h) = S_r(f)$. 
Then $\phi h$ is obtained from $h$ by inserting the 
row $r_j$ in position $j$ of $h$ for all $j \in J$.
\item Let $h \in \Hom_{\OSI(R)}((R^{2d},\omega),R^{2n})$ be such that $S_r(h) < S_r(f)$ in the lexicographic
order.  Then $S_r(\phi h) < S_r(g)$ in the lexicographic order.
\end{compactitem}
\end{lemma}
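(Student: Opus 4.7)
The plan is to construct $\phi$ explicitly as a matrix and then verify the required properties, in analogy with the proof of Lemma~\ref{lemma:insertion}. The key combinatorial setup is as follows: since $J$ is a union of the standard symplectic pairs $\{2i-1, 2i\}$ for $i \in I$, the complement $\{1,\ldots,2n'\} \setminus J$ is likewise a union of such pairs (indexed by $\{1,\ldots,n'\} \setminus I$); let $\iota \colon \{1,\ldots,2n\} \to \{1,\ldots,2n'\} \setminus J$ be the order-preserving bijection, which therefore preserves the pair structure. Writing $S_r(f) = \{s_1 < \cdots < s_{2d}\}$, the row-adapted condition forces the $s_i^{\text{th}}$ row of any $h$ with $S_r(h) = S_r(f)$ to equal $e_i^T \in R^{2d}$, so any row vector in $R^{2d}$ can canonically be expressed as a combination of these rows.

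I will then define $\phi \colon R^{2n} \to R^{2n'}$ row by row: the $j^{\text{th}}$ row of $\phi$ is $e_{\iota^{-1}(j)}^T$ when $j \notin J$, and $\sum_{i=1}^{2d} (r_j)_i\, e_{s_i}^T$ when $j \in J$. This choice is essentially forced by the second bullet of the lemma, and from it one reads off immediately the factorization $g = \phi f$, the description of $\phi h$ for any $h$ with $S_r(h) = S_r(f)$, and the row-adapted property of $\phi$ with $S_r(\phi) = \{1, \ldots, 2n'\} \setminus J$: the non-invertibility of entries in rows $j \in J$ above the corresponding standard basis rows comes directly from the row-adapted condition on $g$, since those entries are equal to the corresponding entries $g_{j,i}$. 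The third bullet follows from the composition rule $S_r(\phi h) = \iota(S_r(h))$ (the transpose of Lemma~\ref{lemma:adaptedclosed}) together with the order-preservation of $\iota$.

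The main obstacle will be verifying that $\phi$ is symplectic. When $l$ or $l'$ lies outside $S_r(f)$, the column $\phi e_l$ equals $e_{\iota(l)}$ and the identity $\omega_{2n'}(\phi e_l, \phi e_{l'}) = \omega_{2n}(e_l, e_{l'})$ follows from the pair-preserving property of $\iota$. The delicate case is $l = s_i$ and $l' = s_{i'}$: here the $J$-components in the columns $\phi e_{s_i}$ and $\phi e_{s_{i'}}$ produce a correction term of the form $\sum_{k \in I} \bigl[(r_{2k-1})_i (r_{2k})_{i'} - (r_{2k})_i (r_{2k-1})_{i'}\bigr]$, with cross-terms between $S_r(g)$-positions and $J$-positions automatically vanishing because these indices lie in orthogonal symplectic pairs of $R^{2n'}$. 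To kill this correction, I will expand the symplectic identity $\omega(e_i, e_{i'}) = \sum_{k=1}^{n'} \bigl[g_{2k-1,i}\, g_{2k,i'} - g_{2k,i}\, g_{2k-1,i'}\bigr]$ for $g$ along the partition $\{1,\ldots,n'\} = I \sqcup (\{1,\ldots,n'\} \setminus I)$. Since $f$ is obtained from $g$ by deleting the rows in $J$ and $S_r(g) \cap J = \emptyset$, the piece of the sum over $\{1,\ldots,n'\} \setminus I$ is exactly the symplectic identity for $f$ and therefore also equals $\omega(e_i, e_{i'})$. Hence the piece over $I$, which is precisely the correction term, must vanish, completing the proof.
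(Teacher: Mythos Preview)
Your construction of $\phi$ is exactly the paper's, and your verification of the three bullet points and of row-adaptedness agrees with the paper's (which simply declares these to be ``easy calculations''). For the symplectic condition you argue slightly more directly than the paper: you expand $\omega_{n'}(g e_i, g e_{i'})$ over $k=1,\ldots,n'$ and split along $I$ versus its complement, whereas the paper decomposes each column as $x_i = x_i' + x_i''$ (supported on $J'$ and $J$ respectively), reduces to $\omega_{n'}(x_{s_i}'', x_{s_{i'}}'') = 0$, identifies $x_{s_i}'' = y_i''$ with the $J$-part of the $i$th column of $g$, and then uses the two symplectic identities for $f$ and $g$ in the bilinear expansion of $\omega_{n'}(y_i - y_i', y_j - y_j')$; your correction term is exactly their $\omega_{n'}(y_i'', y_{i'}'')$, so the two computations are the same in content. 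One wording point: your sentence ``when $l$ or $l'$ lies outside $S_r(f)$, the column $\phi e_l$ equals $e_{\iota(l)}$'' is literally true only for the index that is outside $S_r(f)$; in the mixed case you still need the observation (which you make later) that the $J$-supported part of the other column is symplectically orthogonal to $e_{\iota(l)}$ because $\iota$ preserves pairs, so you should fold that remark into this step rather than the next one.
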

\begin{proof}
Set $J' = \{1,\ldots,2n'\} \setminus J$, and 
write $J = \{j_1 < \cdots < j_{2(n'-n)}\}$ and $J' = \{j_1' < \cdots < j_{2n}'\}$.
Next, write $S_r(f) = \{s_1 < \cdots < s_{2d}\} \subset \{1,\ldots,{2n}\}$, and for $1 \leq i \leq 2n'$, let
$r_i  = (r_i^1,\ldots,r_i^{2d})$.  Define $\widehat{r}_i \in R^{2n}$ to
be the element that has $r_i^j$ in position $s_j$ for $1 \leq j \leq 2d$ and zeros elsewhere.  
Finally, define $\phi\colon R^{2n} \rightarrow R^{2n'}$ to be the map given by the matrix
defined as follows.
\begin{compactitem}
\item For $1 \leq k \leq 2(n'-n)$, the $j_k^{\text{th}}$ row of $\phi$ is $\widehat{r}_k$.
\item For $1 \leq k \leq 2n$, the $(j_k')^{\text{th}}$ row of $\phi$ has a $1$ in position $k$
and zeros elsewhere.
\end{compactitem}
The fact that $g$ is row-adapted implies that $\phi$ is row-adapted, and the three listed conclusions
of the lemma are easy calculations.  The only non-trivial fact that must be proved is that 
$\phi\colon R^{2n} \rightarrow R^{2n'}$ is symplectic with respect to the standard symplectic forms $\omega_n$
and $\omega_{n'}$ on $R^{2n}$ and $R^{2n'}$, respectively.

Let $x_1,\ldots,x_{2n} \in R^{2n'}$ be the columns of $\phi$.  To
prove that $\phi$ is symplectic with respect to $\omega_{n}$ and $\omega_{n'}$ is equivalent to proving that
\begin{equation}
\label{eqn:conclusion}
\omega_{n'}(x_{2i-1},x_{2j-1}) = \omega_{n'}(x_{2i},x_{2j}) = 0 \quad \text{and} \quad \omega_{n'}(x_{2i-1},x_{2j}) = \delta_{ij} \quad \quad (1 \leq i,j \leq n).
\end{equation}
For $1 \leq i \leq 2n$, let $x_i' \in R^{2n'}$ be the element obtained by replacing the $j_k^{\text{th}}$ entry
of $x_i$ with a $0$ for $1 \leq k \leq 2(n'-n)$ and let $x_i'' = x_i - x_i'$.  By construction,
$x_1',\ldots,x_{2n}'$ is the standard basis for $R^{2n'}$ with the basis vectors indexed by $J$ removed, so
\[
\omega_{n'}(x_{2i-1}',x_{2j-1}') = \omega_{n'}(x_{2i}',x_{2j}') = 0 \quad \text{and} \quad \omega_{n'}(x_{2i-1}',x_{2j}') = \delta_{ij} \quad \quad (1 \leq i,j \leq n).
\]
Also, by construction we have $\omega_{n'}(x_i',x_j'') = 0$ for all $1 \leq i,j \leq 2n$.  It follows that \eqref{eqn:conclusion} is equivalent to showing that $\omega_{n'}(x_i'',x_j'') = 0$ for all $1 \leq i,j \leq 2n$.

The only $x_i''$ that are nonzero are $x_{s_i}''$ for $1 \leq i \leq 2d$, so it is enough
to show that that $\omega_{n'}(x_{s_i}'',x_{s_j}'')=0$ for $1 \leq i,j \leq 2d$.
Let $y_1,\ldots,y_{2d} \in R^{2n'}$ be the columns of $g$.  For $1 \leq i \leq 2d$, let
$y_i' \in R^{2n'}$ be the element obtained by replacing the $j_k^{\text{th}}$ entry
of $y_i$ with a $0$ for $1 \leq k \leq 2(n'-n)$ and let $y_i'' = y_i - y_i'$.  By construction,
we have $x_{s_i}'' = y_i''$ for $1 \leq i \leq 2d$, so we must show that
$\omega_{n'}(y_{i}'',y_{j}'') = 0$ for $1 \leq i,j \leq 2d$.

Let $z_1,\ldots,z_{2d} \in R^{2n}$ be the columns of $f$ and let $\bb_1,\ldots,\bb_{2d}$ be the standard
basis for $R^{2d}$.  Since $f\colon (R^{2d},\omega) \rightarrow R^{2n}$ and $g\colon (R^{2d},\omega) \rightarrow R^{2n'}$
are symplectic maps, it follows that
\begin{equation}
\label{eqn:symplecticcond}
\omega_{n}(z_i,z_j) = \omega(\bb_i,\bb_j) \quad \text{and} \quad \omega_{n'}(y_i,y_j) = \omega(\bb_i,\bb_j) \quad \quad (1 \leq i,j \leq 2d).
\end{equation}
Using the fact that $f$ is obtained by deleting appropriate pairs of rows of $g$, we see that
\begin{equation}
\label{eqn:deletioncond}
\omega_{n}(z_i,z_j) = \omega_{n'}(y_i',y_j') \quad \quad (1 \leq i,j \leq 2d).
\end{equation}
Combining \eqref{eqn:symplecticcond} and \eqref{eqn:deletioncond}, we see that 
$\omega_{n'}(y_i,y_j) = \omega_{n'}(y_i',y_j')$ for all $1 \leq i,j \leq 2d$.  By construction, we have
\[
\omega_{n'}(y_i,y_j') = \omega_{n'}(y_i',y_j) = \omega_{n'}(y_i',y_j') \quad \quad (1 \leq i,j \leq 2d).
\]
We conclude that for $1 \leq i,j \leq 2d$ we have $\omega_{2n'}(y_i'',y_j'')$ equal to 
\begin{align*}
\omega_{n'}(y_i - y_i',y_j-y_j') &= \omega_{n'}(y_i,y_j) - \omega_{n'}(y_i',y_j) - \omega_{n'}(y_i,y_j') + \omega_{n'}(y_i',y_j')\\
&= 2 \omega_{n'}(y_i',y_j') - 2 \omega_{n'}(y_i',y_j') = 0,
\end{align*}
as desired.
\end{proof}

\paragraph{Constructing the orders.}
We now prove Lemma \ref{lemma:osikey}.

\begin{proof}[Proof of Lemma \ref{lemma:osikey}]
The general case of Lemma \ref{lemma:osikey} can easily be
deduced from the case where $R$ is a local ring (the argument is identical to that for
Lemma \ref{lemma:ovickey}), so we can assume that $R$ is a local ring.
The first step is to construct $\preceq$.  Consider
$f \in \Hom_{\OSI'(R)}((R^{2d},\omega),R^{2n})$ and $g \in \Hom_{\OSI'(R)}((R^{2d},\omega),R^{2n'})$.  We will say that
$f \preceq g$ if $f$ can be obtained from $g$ by deleting the rows $\Set{$2i-1$, $2i$}{$i \in I$}$ for
some set 
\[I \subset \{1,\ldots,n'\} \setminus \Set{$i$}{either $2i-1 \in S_r(f)$ or $2i \in S_r(f)$}.\]  
This clearly defines a partial order on $\cP_R(d,\omega)$.

\begin{claim}
The partially ordered set $(\cP_R(d,\omega), \preceq)$ is well partially ordered.
\end{claim}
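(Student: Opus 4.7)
The plan is to adapt the strategy from the corresponding claim in Lemma \ref{lemma:ovickey}, embedding $(\cP_R(d,\omega),\preceq)$ into a well partially ordered word poset. Specifically, I will show that $(\cP_R(d,\omega),\preceq)$ is (isomorphic to) a subposet of the plain Higman poset $\Sigma^\star$ of Lemma \ref{lem:higman}, where
\[\Sigma = \bigl(R^{2d} \amalg \{\spadesuit\}\bigr) \times \bigl(R^{2d} \amalg \{\spadesuit\}\bigr).\]
Then Lemma \ref{lem:higman} together with Lemma \ref{lem:poset-noeth}(a) gives the claim.

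Given $g \in \Hom_{\OSI'(R)}((R^{2d},\omega), R^{2n'})$, I encode $g$ as a word $w_g \in \Sigma^\star$ of length $n'$ as follows: the $i$-th letter is $(a_i,b_i)$, where $a_i$ is the $(2i-1)$-st row of $g$ when $2i-1 \notin S_r(g)$ and $a_i = \spadesuit$ otherwise, and likewise for $b_i$ using the $2i$-th row. Since $R$ is row-adapted, the rows indexed by $S_r(g)$ are determined by their position in the word (the $k$-th $\spadesuit$, counted across both coordinates, must be the standard basis vector $\bb_k \in R^{2d}$), so the assignment $g \mapsto w_g$ is injective.

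Next I would check that this encoding is order-preserving and order-reflecting. In the forward direction, if $f \preceq g$ then by definition $f$ is obtained from $g$ by deleting a collection of pairs of rows $(2i-1, 2i)$ with neither index in $S_r(g)$; the corresponding letters in $w_g$ have no $\spadesuit$, so $w_f$ is literally a subword of $w_g$. Conversely, suppose $w_f \le w_g$ in $\Sigma^\star$, witnessed by an increasing embedding of positions. The crucial observation is a counting argument: both $w_f$ and $w_g$ contain exactly $2d$ occurrences of $\spadesuit$ (counting the two coordinates of each letter separately), since $|S_r(f)| = |S_r(g)| = 2d$. Because the subword embedding matches letters exactly, every $\spadesuit$ in $w_f$ uses up a $\spadesuit$ in $w_g$; with identical totals, no $\spadesuit$-containing letter of $w_g$ can be deleted. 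Thus every deleted letter is of the form $(a,b) \in R^{2d} \times R^{2d}$, corresponding to a pair of rows of $g$ neither of which is special, i.e., $f \preceq g$.

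The only mild technical point — and the part where I expect to have to be careful — is verifying that the row-adapted and symplectic structures behave correctly under this correspondence, so that subword deletions of $w_g$ actually yield bona fide elements of $\cP_R(d,\omega)$: the identity-pattern rows indexed by $S_r(g)$ are preserved and renumber correctly, the non-invertibility conditions below these rows survive since only entire rows are removed, and the symplectic form is preserved by exactly the calculation used in Lemma \ref{lemma:insertionsi} (removing a pair $(r_{2i-1}, r_{2i})$ is the inverse of the insertion there). With these straightforward checks in place, the embedding is an isomorphism onto its image, and the claim follows from Lemma \ref{lem:higman} and Lemma \ref{lem:poset-noeth}(a).
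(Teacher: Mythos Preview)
Your approach is essentially identical to the paper's: both encode $f$ by the length-$n$ word in $\Sigma^\star$ whose $i$-th letter records rows $2i-1$ and $2i$ (or $\spadesuit$ when the index lies in $S_r(f)$), and then appeal to Higman's lemma together with Lemma~\ref{lem:poset-noeth}(a). You are in fact more careful than the paper, which simply asserts that the encoding is an order-preserving injection; your counting argument for order-reflection is correct, and the final paragraph of structural checks is harmless but unnecessary, since for the Claim one only needs the biconditional $f \preceq g \Leftrightarrow w_f \le w_g$ for $f,g$ already known to lie in $\cP_R(d,\omega)$.
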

\begin{proof}[{Proof of Claim}]
Define
\[
\Sigma = \left(R^{d} \amalg \{\spadesuit\}\right) \times \left(R^{d} \amalg \{\spadesuit\}\right),
\]
where $\spadesuit$ is a formal symbol.  
We will prove that $(\cP_R(d,\omega),\preceq)$ is isomorphic to a subposet of the well partially ordered poset $\Sigma^\star$
from Lemma~\ref{lem:higman}, so by Lemma~\ref{lem:poset-noeth}(a) is itself well partially ordered.
Consider $f \in \Hom_{\OSI'(R)}((R^{2d},\omega), R^{2n})$.  For $1 \leq i \leq 2n$, let
\[r_i = \begin{cases}
\text{the $i^{\text{th}}$ row of $f$} & \text{if $i \notin S_r(f)$},\\
\spadesuit & \text{if $i \in S_r(f)$}.\end{cases}\]
Thus $r_i \in R^{2d} \amalg \{\spadesuit\}$.  For $1 \leq i \leq n$, let $\theta_i = (r_{2i-1},r_{2i}) \in \Sigma$.
We thus get an element $\theta_1 \cdots \theta_{n} \in \Sigma^{\ast}$.
We claim that the map taking $f$ to $\theta_1 \cdots \theta_{n}$ is an injection from
$\cP_R(d,\omega)$ to $\Sigma^{\star}$.
To see this, consider an element $\theta_1 \cdots \theta_n \in \Sigma^\star$ in its image.  Our
goal is to reconstruct $f$.  Write $\theta_i = (r_{2i-1},r_{2i})$, and set
$S = \Set{$1 \leq i \leq 2n$}{$r_i = \spadesuit$}$.  We can read off all the rows of $f$ except for those
whose index lies in $S$.  But we know that $S = S_r(f)$, so rows whose index lies in $S$ are precisely the
standard basis vectors in $R^d$, appearing in their natural order.  We have thus reconstructed all of $f$,
as claimed.  
That this injection is order-preserving is immediate from the definitions.  The only subtle point is the reason we used
the $\spadesuit$ rather than just using the basis vectors in $R^d$.  The point here is that
in the definition of the partial order on $\cP_R(d,\omega)$, we were not allowed to delete rows in
$S_r(f)$, so we have to distinguish them in $\Sigma$.
\end{proof}

We now extend $\preceq$ to a total order $\leq$.  Fix some arbitrary total order on $R^{2d}$, and consider
$f \in \Hom_{\OSI'(R)}((R^{2d},\omega),R^{2n})$ and $g \in \Hom_{\OSI'(R)}((R^{2d},\omega),R^{2n'})$.
We then determine if $f \leq g$ via the following procedure.  Assume without loss of
generality that $f \neq g$.
\begin{compactitem}
\item If $n < n'$, then $f<g$.
\item Otherwise, $n=n'$.  If $S_r(f) < S_r(g)$ in the lexicographic ordering, then $f < g$.
\item Otherwise, $S_r(f) = S_r(g)$.  Compare the sequences
of elements of $R^{2d}$ which form the rows of $f$ and $g$ using the lexicographic ordering
and the fixed total ordering on $R^{2d}$.
\end{compactitem}
It is clear that this determines a total order $\leq$ on $\cP_R(d,\omega)$ that extends $\preceq$.

It remains to check that these orders have the property claimed by the lemma.  Consider
$f \in \Hom_{\OSI'(R)}((R^{2d},\omega),R^{2n})$ and $g \in \Hom_{\OSI'(R)}((R^{2d},\omega),R^{2n'})$ with
$f \preceq g$.  By Lemma \ref{lemma:insertionsi}, there exists some $\phi \in \Hom_{\OSI(R)}(R^{2n},R^{2n'})$
such that $g = \phi f$.  Now consider $f_1 \in \Hom_{\OSI'(R)}((R^{2d},\omega),R^{2n})$ such that
$f_1 < f$.  We want to show that $\phi f_1 < \phi f$.  If $S_r(f_1) < S_r(f)$ in the lexicographic
order, then this is an immediate consequence of Lemma \ref{lemma:insertionsi}, so we can assume
that $S_r(f_1) = S_r(f)$.  In this case, it follows from Lemma \ref{lemma:insertionsi} that $\phi f_1$ and
$\phi f$ are obtained by inserting the same rows into the same places in $f_1$ and $f$, respectively.
It follows immediately that $\phi f_1 < \phi f$, as desired.
\end{proof}

\subsection{Counterexamples}
\label{section:noetheriannegative}

In this section, we prove Theorem \ref{maintheorem:nonnoetherian}, which asserts
that the category of $\tC$-modules is not locally Noetherian for $\tC \in \{\VI(R), \VIC(R), \SI(R)\}$ when $R$ is an infinite ring. This requires the following lemma.

\begin{lemma}
\label{lemma:nonnoetherian}
Let $\bk$ be a nonzero ring and $\Gamma$ be a group that contains a non-finitely generated subgroup.  Then the group ring $\bk[\Gamma]$ is not Noetherian.
\end{lemma}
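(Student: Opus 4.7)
The plan is to exhibit, for a non-finitely-generated subgroup $H \le \Gamma$, a strictly ascending chain of left ideals in $\bk[\Gamma]$. Since $H$ is not finitely generated, I can inductively choose finitely generated subgroups $H_1 \subsetneq H_2 \subsetneq \cdots$ of $H$ by selecting $h_{n+1} \in H \setminus H_n$ and setting $H_{n+1} = \langle H_n, h_{n+1}\rangle$. For each subgroup $K \le \Gamma$ I will write $J_K \subseteq \bk[\Gamma]$ for the left ideal generated by $\{k - 1 : k \in K\}$, and the goal will be to prove that $J_{H_1} \subsetneq J_{H_2} \subsetneq \cdots$.

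The key step will be to identify the quotient module $\bk[\Gamma]/J_K$ for an arbitrary subgroup $K \le \Gamma$. I will claim that the natural left $\bk[\Gamma]$-module map $\pi_K \colon \bk[\Gamma] \to \bk[\Gamma/K]$ defined by $\gamma \mapsto [\gamma K]$ (where $\Gamma/K$ is the set of left cosets equipped with its natural $\Gamma$-action) has kernel exactly $J_K$. The containment $J_K \subseteq \ker \pi_K$ is immediate from the computation $\pi_K(\gamma(k - 1)) = [\gamma k K] - [\gamma K] = 0$ for $k \in K$. For the reverse inclusion I will choose coset representatives $\{\gamma_\alpha\}$ so that $\bk[\Gamma] = \bigoplus_\alpha \gamma_\alpha \bk[K]$ as a right $\bk[K]$-module; then $\pi_K$ restricts on each summand to the augmentation $\bk[K] \to \bk$, whose kernel is the augmentation ideal $I(K) \subseteq \bk[K]$. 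Hence $\ker \pi_K = \bigoplus_\alpha \gamma_\alpha I(K) = \bk[\Gamma] \cdot I(K) = J_K$.

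Once the isomorphism $\bk[\Gamma]/J_K \cong \bk[\Gamma/K]$ is in hand, strictness of the chain is immediate: the image of $h_{n+1} - 1 \in J_{H_{n+1}}$ in $\bk[\Gamma]/J_{H_n} \cong \bk[\Gamma/H_n]$ is $[h_{n+1} H_n] - [H_n]$, which is nonzero because $h_{n+1} \notin H_n$. Thus $h_{n+1} - 1 \in J_{H_{n+1}} \setminus J_{H_n}$ and $\bk[\Gamma]$ is not left Noetherian. There is no real obstacle; the only mild technical point is verifying $\ker \pi_K = J_K$, which ultimately rests on the freeness of $\bk[\Gamma]$ as a right $\bk[K]$-module.
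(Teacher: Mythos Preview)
Your proof is correct and takes essentially the same approach as the paper: both construct an ascending chain of left ideals $I_{H_n} = \ker(\bk[\Gamma] \to \bk[\Gamma/H_n])$ from an ascending chain of subgroups $H_1 \subsetneq H_2 \subsetneq \cdots$. The paper simply asserts that $H \subsetneq H'$ implies $I_H \subsetneq I_{H'}$ as ``clear,'' whereas you supply the extra identification $J_K = \ker\pi_K$ and an explicit witness $h_{n+1}-1$ for the strictness; this is added detail rather than a different strategy.
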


\begin{proof}
Given a subgroup $H \subseteq \Gamma$, the kernel of the surjection $\bk[\Gamma] \rightarrow \bk[\Gamma/H]$ is a left ideal $I_H \subset \bk[\Gamma]$.  Clearly $H \subsetneqq H'$ implies $I_H \subsetneqq I_{H'}$, so the existence of an infinite ascending chain of left ideals in $\bk[\Gamma]$ follows from the existence of an infinite ascending chain of subgroups of $\Gamma$.  The existence of this is an immediate consequence
of the existence of a non-finitely generated subgroup of $\Gamma$.
\end{proof}

\begin{proof}[{Proof of Theorem \ref{maintheorem:nonnoetherian}}] 
For all of our choices of $\tC$, the group $\Gamma:=\Aut_{\tC}(R^2)$ contains $\SL_2(R)$. 
We claim that $\SL_2(R)$ contains a non-finitely generated subgroup.  There are two cases.  If $\Z \subseteq R$, then
$\SL_2(R)$ contains a rank $2$ free subgroup.  For instance, one can take the subgroup generated by
\[\begin{pmatrix} 1 & 2 \\ 0 & 1 \end{pmatrix} \quad \quad \text{and} \quad \quad 
\begin{pmatrix} 1 & 0 \\ 2 & 1 \end{pmatrix}.\]
This free subgroup contains a non-finitely generated subgroup.  If $\Z \nsubseteq R$, then the additive group of $R$
is annihilated by some positive integer $\ell$.  Since $R$ is infinite, this implies that the additive group of $R$
is not finitely generated.  The claim now follows from the fact that the additive group of $R$ is a subgroup
of $\SL_2(R)$ via the embedding
\[r \mapsto \begin{pmatrix} 1 & r \\ 0 & 1 \end{pmatrix}.\]

By Lemma~\ref{lemma:nonnoetherian}, the claim implies that $\bk[\Gamma]$ is not Noetherian. Define a $\tC$-module $M$ via the formula
\[M_V = \begin{cases}
\bk[\Aut_{\tC}(V)] & \text{if $V \cong \Z^2$},\\
0 & \text{otherwise}.\end{cases}\]
The morphisms are the obvious ones.  It is then clear that $M$ is finitely generated but contains non-finitely
generated submodules.
\end{proof}

\section{Partial resolutions and representation stability}
\label{section:abstractnonsense}

This section contains the machinery that we will use to prove our asymptotic structure theorem.
It contains three sections. Recall the definition of a (weak) complemented category from \S\ref{section:introasymptotic}. In \S\ref{section:autcomplementedcategories}, we will discuss
automorphisms of complemented categories.  Next, in \S\ref{section:exactsequence} we will state Theorem \ref{theorem:resolution}, which gives a certain partial resolution for finitely generated modules over a complemented category. Theorem \ref{theorem:resolution} is proven in \S\ref{section:proofofresolution}.  At the end of \S\ref{section:proofofresolution}, we will derive our asymptotic structure theorem (Theorem \ref{maintheorem:asymptoticstructure}) from Theorem \ref{theorem:resolution}.

The arguments in this section are inspired by \cite{ChurchEllenbergFarbNagpal}.  The formal manipulations in
\S \ref{section:autcomplementedcategories} are also reminiscent of some of those appearing in
\cite{WahlStability}.

\subsection{Automorphism groups of complemented categories}
\label{section:autcomplementedcategories}

Objects in complemented categories equipped with a generator have rich automorphism groups.  For instance, we have the following.

\begin{lemma}
\label{lemma:movedecomp}
Let $(\tA,\monpro)$ be a complemented category with generator $X$.
For $V,V' \in \tA$, the group $\Aut_{\tA}(V')$ acts transitively on the
set $\Hom_{\tA}(V,V')$.
\end{lemma}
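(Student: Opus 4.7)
The plan is to use the existence and uniqueness of complements, together with the generator hypothesis, to build an automorphism of $V'$ that carries $f$ to $g$ for any two morphisms $f,g \colon V \to V'$. Because $X$ is a generator, write $V \cong X^i$ and $V' \cong X^j$; one first checks that the Hom set is empty unless $j \ge i$ (otherwise there is no room for a complement with non-negative $X$-rank), so assume $j \ge i$ and that at least one morphism exists.

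Next, for each morphism $f \colon V \to V'$ I would use the complemented structure to produce the subobject $f(V)$ and its unique complement $D_f \subset V'$, together with an isomorphism $\alpha_f \colon V \monpro D_f \xrightarrow{\cong} V'$ whose pre-composition with the canonical morphism $V \to V \monpro D_f$ recovers $f$ (this is exactly the condition in the complement axiom, viewing the domain of the subobject morphism $f$ as $V$ itself). Counting $X$-ranks and using that the $X$-rank of a monoidal product adds (which I would spell out from the generator axiom applied to $V \monpro D_f \cong V'$), one concludes $D_f \cong X^{j-i}$, independently of $f$.

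Given another morphism $g \colon V \to V'$, I pick any isomorphism $\beta \colon D_f \xrightarrow{\cong} D_g$ coming from step two, and form the composite
\[
\psi \;=\; \alpha_g \circ (1_V \monpro \beta) \circ \alpha_f^{-1} \colon V' \longrightarrow V' .
\]
This is an automorphism of $V'$, so the only thing left is to verify $\psi \circ f = g$. I would reduce this to the identity
\[
(1_V \monpro \beta) \circ (V \to V \monpro D_f) \;=\; (V \to V \monpro D_g),
\]
which in turn follows from unpacking the definition of the canonical morphism as $V \xrightarrow{\cong} V \monpro \mathbbm{1} \to V \monpro D_f$ together with the initiality of $\mathbbm{1}$, which forces $\beta \circ (\mathbbm{1} \to D_f) = (\mathbbm{1} \to D_g)$.

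The main obstacle is purely bookkeeping, namely keeping track of the canonical morphisms through the definition of the complement and verifying the identity above; once initiality of $\mathbbm{1}$ and the defining property of $\alpha_f$ are invoked, the verification is formal. No new ideas beyond the axioms of a complemented category and the existence of a generator are needed.
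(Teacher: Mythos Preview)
Your proposal is correct and follows essentially the same approach as the paper: both take the complements $D_f$ and $D_g$ with their associated isomorphisms $\alpha_f\colon V \monpro D_f \cong V'$ and $\alpha_g\colon V \monpro D_g \cong V'$, use the generator hypothesis to conclude $D_f \cong D_g$, and define the desired automorphism as $\alpha_g \circ (1_V \monpro \beta) \circ \alpha_f^{-1}$. Your write-up is slightly more explicit in justifying the final verification via initiality of $\mathbbm{1}$, but the argument is the same.
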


\begin{proof}
Without loss of generality, $V' = X^k$ for some $k \geq 0$.
Consider morphisms $f,g \colon V \rightarrow X^k$.  Let $C$ and $D$ be the complements to $f(V)$ and $g(V)$, respectively,
and let $\phi \colon V \monpro C \rightarrow X^k$ and $\phi' \colon V \monpro D \rightarrow X^k$ be the associated
isomorphisms.  Writing $V \cong X^{\ell}$, we then have
$C,D \cong X^{k-\ell}$; in particular, there is an isomorphism $\eta \colon C \rightarrow D$.
The composition
\[X^k \xrightarrow{\phi^{-1}} V \monpro C \xrightarrow{\text{id} \monpro \eta} V \monpro D \xrightarrow{\phi'} X^k\]
is then an isomorphism $\Psi \colon X^k \rightarrow X^k$ such that $\Psi \circ \phi \colon V \monpro C \rightarrow X^k$ is
an isomorphism that restricts to $g \colon V \rightarrow X^k$, so $\Psi \circ f = g$.
\end{proof}

\begin{lemma}
\label{lemma:autcomplement}
Let $(\tA,\monpro)$ be a complemented category, let $V,V' \in \tA$, and define 
$\varphi\colon \Aut_{\tA}(V) \times \Aut_{\tA}(V') \rightarrow \Aut_{\tA}(V \monpro V')$ by the formula $\varphi(f,g) = f \monpro g$.  Then the following hold.
\begin{compactenum}[\indent \rm (a)]
\item The homomorphism $\varphi$ is injective.
\item Let $i \colon V \rightarrow V \monpro V'$
and $i' \colon V' \rightarrow V \monpro V'$ be the canonical maps.  Then
\begin{align*}
\Set{$\eta \in \Aut_{\tA}(V \monpro V')$}{$\eta \circ i' = i'$} &= \varphi(\Aut_{\tA}(V) \times \mathrm{id}_{V'}) \cong \Aut_\tA(V),\\
\Set{$\eta \in \Aut_{\tA}(V \monpro V')$}{$\eta \circ i = i$} &= \varphi(\mathrm{id}_{V} \times \Aut_{\tA}(V')) \cong \Aut_\tA(V').
\end{align*}
\end{compactenum}
\end{lemma}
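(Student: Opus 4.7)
Two preliminary facts drive the argument. First, one has the naturality identities
\[
(f \monpro g) \circ i = i \circ f, \qquad (f \monpro g) \circ i' = i' \circ g
\]
for $f \in \Aut_\tA(V)$ and $g \in \Aut_\tA(V')$. These follow by unwinding the definition $i = (\mathrm{id}_V \monpro u) \circ (V \xrightarrow{\cong} V \monpro \mathbbm{1})$, where $u \colon \mathbbm{1} \to V'$ is the unique morphism, together with the fact that $g \circ u = u$ by initiality of $\mathbbm{1}$. Second, the subobjects $i(V)$ and $i'(V')$ are mutual complements in $V \monpro V'$: the identity isomorphism $V \monpro V' \to V \monpro V'$ witnesses $i'(V')$ as the complement of $i(V)$, and the symmetry isomorphism $V' \monpro V \xrightarrow{\cong} V \monpro V'$ (which exists because $\tA$ is a \emph{complemented}, i.e.\ symmetric, monoidal category) witnesses $i(V)$ as the complement of $i'(V')$.

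For part (a): if $\varphi(f,g) = \varphi(f',g')$, composing with $i$ on the right and using the first identity yields $i \circ f = i \circ f'$. Since every morphism in $\tA$ is a monomorphism, $i$ is mono and so $f = f'$; symmetrically $g = g'$.

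For part (b), I will prove the first equality; the second is entirely symmetric. The inclusion $\supseteq$ is immediate from the second identity above, which gives $(f \monpro \mathrm{id}_{V'}) \circ i' = i' \circ \mathrm{id}_{V'} = i'$. For $\subseteq$, suppose $\eta \in \Aut_\tA(V \monpro V')$ satisfies $\eta \circ i' = i'$. Because $\eta$ is an isomorphism, $\eta(i(V))$ is a complement of $\eta(i'(V')) = i'(V')$; by uniqueness of complements applied to $i'(V')$, we conclude $\eta(i(V)) = i(V)$ as subobjects, which produces a morphism $f \colon V \to V$ with $\eta \circ i = i \circ f$. Applying the same reasoning to $\eta^{-1}$ furnishes a two-sided inverse for $f$, so $f \in \Aut_\tA(V)$. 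Finally, the morphisms $\eta$ and $f \monpro \mathrm{id}_{V'}$ agree after pre-composition with $i$ (by the defining property of $f$) and with $i'$ (both equal $i'$, using the second identity above). The injectivity axiom (third bullet of the definition of weak complemented category) therefore forces $\eta = f \monpro \mathrm{id}_{V'} = \varphi(f,\mathrm{id}_{V'})$. The resulting bijection with $\Aut_\tA(V)$ then follows from part (a) restricted to $\Aut_\tA(V) \times \{\mathrm{id}_{V'}\}$.

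The main obstacle I anticipate is the step ``$\eta(i(V))$ is a complement of $i'(V')$,'' which requires carefully transporting the witnessing isomorphism $V \monpro V' \xrightarrow{\cong} V \monpro V'$ of the second preliminary fact along $\eta$ so that the inclusion-compatibility conditions in the definition of complement are preserved. Once this is in hand, the remainder of the argument is a direct application of the axioms of a weak complemented category together with the two preliminary facts.
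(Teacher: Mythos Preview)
Your proof is correct and follows essentially the same route as the paper's. Both arguments rest on the naturality identities $(f \monpro g)\circ i = i\circ f$ and $(f\monpro g)\circ i' = i'\circ g$, then use uniqueness of complements to produce the missing automorphism and the third axiom (injectivity of $\Hom_{\tA}(V\monpro V',W)\to\Hom_{\tA}(V,W)\times\Hom_{\tA}(V',W)$) to pin down $\eta$. The only cosmetic differences are that the paper establishes the second displayed equality in detail (fixing $i$ rather than $i'$), and that it proves injectivity of $\varphi$ by checking the kernel is trivial rather than comparing two arbitrary pairs; your explicit invocation of the symmetry isomorphism to see $i(V)$ as the complement of $i'(V')$ is a point the paper leaves implicit.
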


\begin{proof}
(a) Consider $f_0 \in \Aut_{\tA}(V)$ and $g_0 \in \Aut_{\tA}(V')$
such that $\varphi(f_0,g_0) = \text{id}_{V \monpro V'}$.  The proofs that $f_0 = \text{id}_V$ and
$g_0 = \text{id}_{V'}$ are similar, so we will only give the details for $f_0$.  
Let $\mathbbm{1}$ be the identity of
$(\tA,\monpro)$ and let $u\colon\mathbbm{1} \rightarrow V'$ be the unique map (by assumption, $\mathbbm{1}$ is initial).  Since $g_0 \circ u = u$, we have a commutative diagram
\begin{equation}
\label{eqn:varphiinjectivediagram}
\begin{CD}
V \monpro \mathbbm{1} @>{\text{id}_V \monpro u}>> V \monpro V' \\
@VV{f_0 \monpro \text{id}_{\mathbbm{1}}}V       @VV{f_0 \monpro g_0}V \\
V \monpro \mathbbm{1} @>{\text{id}_V \monpro u}>> V \monpro V'.\end{CD}
\end{equation}
Since $f_0 \monpro g_0 = \text{id}_{V \monpro V'}$, we deduce that 
$(\text{id}_V \monpro u) \circ (f_0 \monpro \text{id}_{\mathbbm{1}}) = \text{id}_V \monpro u$.  
Like all $\tA$-morphisms, $\text{id}_V \monpro u$ is a monomorphism, so 
$f_0 \monpro \text{id}_{\mathbbm{1}} = \text{id}_{V \monpro \mathbbm{1}}$, and hence
$f_0 = \text{id}_V$, as desired.

(b) The isomorphisms are a consequence of (a). We next show that
\begin{align*}
\varphi(\Aut_{\tA}(V) \times \text{id}_{V'}) &\subset \Set{$\eta \in \Aut_{\tA}(V \monpro V')$}{$\eta \circ i' = i'$},\\
\varphi(\text{id}_{V} \times \Aut_{\tA}(V')) &\subset \Set{$\eta \in \Aut_{\tA}(V \monpro V')$}{$\eta \circ i = i$}.
\end{align*}
The proofs are similar, so we will only give the details for $\varphi(\text{id}_{V} \times \Aut_{\tA}(V'))$.
Consider $g_1 \in \Aut_{\tA}(V')$.  The map $\varphi(\text{id}_V,g_1) \circ i$ is the composition of
the canonical isomorphism $V \cong V \monpro \mathbbm{1}$ with the composition
\[V \monpro \mathbbm{1} \stackrel{\text{id}_{V} \monpro u}{\longrightarrow} V \monpro V' \stackrel{\text{id}_V \monpro g_1}{\longrightarrow} V \monpro V'.\]
The uniqueness of $u$ implies that this 
composition equals $\text{id}_V \monpro (g_1 \circ u) = \text{id}_V \monpro u$, so 
$\varphi(\text{id}_V,g_1) \circ i = i$, as desired.

We next show that
\begin{align*}
\varphi(\Aut_{\tA}(V) \times \text{id}_{V'}) &\supset \Set{$\eta \in \Aut_{\tA}(V \monpro V')$}{$\eta \circ i' = i'$},\\
\varphi(\text{id}_{V} \times \Aut_{\tA}(V')) &\supset \Set{$\eta \in \Aut_{\tA}(V \monpro V')$}{$\eta \circ i = i$}.
\end{align*}
The proofs are similar, so we will only give the details for $\varphi(\text{id}_{V} \times \Aut_{\tA}(V'))$.
Consider $h_2 \in \Aut_{\tA}(V \monpro V')$ such that $h_2 \circ i = i$.  We wish to find some $g_2 \in \Aut_{\tA}(V')$ such that $\varphi(\text{id}_V,g_2) = h_2$. Observe that $i'(V')$ is a complement to $i(V)$ and $h_2 \circ i'(V')$ is a complement to $h_2 \circ i(V) = i(V)$.  By the uniqueness of complements, there must exist $g_2 \in \Aut_{\tA}(V')$ such that $h_2 \circ i' = i' \circ g_2$. Under the injection
\[
\Hom_{\tA}(V \monpro V', V \monpro V') \hookrightarrow \Hom_{\tA}(V,V \monpro V') \times \Hom_{\tA}(V',V \monpro V'),
\]
both $h_2$ and $\text{id}_{V} \monpro g_2$ map to the same thing, namely $(i,i' \circ g_2)$. We conclude that $h_2 = \text{id}_{V} \monpro g_2 = \varphi(\text{id}_{V},g_2)$, as desired.
\end{proof}

\subsection{Recursive presentations and partial resolutions}
\label{section:exactsequence}

Fix a complemented category $(\tA,\monpro)$ with a generator $X$ and fix
an $\tA$-module $M$.  We wish to study several chain complexes associated to $M$.

\paragraph{Shift functor.}
Consider some $p \geq 0$ and some $V \in \tA$.  Define
\[
\ShiftSet_{V,p} = \Hom_{\tA}(X^p,V).
\]
This might be the empty set. The groups $\Aut(X)^p$ and $\Sym_p$ and the wreath product 
$\Sym_p \wr \Aut(X) = \Sym_p \ltimes \Aut(X)^p$ all 
act on $X^p$ and induce free actions on $\ShiftSet_{V,p}$. Set
\[
\ShiftSet'_{V,p} = \ShiftSet_{V,p} / \Aut(X)^p, \qquad\ShiftSet''_{V,p} = \ShiftSet_{V,p} / \Sym_p, \qquad \ShiftSet'''_{V,p} = \ShiftSet_{V,p} / (\Sym_p \wr \Aut(X)).
\]
For $h \in \ShiftSet_{V,p}$, let $W_h \subset V$ be the complement
of $h(X^p)$.  We then define an $\tA$-module $\OrderedShift_p M$ via the formula
\[
(\OrderedShift_p M)_V = \bigoplus_{h \in \ShiftSet_{V,p}} M_{W_h} \quad \quad (V \in \tA).
\]
As far as morphisms go, consider a morphism $f \colon  V \rightarrow V'$ and
some $h \in \ShiftSet_{V,p}$.  We then have $f \circ h \in \ShiftSet_{V',p}$.  Moreover,
letting $C \subset V'$ be the complement of $f(V)$, there is a natural isomorphism
$V \monpro C \cong V'$.  Under this isomorphism, we can identify $W_{f \circ h}$ with
$W_h \monpro C$.  The canonical morphism $W_h \rightarrow W_h \monpro C$ thus induces
a map $M_{W_h} \rightarrow M_{W_{f \circ h}}$.
This allows us to define (component by component) a morphism
$(\OrderedShift_p M)_{f} \colon  (\OrderedShift_p M)_V \rightarrow (\OrderedShift_p M)_{V'}$.  We will identify $\OrderedShift_0 M$ with $M$ in the obvious way.
As notation, we will write
$(\OrderedShift_p M)_{V,h}$ for the term of $(\OrderedShift_p M)_V$ associated to $h \in \ShiftSet_{V,p}$.

Define $\OrderedShift'_p M$ and $\OrderedShift''_p M$ and $\OrderedShift'''_p M$
by summing over $\ShiftSet'_{V,p}$ and $\ShiftSet''_{V,p}$ and $\ShiftSet'''_{V,p}$,
rather than $\ShiftSet_{V,p}$.
These are quotients of $\ShiftSet_{V,p}$ as $\tA$-modules by the submodules spanned by $(x,(-1)^g x)$ where $x \in M_{W,h}$ and $(-1)^g x \in M_{W, h \circ g}$ for $g \in \Aut(X)^p$
or $g \in \Sym_p$ or $g \in \Sym_p \wr \Aut(X)$, respectively, and $(-1)^g$ refers 
to the sign of the $\Sym_p$-component of $g$ in the second and third case (and is defined
to be $1$ in the first case).

\begin{lemma} 
\label{lemma:ordshift-noeth}
If $(\tA,\monpro)$ is a complemented category with generator $X$ and if 
$M$ is a finitely generated $\tA$-module, then $\OrderedShift_p M$ and 
$\OrderedShift'_p M$ and $\OrderedShift''_p M$ and $\OrderedShift'''_p M$
are finitely generated $\tA$-modules for all $p \ge 0$.
\end{lemma}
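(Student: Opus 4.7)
The plan is to first observe that $\OrderedShift'_p M$, $\OrderedShift''_p M$, and $\OrderedShift'''_p M$ are each $\tA$-module quotients of $\OrderedShift_p M$, as stated in the paragraph immediately preceding the lemma. Since quotients of finitely generated modules are finitely generated, it suffices to prove the lemma for $\OrderedShift_p M$.

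For this, I would exhibit an explicit finite generating set. Let $m_1, \ldots, m_k$ with $m_i \in M_{V_i}$ be generators of $M$. For each $i$, let $\iota_i \colon X^p \to V_i \monpro X^p$ denote the canonical morphism supplied by the complemented category structure; its image is the ``second factor'' subobject of $V_i \monpro X^p$, so by uniqueness of complements one has $W_{\iota_i} = V_i$. Hence $m_i$ determines an element $\widetilde{m}_i$ of the summand $(\OrderedShift_p M)_{V_i \monpro X^p, \iota_i} = M_{V_i}$, and the claim I would verify is that $\widetilde{m}_1, \ldots, \widetilde{m}_k$ generate $\OrderedShift_p M$.

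To check this claim, I take an arbitrary summand $(\OrderedShift_p M)_{V,h} = M_{W_h}$ indexed by some $h \in \ShiftSet_{V,p}$, and an arbitrary element of it. Because the $m_i$ generate $M$, this element reduces to a $\bk$-linear combination of elements of the form $M_f(m_i)$ for morphisms $f \colon V_i \to W_h$, so it is enough to realize each such $M_f(m_i)$ as the image of $\widetilde{m}_i$ under some $\tA$-morphism. Using the decomposition $V \cong W_h \monpro h(X^p)$ furnished by the complement axiom, I would define
\[
g \colon V_i \monpro X^p \xrightarrow{\ f \monpro h'\ } W_h \monpro h(X^p) \xrightarrow{\ \cong\ } V,
\]
where $h'$ denotes $h$ regarded as an isomorphism onto its image $h(X^p)$, and then verify that $(\OrderedShift_p M)_g(\widetilde{m}_i)$ lands in the summand indexed by $g \circ \iota_i = h$ and equals $M_f(m_i)$ there.

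The one point requiring genuine care is precisely this last verification, which amounts to unwinding the natural isomorphism $W_{\iota_i} \monpro C \cong W_{g \circ \iota_i}$ (where $C$ is the complement of $g(V_i \monpro X^p)$ in $V$) that appears in the definition of $\OrderedShift_p$, and confirming via naturality of the canonical morphisms together with the injectivity axiom for $\Hom_{\tA}(V_i \monpro X^p, V)$ that the induced map $M_{V_i} \to M_{W_h}$ agrees with $M_f$. Once these bookkeeping identifications are in place, the rest of the argument is a direct chase through the definitions.
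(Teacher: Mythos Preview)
Your proposal is correct and follows essentially the same approach as the paper: both reduce to $\OrderedShift_p M$, pick the same candidate generators $\widetilde{m}_i \in (\OrderedShift_p M)_{V_i \monpro X^p,\iota_i}$, and verify they generate. The only organizational difference is in the verification step: the paper first observes that the single summand indexed by the canonical $h_{X^{q-p}}$ lies in the generated submodule and then invokes Lemma~\ref{lemma:movedecomp} (transitivity of $\Aut_{\tA}(X^q)$ on $\ShiftSet_{X^q,p}$) to reach every other summand, whereas you build the morphism $g = (\text{iso}) \circ (f \monpro h')$ to each summand directly; your route avoids appealing to Lemma~\ref{lemma:movedecomp} at the cost of the bookkeeping you flag, while the paper's route sidesteps that bookkeeping by using the automorphism action.
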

\begin{proof}
It is enough to prove this for $\OrderedShift_p M$ since this surjects onto the others.

There exist $V_1,\ldots,V_m \in \tA$ and $x_i \in M_{V_i}$ for
$1 \leq i \leq m$ such that $M$ is generated by $\{x_1,\ldots,x_m\}$.  For all $V \in \tA$,
let $h_V \colon X^p \rightarrow V \monpro X^p$ be the canonical morphism, so
$(\OrderedShift_p M)_{V \monpro X^p,h_V} = M_V$. For $1 \leq i \leq m$, define
$\overline{x}_i \in (\OrderedShift_p M)_{V_i \monpro X^p}$ to be the element
\[x_i \in (\OrderedShift_p M)_{V_i \monpro X^p,h_{V_i}} \subset (\OrderedShift_p M)_{V_i \monpro X^p}.\]
We claim that $\OrderedShift_p M$ is generated by $\overline{x}_1,\ldots,\overline{x}_m$.  Indeed, let $N$ be the submodule of $\OrderedShift_p M$ generated by the indicated elements.  Consider $V \in \tA$.  Our goal is to show that $N_V = (\OrderedShift_p M)_V$.  If
$(\OrderedShift_p M)_V \neq 0$, then $V \cong X^q$ for some $q \geq p$, and it is enough
to show that $N_{X^q} = (\OrderedShift_p M)_{X^q}$.  It is clear
that
$(\OrderedShift_p M)_{X^{q-p} \monpro X^p,h_{X^{q-p}}} \subset N_{X^q}$.
Lemma \ref{lemma:movedecomp} implies that $\Aut_{\tA}(X^q)$ acts transitively on
$\ShiftSet_{X^q,p}$.  We conclude that 
$N_{X^q}$ contains $(\OrderedShift_p M)_{X^q,h}$ for all $h \in \ShiftSet_{X^q,p}$, so
$N_{X^q} = M_{X^q}$. 
\end{proof}

\begin{remark}
The operation $\OrderedShift_p$ is an exact functor on the category of $\tA$-modules: given a morphism $f \colon M \to N$, there is a morphism
$\OrderedShift_p f \colon \OrderedShift_p M \to \OrderedShift_p N$ defined in the obvious way, and the same is true for $\OrderedShift'_p$ and $\OrderedShift''_p$ and $\OrderedShift'''_p$.
Note that $\OrderedShift_p$ can be identified with the $p^{\text{th}}$ iterate of $\OrderedShift_1$, and a similar statement holds for $\OrderedShift'_p$, but not for $\OrderedShift''_p$
or $\OrderedShift'''_p$.
\end{remark}

\paragraph{Chain complex.}
For $p \geq 1$, we now define a morphism $d \colon \OrderedShift_p M \rightarrow \OrderedShift_{p-1} M$ of 
$\tA$-modules as follows.  Consider $V \in \tA$.
For $1 \leq i \leq p$, let $t_i \colon X \rightarrow X^p$ be the canonical morphism of the $i^{\text{th}}$ term and let $s_i \colon X^{p-1} \rightarrow X^p$ be the morphism 
that corresponds to the $(p-1)$-tuple $(t_1, \dots, \widehat{t_i}, \dots, t_p)$ under the isomorphism
\[
\Hom_\tA(X^{p-1}, X^p) \cong \Hom_\tA(X, X^p) \times \cdots \times \Hom_\tA(X, X^p)
\]
from the definition of a complemented category. For $h \in \ShiftSet_{V,p}$, we have $h \circ s_i \in \ShiftSet_{V,p-1}$.  Moreover, we can 
identify $W_{h \circ s_i}$ with $W_h \monpro (h \circ t_i(X))$.
Define $d_i \colon \OrderedShift_p M \rightarrow \OrderedShift_{p-1} M$ to be the morphism that takes 
$(\OrderedShift_p M)_{V,h} = M_{W_h}$
to $(\OrderedShift_{p-1} M)_{V,h \circ s_i} = M_{W_h \monpro (h \circ t_i(X))}$
via the map induced by the canonical morphism $W_h \rightarrow W_h \monpro (h \circ t_i(X))$.  We then define
$d = \sum_{i=1}^p (-1)^{i-1} d_i$.  The usual argument shows that $d \circ d = 0$, so we have
defined a chain complex of $\tA$-modules:
\[
\OrderedShift_\ast M \colon \cdots \longrightarrow \OrderedShift_3 M \longrightarrow \OrderedShift_2 M \longrightarrow \OrderedShift_1 M \longrightarrow M.
\]
The differentials factor through the quotients $\OrderedShift'_p M$ and $\OrderedShift''_p M$ 
and $\OrderedShift'''_p M$, so we get complexes
\begin{alignat*}{10}
&\OrderedShift_\ast' M &&\colon \cdots &&\longrightarrow &&\OrderedShift'_3 M &&\longrightarrow &&\OrderedShift'_2 M &&\longrightarrow &&\OrderedShift'_1 M &&\longrightarrow &&M,\\
&\OrderedShift_\ast'' M &&\colon \cdots &&\longrightarrow &&\OrderedShift''_3 M &&\longrightarrow &&\OrderedShift''_2 M &&\longrightarrow &&\OrderedShift''_1 M &&\longrightarrow &&M,\\
&\OrderedShift_\ast''' M &&\colon \cdots &&\longrightarrow &&\OrderedShift'''_3 M &&\longrightarrow &&\OrderedShift'''_2 M &&\longrightarrow &&\OrderedShift'''_1 M &&\longrightarrow &&M.
\end{alignat*}

\begin{remark} \label{rmk:groupoid-alg}
Let $\tA^\circ$ be the underlying groupoid of $\tA$, i.e., the subcategory where we take all objects of $\tA$ and only keep the isomorphisms. Using the complemented structure of $\tA$, there is a natural symmetric monoidal structure on $\tA^\circ$-modules defined by 
\[
(F \otimes G)(V) = \bigoplus_{V = V' \monpro V''} F(V') \otimes_\bk G(V'').
\]
So we can define commutative algebras and their modules. Define an $\tA^\circ$-module $A_1$ by $V \mapsto \bk$ if $V$ is isomorphic to the generator of $\tA$ and $V \mapsto 0$ otherwise and set $\widetilde{A} = {\rm Sym}(A_1)$ where ${\rm Sym}$ denotes the free symmetric algebra. Then 
\[
\widetilde{A}(V) = \bigoplus_{\substack{\{L_1, \dots, L_n\}\\V = L_1 \monpro \cdots \monpro L_n}} \bk
\]
where the sum is over all unordered decompositions of $V$ into rank $1$ subspaces. There is a quotient $A$ of $\widetilde{A}$ given by $V \mapsto \bk$ for all $V$ (identify all summands above), and the category of $\tA$-modules is equivalent to the category of $A$-modules: a map $A \otimes M \to M$ is equivalent to giving a map $M(V') \to M(V)$ for each decomposition $V = V' \monpro V''$, and the associativity of multiplication is equivalent to these maps being compatible with composition.

Under this interpretation, $\OrderedShift'_\ast M$ becomes
\[
\cdots \to A_1^{\otimes 3} \otimes M \to A_1^{\otimes 2} \otimes M \to A_1 \otimes M \to M,
\]
while $\OrderedShift'''_\ast M$ is the Koszul complex of $M$ thought of as an $\widetilde{A}$-module:
\[
\cdots \to \mybigwedge{3} A_1 \otimes M \to \mybigwedge{2} A_1 \otimes M \to A_1 \otimes M \to M. \qedhere
\]
\end{remark}

\paragraph{Relation to finite generation.}
We pause now to make the following observation.

\begin{lemma}
\label{lemma:shiftfinitegen}
Let $(\tA,\monpro)$ be a complemented category with generator $X$ and 
let $M$ be an $\tA$-module over a ring $\bk$.  Assume that $M_V$ is a finitely generated $\bk$-module for all $V \in \tA$.
Then $M$ is finitely generated if and only if there exists some $N \geq 0$ such that
the map
$d \colon  (\OrderedShift_1 M)_{V} \rightarrow M_{V}$ is surjective for all $V \in \tA$ whose
$X$-rank is at least $N$.
\end{lemma}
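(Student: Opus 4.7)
The approach is to prove both directions separately, with the forward direction using a complement-refinement argument and the backward direction proceeding by induction on $X$-rank. Throughout I will use that $d$, restricted to the summand $(\OrderedShift_1 M)_{V,h} = M_{W_h}$, is (since $p = 1$) simply induced by the canonical morphism $W_h \hookrightarrow W_h \monpro h(X) \cong V$; so surjectivity of $d$ at $V$ asserts that $M_V$ is the $\bk$-span of the images of these canonical morphisms as $h$ ranges over $\Hom_{\tA}(X, V)$.

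For the forward direction, assume $M$ is generated by a finite set $\{x_i \in M_{V_i}\}_{i=1}^m$ and set $N = 1 + \max_i \mathrm{rank}_X(V_i)$. Any $x \in M_V$ with $\mathrm{rank}_X(V) \geq N$ is a $\bk$-combination of elements of the form $M_f(x_i)$ for $\tA$-morphisms $f\colon V_i \to V$, so I need only factor each such $f$ as $V_i \to W_h \hookrightarrow V$ for some $h \in \Hom_{\tA}(X,V)$. The complement $C$ of $f(V_i)$ in $V$ has $X$-rank $c := \mathrm{rank}_X(V) - \mathrm{rank}_X(V_i) \geq 1$, so I pick an isomorphism $C \cong X \monpro X^{c-1}$ and reassemble (using the symmetry of $\monpro$) to write $V \cong (f(V_i) \monpro X^{c-1}) \monpro X$; taking $h\colon X \to V$ to be the canonical morphism onto the final $X$ summand yields $W_h \cong f(V_i) \monpro X^{c-1}$, and the map $V_i \to f(V_i) \hookrightarrow f(V_i) \monpro X^{c-1} = W_h$ supplies the required factorization.

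For the backward direction, assume the surjectivity hypothesis with threshold $N$. Since $X$ is a generator, objects of $X$-rank less than $N$ are exhausted up to isomorphism by $X^0, X^1, \ldots, X^{N-1}$, and the hypothesis that each $M_{X^i}$ is finitely generated as a $\bk$-module supplies a finite $\bk$-generating set; let $S$ be the union of these finitely many finite sets, and let $M' \subseteq M$ be the $\tA$-submodule generated by $S$. I will show $(M')_V = M_V$ for all $V \in \tA$ by induction on $\mathrm{rank}_X(V)$. The base case $\mathrm{rank}_X(V) < N$ holds by construction of $S$, and for the inductive step with $\mathrm{rank}_X(V) = k \geq N$, any $x \in M_V$ can be written as $d(y)$ for some $y \in (\OrderedShift_1 M)_V$; decomposing $y$ into its components $y_h \in M_{W_h}$, each $W_h$ has $X$-rank $k-1$ so $y_h \in (M')_{W_h}$ by induction, whence the closure of $M'$ under induced morphisms gives $x = d(y) \in (M')_V$.

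The main obstacle is the factorization step in the forward direction: I must verify that the map $V_i \to W_h$ constructed above is a genuine $\tA$-morphism and that its composition with the canonical $W_h \hookrightarrow V$ recovers $f$. This boils down to image-factorization properties of monomorphisms in complemented categories together with the uniqueness of complements, and requires a bit of care to state cleanly, but presents no serious difficulty. Everything else is bookkeeping with the monoidal and submodule structures.
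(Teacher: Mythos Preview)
Your proposal is correct and follows essentially the same approach as the paper's proof. The paper's argument is terser---it simply asserts that every morphism $V_i \to V$ factors through a subobject of $X$-rank one less than that of $V$, and that combining $\bk$-generating sets for $M_{X^i}$ with $0 \le i \le N-1$ yields an $\tA$-generating set---whereas you spell out the complement-splitting that produces the factorization and make the induction in the converse explicit, but the underlying ideas are identical.
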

\begin{proof}
If $M$ is generated by elements $x_1,\ldots,x_k$ with $x_i \in M_{V_i}$, then
we can take $N$ to be the maximal $X$-rank of $V_1,\ldots,V_k$ plus $1$.  Indeed, if $V \in \tA$ has $X$-rank at least $N$, then every morphism $V_i \rightarrow V$ factors through $W$ for some $W \subset V$ whose $X$-rank is one less than that of $V$, which implies that the map $d \colon  (\OrderedShift_1 M)_{V} \rightarrow M_{V}$ is surjective.  Conversely, if such an $N$ exists, then for a generating set we can combine generating sets for $M_{X^i}$ for $0 \leq i \leq N-1$ to get a generating set for $M$.
\end{proof}

\paragraph{Partial resolutions and representation stability.}
We finally come to our main theorem which will be used in the proof of Theorem~\ref{maintheorem:asymptoticstructure}.

\begin{theorem}
\label{theorem:resolution}
Let $(\tA,\monpro)$ be a complemented category with generator $X$.
Assume that the category of $\tA$-modules is locally Noetherian, 
and let $M$ be a finitely generated $\tA$-module.
Fix some $q \geq 1$.  If the $X$-rank of $V \in \tA$ is sufficiently large,
then the chain complex
\[
(\OrderedShift_q M)_V \longrightarrow (\OrderedShift_{q-1} M)_V \longrightarrow \cdots \longrightarrow (\OrderedShift_1 M)_V \longrightarrow M_V \longrightarrow 0
\]
is exact. The same holds if we replace $\OrderedShift_p$ by $\OrderedShift'_p$ or $\OrderedShift''_p$ or $\OrderedShift'''_p$.
\end{theorem}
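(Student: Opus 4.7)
The plan is to induct on $q$, using the Noetherian hypothesis at each stage to keep kernels of differentials finitely generated. I will concentrate on the complex $\OrderedShift_\ast M$; the variants $\OrderedShift'_\ast M$, $\OrderedShift''_\ast M$, $\OrderedShift'''_\ast M$ are obtained by quotienting $\OrderedShift_p M$ by free actions of the finite groups $\Aut(X)^p$, $\Sym_p$, and $\Sym_p \wr \Aut(X)$, and a parallel argument handles each of them. The base case $q = 1$ asserts that $d \colon (\OrderedShift_1 M)_V \to M_V$ is surjective when the $X$-rank of $V$ is sufficiently large, which is immediate from Lemma \ref{lemma:shiftfinitegen} applied to the finitely generated module $M$.

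For the inductive step, assume the result with $q - 1$ in place of $q$ holds for every finitely generated $\tA$-module, and set $Z = \ker(d \colon \OrderedShift_1 M \to M)$. Since $\OrderedShift_1 M$ is finitely generated (Lemma \ref{lemma:ordshift-noeth}) and the category of $\tA$-modules is Noetherian, $Z$ is finitely generated. Using the identification $\OrderedShift_p \cong (\OrderedShift_1)^p$ from the Remark after Lemma \ref{lemma:ordshift-noeth} together with the exactness of the functor $\OrderedShift_1$ (which is immediate from its direct-sum definition), applying $(\OrderedShift_1)^{p-1}$ to the short exact sequence $0 \to Z \to \OrderedShift_1 M \to M \to 0$ (which is exact in large $X$-rank by the base case) produces natural inclusions $\OrderedShift_{p-1} Z \hookrightarrow \OrderedShift_p M$ for every $p \geq 1$, whose images, in large $X$-rank, are exactly the kernels of the first face maps $d_1 \colon \OrderedShift_p M \to \OrderedShift_{p-1} M$.

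The main obstacle will be to bridge the gap between this picture---which cuts $\OrderedShift_p M$ along $d_1$ alone---and the actual differential $d = \sum_{i=1}^p (-1)^{i-1} d_i$ of $\OrderedShift_\ast M$. I expect to handle this via a spectral sequence argument on a double complex whose horizontal direction is driven by the augmentation $\OrderedShift_1 N \to N$ applied iteratively to $N = (\OrderedShift_1)^{p-1} M$, and whose vertical direction encodes the remaining semi-simplicial face maps $d_2,\ldots,d_p$ subject to the usual relations; in large $X$-rank the rows are acyclic (by the base case applied levelwise, using the exactness of $\OrderedShift_1$), so one of the two spectral sequences collapses and the surviving $E_1$-page is identified, up to a degree shift, with $\OrderedShift_\bullet Z$. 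Applying the inductive hypothesis to the finitely generated module $Z$ then yields exactness of $\OrderedShift_{q-1} Z \to \cdots \to \OrderedShift_1 Z \to Z \to 0$ in large $X$-rank, which translates back through the spectral sequence to exactness of $\OrderedShift_\ast M$ in positions $\leq q$ in large $X$-rank, completing the induction.
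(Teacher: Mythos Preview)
Your base case and the idea of bringing in $Z=\ker(\OrderedShift_1 M\to M)$ are fine, and you are right that $\OrderedShift_{p-1}Z$ identifies (in large rank) with $\ker(d_1\colon\OrderedShift_p M\to\OrderedShift_{p-1}M)$. The gap is in the inductive step: the double complex you describe does not exist. If you try to split the total differential as $d_1$ in one direction and $\sum_{i\ge 2}(-1)^{i-1}d_i$ in the other, these do \emph{not} commute or anticommute, because the face maps satisfy the simplicial identities $d_i d_j=d_{j-1}d_i$ for $i<j$ rather than bicomplex identities. So there is no spectral sequence of the kind you sketch, and in particular no clean identification of an $E_1$-page with $\OrderedShift_\bullet Z$. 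Without that, the induction never gets off the ground past $q=1$: knowing exactness of $\OrderedShift_{q-1}Z\to\cdots\to Z\to 0$ tells you about kernels of $d_1$, not about cycles for the full differential $d$.

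The paper avoids induction on $q$ entirely. The key step you are missing is Lemma~\ref{lemma:homologytorsion}: one writes down an explicit chain homotopy $G\colon(\OrderedShift_p M)_V\to(\OrderedShift_{p+1}M)_{V\monpro X}$ (insert the extra copy of $X$ as the first slot of $X^{p+1}$) showing that the canonical morphism $V\to V\monpro X$ induces the zero map on every $\HH_i(\OrderedShift_\ast M)$. This says all the homology modules are torsion in the sense of \S\ref{section:proofofresolution}. Since each $\OrderedShift_i M$ is finitely generated (Lemma~\ref{lemma:ordshift-noeth}) and the category is Noetherian, each $\HH_i(\OrderedShift_\ast M)$ is finitely generated, and Lemma~\ref{lemma:torsionvanish} then forces $\HH_i(\OrderedShift_\ast M)_V=0$ once the $X$-rank of $V$ is large. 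This handles all $q$ simultaneously, and the chain homotopy descends to the quotients $\OrderedShift'_\ast,\OrderedShift''_\ast,\OrderedShift'''_\ast$.
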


The proof of Theorem \ref{theorem:resolution} is in \S \ref{section:proofofresolution}
after some preliminaries. 

\begin{remark}
\label{remark:differentresolutions}
The different resolutions given by Theorem \ref{theorem:resolution} are useful in
different contexts.  The resolution $\OrderedShift_\ast$ will be used in our
finite generation machine in \S \ref{section:themachine} (though the other
resolutions could also be used at the cost of complicating the necessary spaces).  For
$\tA = \FI$, the resolution $\OrderedShift''_{\ast}$ is a version of the
``central stability chain complex'' from \cite{PutmanRepStabilityCongruence}; this
also played an important role in \cite{ChurchEllenbergFarbNagpal}.  Finally,
the resolution $\OrderedShift'''_{\ast}$ is the ``most efficient'' of our resolutions in the sense that the terms are smallest.
\end{remark}

\subsection{Proof of Theorem~\ref{theorem:resolution}}
\label{section:proofofresolution}

We begin with some preliminary results needed for the proof of
Theorem \ref{theorem:resolution}.

\paragraph{Torsion submodule.}
If $M$ is an $\tA$-module, then define the {\bf torsion submodule} of $M$, denoted $T(M)$, to be the submodule defined via the formula
\[
T(M)_V = \Set{$x \in M_V$}{there exists a morphism $f \colon V \rightarrow W$ with $M_f(x)=0$}.
\]

\begin{lemma}
\label{lemma:torsionsubmodule}
Let $(\tA,\monpro)$ be a complemented category with generator $X$ and let $M$ be an $\tA$-module.  Then
$T(M)$ is an $\tA$-submodule of $M$.
\end{lemma}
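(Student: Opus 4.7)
The goal is to verify two things: (a) that each $T(M)_V$ is a $\bk$-submodule of $M_V$, and (b) that for every $\tA$-morphism $g\colon V \to V'$, the map $M_g$ carries $T(M)_V$ into $T(M)_{V'}$. Closure under scalar multiplication in (a), and the fact that $0 \in T(M)_V$, are immediate from the definition, so the real content is closure under addition in (a) together with stability in (b).

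For (b), which I would dispose of first as a warm-up, suppose $x \in T(M)_V$ is annihilated by some $f\colon V \to W$ and let $g\colon V \to V'$ be arbitrary. Take $D$ to be the complement of $g(V)$ in $V'$, so there is an isomorphism $V \monpro D \cong V'$ under which $g$ is identified with the canonical morphism $V \to V \monpro D$. Setting $f' = f \monpro \mathrm{id}_D\colon V' \cong V \monpro D \to W \monpro D$, one checks directly that $f' \circ g$ equals the composition of $f$ with the canonical morphism $W \to W \monpro D$, so $M_{f'}(M_g(x)) = 0$, witnessing $M_g(x) \in T(M)_{V'}$.

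For (a), given $x, y \in T(M)_V$ annihilated respectively by $f_i\colon V \to W_i$ for $i = 1, 2$, my plan is to construct a single morphism $h$ out of $V$ that annihilates both, from which closure under addition follows at once. Using the complement of $f_i(V)$ inside $W_i$, write $W_i \cong V \monpro C_i$, and let $h\colon V \to V \monpro C_1 \monpro C_2$ be the canonical morphism. Then $h$ factors as $V \xrightarrow{f_1} W_1 \cong V \monpro C_1 \to V \monpro C_1 \monpro C_2$, so $M_h(x) = 0$. To handle $y$, apply the symmetry isomorphism $\sigma\colon V \monpro C_1 \monpro C_2 \to V \monpro C_2 \monpro C_1$. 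Because $\mathbbm{1}$ is initial, any morphism from $V$ into an object of the form $V \monpro Y$ built out of canonical maps is uniquely determined, which forces $\sigma \circ h$ to coincide with the canonical morphism $V \to V \monpro C_2 \monpro C_1$. The latter factors through $V \xrightarrow{f_2} V \monpro C_2$, so $M_\sigma(M_h(y)) = 0$, and invertibility of $M_\sigma$ gives $M_h(y) = 0$.

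The main obstacle is the addition step in (a): unlike in an abelian situation, there is no a priori common target for the morphisms $f_1$ and $f_2$, and one has to manufacture one from the two complements $C_1, C_2$. This is precisely where the full symmetric monoidal hypothesis (distinguishing a complemented category from a merely weakly complemented one) is used; without a symmetry there would be no mechanism forcing the canonical morphism $V \to V \monpro C_1 \monpro C_2$ to also factor through $f_2$.
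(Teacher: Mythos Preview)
Your proof is correct. The approach to the addition step differs from the paper's in an interesting way. The paper maps $x$ and $x'$ into the common target $W_1 \monpro W_2$ via the canonical morphisms $W_i \to W_1 \monpro W_2$, and then invokes Lemma~\ref{lemma:movedecomp} (transitivity of $\Aut_{\tA}(W_1 \monpro W_2)$ on $\Hom_{\tA}(V, W_1 \monpro W_2)$) to produce an automorphism aligning the two compositions $V \to W_1 \monpro W_2$; this automorphism then gives the common annihilating morphism. You instead peel off the complements $C_i$ of $f_i(V)$ in $W_i$, map into $V \monpro C_1 \monpro C_2$, and verify the two factorizations by hand using the symmetry isomorphism and the initiality of $\mathbbm{1}$. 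Your route is slightly more self-contained and in fact does not use the generator hypothesis at all (the paper's appeal to Lemma~\ref{lemma:movedecomp} does), while the paper's route is a bit shorter by packaging the alignment step into a prior lemma. You also spell out part (b), which the paper leaves implicit; your argument there is fine.
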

\begin{proof}
Fix $V \in \tA$.  It suffices to show that $T(M)_V$ is closed under addition.  Given $x,x' \in T(M)_V$, pick morphisms 
$f \colon V \to W$ and $f' \colon V \to W'$ such that $M_{f}(x)=0$ and $M_{f'}(x') = 0$.  Let
$g \colon W \rightarrow W \monpro W'$ and $g' \colon W' \rightarrow W \monpro W'$ be the canonical morphisms.
Lemma \ref{lemma:movedecomp} implies that there exists some $h \in \Aut_{\tA}(W \monpro W')$ such that
$h \circ g \circ f = g' \circ f'$.  This morphism kills both $x$ and $x'$, and hence
also $x+x'$.
\end{proof}

\begin{lemma}
\label{lemma:torsionvanish}
Let $(\tA,\monpro)$ be a complemented category with generator $X$.  Assume that 
the category of $\tA$-modules is locally Noetherian
and let $M$ be a finitely generated $\tA$-module.  Then
there exists some $N \geq 0$ such that if $V \in \tA$ has $X$-rank at least $N$, then $T(M)_V = 0$.
\end{lemma}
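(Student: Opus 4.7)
The plan is to combine the Noetherian hypothesis with the transitivity result from Lemma~\ref{lemma:movedecomp} to reduce torsion-killing to finitely many witness morphisms, then promote those witnesses to every high-rank object using the transitive action of the automorphism group.

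First I would invoke Lemma~\ref{lemma:torsionsubmodule} to know that $T(M)$ is an honest $\tA$-submodule of $M$. Since the category of $\tA$-modules is Noetherian by hypothesis and $M$ is finitely generated, $T(M)$ is itself finitely generated; pick generators $x_1, \ldots, x_k$ with $x_i \in T(M)_{V_i}$. By the very definition of torsion, for each $i$ there exists an $\tA$-morphism $f_i \colon V_i \to W_i$ with $M_{f_i}(x_i) = 0$. Let $N$ be the maximum $X$-rank of the $W_i$.

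Now suppose $V \in \tA$ has $X$-rank at least $N$. For each $i$, since the $X$-rank of $V$ is at least the $X$-rank of $W_i$, we may write $V \cong W_i \monpro C_i$ for some $C_i \in \tA$, which furnishes a canonical morphism $h_i \colon W_i \to V$. Then $M_{h_i \circ f_i}(x_i) = M_{h_i}\bigl(M_{f_i}(x_i)\bigr) = 0$. Given an arbitrary morphism $g \colon V_i \to V$, Lemma~\ref{lemma:movedecomp} supplies an automorphism $\alpha \in \Aut_{\tA}(V)$ with $\alpha \circ h_i \circ f_i = g$, whence $M_g(x_i) = M_\alpha\bigl(M_{h_i \circ f_i}(x_i)\bigr) = 0$.

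To finish, I would observe that $T(M)_V$ is spanned as a $\bk$-module by elements of the form $M_g(x_i)$ with $g \in \Hom_{\tA}(V_i, V)$ (this is what it means for $\{x_1,\ldots,x_k\}$ to generate $T(M)$). Since each such generator-image vanishes by the previous paragraph, $T(M)_V = 0$, as required. The main (minor) obstacle is ensuring the automorphism-transitivity step applies uniformly in $g$; this is handled cleanly by Lemma~\ref{lemma:movedecomp}, which precisely says that $\Aut_{\tA}(V)$ acts transitively on $\Hom_{\tA}(V_i, V)$ whenever the latter is non-empty, a condition guaranteed by the rank bound.
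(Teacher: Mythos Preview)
Your proof is correct and follows essentially the same approach as the paper's: both use the Noetherian hypothesis to reduce to finitely many torsion witnesses $f_i\colon V_i \to W_i$, then invoke the transitivity of Lemma~\ref{lemma:movedecomp} to factor an arbitrary morphism $g\colon V_i \to V$ through a morphism killing $x_i$. The only cosmetic difference is that the paper packages the $W_i$ into a single object $W = W_1 \monpro \cdots \monpro W_p$ and sets $N$ to be its rank, whereas you take $N$ to be the maximum rank of the $W_i$ and work with each $W_i$ separately; your bound is slightly sharper, but the argument is the same.
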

\begin{proof}
Combining Lemma~\ref{lemma:torsionsubmodule} with our locally Noetherian assumption, we get that $T(M)$ is finitely generated, say by elements
$x_1,\ldots,x_p$ with $x_i \in T(M)_{V_i}$ for $1 \leq i \leq p$.  Let $f_i \colon  V_i \rightarrow W_i$
be a morphism such that $M_{f_i}(x_i) = 0$.  Define
$W = W_1 \monpro \cdots \monpro W_p$, and consider some $1 \leq i \leq p$.  Define
$\widetilde{f}_i \colon  V_i \rightarrow W$ to be the composition of $f_i$ with the canonical morphism
$W_i \hookrightarrow W$.
It is then clear that $M_{\widetilde{f}_i}(x_i)=0$.
If $N$ is the $X$-rank of $W$, we claim that $T(M)_V = 0$ whenever $V \in \tA$ has $X$-rank at least $N$. Indeed, every morphism $g \colon  V_i \rightarrow V$ can be factored as
\[
V_i \xrightarrow{\widetilde{f}_i} W \longrightarrow V,
\]
and thus $M_{g}(x_i) = 0$.  Since the $x_i$ generate $T(M)$, it follows that $T(M)_V = 0$.
\end{proof}

\paragraph{Torsion homology.}
Since $\OrderedShift_{\ast} M$ is a chain complex
of $\tA$-modules, we can take its homology and obtain an $\tA$-module
$\HH_q(\OrderedShift_{\ast} M)$ for each $q \geq 0$.  We then have the following.

\begin{lemma}
\label{lemma:homologytorsion}
Let $(\tA,\monpro)$ be a complemented category with generator $X$.
Fix some $q \geq 0$, and consider $V \in \tA$.  Let $\iota \colon V \rightarrow V \monpro X$ be the canonical morphism.  Then for all $x \in (\HH_q(\OrderedShift_{\ast} M))_V$, we have
$(\HH_q(\OrderedShift_{\ast} M))_{\iota}(x) = 0$. In particular, the same is true for any morphism $f$ that maps $V$ to $W$ with larger $X$-rank. The same holds if we replace $\OrderedShift_\ast$ by $\OrderedShift'_\ast$ or $\OrderedShift''_\ast$ or $\OrderedShift'''_\ast$.
\end{lemma}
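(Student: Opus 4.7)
The plan is to construct an explicit chain contraction witnessing the vanishing of the pushforward in homology. Given $V \in \tA$ and the canonical morphism $f \colon V \to V \monpro X$, I will define a family of maps
\[
s_p \colon (\OrderedShift_p M)_V \longrightarrow (\OrderedShift_{p+1} M)_{V \monpro X} \qquad (p \geq 0)
\]
and verify the identity $d \circ s_p - s_{p-1} \circ d = (-1)^p (\OrderedShift_p M)_f$. On summands, I take an element $(h,x)$ with $h \in \ShiftSet_{V,p}$ and $x \in M_{W_h}$ and send it to $(h',x)$ with $h' = h \monpro \mathrm{id}_X \colon X^{p+1} \to V \monpro X$; note that $h'(X^{p+1}) = h(X^p) \monpro X$, so $W_{h'} = W_h$ and the assignment on $M_{W_h}$ is just the identity.

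First I would verify the chain-homotopy identity. Using the notation $\sigma_i \colon X^{p} \to X^{p+1}$ for the map omitting the $i$-th factor, a direct unwinding shows that $h' \circ \sigma_{p+1} = f \circ h$ (so the $W$-target is $W_h \monpro X$, and the induced map on $M$ is precisely $(\OrderedShift_p M)_f$), while for $i \leq p$ we have $h' \circ \sigma_i = (h \circ \sigma_i) \monpro \mathrm{id}_X$ with $W_{h' \circ \sigma_i} = W_{h \circ \sigma_i}$, so that the $i$-th face of $s_p$ agrees with $s_{p-1}$ applied to the $i$-th face of $(h,x)$. Summing with the alternating signs gives the stated identity. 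Consequently, for any cycle $z \in (\OrderedShift_p M)_V$, the element $(\OrderedShift_p M)_f(z) = (-1)^p\, d(s_p(z))$ is a boundary, which proves the claim for $\OrderedShift_\ast M$.

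Next I would check that $s_p$ descends to the three quotient complexes. The point is that for $g \in \Aut(X)^p$, $g \in \Sym_p$, or $g \in \Sym_p \wr \Aut(X)$ acting on $h$ on the right, the image $s_p(h \circ g, x) = (h \monpro \mathrm{id}_X) \circ (g \monpro \mathrm{id}_X)$ differs from $s_p(h,x)$ by the corresponding element $g \monpro \mathrm{id}_X$ of $\Aut(X)^{p+1}$, $\Sym_{p+1}$, or $\Sym_{p+1} \wr \Aut(X)$, whose sign in $\Sym_{p+1}$ equals the sign of $g$ in $\Sym_p$. Hence $s_p$ descends compatibly to $\OrderedShift'_p$, $\OrderedShift''_p$, $\OrderedShift'''_p$, and the homotopy identity survives the quotient.

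Finally, I would derive the ``in particular'' clause. If $g \colon V \to W$ is any morphism with $\mathrm{rank}_X(W) > \mathrm{rank}_X(V)$, then letting $C$ be the complement to $g(V)$ in $W$, the morphism $g$ factors as the canonical morphism $V \to V \monpro C$ followed by an isomorphism $V \monpro C \xrightarrow{\cong} W$. Writing $C \cong X^k$ and iterating the basic case $k$ times (or applying it once with $X$ replaced by the intermediate factor and then composing with an automorphism, which is harmless in homology) shows that $g$ also kills $\HH_q$. The only mildly subtle point in the whole argument is keeping the bookkeeping of faces and signs straight when verifying the homotopy identity; once that is in place the remaining verifications are formal.
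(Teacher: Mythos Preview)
Your proof is correct and follows essentially the same strategy as the paper: construct an explicit chain homotopy showing that the pushforward $I = (\OrderedShift_\ast M)_f$ is null-homotopic. The only difference is cosmetic: the paper inserts the new $X$ factor in position $1$ (via the symmetric monoidal flip $X \monpro X^p \to X^p \monpro X$), obtaining $d_1 G = I$ and $G d_i = d_{i+1} G$, hence the clean identity $dG + Gd = I$; you insert it in position $p+1$, obtaining $d_{p+1} s_p = I$ and $d_i s_p = s_{p-1} d_i$ for $i \le p$, hence $ds_p - s_{p-1}d = (-1)^p I$. Both give the same conclusion on homology (and, as you implicitly note, setting $G_p = (-1)^p s_p$ recovers the standard identity $dG + Gd = I$). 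Your verification that the homotopy descends to the three quotients and your handling of the ``in particular'' clause are also fine.
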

\begin{proof}
For $\OrderedShift_\ast$, the relevant map on homology is induced by a map
\[\begin{CD}
\cdots @>>> (\OrderedShift_2 M)_{V \monpro X} @>>> (\OrderedShift_1 M)_{V \monpro X} @>>> M_{V \monpro X}     @>>> 0 \\
@.          @AA{I}A                   @AA{I}A                   @AA{I}A                 @.\\
\cdots @>>> (\OrderedShift_2 M)_V            @>>> (\OrderedShift_1 M)_V            @>>> M_{V}              @>>> 0\end{CD}\]
of chain complexes.  We will prove that $I$ is chain homotopic to the zero map. The chain homotopy will descend to 
a similar chain homotopy on $\OrderedShift'_\ast$ and $\OrderedShift''_\ast$ and
$\OrderedShift'''_\ast$, so this will imply the lemma.

Let $\iota \colon V \rightarrow V \monpro X$ be the canonical morphism.  Recall that
\[
(\OrderedShift_p M)_V = \bigoplus_{h \in \ShiftSet_{V,p}} M_{W_h} \quad \text{and} \quad (\OrderedShift_p M)_{V \monpro X} = \bigoplus_{h \in \ShiftSet_{V \monpro X,p}} M_{W_h}.
\]
The map $I \colon  (\OrderedShift_p M)_V \rightarrow (\OrderedShift_p M)_{V \monpro X}$ takes 
$(\OrderedShift_p M)_{V,h} = M_{W_h}$ to $(\OrderedShift_p M)_{V \monpro X,\iota \circ h} = M_{W_h \monpro X}$ 
via the map induced by the canonical morphism $W \rightarrow W \monpro X$.

We now define a chain homotopy map 
\[
G \colon  (\OrderedShift_p M)_V \rightarrow (\OrderedShift_{p+1} M)_{V \monpro X}
\] 
as follows. Given $h \in \ShiftSet_{V,p} = \Hom_\tA(X^p,V)$, define $\overline{h} \colon X^{p+1} \rightarrow V \monpro X$ to be the composition
\[
X^{p+1} = X \monpro X^{p} \longrightarrow X^p \monpro X \xrightarrow{h \monpro \text{id}} V \monpro X,
\]
where the first arrow flips the two factors using the symmetric monoidal structure on $\tA$.
We then have $\overline{h} \in \ShiftSet_{V \monpro X,p+1}$.  Since
\[
(\OrderedShift_p M)_{V,h} = (\OrderedShift_{p+1} M)_{V \monpro X,\overline{h}} = M_{W_h},
\]
we can define $G$ on $(\OrderedShift_p M)_{V,h}$ to be the identity map
$(\OrderedShift_p M)_{V,h} = (\OrderedShift_{p+1} M)_{V \monpro X,\overline{h}}$.

We now claim that $d G + G d = I$.  Indeed, on $(\OrderedShift_p M)_V$ the map $d G + G d$ takes the form
\[
\left(\sum_{i=1}^p (-1)^{i-1} G d_i \right) + \left(\sum_{j=1}^{p+1} (-1)^{j-1} d_j G\right).
\]
Straightforward calculations show that $d_1 G = I$ and that $G d_i = d_{i+1} G$ for $1 \leq i \leq p$. 
\end{proof}

\paragraph{Endgame.}
All the ingredients are now in place for the proof of Theorem \ref{theorem:resolution}.

\begin{proof}[{Proof of Theorem \ref{theorem:resolution}}]
For all $i \geq 0$, Lemma \ref{lemma:ordshift-noeth} implies that the $\tA$-module $\OrderedShift_{i} M$ is finitely
generated.  Our assumption that the category of $\tA$-modules 
is locally Noetherian then implies that $\HH_i(\OrderedShift_{\ast} M)$ is finitely
generated.  Lemma \ref{lemma:homologytorsion} implies that for all $i \geq 0$, we have
\[
T(\HH_i(\OrderedShift_{\ast} M)) = \HH_i(\OrderedShift_{\ast} M).
\]
The upshot of all of this is that we can apply Lemma \ref{lemma:torsionvanish} to obtain some $N_q \geq 0$ 
such that if $V \in \tA$ has $X$-rank at least $N_q$ and $0 \leq i \leq q$, then
$\HH_i(\OrderedShift_{\ast} M)_V = 0$.
In other words, the chain complex
\[
(\OrderedShift_q M)_V \longrightarrow (\OrderedShift_{q-1} M)_V \longrightarrow \cdots \longrightarrow (\OrderedShift_1 M)_V \longrightarrow M_V \longrightarrow 0
\]
is exact, as desired. For $\OrderedShift'_\ast$ and $\OrderedShift''_\ast$ and
$\OrderedShift'''_\ast$, the proof is the same.
\end{proof}

\paragraph{Asymptotic structure theorem.}
We conclude by proving Theorem~\ref{maintheorem:asymptoticstructure} (the main argument
here is very similar to that of \cite[Lemma 2.23]{ChurchPutmanJohnson}).

\begin{proof}[{Proof of Theorem \ref{maintheorem:asymptoticstructure}}]
Injective representation stability follows from Lemma \ref{lemma:torsionvanish}, and surjective
representation stability is immediate from finite generation.  All that remains to prove is 
central stability.  Using Theorem \ref{theorem:resolution}, choose $N$ large enough
so that the chain complex
\[(\OrderedShift_2 M)_V \longrightarrow (\OrderedShift_1 M)_V \longrightarrow M_V \longrightarrow 0\]
is exact whenever the $X$-rank of $V \in \tA$ is at least $N$.  Define $M'$ to be
the left Kan extension to $\tA$ of the restriction of $M$ to $\tA^N$.  The universal
property of the left Kan extension gives a natural transformation 
$\theta\colon M' \rightarrow M$ such that $\theta_V\colon (M')_V \rightarrow M_V$ is the
identity for all $V \in \tA$ whose $X$-rank is at most $N$.  We must prove that
$\theta_V$ is an isomorphism for all $V \in \tA$.  Letting $r$ be the $X$-rank of $V$,
the proof is by induction on $r$.  The base cases are when $0 \leq r \leq N$, where
the claim is trivial.  Assume now that $r>N$ and that the claim is true for all
smaller $r$.  The natural transformation $\theta$ induces natural transformations
$\theta_i\colon \OrderedShift_i M' \rightarrow \OrderedShift_i M$ for all $i \geq 0$, and
our inductive hypothesis implies that the maps
$(\theta_i)_V \colon (\OrderedShift_i M')_V \rightarrow (\OrderedShift_i M)_V$ are
isomorphisms for all $i \geq 1$.  We have a commutative diagram 
\[\begin{CD}
(\OrderedShift_2 M')_V @>>> (\OrderedShift_1 M')_V @>>> (M')_V @>>> 0 \\
@V{(\theta_2)_V}V{\cong}V  @V{(\theta_1)_V}V{\cong}V @V{\theta_V}VV @. \\
(\OrderedShift_2 M)_V @>>> (\OrderedShift_1 M)_V @>>> M_V @>>> 0 \end{CD}\]
We warn the reader that while the bottom row is exact, the top row is as yet only
a chain complex.  If $W \in \tA$ has $X$-rank at most $N$, then every $\tA$-morphism
$W \rightarrow V$ factors through an object whose $X$-rank is $r-1$ (for instance,
this is an easy consequence of Lemma \ref{lemma:movedecomp}).  Combining this
with the usual formula for a left Kan extension as a colimit, we deduce that
the map $(\OrderedShift_1 M')_V \rightarrow (M')_V$ is surjective.  
The fact that $\theta_V$ is an isomorphism now follows from an easy chase of the above
diagram.
\end{proof}

\section{Twisted homological stability}
\label{section:twistedstability}

In this section, we prove a general theorem which allows us to deduce twisted homological stability from untwisted
homological stability and local Noetherianity (see Theorem \ref{theorem:twistedstability} below;
this abstracts an argument of Church \cite{ChurchHomologicalAlgebra}).  Using this, we will prove Theorems \ref{maintheorem:gltwisted} and \ref{maintheorem:sptwisted}, which give very general twisted homological stability theorems
for $\SL_n^{\Unit}(R)$ and $\Sp_{2n}(R)$.  We remark that Wahl--Randal-Williams \cite{WahlStability}
have also recently and independently proved twisted homological stability theorems.  They require their coefficients
to be either ``polynomial'' or ``abelian'', but they allow much more general rings $R$.  Their
proofs are very different from ours.

\begin{lemma}
\label{lemma:identifyinduced}
Let $(\tA,\monpro)$ be a complemented category with a generator $X$.  Then for all $n \geq r$, we have
an isomorphism of $\Aut_{\tA}(X^n)$-representations
\[
\bk[\Hom_{\tA}(X^r,X^n)] \cong \bk[\Aut_\tA(X^n) / \Aut_\tA(X^{n-r})] \cong \Ind_{\Aut_{\tA}(X^{n-r})}^{\Aut_{\tA}(X^n)} \bk. 
\]
Consequently,
$\HH_k(\Aut_{\tA}(X^n);\bk[\Hom_{\tA}(X^r,X^n)]) \cong \HH_k(\Aut_{\tA}(X^{n-r});\bk)$.
\end{lemma}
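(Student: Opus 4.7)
The plan is to apply the orbit-stabilizer theorem combined with Shapiro's lemma. First, I would invoke Lemma \ref{lemma:movedecomp} to conclude that $\Aut_{\tA}(X^n)$ acts transitively on $\Hom_{\tA}(X^r,X^n)$. Since $n \geq r$, there is a canonical morphism $i \colon X^r \to X^r \monpro X^{n-r} \cong X^n$, which I would use as a distinguished basepoint. Transitivity then yields a bijection of $\Aut_{\tA}(X^n)$-sets
\[
\Aut_{\tA}(X^n)/\Stab(i) \xrightarrow{\cong} \Hom_{\tA}(X^r,X^n), \qquad g \mapsto g \circ i.
\]

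The next step is to identify $\Stab(i)$ with $\Aut_{\tA}(X^{n-r})$. This is exactly what Lemma \ref{lemma:autcomplement}(b) provides: under the identification $X^n = X^r \monpro X^{n-r}$, the subgroup of $\Aut_{\tA}(X^r \monpro X^{n-r})$ fixing the canonical map from the first factor equals $\varphi(\mathrm{id}_{X^r} \times \Aut_{\tA}(X^{n-r}))$, which by part (a) of that lemma is isomorphic to $\Aut_{\tA}(X^{n-r})$. Combining this with the preceding bijection and applying the free functor $\bk[-]$ gives the first two claimed isomorphisms of $\Aut_{\tA}(X^n)$-representations; the identification $\bk[G/H] \cong \Ind_H^G \bk$ is the standard definition of the induced representation from the trivial representation.

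For the homology statement, I would simply quote Shapiro's lemma, which gives
\[
\HH_k\bigl(\Aut_{\tA}(X^n); \Ind_{\Aut_{\tA}(X^{n-r})}^{\Aut_{\tA}(X^n)} \bk\bigr) \cong \HH_k\bigl(\Aut_{\tA}(X^{n-r}); \bk\bigr).
\]
There is no substantive obstacle here: all the work has already been done in Lemmas \ref{lemma:movedecomp} and \ref{lemma:autcomplement}, and the only care required is to check that the bijection of sets is indeed $\Aut_{\tA}(X^n)$-equivariant (which is immediate from the formula $g \mapsto g \circ i$) and that the stabilizer identification matches up with the left coset action in the expected way.
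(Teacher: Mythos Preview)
Your proposal is correct and follows essentially the same approach as the paper: transitivity via Lemma~\ref{lemma:movedecomp}, stabilizer identification via Lemma~\ref{lemma:autcomplement}(b), and then Shapiro's lemma for the homology consequence. The paper's proof is just a terser version of what you wrote.
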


\begin{proof}
Let $i \in \Hom_\tA(X^r, X^n)$ be the canonical morphism $X^r \to X^r \monpro X^{n-r}$. By Lemma~\ref{lemma:movedecomp}, $\Aut_\tA(X^n)$ acts transitively on the set $\Hom_\tA(X^r, X^n)$. By Lemma~\ref{lemma:autcomplement}(b), the stabilizer of $i$ in $\Aut_\tA(X^n)$ is identified with $\Aut_\tA(X^{n-r})$, which establishes the first isomorphism. 
The second follows from Shapiro's lemma \cite[Proposition III.6.2]{BrownCohomology}.
\end{proof}

Our main theorem is then as follows.  If $(\tA,\monpro)$ is a complemented category with a generator
$X$ and $M$ is an $\tA$-module, then denote $M_{X^n}$ by $M_n$.

\begin{theorem}
\label{theorem:twistedstability}
Let $(\tA,\monpro)$ be a complemented category with a generator $X$ and let $M$ be a finitely generated $\tA$-module
over a ring $\bk$.  Assume that the following hold.
\begin{compactenum}[\indent \rm 1.]
\item The category of $\tA$-modules over $\bk$ is locally Noetherian.
\item For all $k \geq 0$, the map 
$\HH_k(\Aut_{\tA}(X^n);\bk) \rightarrow \HH_k(\Aut_{\tA}(X^{n+1});\bk)$
induced by the map $\Aut_{\tA}(X^n) \rightarrow \Aut_{\tA}(X^{n+1})$ that
takes $f \in \Aut_{\tA}(X^n)$ to $f \monpro \mathrm{id}_{X} \in \Aut_{\tA}(X^{n+1})$
is an isomorphism for $n \gg 0$.
\end{compactenum}
Then for $k \geq 0$, the map 
$\HH_k(\Aut_{\tA}(X^n);M_n) \rightarrow \HH_k(\Aut_{\tA}(X^{n+1});M_{n+1})$
is an isomorphism for $n \gg 0$.
\end{theorem}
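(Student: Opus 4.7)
The plan is to combine two ingredients: the existence of resolutions of $M$ by direct sums of principal projectives (enabled by the Noetherian hypothesis), and a hyperhomology spectral sequence that converts such a resolution into a computation of $\HH_k(G_n; M_n)$ in terms of untwisted homologies of $G_n$ and its ``smaller'' subgroups. Throughout, write $G_n = \Aut_{\tA}(X^n)$. This essentially abstracts the argument of Church from \cite{ChurchHomologicalAlgebra}.

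First, construct a resolution $F_\bullet \to M$ of $\tA$-modules in which each $F_q$ is a finite direct sum of principal projectives of the form $P_{\tA,X^r}$ for various $r$. Existence is standard: finite generation of $M$ gives a surjection from such an $F_0$, the Noetherianity hypothesis ensures that $\ker(F_0 \to M)$ is again finitely generated, and we iterate. Evaluating at $X^n$ yields, for each $n$, an exact complex of $\bk[G_n]$-modules $\cdots \to F_1(X^n) \to F_0(X^n) \to M_n \to 0$.

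Second, apply the Cartan--Eilenberg hyperhomology spectral sequence associated to the double complex $P_\bullet \otimes_{\bk[G_n]} F_\bullet(X^n)$, where $P_\bullet \to \bk$ is a projective $\bk[G_n]$-resolution of the trivial module. Filtering in one direction degenerates to $\HH_\ast(G_n; M_n)$ (using that $F_\bullet(X^n) \to M_n$ is a quasi-isomorphism), while the other filtration produces the spectral sequence
\[ E^2_{p,q} = \HH_p(G_n; F_q(X^n)) \;\Longrightarrow\; \HH_{p+q}(G_n; M_n). \]
By Lemma~\ref{lemma:identifyinduced}, each $\HH_p(G_n; F_q(X^n))$ is a finite direct sum of terms of the form $\HH_p(G_{n-r}; \bk)$, each of which stabilizes in $n$ by hypothesis~(2); the naturality of the identification in Lemma~\ref{lemma:identifyinduced} guarantees that the stabilization map at the $E^2$ level is compatible with the map induced by $G_n \hookrightarrow G_{n+1}$.

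Third, pass from $E^2$-stability to $E^\infty$-stability and then to stability of the abutment. Since the spectral sequence lives in the first quadrant, each $E^\infty_{p,q}$ depends on only finitely many $E^2$ entries, and for fixed $k$ only finitely many entries influence $\HH_k(G_n; M_n)$. Choosing $n$ large enough that all these entries and the differentials between them stabilize, we conclude that each $E^\infty_{p,k-p}$ stabilizes, and the standard five-lemma bookkeeping along the finite filtration of the abutment implies that $\HH_k(G_n; M_n) \to \HH_k(G_{n+1}; M_{n+1})$ is an isomorphism for $n \gg 0$. The main obstacle is this last step: while every piece is standard, one must carefully check that functoriality of the hyperhomology spectral sequence in $n$ meshes with the Shapiro-type identifications from Lemma~\ref{lemma:identifyinduced}, so that both the $E^2$ entries and all relevant differentials become isomorphisms together once $n$ is sufficiently large.
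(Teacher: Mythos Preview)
Your proposal is correct and follows essentially the same route as the paper: build a resolution of $M$ by finite direct sums of representables using the Noetherian hypothesis, run the hyperhomology spectral sequence (the paper cites \cite[Proposition~VII.5.2]{BrownCohomology}), identify the starting page via Lemma~\ref{lemma:identifyinduced}, and pass to the abutment. The only differences are cosmetic: what you call the $E^2$-page is the paper's $E^1$-page, and where you sketch the finite-range/five-lemma argument by hand, the paper simply invokes Zeeman's spectral sequence comparison theorem \cite{ZeemanComparison}.
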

\begin{proof}
Combining Lemma~\ref{lemma:identifyinduced} with our second assumption, we see that the theorem is true if $M$ is a finite direct sum of representable $\tA$-modules $P_{\tA,x}$ with
$x \in A$ (see \S \ref{section:noetherianprelim}).
Our local Noetherian assumption implies that there exists an $\tA$-module resolution
\[
\cdots \longrightarrow P_2 \longrightarrow P_1 \longrightarrow P_0 \longrightarrow M \longrightarrow 0,
\]
where $P_i$ is a finite direct sum of representable $\tA$-modules for all $i \ge 0$. For all $n \geq 0$, we can combine \cite[Proposition VII.5.2]{BrownCohomology} and \cite[Eqn. VII.5.3]{BrownCohomology} to obtain a spectral sequence 
\[
{\rm E}^1_{pq}(n) = \HH_p(\Aut_{\tA}(X^n);(P_q)_n) \Longrightarrow \HH_{p+q}(\Aut_{\tA}(X^n);M_n).
\]
This spectral sequence is natural, so there is a map ${\rm E}^1_{\ast \ast}(n) \rightarrow {\rm E}^1_{\ast \ast}(n+1)$
which converges to the map $\HH_{\ast}(\Aut_{\tA}(X^n);M_n) \rightarrow \HH_{\ast}(\Aut_{\tA}(X^{n+1});M_{n+1})$.
By the above, for each pair $(p,q)$, the map ${\rm E}^1_{pq}(n) \rightarrow {\rm E}^1_{pq}(n+1)$ is an isomorphism
for $n \gg 0$. 
This implies that for each $k \geq 0$, the map
\[
\HH_k(\Aut_{\tA}(X^n);M_n) \rightarrow \HH_k(\Aut_{\tA}(X^{n+1});M_{n+1})
\]
is an isomorphism for $n \gg 0$, as desired.
\end{proof}

\begin{proof}[{Proof of Theorems \ref{maintheorem:gltwisted} and \ref{maintheorem:sptwisted}}]
Theorems \ref{maintheorem:gltwisted} and
\ref{maintheorem:sptwisted} are obtained by 
applying Theorem \ref{theorem:twistedstability} to the
categories $\VIC(R,\Unit)$ and $\SI(R)$, respectively.  The necessary
locally Noetherian theorems are Theorem \ref{maintheorem:vicnoetherian},
and Theorem \ref{maintheorem:sinoetherian}.
The necessary untwisted homological stability theorems are due
to van der Kallen \cite{VanDerKallenStability}
and Mirzaii--van der Kallen \cite{MirzaiiVanDerKallenStability}.  
\end{proof}

\section{A machine for finite generation}
\label{section:themachine}

This section contains the machine we will use to prove our finite generation results.  We begin with two sections of preliminaries: \S \ref{section:coef} is devoted to systems of coefficients, and \S \ref{section:equivariant} is devoted to the basics of equivariant homology.  Finally, \S \ref{section:congruence} contains our machine.
Our machine is based on an unpublished argument of Quillen for proving homological stability (see, e.g., \cite{HatcherWahl}).  The insight that that argument interacts well with central stability is due to the first author \cite{PutmanRepStabilityCongruence}.  This was later reinterpreted in the language of $\FI$-modules in \cite{ChurchEllenbergFarbNagpal}.  The arguments in this section abstract and generalize the arguments in \cite{PutmanRepStabilityCongruence} and \cite{ChurchEllenbergFarbNagpal}.
Throughout this section, $\bk$ is a fixed commutative ring.

\subsection{Systems of coefficients}
\label{section:coef}

Letting $\HDelta$ be the category whose objects are the finite sets $[n]_+ = \{0,\ldots,n\}$ and whose morphisms are strictly increasing maps, recall that a semisimplicial set $X$ is a contravariant functor from $\HDelta$ to the category of sets.  The image of $[n]_+$ is called the set of $n$-simplices and is denoted $X^n$.
A simplex $\sigma' \in X^m$ is a face of a simplex $\sigma \in X^n$ if $\sigma'$ is the image of $\sigma$ under the map $X^n \rightarrow X^m$ induced by some map $[m]_+ \rightarrow [n]_+$.  The geometric realization of $X$ is denoted $|X|$.  For more on semisimplicial sets, see \cite{FriedmanSimplicial} (where they are called $\Delta$-sets).

Fix a semisimplicial set $X$.  Observe that the set $\sqcup_{n=0}^{\infty} X^n$ forms
the objects of a category with a unique morphism $\sigma' \rightarrow \sigma$
whenever $\sigma'$ is a face of $\sigma$.  We will call this the {\bf simplex
category} of $X$. 

\begin{definition}
A {\bf coefficient system} on $X$ is a contravariant functor from the simplex
category of $X$ to the category of $\bk$-modules.
\end{definition}

\begin{remark}
In other words, a coefficient system $\mathfrak{F}$ on $X$ consists of
$\bk$-modules $\mathfrak{F}(\sigma)$ for simplices
$\sigma$ of $X$ and homomorphisms $\mathfrak{F}(\sigma' \rightarrow \sigma) \colon  \mathfrak{F}(\sigma) \rightarrow \mathfrak{F}(\sigma')$ whenever $\sigma'$ is a
face of $\sigma$.  These homomorphisms must satisfy the obvious compatibility condition.
\end{remark}

\begin{definition}
Let $\mathfrak{F}$ be a coefficient system on $X$.  The {\bf simplicial chain complex} 
of $X$ with coefficients in $\mathfrak{F}$ is as follows.  Define
\[\Chain_k(X;\mathfrak{F}) = \bigoplus_{\sigma \in X^{k}} \mathfrak{F}(\sigma).\]
Next, define a differential 
$\partial \colon  \Chain_k(C;\mathfrak{F}) \rightarrow \Chain_{k-1}(C;\mathfrak{F})$
in the following way.  Consider $\sigma \in X^{k}$.
We will denote an element of $\mathfrak{F}(\sigma) \subset \Chain_k(X;\mathfrak{F})$
by $c \cdot \sigma$ for $c \in \mathfrak{F}(\sigma)$.  
For $0 \leq i \leq k$, let $\sigma_i$ be the face of $\sigma$ associated to the unique
morphism $[k-1]_+ \rightarrow [k]_+$ of $\HDelta$ whose image does not contain $i$.
For $c \in \mathfrak{F}(\sigma)$, we then define
\[
\partial(c \cdot \sigma) = \sum_{i=0}^k (-1)^i c_i \cdot \sigma_i,
\]
where $c_i$ is the image of $c$ under the homomorphism
$\mathfrak{F}(\sigma_i \rightarrow \sigma) \colon  \mathfrak{F}(\sigma) \longrightarrow \mathfrak{F}(\sigma_i)$.
Taking the homology of $\Chain_{\ast}(X;\mathfrak{F})$ yields the {\bf homology groups of $X$ with coefficients in $\mathfrak{F}$},
which we will denote by $\HH_{\ast}(X;\mathfrak{F})$.
\end{definition}

\begin{remark}
If $\mathfrak{F}$ is the coefficient system that assigns $\bk$ to every simplex 
and the identity map to every inclusion of a face,
then $\HH_{\ast}(X;\mathfrak{F}) \cong \HH_{\ast}(X;\bk)$.
\end{remark}

\subsection{Equivariant homology}
\label{section:equivariant}

A good reference that contains proofs of everything we state in this section is \cite[\S VII]{BrownCohomology}.

Semisimplicial sets form a category $\SSset$ whose morphisms are natural transformations.  Given a 
semisimplicial set $X$ and a group $G$, an action of $G$ on $X$ consists of a homomorphism
$G \rightarrow \Aut_{\SSset}(X)$.  Unpacking this, an action of $G$ on $X$ consists
of actions of $G$ on $X^n$ for all $n \geq 0$ which satisfy the obvious compatibility
condition.  Observe that this induces an action of $G$ on $X$.  Also, there 
is a natural quotient semisimplicial set $X/G$ with $(X/G)^n = X^n / G$. 

\begin{remark}
If $G$ is a group acting on a semisimplicial set $X$, then there is a natural
continuous map $|X| \rightarrow |X/G|$ that factors through $|X|/G$.  In fact, it is easy
to see that $|X|/G = |X/G|$.
\end{remark}

\begin{definition}
Consider a group $G$ acting on a semisimplicial set $X$.
Let $EG$ be a contractible CW complex on which $G$ acts properly discontinuously
and freely, so $EG / G$ is a classifying space for $G$.
Define $EG \times_G X$ to be the quotient of $EG \times |X|$ 
by the diagonal action of $G$.  The {\bf $G$-equivariant homology groups of $X$}, 
denoted $\HH_\ast^G(X;\bk)$, are defined to be $\HH_\ast(EG \times_G X;\bk)$.
This definition does not depend on the choice of $EG$.  
The construction of $EG \times_G X$ is known as the {\bf Borel construction}.
\end{definition}

The following lemma summarizes two key properties of these homology groups.

\begin{lemma}
\label{lemma:equivarianthomology}
Consider a group $G$ acting on a semisimplicial set $X$.
There is a canonical map $\HH_{\ast}^G(X;\bk) \rightarrow \HH_{\ast}(G;\bk)$ such that if $X$ is $k$-acyclic, then the map $\HH_i^G(X;\bk) \rightarrow \HH_i(G;\bk)$ is an isomorphism for $i \leq k$.
\end{lemma}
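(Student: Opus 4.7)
The first bullet is essentially a matter of unwinding the definition. The plan is to take the $G$-equivariant projection $EG \times X \to EG$ (where $G$ acts diagonally on the source and by deck transformations on the target) and observe that it descends to a continuous map $EG \times_G X \to EG/G = BG$. Applying $\HH_\ast(-;\bk)$ then yields the canonical map $\HH^G_\ast(X;\bk) \to \HH_\ast(BG;\bk) = \HH_\ast(G;\bk)$.

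For the second bullet, the plan is to apply the Serre spectral sequence of the Borel fibration $|X| \to EG \times_G |X| \to BG$:
\[
E^2_{p,q} = \HH_p(G;\HH_q(|X|;\bk)) \Longrightarrow \HH^G_{p+q}(X;\bk).
\]
The map constructed in the first bullet is identified with the edge homomorphism along the $q=0$ row. Equivalently, one can filter the double complex $C_\ast(EG) \otimes_{\bk G} C_\ast(X;\bk)$, following \cite[Ch.\ VII]{BrownCohomology}.

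The $k$-acyclicity hypothesis forces $\HH_q(|X|;\bk) = 0$ for $1 \leq q \leq k$, while $\HH_0(|X|;\bk) = \bk$ with trivial $G$-action since $|X|$ is path-connected. Hence the horizontal strip $1 \leq q \leq k$ of $E^2$ vanishes, while the bottom row reads $E^2_{p,0} = \HH_p(G;\bk)$. A direct inspection shows that no differential $d_r$ with $r \geq 2$ enters or leaves $E^r_{i,0}$ nontrivially when $i \leq k$: the outgoing target $E^r_{i-r, r-1}$ has total degree $i-1 \leq k$ with second coordinate $r-1 \geq 1$, so it lies in the vanishing strip (or has $p<0$); the incoming source $E^r_{i+r,1-r}$ is zero since $1-r < 0$. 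Consequently $E^\infty_{i,0} = E^2_{i,0} = \HH_i(G;\bk)$, and the edge homomorphism gives the claimed isomorphism for $i \leq k$.

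The one technical point worth checking is that the Borel construction on the semisimplicial set $X$ really does fit into a Serre-type fibration with fiber (homotopy equivalent to) $|X|$ and base $BG$, so that the spectral sequence is genuinely available in this setting. This is classical, and I would simply invoke the relevant constructions in \cite[Ch.\ VII]{BrownCohomology} rather than reprove them; the main substance of the lemma is the dimension count above.
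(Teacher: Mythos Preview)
Your proposal is correct and follows essentially the same approach as the paper: the paper constructs the canonical map via the projection $EG \times X \to EG$ just as you do, and for the second claim simply cites the spectral sequence in \cite[(7.2), \S VII.7]{BrownCohomology}, which is precisely the Serre spectral sequence of the Borel fibration whose $E^2$-page and edge-homomorphism analysis you have spelled out in detail.
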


\begin{proof}
The map $\HH_{\ast}^G(X;\bk) \rightarrow \HH_{\ast}(G;\bk)$ comes 
from the map $EG \times_G X \rightarrow EG/G$
induced by the projection of $EG \times X$ onto its first factor.  
The second claim is an immediate consequence of the spectral sequence whose ${\rm E}^2$ page is \cite[(7.2), \S VII.7]{BrownCohomology}.
\end{proof}

To calculate equivariant homology, we use a certain spectral sequence.  First, a definition.

\begin{definition}
Consider a group $G$ acting on a semisimplicial set $X$.  Define a coefficient
system $\HHHH_q(G,X;\bk)$ on $X/G$ as follows.  Consider a simplex $\sigma$ of $X/G$.
Let $\widetilde{\sigma}$ be any lift of $\sigma$ to $X$.  Set
\[
\HHHH_q(G,X;\bk)(\sigma) = \HH_q(G_{\widetilde{\sigma}};\bk),
\]
where $G_{\widetilde{\sigma}}$ is the stabilizer of $\widetilde{\sigma}$. This does 
not depend on the choice of $\widetilde{\sigma}$ since conjugation induces the identity 
on group homology, so it defines a coefficient system on $X/G$.
\end{definition}

Our spectral sequence is then as follows.  It can be easily extracted from
\cite[\S VII.8]{BrownCohomology}.

\begin{theorem}
\label{theorem:mainspectralsequence}
Let $G$ be a group acting on a semisimplicial set $X$.  There is a spectral sequence
\[
{\rm E}^1_{p,q} = \Chain_p(X/G;\HHHH_q(G,X;\bk)) \Longrightarrow \HH_{p+q}^G(X;\bk)
\]
with $d^1 \colon {\rm E}^1_{p,q} \rightarrow {\rm E}^1_{p-1,q}$ equal to the differential
of $\Chain_{\ast}(X/G; \HHHH_q(G,X;\bk))$.
\end{theorem}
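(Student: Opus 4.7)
The plan is to construct this spectral sequence as the skeletal filtration spectral sequence for the Borel construction $EG \times_G X$. Concretely, let $X^{(p)}$ denote the sub-semisimplicial set generated by simplices of dimension $\le p$, and set $F_p = EG \times_G X^{(p)}$. This is an increasing filtration of $EG \times_G X$ whose associated long exact sequences of pairs paste together to give a spectral sequence with
\[
E^1_{p,q} = \HH_{p+q}(F_p, F_{p-1}; \bk) \Longrightarrow \HH_{p+q}(EG \times_G X; \bk) = \HH_{p+q}^G(X; \bk).
\]
First I would verify convergence: since $F_{-1} = \emptyset$ and $\bigcup_p F_p = EG \times_G X$, the filtration is exhaustive and bounded below, so standard results apply.

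Next, the main identification of the $E^1$ page. Because $X$ is semisimplicial, the cofiber $X^{(p)}/X^{(p-1)}$ is a wedge $\bigvee_{\sigma \in X^p} \Delta^p / \partial \Delta^p$, and this wedge decomposition respects the $G$-action: $G$ permutes the summands, with the summand corresponding to $\sigma$ stabilized by $G_\sigma$. Taking the Borel construction and collapsing, one obtains a wedge over orbit representatives
\[
F_p / F_{p-1} \;\cong\; \bigvee_{\sigma \in (X/G)^p} EG_{\widetilde{\sigma}} \times_{G_{\widetilde{\sigma}}} \bigl(\Delta^p / \partial \Delta^p\bigr).
\]
Each summand fibers over $BG_{\widetilde\sigma}$ with fiber $\Delta^p/\partial \Delta^p$; since this fiber has reduced homology only in degree $p$ (equal to $\bk$), the Serre spectral sequence (or an explicit chain-level model) collapses to give $\HH_{p+q}(F_p, F_{p-1}; \bk) \cong \bigoplus_{\sigma \in (X/G)^p} \HH_q(G_{\widetilde\sigma}; \bk)$, which is precisely $\Chain_p(X/G; \HHHH_q(G,X;\bk))$.

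The step that requires the most care will be verifying that $d^1$ agrees with the simplicial differential of $\Chain_\ast(X/G; \HHHH_q(G,X;\bk))$, including the correct signs and the contravariant functoriality encoded in the coefficient system. For this I would work at the chain level: pick a free resolution of $\bk$ over $\Z G$ and form the double complex $C_\ast(EG) \otimes_{\Z G} C_\ast(X)$ computing $\HH_\ast^G(X;\bk)$, and filter by $p = $ simplicial degree on the $X$ factor. The $E^1$ differential is then the connecting homomorphism of the triple $(F_p, F_{p-1}, F_{p-2})$; by the standard argument (as in \cite[\S VII.8]{BrownCohomology}), on a simplex $\sigma$ this decomposes as the alternating sum of face maps, and the induced maps on stabilizer homology are exactly the restriction maps $\HH_q(G_{\widetilde\sigma};\bk) \to \HH_q(G_{\widetilde{\sigma_i}};\bk)$ (which are well-defined up to conjugation by an element of $G$, matching the independence in the definition of $\HHHH_q(G,X;\bk)$). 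This identification, together with a careful sign bookkeeping, completes the proof.
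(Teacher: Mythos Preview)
Your sketch is correct and follows the standard construction. Note, however, that the paper does not actually prove this theorem: it is stated with the remark that it ``can be easily extracted from \cite[\S VII.8]{BrownCohomology}'' and no further argument is given. Your skeletal-filtration approach is precisely the construction Brown carries out in that reference, so your proposal is consistent with what the paper cites. One small notational point: in your wedge decomposition you write $EG_{\widetilde\sigma} \times_{G_{\widetilde\sigma}} (\Delta^p/\partial\Delta^p)$, but it is cleaner to use $EG$ itself restricted to the subgroup $G_{\widetilde\sigma}$ (since $EG$ is already a free contractible $G_{\widetilde\sigma}$-space), rather than introducing a separate $EG_{\widetilde\sigma}$.
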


\subsection{A machine for finite generation}
\label{section:congruence}

We now discuss a machine for proving that the homology groups of certain congruence subgroups form finitely generated
modules over a complemented category.

\paragraph{Congruence subgroups.}
Let $(\tB,\monpro)$ be a weak complemented category with a generator $Y$.  The assignment
$V \mapsto \Aut_{\tB}(\tilV)$ for $V \in \tB$
is functorial.  Indeed, given a $\tB$-morphism $\tilf \colon \tilV_1 \rightarrow \tilV_2$, let
$\tilC$ be the complement to $\tilf(\tilV_1) \subset \tilV_2$ and write $\tilV_2 \cong \tilV_1 \monpro \tilC$;
we can then extend automorphisms of $\tilV_1$ to $\tilV_2$ by letting them act as the identity on $\tilC$.  It
is clear that this is well-defined.  Next, let $(\tA,\monpro)$ be a complemented category
and let $\Psi \colon \tB \rightarrow \tA$ be a strong monoidal functor (recall that this is a monoidal
functor whose coherence maps are invertible).  An argument similar to the above
shows that the assignment
\[
\tilV \mapsto \Ker(\Aut_{\tB}(\tilV) \rightarrow \Aut_{\tA}(\Psi(\tilV))) \quad \quad (\tilV \in \tB)
\]
is functorial.  We will call this functor the {\bf level $\Psi$ congruence subgroup} of $\Aut_{\tB}$ and denote its value on $\tilV$ by $\Gamma_{\tilV}(\Psi)$.

\paragraph{Highly surjective functors.}
The above construction is particularly well-behaved on certain kinds of functors that we now define.
Let $(\tA,\monpro)$ be a complemented category with a generator $X$ and let $(\tB,\monpro)$ be a weak complemented
category with a generator $Y$.  A {\bf highly surjective functor} from $\tB$ to $\tA$ is a strong monoidal functor
$\Psi \colon \tB \rightarrow \tA$ satisfying:

\begin{compactitem}
\item $\Psi(Y) = X$.
\item For $\tilV \in \tB$, the map $\Psi_{\ast} \colon \Aut_{\tB}(\tilV) \rightarrow \Aut_{\tA}(\Psi(\tilV))$ is surjective.
\end{compactitem}

\begin{remark}
We only assume that $\Psi$ is a strong monoidal functor (as opposed to a strict monoidal functor), so for all $q \geq 0$ we only know that $\Psi(Y^q)$ and $X^q$ are naturally isomorphic rather than identical.  
To simplify notation, we will simply identify $\Psi(Y^q)$ and $X^q$ henceforth.  The
careful reader can insert appropriate natural isomorphisms as needed.
\end{remark}

\begin{lemma}
\label{lemma:highlysurjective}
Let $(\tA,\monpro)$ be a complemented category with a generator $X$, let $(\tB,\monpro)$ be a weak complemented
category with a generator $Y$, and let $\Psi \colon \tB \rightarrow \tA$ be a highly surjective functor. Then the following hold.
\begin{compactenum}[\indent \rm (a)]
\item For all $V \in \tA$, there exists some $\tilV \in \tB$ such that $\Psi(\tilV) \cong V$.
\item For all $\tilV_1,\tilV_2 \in \tB$, the map 
$\Psi_{\ast} \colon \Hom_{\tB}(\tilV_1,\tilV_2) \rightarrow \Hom_{\tA}(\Psi(\tilV_1),\Psi(\tilV_2))$ is surjective.
\end{compactenum}
\end{lemma}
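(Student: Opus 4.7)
The plan is to handle the two parts essentially independently, with part (a) being immediate from the definitions and part (b) being the main content, to be proven by reducing an arbitrary $\tA$-morphism to an automorphism composed with a ``canonical'' morphism coming from the monoidal structure, which $\Psi$ will automatically preserve.

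For part (a), since $X$ is a generator of $\tA$, any $V \in \tA$ satisfies $V \cong X^i$ for some $i \geq 0$. Because $\Psi$ is strong monoidal with $\Psi(Y) = X$, iterating the monoidal structure gives a natural isomorphism $\Psi(Y^i) \cong X^i \cong V$, so $\tilV := Y^i$ does the job.

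For part (b), I would begin by using the fact that $Y$ is a generator of $\tB$ to pick isomorphisms $\tilV_1 \cong Y^{n_1}$ and $\tilV_2 \cong Y^{n_2}$, which after applying $\Psi$ gives isomorphisms $\Psi(\tilV_1) \cong X^{n_1}$ and $\Psi(\tilV_2) \cong X^{n_2}$. Given $f \in \Hom_{\tA}(\Psi(\tilV_1), \Psi(\tilV_2))$, the subobject $f(\Psi(\tilV_1)) \subset \Psi(\tilV_2)$ has an $X$-rank complement, from which one deduces $n_1 \leq n_2$. Let $\iota \colon X^{n_1} \to X^{n_2}$ denote the canonical $\tA$-morphism from the monoidal structure, and let $\tiliota \colon Y^{n_1} \to Y^{n_2}$ denote its $\tB$-analogue; since $\Psi$ is strong monoidal, $\Psi(\tiliota) = \iota$ (after identifying $\Psi(Y^{n_j})$ with $X^{n_j}$).

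The key step is then to apply Lemma~\ref{lemma:movedecomp} to conclude that $\Aut_{\tA}(X^{n_2})$ acts transitively on $\Hom_{\tA}(X^{n_1}, X^{n_2})$, so we can write $f = g \circ \iota$ for some $g \in \Aut_{\tA}(X^{n_2})$. By the defining surjectivity property of a highly surjective functor, $g$ lifts to some $\tilg \in \Aut_{\tB}(Y^{n_2})$, and then $\tilf := \tilg \circ \tiliota$ satisfies $\Psi(\tilf) = g \circ \iota = f$, as desired. The only mild subtlety is checking that $\Psi$ carries the canonical $\tB$-morphism to the canonical $\tA$-morphism, which is automatic from $\Psi$ being a monoidal functor sending $Y$ to $X$; there is no serious obstacle.
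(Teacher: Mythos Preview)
Your proposal is correct and follows essentially the same route as the paper's proof: reduce to $Y^{n_1}\to Y^{n_2}$ and $X^{n_1}\to X^{n_2}$, use Lemma~\ref{lemma:movedecomp} to write $f$ as an automorphism of $X^{n_2}$ composed with a known morphism, lift the automorphism via high surjectivity, and compose. The paper phrases the base morphism slightly more loosely (it picks an arbitrary $\tilf_1\in\Hom_{\tB}(Y^{n_1},Y^{n_2})$ rather than insisting on the canonical one), but this is an inessential difference.
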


\begin{proof}
For the first claim, we have $V \cong X^q$ for some $q \geq 0$, so $\Psi(Y^q) = X^q \cong V$.  

For the second claim, write $\tilV_1 \cong Y^q$ and $\tilV_2 \cong Y^r$, so $\Psi(\tilV_1) \cong X^q$ and $\Psi(\tilV_2) \cong X^r$.  
If $q>r$, then $\Hom_{\tA}(X^q,X^r) = \emptyset$ and the claim is trivial, so we can assume that
$q \le r$.  Consider some $f \in \Hom_{\tA}(X^q,X^r)$.  Let $\tilf_1 \in \Hom_{\tB}(Y^q,Y^r)$ be arbitrary (this
set is nonempty; for instance, it contains the composition of the canonical map $Y^q \rightarrow Y^q \monpro Y^{r-q}$
with an isomorphism $Y^q \monpro Y^{r-q} \cong Y^r$).  Lemma \ref{lemma:movedecomp} implies that there exists
some $f_2 \in \Aut_{\tA}(X^r)$ such that $f = f_2 \circ \Psi(\tilf_1)$.  Since $\Psi$ is highly surjective,
we can find $\tilf_2 \in \Aut_{\tB}(Y^r)$ such that $\Psi(\tilf_2) = f_2$.  Setting $\tilf = \tilf_2 \circ \tilf_1$,
we then have $\Psi(\tilf) = f$, as desired.
\end{proof}

\begin{lemma}
\label{lemma:hhhwelldefined}
Let $(\tA,\monpro)$ be a complemented category with generator $X$, let $(\tB,\monpro)$ be a weak complemented category
with generator $Y$, and let $\Psi \colon \tB \rightarrow \tA$ be a highly surjective functor.  Consider
$\tilf,\tilf' \in \Hom_{\tB}(Y^q,Y^r)$ such that $\Psi(\tilf) = \Psi(\tilf')$.  Then there
exists some $\tilg \in \Gamma_{Y^r}(\Psi)$ such that $\tilf' = \tilg \circ \tilf$.
\end{lemma}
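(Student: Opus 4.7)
The plan is to first find \emph{any} $\tB$-automorphism of $Y^r$ carrying $\tilf$ to $\tilf'$, and then to correct it by right-multiplying by an automorphism in the kernel of $\Psi$ which fixes $\tilf$.

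First, I would apply Lemma~\ref{lemma:movedecomp} (whose proof uses only the uniqueness of complements together with the canonical morphisms out of the initial object $\mathbbm{1}$, both of which are available in the weak complemented category $\tB$) to produce some $\tilh \in \Aut_{\tB}(Y^r)$ with $\tilh \circ \tilf = \tilf'$. Let $\tilC \subset Y^r$ be the complement of $\tilf(Y^q)$ and let $\mu \colon Y^q \monpro \tilC \xrightarrow{\cong} Y^r$ be the associated isomorphism, so that $\mu \circ i = \tilf$ where $i \colon Y^q \to Y^q \monpro \tilC$ denotes the canonical morphism. Since $\Psi$ is strong monoidal, $\Psi(\mu)$ identifies $X^r$ with $X^q \monpro \Psi(\tilC)$ and realizes $\Psi(\tilC)$ as the (necessarily unique) complement to $\Psi(\tilf)(X^q)$ in $X^r$.

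Now $\Psi(\tilh) \circ \Psi(\tilf) = \Psi(\tilf') = \Psi(\tilf)$, so by Lemma~\ref{lemma:autcomplement}(b) applied in the complemented category $\tA$ to the decomposition $X^r \cong X^q \monpro \Psi(\tilC)$, there is a unique $\alpha \in \Aut_{\tA}(\Psi(\tilC))$ with $\Psi(\tilh) = \mathrm{id}_{X^q} \monpro \alpha$. Since $\Psi$ is highly surjective, I would lift $\alpha^{-1}$ to some $\widetilde{\beta} \in \Aut_{\tB}(\tilC)$ and define $\widetilde{k} \in \Aut_{\tB}(Y^r)$ to be the conjugate $\mu \circ (\mathrm{id}_{Y^q} \monpro \widetilde{\beta}) \circ \mu^{-1}$. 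Using only that $\mathbbm{1}$ is initial (so that $\widetilde{\beta} \circ u = u$ for the unique $u \colon \mathbbm{1} \to \tilC$), a direct computation gives $(\mathrm{id}_{Y^q} \monpro \widetilde{\beta}) \circ i = i$, whence $\widetilde{k} \circ \tilf = \mu \circ (\mathrm{id}_{Y^q} \monpro \widetilde{\beta}) \circ i = \mu \circ i = \tilf$. The strong monoidality of $\Psi$ then yields $\Psi(\widetilde{k}) = \mathrm{id}_{X^q} \monpro \alpha^{-1} = \Psi(\tilh)^{-1}$.

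Finally, I would set $\tilg = \tilh \circ \widetilde{k}$. Then $\tilg \circ \tilf = \tilh \circ (\widetilde{k} \circ \tilf) = \tilh \circ \tilf = \tilf'$, while $\Psi(\tilg) = \Psi(\tilh) \circ \Psi(\widetilde{k}) = \mathrm{id}_{X^r}$, so that $\tilg$ lies in $\Gamma_{Y^r}(\Psi)$, completing the proof. The only delicate point I anticipate is keeping track of which auxiliary lemmas are invoked in which category: Lemma~\ref{lemma:movedecomp} must be used in the weak complemented $\tB$ (where its original proof still goes through), while Lemma~\ref{lemma:autcomplement}(b) is applied only in the complemented $\tA$, and the bridge between the two is provided precisely by the strong monoidality and high surjectivity of $\Psi$.
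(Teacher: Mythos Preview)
Your proof is correct and follows essentially the same strategy as the paper's: obtain via Lemma~\ref{lemma:movedecomp} (in $\tB$) an automorphism carrying $\tilf$ to $\tilf'$, observe via Lemma~\ref{lemma:autcomplement}(b) (in $\tA$) that its image under $\Psi$ has the form $\mathrm{id}\monpro\alpha$, lift $\alpha^{-1}$ back to $\tB$ by high surjectivity, and multiply by the resulting correction. The only cosmetic difference is that the paper first conjugates by the isomorphism $Y^r\cong Y^q\monpro\tilC$ and works inside $\Aut_{\tB}(Y^q\monpro\tilC)$, whereas you carry out the same computation directly in $\Aut_{\tB}(Y^r)$ via your $\mu$; your explicit remark about which lemma is applied in which category is a point the paper leaves implicit.
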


\begin{proof}
Let $\tilC$ be the complement to $\tilf(Y^q) \subset Y^r$ and $\tilh \colon Y^r \rightarrow Y^q \monpro \tilC$ 
be the isomorphism such that $\tilh \circ \tilf \colon Y^q \rightarrow Y^q \monpro \tilC$ is the canonical map. Define $C = \Psi(\tilC)$, $h = \Psi(\tilh)$, and $f = \Psi(\tilf)$, so $h \circ f \colon X^q \rightarrow X^q \monpro C$ is the canonical map.  Lemma \ref{lemma:movedecomp} implies that
there exists some $\tilg_1 \in \Aut_{\tB}(Y^q \monpro \tilC)$ such that 
$\tilh \circ \tilf' = \tilg_1 \circ \tilh \circ \tilf$.  Define $g_1 = \Psi(\tilg_1)$, so
$h \circ f = g_1 \circ h \circ f$.  Lemma \ref{lemma:autcomplement}(b) implies that there
exists some $g_2 \in \Aut_{\tA}(C)$ such that $g_1 = \text{id} \monpro g_2$.  Since
$\Psi$ is highly surjective, we can find some $\tilg_2 \in \Aut_{\tB}(\tilC)$ such that $\Psi(\tilg_2) = g_2$.  

Define $\tilg \in \Aut_{\tB}(Y^r)$ to equal $\tilh^{-1} \circ \tilg_1 \circ (\text{id} \monpro \tilg_2^{-1}) \circ \tilh$.
Then $\Psi(\tilg) = h^{-1} \circ g_1 \circ (\text{id} \monpro g_2^{-1}) \circ h$, which is
the identity since $g_1 = \text{id} \monpro g_2$.  Thus $\tilg \in \Gamma_{Y^r}(\Psi)$.  Finally,
since $\tilh \circ \tilf$ is the canonical map, it follows that 
$\tilh \circ \tilf \circ (\text{id} \monpro \tilg_2^{-1}) = \tilh \circ \tilf$.  Thus
\[
\tilg \circ \tilf = \tilh^{-1} \circ \tilg_1 \circ (\text{id} \monpro \tilg_2^{-1}) \circ \tilh \circ \tilf = \tilh^{-1} \circ \tilg_1 \circ \tilh \circ \tilf = \tilh^{-1} \circ \tilh \circ \tilf' = \tilf'. \qedhere
\]
\end{proof}

\paragraph{Homology of congruence subgroups.}
Let $(\tA,\monpro)$ be a complemented category with generator $X$, let $(\tB,\monpro)$ be a weak complemented category
with generator $Y$,
and let $\Psi \colon \tB \rightarrow \tA$ be a highly surjective functor.  Fixing a ring $\bk$ and some $k \geq 0$, we
define an $\tA$-module $\HHH_k(\Psi; \bk)$ as follows.  Since $\tA$ is equivalent to the full subcategory spanned
by the $X^q$, it is enough to define $\HHH_k(\Psi;\bk)$ on these.  Set
\[
\HHH_k(\Psi;\bk)_{X^q} = \HH_k(\Gamma_{Y^q}(\Psi);\bk).
\]
To define $\HHH_k(\Psi;\bk)$ on morphisms, 
consider $f \in \Hom_{\tA}(X^q,X^r)$.  By Lemma \ref{lemma:highlysurjective},
there exists some $\tilf \in \Hom_{\tB}(Y^q,Y^r)$ such that $\Psi(\tilf) = f$, and we define
\[
\HHH_k(\Psi;\bk)_{f} \colon \HH_k(\Gamma_{Y^q}(\Psi);\bk) \rightarrow \HH_k(\Gamma_{Y^r}(\Psi);\bk)
\]
to be the map on homology induced by the map $\Gamma_{Y^q}(\Psi) \rightarrow \Gamma_{Y^r}(\Psi)$ induced by $\tilf$.
To see that this is well-defined, observe that if $\tilf' \in \Hom_{\tB}(Y^q,Y^r)$ also satisfies
$\Psi(\tilf') = f$, then Lemma \ref{lemma:hhhwelldefined} implies that there exists
some $\tilg \in \Gamma_{Y^r}(\Psi)$ such that $\tilf' = \tilg \circ \tilf$.  Since inner
automorphisms act trivially on group homology, we conclude that $\tilf$ and $\tilf'$ induce the same
map $\HH_k(\Gamma_{Y^q}(\Psi);\bk) \rightarrow \HH_k(\Gamma_{Y^r}(\Psi);\bk)$.

\paragraph{Space of $Y$-embeddings.}
We wish to give a sufficient condition for the $\tA$-module $\HHH_k(\Psi;\bk)$ to be finitely generated. This requires the following definition.  Consider $\tilV \in \tB$.  Recall that for all $k \geq 0$, we defined
\[
\ShiftSet_{\tilV,k} = \Hom_{\tB}(Y^k,\tilV).
\]
The {\bf space of $Y$-embeddings} is the semisimplicial set $\ShiftSet_{\tilV}$ defined as follows. First, define
$(\ShiftSet_{\tilV})^{k} = \ShiftSet_{\tilV,k+1}$. 
For $f \in \ShiftSet_{\tilV,k+1}$, we will denote the associated $k$-simplex of $\ShiftSet_{\tilV}$ by $[f]$.
Next, if $[f] \in (\ShiftSet_{\tilV})^{k}$ and $\phi \colon [\ell]_+ \rightarrow [k]_+$ 
is a morphism of $\HDelta$ (i.e., a strictly increasing map from $[\ell]_+ = \{0,\ldots,\ell\}$ to $[k]_+ = \{0,\ldots,k\}$), then there is a natural induced map $\phi_{\ast} \colon Y^{\ell+1} \rightarrow Y^{k+1}$ (here we are indexing the $Y$-factors starting at $0$).  We define the face of $[f]$ associated to $\phi$ to be $[f \circ \phi_{\ast}]$. 

\paragraph{Finite generation, statement.}
We can now state our main theorem.

\begin{theorem}
\label{theorem:finitegen}
Let $(\tA,\monpro)$ be a complemented category with a generator $X$, let $(\tB,\monpro)$ be a weak complemented
category with a generator $Y$, and let $\Psi \colon \tB \rightarrow \tA$ be a highly surjective functor.  Fix a
Noetherian ring $\bk$, and assume that the following hold.
\begin{compactenum}[\indent \rm 1.]
\item The category of $\tA$-modules is locally Noetherian.

\item For all $V \in \tA$ and $k \geq 0$, the $\bk$-module $\HHH_k(\Psi;\bk)_V$ is finitely generated.

\item For all $p \geq 0$, there exists some $N_p \geq 0$ such that for all $\tilV \in \tB$ whose $Y$-rank is at least $N_p$, the space $\ShiftSet_{\tilV}$ is $p$-acyclic.
\end{compactenum}
Then $\HHH_k(\Psi;\bk)$ is a finitely generated $\tA$-module for all $k \geq 0$.
\end{theorem}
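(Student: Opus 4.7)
The plan is to induct on $k$. The base case $k=0$ is immediate: $\HHH_0(\Psi;\bk)$ is the constant $\tA$-module $\bk$ (all $\Gamma_{\tilV}(\Psi)$ give $\HH_0=\bk$, and the transition maps are identities), and this is generated by $1 \in \HHH_0(\Psi;\bk)_{X^0}$ since $X^0$ is initial in $\tA$.

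For the inductive step, assume $\HHH_i(\Psi;\bk)$ is finitely generated for $i<k$. Given a prospective $V \in \tA$ of large $X$-rank, I will pick a lift $\tilV \in \tB$ via Lemma \ref{lemma:highlysurjective}(a) and apply Theorem \ref{theorem:mainspectralsequence} to $\Gamma := \Gamma_{\tilV}(\Psi)$ acting on $\ShiftSet_{\tilV}$, obtaining
\begin{equation*}
E^1_{p,q} = \Chain_p(\ShiftSet_{\tilV}/\Gamma;\HHHH_q(\Gamma,\ShiftSet_{\tilV};\bk)) \Longrightarrow \HH^{\Gamma}_{p+q}(\ShiftSet_{\tilV};\bk).
\end{equation*}
Assumption~3 combined with Lemma \ref{lemma:equivarianthomology} will identify the abutment in total degrees $\leq k$ with $\HH_{p+q}(\Gamma;\bk) = \HHH_{p+q}(\Psi;\bk)_V$, once the rank of $V$ exceeds $N_k$.

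The heart of the argument is the identification $E^1_{p,q} \cong (\OrderedShift_{p+1}\HHH_q(\Psi;\bk))_V$, with $d^1$ matching the differential of $\OrderedShift_\ast$. The bijection $(\ShiftSet_{\tilV})^p/\Gamma \cong \Hom_{\tA}(X^{p+1}, V)$ will follow from Lemma \ref{lemma:highlysurjective}(b) (surjectivity) and Lemma \ref{lemma:hhhwelldefined} (injectivity), while an argument parallel to Lemma \ref{lemma:autcomplement}(b) will identify the $\Gamma$-stabilizer of a $p$-simplex lifting $f \in \Hom_{\tA}(X^{p+1}, V)$ with $\Gamma_{\tilC_f}(\Psi)$, for $\tilC_f \in \tB$ a lift of the complement $W_f$ of $f(X^{p+1})$ in $V$. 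With this bridge in place, the inductive hypothesis plus Theorem \ref{theorem:resolution} (available thanks to assumption~1) will make the $q$-th row $\cdots \to (\OrderedShift_2 \HHH_q)_V \to (\OrderedShift_1 \HHH_q)_V \to \HHH_q(\Psi;\bk)_V \to 0$ exact for large $V$ and each $q<k$, so $E^2_{p,q}=0$ whenever $p \geq 1$ and $q<k$.

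Consequently no nonzero higher differential can touch $E^r_{0,k}$, because its only possible source $E^r_{r, k-r+1}$ sits in the vanishing region; the total-degree-$k$ antidiagonal on $E^{\infty}$ is therefore concentrated at $(0,k)$, giving $\HHH_k(\Psi;\bk)_V \cong E^{\infty}_{0,k} = E^2_{0,k}$. The resulting edge map $(\OrderedShift_1 \HHH_k(\Psi;\bk))_V = E^1_{0,k} \twoheadrightarrow \HHH_k(\Psi;\bk)_V$ will be checked to coincide with the natural differential $d$ of \S\ref{section:exactsequence}: both are the sum over $f \in \Hom_{\tA}(X, V)$ of the inclusion-induced maps $\HH_k(\Gamma_{\tilC_f}(\Psi);\bk) \to \HH_k(\Gamma;\bk)$. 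Surjectivity of $d$ on all large-rank $V$, together with assumption~2, will then trigger Lemma \ref{lemma:shiftfinitegen} and close the induction. The main obstacle is precisely the $E^1$-identification and the verification that $d^1$ matches the $\OrderedShift_\ast$ differential at the level of simplicial face maps; once that bridge is built, the spectral-sequence bookkeeping is routine.
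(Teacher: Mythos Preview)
Your proposal is correct and follows essentially the same approach as the paper: induction on $k$, the equivariant spectral sequence for $\Gamma_{\tilV}(\Psi)$ acting on $\ShiftSet_{\tilV}$, the identification $E^1_{p,q}\cong(\OrderedShift_{p+1}\HHH_q(\Psi;\bk))_V$ (which the paper packages as Lemmas \ref{lemma:quotientemb} and \ref{lemma:identifye1}, proved exactly via the ingredients you list), the vanishing of $E^2_{p,q}$ for $q<k$ via Theorem \ref{theorem:resolution}, and the conclusion through Lemma \ref{lemma:shiftfinitegen}. The only cosmetic difference is that the paper records the $E^2$-vanishing in the finite range $1\le p\le k+1$ (which is all Theorem \ref{theorem:resolution} literally supplies and all the spectral sequence needs), whereas you wrote ``$p\ge 1$''; this does not affect the argument.
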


We prove Theorem~\ref{theorem:finitegen} at the end of this section after some preliminary results.

\paragraph{Identifying the spectral sequence.}
Let $(\tB,\monpro)$ be a weak complemented category with generator $Y$.  For all $\tilV \in \tB$, the group $\Aut_{\tB}(\tilV)$ acts on $\ShiftSet_{\tilV}$.  The following lemma describes the restriction of this action to a congruence subgroup.

\begin{lemma}
\label{lemma:quotientemb}
Let $(\tA,\monpro)$ be a complemented category with a generator $X$, let $(\tB,\monpro)$ be a weak complemented
category with a generator $Y$, and let $\Psi \colon \tB \rightarrow \tA$ be a highly surjective functor.  Fix
some $\tilV \in \tB$, and set $V = \Psi(\tilV) \in \tA$.  Then
the group $\Gamma_{\tilV}(\Psi)$ acts on $\ShiftSet_{\tilV}$ 
and we have an isomorphism $\ShiftSet_{\tilV} / \Gamma_{\tilV}(\Psi) \cong \ShiftSet_{V}$
of semisimplicial sets.
\end{lemma}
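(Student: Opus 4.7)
The plan is to exhibit the claimed isomorphism by applying $\Psi$ simplex-by-simplex. First I would check that $\Gamma_{\tilV}(\Psi)$ acts on $\ShiftSet_{\tilV}$: an element $\tilg \in \Gamma_{\tilV}(\Psi) \subset \Aut_{\tB}(\tilV)$ acts on a $k$-simplex $[\tilf] \in (\ShiftSet_{\tilV})^k = \Hom_{\tB}(Y^{k+1},\tilV)$ by $\tilg \cdot [\tilf] = [\tilg \circ \tilf]$, and this action commutes with face maps since those are defined by precomposition with morphisms $\phi_{\ast}\colon Y^{\ell} \to Y^{k+1}$ while the group action is by postcomposition.

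Next I would define a semisimplicial map $\Phi\colon \ShiftSet_{\tilV} \to \ShiftSet_{V}$ by sending $[\tilf] \mapsto [\Psi(\tilf)]$. Functoriality of $\Psi$ together with the identification $\Psi(Y^{k+1}) = X^{k+1}$ guarantees compatibility with face maps. This map is surjective at each simplicial level by Lemma \ref{lemma:highlysurjective}(b), which says $\Psi_{\ast}\colon \Hom_{\tB}(Y^{k+1},\tilV) \to \Hom_{\tA}(X^{k+1},V)$ is surjective. It is also invariant under the $\Gamma_{\tilV}(\Psi)$-action: if $\tilg \in \Gamma_{\tilV}(\Psi)$, then by definition $\Psi(\tilg) = \mathrm{id}_{V}$, so $\Psi(\tilg \circ \tilf) = \Psi(\tilg) \circ \Psi(\tilf) = \Psi(\tilf)$. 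Thus $\Phi$ descends to a surjective semisimplicial map $\overline{\Phi}\colon \ShiftSet_{\tilV}/\Gamma_{\tilV}(\Psi) \to \ShiftSet_{V}$.

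It remains to verify that $\overline{\Phi}$ is injective on simplices. Suppose $\tilf,\tilf' \in \Hom_{\tB}(Y^{k+1},\tilV)$ satisfy $\Psi(\tilf) = \Psi(\tilf')$. Then Lemma \ref{lemma:hhhwelldefined} produces some $\tilg \in \Gamma_{\tilV}(\Psi)$ with $\tilf' = \tilg \circ \tilf$, so $[\tilf]$ and $[\tilf']$ lie in the same $\Gamma_{\tilV}(\Psi)$-orbit. Hence $\overline{\Phi}$ is a bijection on each set of $k$-simplices, and being a semisimplicial map it is an isomorphism of semisimplicial sets.

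The only step requiring anything more than unwinding definitions is the injectivity of $\overline{\Phi}$, and this is precisely the content of Lemma \ref{lemma:hhhwelldefined}, which has already been established; all other steps are formal consequences of the definitions of highly surjective functor, congruence subgroup, and $\ShiftSet_{\tilV}$.
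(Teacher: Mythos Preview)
Your proof is correct and follows essentially the same approach as the paper: define the map $\ShiftSet_{\tilV} \to \ShiftSet_V$ by applying $\Psi$, observe it is $\Gamma_{\tilV}(\Psi)$-invariant, then use Lemma~\ref{lemma:highlysurjective}(b) for surjectivity and Lemma~\ref{lemma:hhhwelldefined} for injectivity on each level. Your write-up is somewhat more detailed (you explicitly verify compatibility with face maps and the invariance computation), but the structure and the two key lemmas invoked are identical.
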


\begin{proof}
The claimed action of $\Gamma_{\tilV}(\Psi)$ is the restriction of the obvious
action of $\Aut_{\tB}(\tilV)$.
We have a $\Gamma_{\tilV}(\Psi)$-invariant map $\ShiftSet_{\tilV} \rightarrow \ShiftSet_{V}$
of semisimplicial sets.  This induces a map
$\rho \colon \ShiftSet_{\tilV} / \Gamma_{\tilV}(\Psi) \rightarrow \ShiftSet_{V}$.
For all $k \geq 0$, Lemma \ref{lemma:highlysurjective} implies that $\rho$ is surjective on
$k$-simplices and Lemma \ref{lemma:hhhwelldefined}
implies that $\rho$ is injective on $k$-simplices.
We conclude that $\rho$ is an isomorphism of semisimplicial 
sets, as desired.
\end{proof}

Let $(\tA,\monpro)$ be a complemented category with a generator $X$ and let $\Psi \colon \tB \rightarrow \tA$ be a highly surjective functor.
Fix some $\tilV \in \tB$.  Our next goal is to understand the spectral sequence given by
Theorem \ref{theorem:mainspectralsequence} for the action of $\Gamma_{\tilV}(\Psi)$ on
$\ShiftSet_{\tilV}$.  Set $V = \Psi(\tilV) \in \tA$.  Using Lemma \ref{lemma:quotientemb}, this
spectral sequence is of the form
\[
{\rm E}^1_{p,q} = \Chain_p(\ShiftSet_{V}; \HHHH_q(\Gamma_{\tilV}(\Psi),\ShiftSet_{\tilV};\bk)) \Longrightarrow \HH_{p+q}^{\Gamma_{\tilV}(\Psi)}(\ShiftSet_{\tilV};\bk).
\]

Recall the definition of $\OrderedShift_p$ from \S\ref{section:exactsequence}.

\begin{lemma}
\label{lemma:identifye1}
Let $(\tA,\monpro)$ be a complemented category with a generator $X$, let $(\tB,\monpro)$ be a weak complemented
category with a generator $Y$, and let $\Psi \colon \tB \rightarrow \tA$ be a highly surjective functor.
Fix $\tilV \in \tB$, and set $V = \Psi(\tilV)$.  Finally, fix a ring $\bk$.  Then for $p,q \geq 0$ we have
\[
\Chain_{p-1}(\ShiftSet_{V}; \HHHH_q(\Gamma_{\tilV}(\Psi),\ShiftSet_{\tilV};\bk)) 
\cong (\OrderedShift_{p} \HHH_q(\Psi;\bk))_{V}.
\]
\end{lemma}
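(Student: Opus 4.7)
The plan is to unpack both sides of the claimed isomorphism and then identify the summands term by term.

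First I will unwind the left-hand side. By definition, the $p-1$ simplices of $\ShiftSet_V$ are the elements of $\ShiftSet_{V,p}=\Hom_{\tA}(X^p,V)$. Therefore, writing $[f]$ for the $(p-1)$-simplex corresponding to $f\in\Hom_{\tA}(X^p,V)$, we have
\[
\Chain_{p-1}(\ShiftSet_V;\HHHH_q(\Gamma_{\tilV}(\Psi),\ShiftSet_{\tilV};\bk))=\bigoplus_{f\in\Hom_{\tA}(X^p,V)}\HHHH_q(\Gamma_{\tilV}(\Psi),\ShiftSet_{\tilV};\bk)([f]).
\]
By Lemma \ref{lemma:quotientemb}, each $[f]$ lifts to a simplex $[\tilf]$ with $\tilf\in\Hom_{\tB}(Y^p,\tilV)$, and by definition the coefficient system is $\HH_q(G_{\tilf};\bk)$ where $G_{\tilf}$ is the stabilizer of $\tilf$ under the action of $\Gamma_{\tilV}(\Psi)$ on $\ShiftSet_{\tilV,p}$ (which is the restriction of composition on $\Aut_{\tB}(\tilV)$).

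The key computation is to identify $G_{\tilf}$. Let $\tilC\subseteq\tilV$ be the complement of $\tilf(Y^p)$, so that the canonical isomorphism $Y^p\monpro\tilC\xrightarrow{\cong}\tilV$ sends $Y^p\to\tilV$ to $\tilf$. Under this identification, Lemma \ref{lemma:autcomplement}(b) says that the stabilizer of $\tilf$ in $\Aut_{\tB}(\tilV)$ is exactly the image of $\Aut_{\tB}(\tilC)$ under the map $g\mapsto\mathrm{id}_{Y^p}\monpro g$. Applying $\Psi$ and using that $\Psi$ is strong monoidal, this extension-by-identity operation is compatible with the analogous operation in $\tA$, so the stabilizer intersected with $\Gamma_{\tilV}(\Psi)$ is exactly $\Gamma_{\tilC}(\Psi)$. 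Hence $\HH_q(G_{\tilf};\bk)=\HH_q(\Gamma_{\tilC}(\Psi);\bk)=\HHH_q(\Psi;\bk)_{\Psi(\tilC)}=\HHH_q(\Psi;\bk)_{W_f}$, where $W_f\subseteq V$ is the complement of $f(X^p)$ (note $\Psi(\tilC)\cong W_f$ because $\Psi$ preserves complements of subobjects that come from $\tB$-subobjects, which is immediate from $\Psi$ being strong monoidal applied to the decomposition $\tilV\cong Y^p\monpro\tilC$).

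Comparing with the right-hand side
\[
(\OrderedShift_p\HHH_q(\Psi;\bk))_V=\bigoplus_{h\in\ShiftSet_{V,p}}\HHH_q(\Psi;\bk)_{W_h},
\]
we get a term-by-term identification indexed by $\Hom_{\tA}(X^p,V)$. The main subtlety, and the one step I would check most carefully, is well-definedness: we must verify that the identification is canonical, i.e.\ independent of the lift $\tilf$ of $[f]$. If $\tilf'$ is another lift, Lemma \ref{lemma:hhhwelldefined} gives $\tilg\in\Gamma_{\tilV}(\Psi)$ with $\tilf'=\tilg\circ\tilf$; conjugation by $\tilg$ identifies $G_{\tilf}$ with $G_{\tilf'}$ and acts trivially on group homology, so the induced identification on $\HH_q$ is canonical. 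Finally I would check that the identification is natural for faces of simplices (so that it commutes with the differentials one would need later), by verifying that passing to a face $f\circ s_i\in\Hom_{\tA}(X^{p-1},V)$ corresponds, under the complement decomposition, to the canonical map $W_f\to W_f\monpro(f\circ t_i(X))=W_{f\circ s_i}$, which is precisely the structure map used in the definition of $\OrderedShift_p$.
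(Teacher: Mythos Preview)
Your proof is correct and follows essentially the same approach as the paper: decompose the chain group as a direct sum over $\ShiftSet_{V,p}$, lift each simplex to $\tB$, use Lemma~\ref{lemma:autcomplement}(b) to identify the stabilizer in $\Aut_{\tB}(\tilV)$ with $\Aut_{\tB}(\tilC)$, intersect with the congruence subgroup to obtain $\Gamma_{\tilC}(\Psi)$, and match summands with the definition of $(\OrderedShift_p\HHH_q(\Psi;\bk))_V$. Your additional remarks on independence of the lift and compatibility with face maps go slightly beyond what the paper spells out here (the former is absorbed into the definition of the coefficient system $\HHHH_q$, and the latter is used tacitly in the proof of Theorem~\ref{theorem:finitegen}), but they are correct and worth noting.
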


\begin{proof}
Recall that the $(p-1)$-simplices of $\ShiftSet_V$ are in bijection with the set
$\ShiftSet_{V,p}$ and that the simplex associated to $h \in \ShiftSet_{V,p}$ is denoted $[h]$.  It follows that
\[
\Chain_{p-1}(\ShiftSet_{V}; \HHHH_q(\Gamma_{\tilV}(\Psi), \ShiftSet_{\tilV};\bk)) 
= \bigoplus_{h \in \ShiftSet_{V,p}} \HHHH_q(\Gamma_{\tilV}(\Psi), \ShiftSet_{\tilV};\bk)([h]).
\]
Fix some $h \in \ShiftSet_{V,p}$, and let $\tilh \in \ShiftSet_{\tilV,p}$ be a lift of $\tilh$.  By definition, we have
\[
\HHHH_q(\Gamma_{\tilV}(\Psi),\ShiftSet_{\tilV};\bk)([h]) = \HH_q((\Gamma_{\tilV}(\Psi))_{[\tilh]};\bk).
\]
We have 
\[
(\Aut_{\tB}(\tilV))_{[\tilh]} = \Set{$\tilg \in \Aut_{\tB}(\tilV)$}{$\tilg \circ \tilh = \tilh$} \quad \text{and} \quad (\Gamma_{\tilV}(\Psi))_{[\tilh]} = \Set{$\tilg \in \Gamma_{\tilV}(\Psi)$}{$\tilg \circ \tilh = \tilh$}.
\]
Letting $\tilW_{\tilh} \subset \tilV$ be a complement to $\tilh(Y^{p})$, Lemma \ref{lemma:autcomplement}(b) implies that
$(\Aut_{\tB}(\tilV))_{[\tilh]} \cong \Aut_{\tB}(\tilW_{\tilh})$.  Under this isomorphism,
$(\Gamma_{\tilV}(\Psi))_{[\tilh]}$ goes to $\Gamma_{\tilW_{\tilh}}(\Psi)$.  Letting $W_h = \Psi(\tilW_{\tilh}) \subset V$, we
have that $W_h$ is a complement to $h(X^{p})$.  By definition, we have
\[
\HH_q((\Gamma_{\tilV}(\Psi))_{[\tilh]};\bk) = \HH_q(\Gamma_{\tilW_{\tilh}}(\Psi);\bk) = (\HHH_q(\Psi;\bk))_{W_h}.
\]
The upshot of all of this is that
\[
\Chain_{p-1}(\ShiftSet_{V}; \HHHH_q(\Gamma_{\tilV}(\Psi), \ShiftSet_{\tilV};\bk))
= \bigoplus_{h \in \ShiftSet_{V,p}} (\HHH_q(\Psi;\bk))_{W_h} = (\OrderedShift_{p} \HHH_q(\Psi;\bk))_{V}. \qedhere
\]
\end{proof}

\paragraph{Finite generation, proof.}
We finally prove Theorem \ref{theorem:finitegen}.

\begin{proof}[{Proof of Theorem \ref{theorem:finitegen}}]
The proof is by induction on $k$.  The base case $k=0$ is trivial, so assume that
$k>0$ and that $\HHH_{k'}(\Psi;\bk)$ is a finitely generated $\tA$-module for all $0 \leq k' < k$.
Since we are assuming that the $\bk$-module $\HHH_k(\Psi;\bk)_V$ is finitely generated
for all $V \in \tA$ and all $k \geq 0$, we can apply Lemma \ref{lemma:shiftfinitegen}
and deduce that it is enough to prove that if $V \in \tA$ has $X$-rank sufficiently
large, then the map
\begin{equation}
\label{eqn:surjectivegoal}
(\OrderedShift_1 \HHH_k(\Psi;\bk))_{V} \rightarrow \HHH_k(\Psi;\bk)_V
\end{equation}
is surjective.  Letting $\tilV \in \tB$ satisfy $\Psi(\tilV) = V$, we have
$\HHH_k(\Psi;\bk)_V = \HH_k(\Gamma_{\tilV}(\Psi);\bk)$.  To prove that \eqref{eqn:surjectivegoal} is
surjective, we will use the action of $\Gamma_{\tilV}(\Psi)$ on $\ShiftSet_{\tilV}$.

By Lemma \ref{lemma:equivarianthomology} and our assumption about the acyclicity of
$\ShiftSet_{\tilV}$, the map
\[
\HH_k^{\Gamma_{\tilV}(\Psi)}(\ShiftSet_{\tilV};\bk) \rightarrow \HH_k(\Gamma_{\tilV};\bk)
\]
is an isomorphism if the $Y$-rank of $\tilV$ (which equals the $X$-rank of $V$) is sufficiently large.
Combining Theorem~\ref{theorem:mainspectralsequence} with Lemma~\ref{lemma:identifye1}, 
we get a spectral sequence 
\[
{\rm E}^1_{p,q} \cong (\OrderedShift_{p+1} \HHH_q(\Psi;\bk))_{V} \Longrightarrow \HH_{p+q}^{\Gamma_{\tilV}(\Psi)}(\ShiftSet_{\tilV};\bk).
\]
Moreover, examining the isomorphism in Lemma~\ref{lemma:identifye1}, we see
that the differential $d \colon {\rm E}^1_{p,q} \rightarrow {\rm E}^1_{p-1,q}$ is identified
with the differential
\[
d \colon (\OrderedShift_{p+1} \HHH_q(\Psi;\bk))_{V} \rightarrow (\OrderedShift_{p} \HHH_q(\Psi;\bk))_{V}.
\]
Our inductive hypothesis says that $\HHH_q(\Psi;\bk)$
is a finitely generated $\tA$-module for $0 \leq q < k$.  Theorem \ref{theorem:resolution}
therefore implies that if the $X$-rank of $V$ is sufficiently large, we have
${\rm E}^2_{p,q} = 0$ for $0 \leq q < k$ and $1 \leq p \leq k+1$.  This implies that the map
\[
(\OrderedShift_1 \HHH_k(\Psi;\bk))_{V} = {\rm E}^1_{0,k} \rightarrow \HH_k^{\Gamma_{\tilV}(\Psi)}(\ShiftSet_{\tilV};\bk) \cong \HH_k(\Gamma_{\tilV};\bk)\]
is a surjection, as desired.
\end{proof}

\section{Finite generation theorems}
\label{section:finitegen}

In this section, we construct the modules claimed in the introduction and prove
Theorems \ref{maintheorem:glfinite}, \ref{maintheorem:slfinite},
\ref{maintheorem:autfinite}, \ref{maintheorem:spfinite}, and \ref{maintheorem:modfinite};
they are proved in \S \ref{section:glfinite}, \S \ref{section:glfinite}, \S \ref{section:autfinite},
\S \ref{section:spfinite}, and \S \ref{section:modfinite}, respectively.
These theorems assert that the homology groups of various congruence subgroups form
finitely generated modules over the appropriate categories.  They will all be deduced
from Theorem \ref{theorem:finitegen}.  We also introduce the weak complemented category
$\Surf$ in \S \ref{section:surfaces}.

There is an interesting parallel between the complexes that appear in this section and those that
appear in the paper \cite{WahlStability} of Wahl--Randal-Williams, which proves a variety of more
traditional homological stability theorems.  Our paper and theirs were written independently (though
in later revisions to save space we removed some proofs that duplicated work of theirs).

\subsection{The general and special linear groups: Theorems \ref{maintheorem:glfinite} and \ref{maintheorem:slfinite}}
\label{section:glfinite}

Let $\O_K$ be the ring of integers in an algebraic number field $K$, let
$\alpha \subset \O_K$ be a proper nonzero ideal, let $\Unit \subset \O_K/\alpha$ be the
image of the group of units $\widehat{\Unit} \subset \O_K$, and let $\bk$ be a Noetherian
ring.  In this section, we construct
$\VIC(\O_K/\alpha,\Unit)$-modules $\HHH_k(\GL(\O_K,\alpha);\bk)$ as in
Theorem \ref{maintheorem:glfinite} for all $k \geq 0$; 
see Corollary \ref{corollary:constructgl}.
We then prove Theorem \ref{maintheorem:glfinite},
which says that $\HHH_k(\GL(\O_K,\alpha);\bk)$ is a finitely generated 
$\VIC(\O_K/\alpha,\Unit)$-module for all $k \geq 0$.  Theorem \ref{maintheorem:slfinite} is a similar
result for the special linear group.  Its proof is similar to that of the general linear group, and we
leave it as an exercise to the reader.

\paragraph{Categories and functors.}
As was noted in Example \ref{example:vic}, for any ring $R$ the category $\VIC(R)$ is
a complemented category whose monoidal structure is direct sum.  We
have
\begin{equation}
\label{eqn:vicraut}
\Aut_{\VIC(R)}(R^n) = \GL_n(R) \quad \quad (n \geq 0).
\end{equation}
Define a strong monoidal functor $\Psi \colon \VIC(\O_K) \rightarrow \VIC(\O_K/\alpha, \Unit)$ by $\Psi(V) = (\O_K/\alpha) \otimes_{\O_K} V$.
The reason for the presence of $\Unit$ in
the target of $\Psi$ is that objects $V$ of $\VIC(\O_K)$ are equipped with $\widehat{\Unit}$-orientations (a vacuous
condition since $\widehat{\Unit}$ acts transitively on the set of orientations!), and these induce
$\Unit$-orientations on $(\O_K/\alpha) \otimes_{\O_K} V$.
The functor $\Psi$ takes the generator $\O_K^1$ of $\VIC(\O_K)$ to the generator $(\O_K/\alpha)^1$
of $\VIC(\O_K/\alpha,\Unit)$.  Define $\GL(\O_K,\alpha) = \Gamma(\Psi)$.  Using the identification
in \eqref{eqn:vicraut}, we have
$\GL(\O_K,\alpha)_{\O_K^n} = \GL_n(\O_K,\alpha)$.

\begin{lemma}
\label{lemma:glhighlysurjective}
The functor $\Psi$ is highly surjective.
\end{lemma}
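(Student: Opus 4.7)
The plan is to verify the two conditions in the definition of a highly surjective functor. The first condition, that $\Psi$ sends the generator $\O_K^1$ of $\VIC(\O_K)$ to the generator $(\O_K/\alpha)^1$ of $\VIC(\O_K/\alpha,\Unit)$, is immediate from the definition of $\Psi$ as base change to $\O_K/\alpha$. The substantive condition is that for each $n \geq 0$, the reduction map
\[
\Psi_{\ast}\colon \GL_n(\O_K) \longrightarrow \SL_n^{\Unit}(\O_K/\alpha)
\]
is surjective, where on the right we are using the identification $\Aut_{\VIC(\O_K/\alpha,\Unit)}((\O_K/\alpha)^n) = \SL_n^{\Unit}(\O_K/\alpha)$.

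The cases $n = 0$ and $n = 1$ are handled directly: for $n = 0$ both groups are trivial, and for $n = 1$ we have $\SL_1^{\Unit}(\O_K/\alpha) = \Unit$, which by definition is the image of $\O_K^\times = \GL_1(\O_K)$. For $n \geq 2$, I would first reduce to the case of $\SL_n$ in place of $\SL_n^{\Unit}$. Namely, given $g \in \SL_n^{\Unit}(\O_K/\alpha)$, the determinant $\det(g) \in \Unit$ lifts by definition to a unit $\tilde{u} \in \O_K^\times$. Forming $\tilde{D} = \mathrm{diag}(\tilde{u},1,\ldots,1) \in \GL_n(\O_K)$ and letting $\bar{D}$ be its reduction modulo $\alpha$, the product $\bar{D}^{-1}g$ lies in $\SL_n(\O_K/\alpha)$; lifting this to $\SL_n(\O_K)$ and multiplying by $\tilde{D}$ produces the desired lift of $g$.

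It therefore suffices to show that $\SL_n(\O_K) \to \SL_n(\O_K/\alpha)$ is surjective for $n \geq 2$. Since $\O_K/\alpha$ is finite, it is a finite product of local rings, hence semilocal; consequently $\SL_n(\O_K/\alpha)$ is generated by elementary matrices $I + c \, e_{ij}$ for $i \neq j$ and $c \in \O_K/\alpha$. Each such elementary matrix is the reduction of the corresponding elementary matrix $I + \tilde{c} \, e_{ij} \in \SL_n(\O_K)$ for any lift $\tilde{c} \in \O_K$, which gives the required surjectivity.

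There is essentially no obstacle here: the only mildly non-formal input is the semilocal generation of $\SL_n$ by elementary matrices, which is standard. The proof is short and can be presented as a few lines splitting into the cases above.
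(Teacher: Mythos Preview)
Your argument is correct and complete. The reduction to $\SL_n$ via a diagonal lift of the determinant, followed by the observation that $\SL_n$ of a finite (hence semilocal) ring is generated by elementary matrices which lift trivially, is a clean and entirely elementary route to the required surjectivity.

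The paper's proof proceeds along the same outline---reducing the statement to surjectivity of $\SL_n(\O_K) \to \SL_n(\O_K/\alpha)$---but for that last step it invokes strong approximation for $\SL_n$ over number fields rather than your elementary-matrix argument. Your approach is more self-contained and avoids appealing to a deep theorem; strong approximation is considerably more than what is needed here, since the target ring is finite and the generation of $\SL_n$ by elementaries over such rings is a direct Gaussian-elimination argument. On the other hand, the citation to strong approximation has the virtue of uniformity: the same reference covers the analogous surjectivity for $\Sp_{2n}$ used later in the paper, whereas your argument would need a separate (though equally elementary) treatment for the symplectic case.
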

\begin{proof}
This is equivalent to the fact that the map $\GL_n(\O_K) \rightarrow \SL_n^{\Unit}(\O_K/\alpha)$
is surjective for all $n \geq 0$.  This follows from the fact that the map $\SL_n(\O_K) \rightarrow \SL_n(\O_K/\alpha)$
is surjective, which follows from strong approximation (see, e.g., 
\cite[Chapter 7]{PlatonovRapinchuk}; this is a special feature of rings of integers).
\end{proof}

\noindent
As we discussed in \S \ref{section:congruence}, Lemma \ref{lemma:glhighlysurjective}
has the following corollary.

\begin{corollary}
\label{corollary:constructgl}
For all $k \geq 0$, there exists a $\VIC(\O_K/\alpha,\Unit)$-module
$\HHH_k(\GL(\O_K,\alpha);\bk)$ such that
$\HHH_k(\GL(\O_K,\alpha);\bk)_{(\O_K/\alpha)^n} = \HH_k(\GL_n(\O_K,\alpha);\bk)$.
\end{corollary}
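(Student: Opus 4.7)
The proof of this corollary is essentially a direct application of the general construction of \S \ref{section:congruence} to the highly surjective functor $\Psi \colon \VIC(\O_K) \to \VIC(\O_K/\alpha,\Unit)$ from Lemma~\ref{lemma:glhighlysurjective}. My plan is as follows.

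First, I would invoke the machinery built in \S \ref{section:congruence}: given any highly surjective functor $\Psi \colon \tB \to \tA$ between a weak complemented category and a complemented category with compatible generators, and any Noetherian ring $\bk$ with $k \ge 0$, that section constructs an $\tA$-module $\HHH_k(\Psi;\bk)$ whose value on $X^q$ is $\HH_k(\Gamma_{Y^q}(\Psi);\bk)$ and whose value on a morphism $f \in \Hom_\tA(X^q,X^r)$ is obtained by choosing any lift $\tilf \in \Hom_\tB(Y^q,Y^r)$ with $\Psi(\tilf) = f$ and taking the induced map on group homology of the congruence subgroups. By Lemma~\ref{lemma:highlysurjective}(b) such a lift always exists, and by Lemma~\ref{lemma:hhhwelldefined} any two lifts differ by a congruence element acting by an inner automorphism on $\Gamma_{Y^r}(\Psi)$, so the induced map on homology is independent of the choice. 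Functoriality then follows from the functoriality of the lifts up to congruence and the triviality of inner automorphisms on homology.

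Second, I would define $\HHH_k(\GL(\O_K,\alpha);\bk) := \HHH_k(\Psi;\bk)$ and check that the claimed value on $(\O_K/\alpha)^n$ is correct. Since $\VIC(\O_K/\alpha,\Unit)$ is equivalent to its full subcategory on the objects $(\O_K/\alpha)^n$, it suffices to evaluate on these. We have $\Psi(\O_K^n) = (\O_K/\alpha) \otimes_{\O_K} \O_K^n \cong (\O_K/\alpha)^n$, and under the identification $\Aut_{\VIC(\O_K)}(\O_K^n) = \GL_n(\O_K)$ the congruence subgroup $\Gamma_{\O_K^n}(\Psi)$ is by definition the kernel of the reduction map to $\SL_n^{\Unit}(\O_K/\alpha)$, i.e.\ $\GL_n(\O_K,\alpha)$. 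Thus
\[
\HHH_k(\GL(\O_K,\alpha);\bk)_{(\O_K/\alpha)^n} = \HH_k(\Gamma_{\O_K^n}(\Psi);\bk) = \HH_k(\GL_n(\O_K,\alpha);\bk),
\]
as required.

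There is no real obstacle here; all the work has already been absorbed into the abstract framework of \S \ref{section:congruence}. The only minor points to verify are the strong-monoidal-functor identifications (so that $\Psi(\O_K^n)$ is naturally $(\O_K/\alpha)^n$) and the compatibility of orientations under reduction mod $\alpha$, both of which are formal. The substantive content of the construction, namely the well-definedness of the induced maps, is exactly Lemma~\ref{lemma:hhhwelldefined}, which in turn rests on Lemma~\ref{lemma:glhighlysurjective}.
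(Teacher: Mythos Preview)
Your proposal is correct and matches the paper's approach exactly: the paper gives no separate proof of this corollary, merely noting that it follows from the general construction of $\HHH_k(\Psi;\bk)$ in \S\ref{section:congruence} applied to the highly surjective functor $\Psi$ of Lemma~\ref{lemma:glhighlysurjective}. Your unpacking of that construction (existence of lifts via Lemma~\ref{lemma:highlysurjective}, well-definedness via Lemma~\ref{lemma:hhhwelldefined}, and the identification $\Gamma_{\O_K^n}(\Psi) = \GL_n(\O_K,\alpha)$) is precisely what the paper intends.
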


\paragraph{Identifying the space of embeddings.}
We now wish to give a concrete description of the space of $\O_K^1$-embeddings
of $\O_K^n$ (recall the definition of the space of $Y$-embeddings from \S\ref{section:congruence}).  A {\bf split partial basis} for $\O_K^n$ consists of elements
$((v_0,w_0),\ldots,(v_p,w_p))$ as follows.
\begin{compactitem}
\item The $v_i$ are elements of $\O_K^n$ that form part of a free
basis for the free $\O_K$-module $\O_K^n$.
\item The $w_i$ are elements of $\Hom_{\O_K}(\O_K^n,\O_K)$.
\item For all $0 \leq i,j \leq p$, we have $w_i(v_j) = \delta_{i,j}$ (Kronecker delta).
\end{compactitem}
These conditions imply that the $w_i$ also form part
of a free basis for the free $\O_K$-module $\Hom_{\O_K}(\O_K^n,\O_K)$.  The
{\bf space of split partial bases} for $\O_K^n$, denoted $\PartialBases_n(\O_K)$, is the semisimplicial
set whose $p$-simplices are split partial bases
$((v_0,w_0),\ldots,(v_p,w_p))$ for $\O_K^n$.

\begin{lemma}
\label{lemma:glidentify}
The space of $\O_K^1$-embeddings of $\O_K^n$ is isomorphic to $\PartialBases_n(\O_K)$.
\end{lemma}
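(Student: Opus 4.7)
The plan is to unpack both sides of the claimed isomorphism using the alternative description of $\VIC(\O_K)$-morphisms from Remark~\ref{remark:vicalternate} and then exhibit the bijection on simplices explicitly.

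First I would describe the $p$-simplices. By definition, a $p$-simplex of $\ShiftSet_{\O_K^n}$ is an element of $\Hom_{\VIC(\O_K)}(\O_K^{p+1},\O_K^n)$. Applying Remark~\ref{remark:vicalternate}, this is the same as a pair $(f,g)$ where $f\colon \O_K^{p+1}\to \O_K^n$ and $g\colon \O_K^n \to \O_K^{p+1}$ satisfy $g\circ f = \mathrm{id}_{\O_K^{p+1}}$ (and such that $\mathrm{ker}(g)$ is free, which is automatic since it is the complement $C$ in the original definition). Given such a pair, let $e_0,\ldots,e_p$ be the standard basis of $\O_K^{p+1}$ and let $\pi_0,\ldots,\pi_p \in \Hom_{\O_K}(\O_K^{p+1},\O_K)$ be the dual basis. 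Set $v_i = f(e_i)$ and $w_i = \pi_i \circ g$. The identity $gf = \mathrm{id}$ translates to $w_i(v_j) = \delta_{ij}$, and the fact that $f$ is a splittable injection (so $f(\O_K^{p+1})$ is a free summand of $\O_K^n$) means exactly that the $v_i$ form part of a free basis of $\O_K^n$. So the tuple $((v_0,w_0),\ldots,(v_p,w_p))$ is a split partial basis.

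Next I would construct the inverse. Given a split partial basis $((v_0,w_0),\ldots,(v_p,w_p))$, define $f\colon \O_K^{p+1}\to\O_K^n$ by $f(e_i)=v_i$ and $g\colon \O_K^n \to \O_K^{p+1}$ by $g(x)=(w_0(x),\ldots,w_p(x))$. The relation $w_i(v_j)=\delta_{ij}$ gives $gf=\mathrm{id}$, and the map $x \mapsto x - \sum_i w_i(x) v_i$ is a projection onto $\bigcap_i \ker(w_i)$, which (since the $w_i$ extend to a basis of $\Hom_{\O_K}(\O_K^n,\O_K)$) is a free direct summand providing the complement $C$. Thus $(f,g)$ defines a $\VIC(\O_K)$-morphism $\O_K^{p+1}\to \O_K^n$. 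It is immediate that the two constructions are mutually inverse.

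Finally I would check compatibility with face maps. For a strictly increasing $\phi\colon [\ell]\to[k]$, the induced map $\phi_\ast\colon \O_K^{\ell+1}\to \O_K^{k+1}$ sends $e_i \mapsto e_{\phi(i)}$; under the $(f,g)$ description, postcomposition corresponds to the pair $(f\circ\phi_\ast,\psi_\ast\circ g)$ where $\psi_\ast$ is the coordinate projection onto the factors indexed by $\phi(\{0,\ldots,\ell\})$. Under our bijection this sends $((v_0,w_0),\ldots,(v_k,w_k))$ to $((v_{\phi(0)},w_{\phi(0)}),\ldots,(v_{\phi(\ell)},w_{\phi(\ell)}))$, which is precisely the face map in $\PartialBases_n(\O_K)$.

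There is no real obstacle here; the whole content of the lemma is the identification afforded by Remark~\ref{remark:vicalternate} together with the elementary observation that a splittable injection into a free $\O_K$-module is the same thing as a partial basis together with a compatible projection. The only minor point requiring care is verifying that the free-cokernel condition in the definition of $\VIC(\O_K)$ (or equivalently, that $\ker(g)$ is free) is automatic in the split partial basis picture, which follows because the complement is a direct summand of $\O_K^n$ of complementary rank, hence free (or more simply, since $\O_K$ is a PID, all submodules of $\O_K^n$ are free).
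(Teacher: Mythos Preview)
Your proof is correct and takes essentially the same approach as the paper: the paper works with the $(f,C)$ description of $\VIC(\O_K)$-morphisms and defines $w_i$ by $w_i(v_j)=\delta_{ij}$, $w_i|_C=0$, while you use the equivalent $(f,g)$ description from Remark~\ref{remark:vicalternate} and set $w_i=\pi_i\circ g$; the inverse in both cases recovers the complement as $C=\bigcap_i\ker(w_i)$. One small correction: your parenthetical ``since $\O_K$ is a PID'' is false in general (rings of integers are Dedekind domains but need not be PIDs), so you should drop it and rely solely on your first argument, which is already sufficient since the $w_i$ extend to a free basis of the dual.
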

\begin{proof}
Fix some $p \geq 0$.
The $p$-simplices of the space of $\O_K^1$-embeddings of $\O_K^n$ are in bijection
with elements of $\Hom_{\VIC(\O_K)}(\O_K^{p+1},\O_K^n)$.  We
will define set maps
\[
\eta_1 \colon \Hom_{\VIC(\O_K)}(\O_K^{p+1},\O_K^n) \rightarrow (\PartialBases_n(\O_K))^p, \quad \eta_2 \colon (\PartialBases_n(\O_K))^p \rightarrow \Hom_{\VIC(\O_K)}(\O_K^{p+1},\O_K^n)
\]
that are compatible with the semisimplicial structure and satisfy
$\eta_1 \circ \eta_2 = \text{id}$ and $\eta_2 \circ \eta_1 = \text{id}$.  The
constructions are as follows.
\begin{compactitem}
\item Consider $(f,C) \in \Hom_{\VIC(\O_K)}(\O_K^{p+1},\O_K^n)$,
so by definition $f \colon \O_K^{p+1} \rightarrow \O_K^n$ is an $\O_K$-linear 
injection and $C \subset \O_K^n$ is a complement to $f(\O_K^{p+1})$.  
For $0 \leq i \leq p$, define $v_i \in \O_K^n$ to be the image under $f$ of
the $(i-1)^{\text{st}}$ basis element of $\O_K^{p+1}$ and define 
$w_i \in \Hom_{\O_K}(\O_K^n,\O_K)$ via the formulas
\[
w_i(v_j) = \delta_{ij} \quad \text{and} \quad w_i|_C = 0 \quad \quad (0 \leq j \leq p).
\]
Then $\eta_1(f,C):=((v_0,w_0),\ldots,(v_{p},w_{p}))$ is a split partial basis.
\item Consider a split partial basis $((v_0,w_0),\ldots,(v_{p+1},w_{p+1}))$.
Define $f \colon \O_K^{p+1} \rightarrow \O_K^n$ to be the map that takes the 
$i^{\text{th}}$ basis element to $v_{i-1}$.  Also, define $C = \bigcap_{i=0}^p \Ker(w_i)$.  
Then $\eta_2(((v_0,w_0),\ldots,(v_{p},w_{p}))) := (f,C)$ is an 
element of $\Hom_{\VIC(\O_K)}(\O_K^{p+1},\O_K^n)$. \qedhere
\end{compactitem}
\end{proof}

\noindent
The space $\PartialBases_n(\O_K)$ was introduced by Charney \cite{CharneyCongruence},
who proved the following theorem.

\begin{theorem}[{\cite[Theorem 3.5]{CharneyCongruence}}]
\label{theorem:partialbasescon}
The space $\PartialBases_n(\O_K)$ is $\frac{n-4}{2}$-acyclic.
\end{theorem}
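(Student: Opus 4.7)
The plan is to prove this by induction on $n$, with base cases $n \leq 3$ being vacuous since the bound $\tfrac{n-4}{2}$ is then negative. For the inductive step, assume the theorem for all ranks below $n$ and fix $k \leq \tfrac{n-4}{2}$; we must show $\RH_k(\PartialBases_n(\O_K)) = 0$. The heart of the argument is a self-similarity of the complex at the level of links.

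First I would establish the link computation. Given a $p$-simplex $\sigma = ((v_0,w_0),\ldots,(v_p,w_p))$, the splitting data determines a canonical decomposition $\O_K^n = \Span{v_0,\ldots,v_p} \oplus W$ with $W = \bigcap_{i=0}^p \Ker(w_i)$, a free $\O_K$-module of rank $n-p-1$. A simplex of $\PartialBases_n(\O_K)$ containing $\sigma$ as a face is precisely an extension of the split partial basis by further pairs $(v,w)$ with $v \in W$ and $w|_{\Span{v_0,\ldots,v_p}} = 0$; such $w$ are determined by their restriction to $W$. Hence the link of $\sigma$ is canonically isomorphic to $\PartialBases_{n-p-1}(\O_K)$, which by induction is $\tfrac{n-p-5}{2}$-acyclic.

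Next I would combine this with a ``bad simplex'' filtration argument in the spirit of Quillen (as adapted by Maazen--van der Kallen). Pick a reference pair $(e,\varepsilon) \in \O_K^n \times \Hom_{\O_K}(\O_K^n,\O_K)$ with $\varepsilon(e)=1$ and let $Y \subset \PartialBases_n(\O_K)$ be the subcomplex spanned by simplices that do not contain $(e,\varepsilon)$ as a vertex but whose vertices $(v_i,w_i)$ satisfy $w_i(e) = 0$ and $\varepsilon(v_i) = 0$, so that $((e,\varepsilon),(v_0,w_0),\ldots)$ is again a split partial basis. Then $Y$ together with the (contractible) star of $(e,\varepsilon)$ cover a subcomplex whose inclusion into $\PartialBases_n(\O_K)$ should be shown to induce the zero map on $\RH_k$ via a spectral sequence whose $E^1$-terms involve the link acyclicities from the previous paragraph. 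Iterating this across all vertices reduces every cycle to the cone on $(e,\varepsilon)$, hence to a boundary.

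The main obstacle is the arithmetic step underlying the filtration: given an arbitrary $k$-cycle in $\PartialBases_n(\O_K)$, one must show each of its simplices can be modified (rel boundary) to lie in $Y \cup \text{star}(e,\varepsilon)$. This amounts to showing that given a split partial basis $((v_0,w_0),\ldots,(v_p,w_p))$ in $\O_K^n$ with $p$ below some threshold, one can find unimodular row operations (adding multiples of the $v_i$'s to each other and to $e$) that arrange both $w_i(e) = 0$ and $\varepsilon(v_i) = 0$. This is where the ring-theoretic hypothesis on $\O_K$ enters: the Bass stable range $\mathrm{sr}(\O_K) \leq 2$ for Dedekind rings of integers ensures that such elementary modifications exist in the required range, and it is precisely this input that gives the $\tfrac{n-4}{2}$ bound (versus the $\tfrac{n-3}{2}$ bound in the field case). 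Establishing this elementary-operations lemma in a way that controls the dimensions uniformly is the principal technical difficulty.
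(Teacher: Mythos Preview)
The paper does not prove this theorem; it is quoted directly from Charney \cite[Theorem 3.5]{CharneyCongruence}, with a remark explaining why her hypotheses apply (namely that $\O_K$ satisfies Bass's stable range condition $\mathrm{SR}_3$, and that her slightly different definition of the complex agrees with the present one over Dedekind domains via \cite{ReinerUnimodular}). So there is nothing in the paper to compare your argument against beyond this citation.

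That said, your sketch is in the right spirit and close to how Charney actually proceeds: the link identification $\Link(\sigma) \cong \PartialBases_{n-p-1}(\O_K)$ is correct and is the structural input, and the stable range condition is indeed what drives the connectivity bound. However, the assembly you describe is not quite right. Your subcomplex $Y$ as written is simply the link of $(e,\varepsilon)$, so $Y \cup \mathrm{star}(e,\varepsilon)$ is just the closed star, and the content of the argument is not ``iterating across all vertices'' but rather showing that an arbitrary simplicial map $S^k \to \PartialBases_n(\O_K)$ can be homotoped into this single closed star. The standard mechanism is a bad-simplex argument: declare a simplex ``bad'' if none of its vertices are joinable to $(e,\varepsilon)$, and remove bad simplices from the image of the sphere one at a time, top-dimensional first, by coning each off inside its (highly connected) link. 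The stable range enters when you need to produce, given a bad simplex $\sigma$ in the image, a vertex in $\Link(\sigma)$ that \emph{is} joinable to $(e,\varepsilon)$; this is a unimodular-completion statement, not a statement about simultaneously arranging $w_i(e)=0$ and $\varepsilon(v_i)=0$ for all vertices of a given simplex. Your final paragraph gestures at the right difficulty but mislocates where the elementary operations are applied.
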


\begin{remark}
Charney actually worked over rings that satisfy Bass's stable
range condition $\text{SR}_r$; the above follows from her work
since $\O_K$ satisfies $\text{SR}_3$.  Also, \cite[Theorem 3.5]{CharneyCongruence}
contains the data of an ideal $\alpha \subset \O_K$ (not the same
as the ideal we are working with!); the reference
reduces to Theorem \ref{theorem:partialbasescon} when $\alpha=0$.  Finally,
Charney's definition of $\PartialBases_n(R)$ is different from ours, but
for Dedekind domains the definitions are equivalent; see \cite{ReinerUnimodular}.
\end{remark}

\paragraph{Putting it all together.}
We now prove Theorem \ref{maintheorem:glfinite}, which asserts that the
$\VIC(\O/\alpha,\Unit)$-module $\HHH_k(\GL(\O_K,\alpha);\bk)$ is finitely generated.

\begin{proof}[{Proof of Theorem \ref{maintheorem:glfinite}}]
We apply Theorem \ref{theorem:finitegen} to the highly surjective
functor $\Psi$.  This theorem has three hypotheses.
The first is that the category of $\VIC(\O_K/\alpha,\Unit)$-modules is locally Noetherian, which is Theorem 
\ref{maintheorem:vicnoetherian}.  The second is that
$\HH_k(\GL_n(\O_K,\alpha);\bk)$ is a finitely generated $\bk$-module for all
$k \geq 0$, which is a theorem of Borel--Serre \cite{BorelSerreCorners}.  The
third is that the space of $\O_K^1$-embeddings is highly acyclic, which
follows from Lemma \ref{lemma:glidentify} and Theorem \ref{theorem:partialbasescon}.
We conclude that we can apply Theorem \ref{theorem:finitegen} and deduce
that $\HHH_k(\GL(\O_K,\alpha);\bk)$ is a finitely generated $\VIC(\O_K/\alpha,\Unit)$-module
for all $k \geq 0$, as desired.
\end{proof}

\subsection{The automorphism group of a free group: Theorem \ref{maintheorem:autfinite}}
\label{section:autfinite}

Fix $\ell \geq 2$, and let $\bk$ be a Noetherian ring.  In this section,
we construct $\VIC(\Z/\ell,\pm 1)$-modules $\HHH_k(\Aut(\ell);\bk)$ as in
Theorem \ref{maintheorem:autfinite} for all $k \geq 0$; see Corollary
\ref{corollary:constructaut}.  We then prove Theorem \ref{maintheorem:autfinite},
which says that $\HHH_k(\Aut(\ell);\bk)$ is a finitely generated 
$\VIC(\Z/\ell,\pm 1)$-module for all $k \geq 0$.

\paragraph{The category of free groups.}
Define $\Fr$ to be a category whose objects are the finite-rank free groups and whose monoidal structure is the free product $\ast$.  The $\Fr$-morphisms
from $F$ to $F'$ are pairs $(f,C) \colon F \rightarrow F'$, where $f \colon F \rightarrow F'$ is an injective homomorphism and $C \subseteq F'$ is a subgroup such that $F' = f(F) \ast C$.  The category $\Fr$ is a complemented category.
The subobject attached to a morphism $(f,C)\colon F \rightarrow F'$ can be thought of as the pair $(f(F), C)$ and its complement is the subobject $(C, f(F))$. A generator is the rank $1$ free group $F_1 \cong \Z$, and
\begin{equation}
\label{eqn:autaut}
\Aut_{\Fr}(F_n) = \Aut(F_n) \quad \quad (n \geq 0).
\end{equation}
The category $\Fr$ was first introduced by Djament--Vespa \cite{DjamentVespa2} to study the twisted homology of $\Aut(F_n)$.

\paragraph{Functor.}
The category $\VIC(\Z/\ell,\pm 1)$ is a complemented category whose monoidal structure is given by the direct sum. 
There is a strong monoidal functor $\Psi \colon \Fr \rightarrow \VIC(\Z/\ell,\pm 1)$ defined via the
formula $\Psi(F) = \HH_1(F;\Z/\ell)$.
The reason for the $\pm 1$ here is that $\Psi$ factors through the functor $\Fr \rightarrow \VIC(\Z)$ that
takes $F$ to $\HH_1(F;\Z)$, and elements of $\VIC(\Z)$ are equipped with $\pm 1$-orientations (a vacuous
condition since the group of units of $\Z$ is $\pm 1$).
The functor $\Psi$ takes the generator $F_1 \cong \Z$ of $\Fr$ to the generator $(\Z/\ell)^1$
of $\VIC(\Z/\ell, \pm 1)$.  Define $\Aut(\ell) = \Gamma(\Psi)$.  Using the identification
in \eqref{eqn:autaut}, we have
$\Aut(\ell)_{F_n} = \Aut(F_n,\ell)$.

\begin{lemma}
\label{lemma:authighlysurjective}
The functor $\Psi$ is highly surjective.
\end{lemma}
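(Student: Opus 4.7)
The plan is to verify the two conditions in the definition of a highly surjective functor. The condition $\Psi(F_1) = (\Z/\ell)^1$ is immediate from $\HH_1(\Z;\Z/\ell) \cong \Z/\ell$, so the real content is showing that for every $n \geq 0$, the induced map
\[
\Psi_\ast \colon \Aut_{\Fr}(F_n) = \Aut(F_n) \longrightarrow \Aut_{\VIC(\Z/\ell,\pm 1)}((\Z/\ell)^n) = \SL_n^{\pm 1}(\Z/\ell)
\]
is surjective. The cases $n=0$ and $n=1$ are trivial (for $n=1$, both groups are $\{\pm 1\}$ and the map is the identity), so assume $n \geq 2$.

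The strategy is to factor $\Psi_\ast$ as
\[
\Aut(F_n) \xrightarrow{\ \alpha\ } \GL_n(\Z) \xrightarrow{\ \beta\ } \SL_n^{\pm 1}(\Z/\ell),
\]
where $\alpha$ is the action on $\HH_1(F_n;\Z) \cong \Z^n$ and $\beta$ is reduction modulo $\ell$ (which lands in $\SL_n^{\pm 1}(\Z/\ell)$ because every element of $\GL_n(\Z)$ has determinant $\pm 1$). I would verify surjectivity of each factor separately. For $\alpha$, this is the classical theorem of Nielsen that $\Aut(F_n)$ surjects onto $\GL_n(\Z)$ via the abelianization action; the standard argument is that the elementary Nielsen transformations on a free basis of $F_n$ lift the elementary matrix generators of $\GL_n(\Z)$.

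For $\beta$, the image of $\SL_n(\Z) \to \SL_n(\Z/\ell)$ is all of $\SL_n(\Z/\ell)$, since $\SL_n(\Z)$ is generated by the elementary matrices $I + e_{ij}$ (for $n \geq 2$) and their reductions generate $\SL_n(\Z/\ell)$; alternatively this is strong approximation, exactly as invoked for Lemma~\ref{lemma:glhighlysurjective}. Since $\GL_n(\Z)$ contains elements of determinant $-1$ (e.g.\ a permutation matrix corresponding to a transposition), the image of $\beta$ contains both cosets of $\SL_n(\Z/\ell)$ inside $\SL_n^{\pm 1}(\Z/\ell)$, so $\beta$ is surjective onto its target. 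Composing, $\Psi_\ast$ is surjective, which completes the verification.

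There is no serious obstacle here: the only potentially subtle point is bookkeeping with the $\pm 1$-orientation decoration on $\VIC(\Z/\ell, \pm 1)$, but this is automatic since $\Psi$ factors through $\VIC(\Z)$, whose orientations are vacuous, and the determinants of Nielsen automorphisms are automatically $\pm 1$.
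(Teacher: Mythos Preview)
Your proof is correct and follows essentially the same approach as the paper: factor through $\GL_n(\Z)$ and invoke Nielsen's theorem that $\Aut(F_n) \twoheadrightarrow \GL_n(\Z)$. The paper's proof is terser and only cites this classical fact, leaving the surjectivity of the reduction map $\GL_n(\Z) \to \SL_n^{\pm 1}(\Z/\ell)$ implicit, whereas you spell it out via strong approximation and the determinant~$-1$ coset; your version is more complete but not a different argument.
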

\begin{proof}
This is equivalent to the fact that the map $\Aut(F_n) \rightarrow \GL_n(\Z)$
is surjective for all $n \geq 0$, which is classical (see, e.g., 
\cite[Chapter I, Proposition 4.4]{LyndonSchupp}).
\end{proof}

\noindent
As we discussed in \S \ref{section:congruence}, Lemma \ref{lemma:authighlysurjective}
has the following corollary.

\begin{corollary}
\label{corollary:constructaut}
For all $k \geq 0$, there exists a $\VIC(\Z/\ell, \pm 1)$-module
$\HHH_k(\Aut(\ell);\bk)$ such that
$\HHH_k(\Aut(\ell);\bk)_{(\Z/\ell)^n} = \HH_k(\Aut(F_n,\ell);\bk)$.
\end{corollary}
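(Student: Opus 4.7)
The plan is to apply the general construction of congruence-subgroup homology modules from \S\ref{section:congruence} to the highly surjective functor $\Psi\colon \Fr \to \VIC(\Z/\ell,\pm 1)$ built above, and then unwind the definitions to see that the resulting $\VIC(\Z/\ell,\pm 1)$-module takes the claimed values on the generating objects $(\Z/\ell)^n$.

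First I would set $\HHH_k(\Aut(\ell);\bk) := \HHH_k(\Psi;\bk)$, the $\VIC(\Z/\ell,\pm 1)$-module constructed in \S\ref{section:congruence} from a highly surjective functor. By Lemma \ref{lemma:authighlysurjective} the functor $\Psi$ is highly surjective, so this construction applies. By definition the value of this module on the generator $(\Z/\ell)^n = \Psi(F_n)$ is $\HH_k(\Gamma_{F_n}(\Psi);\bk)$, where $\Gamma_{F_n}(\Psi) = \Ker(\Aut_{\Fr}(F_n) \to \Aut_{\VIC(\Z/\ell,\pm 1)}(\Psi(F_n)))$. Using the identification $\Aut_{\Fr}(F_n) = \Aut(F_n)$ from equation \eqref{eqn:autaut} and the fact that $\Psi_\ast \colon \Aut(F_n) \to \SL_n^{\pm 1}(\Z/\ell)$ is precisely the map induced by the action of $\Aut(F_n)$ on $\HH_1(F_n;\Z/\ell)$, we see that $\Gamma_{F_n}(\Psi) = \Aut(F_n,\ell)$ by the very definition of the level $\ell$ congruence subgroup given in the introduction. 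This yields the desired equality $\HHH_k(\Aut(\ell);\bk)_{(\Z/\ell)^n} = \HH_k(\Aut(F_n,\ell);\bk)$.

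To finish I would verify that the functoriality on morphisms is well-defined. Given a $\VIC(\Z/\ell,\pm 1)$-morphism $f\colon (\Z/\ell)^m \to (\Z/\ell)^n$, Lemma \ref{lemma:highlysurjective}(b), applied using Lemma \ref{lemma:authighlysurjective}, produces a lift $\widetilde{f} \in \Hom_{\Fr}(F_m, F_n)$ with $\Psi(\widetilde{f}) = f$; this lift induces a homomorphism $\Aut(F_m,\ell) \to \Aut(F_n,\ell)$ of congruence subgroups, and hence a map on $\HH_k(-;\bk)$. Independence of the choice of lift follows from Lemma \ref{lemma:hhhwelldefined}: any two lifts differ by left multiplication by an element of $\Gamma_{F_n}(\Psi) = \Aut(F_n,\ell)$, which acts on itself by conjugation and therefore trivially on group homology. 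Functoriality in $f$ is then immediate from functoriality of taking lifts up to $\Aut(F_n,\ell)$.

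There is no real obstacle here: the statement is purely a matter of recognizing that the abstract framework of \S\ref{section:congruence} specializes correctly in this example. The only point that requires any care is the verification that $\Gamma_{F_n}(\Psi)$ literally coincides with the group $\Aut(F_n,\ell)$ defined in the introduction, which comes down to the definition of $\Psi$ via $H \mapsto \HH_1(H;\Z/\ell)$ matching the map $\Aut(F_n) \to \SL_n^{\pm 1}(\Z/\ell)$ used to define the congruence subgroup.
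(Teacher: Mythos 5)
Your proposal is correct and follows essentially the same approach as the paper: the paper's proof is just a pointer back to the general construction of $\HHH_k(\Psi;\bk)$ in \S\ref{section:congruence} (together with Lemma~\ref{lemma:authighlysurjective}), and you have simply unwound that construction step by step, including the well-definedness argument via Lemma~\ref{lemma:hhhwelldefined} and the identification of $\Gamma_{F_n}(\Psi)$ with $\Aut(F_n,\ell)$.
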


\paragraph{Identifying the space of embeddings.}
We now wish to give a concrete description of the space of $F_1$-embeddings
of $F_n$.   A {\bf split partial basis} for $F_n$ consists of elements
$((v_0,C_0),\ldots,(v_p,C_p))$ as follows.
\begin{compactitem}
\item The $v_i$ are elements of $F_n$.
\item The $C_i$ are subgroups of $F_n$ such that $F_n = \langle v_i \rangle \ast C_i$.
\item For all $0 \leq i,j \leq p$ with $i \neq j$, we have $v_j \in C_i$.
\end{compactitem}
We remark that the condition $F_n = \langle v_i \rangle \ast C_i$ implies that
$C_i \cong F_{n-1}$.  Also, the conditions together imply that
$\bigcap_{i=0}^p C_i \cong F_{n-p-1}$ and that the $v_i$ can be extended to a free basis
for $F_n$.
The {\bf space of split partial bases} for $F_n$, 
denoted $\PartialBases(F_n)$, is the semisimplicial
set whose $p$-simplices are split partial bases
$((v_0,C_0),\ldots,(v_p,C_p))$ for $F_n$.  We remark that the $C_i$ in this
space play the same role as the kernels of the $w_i$ in $\PartialBases_n(\O_K)$.

\begin{lemma}
\label{lemma:autidentify}
The space of $F_1$-embeddings of $F_n$ is isomorphic to $\PartialBases(F_n)$.
\end{lemma}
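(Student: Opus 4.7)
The plan is to mirror the proof of Lemma~\ref{lemma:glidentify} in the free-group setting, constructing a bijection between $p$-simplices and verifying compatibility with face maps. Recall that a $p$-simplex of the space of $F_1$-embeddings of $F_n$ is by definition an element of $\Hom_{\Fr}(F_{p+1}, F_n)$, i.e.\ a pair $(f,C)$ with $f\colon F_{p+1}\hookrightarrow F_n$ injective and $C\subseteq F_n$ a subgroup satisfying $F_n = f(F_{p+1})\ast C$.

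First, I would define $\eta_1\colon \Hom_{\Fr}(F_{p+1},F_n)\to \PartialBases(F_n)^p$ as follows. Fix the standard free basis $(e_0,\dots,e_p)$ of $F_{p+1}$. Given $(f,C)$, set $v_i := f(e_i)$ and $C_i := \langle v_j : j\neq i\rangle \ast C$. The free product decomposition $F_n = \langle v_0\rangle\ast\cdots\ast\langle v_p\rangle\ast C$ (which rearranges since the free product is coproduct in groups) immediately gives $F_n = \langle v_i\rangle \ast C_i$, and by construction $v_j\in C_i$ for $j\neq i$; so $\eta_1(f,C)$ is a split partial basis. In the reverse direction I would define $\eta_2$ by sending $((v_0,C_0),\dots,(v_p,C_p))$ to $(f,C)$, where $f\colon F_{p+1}\to F_n$ is the unique homomorphism with $f(e_i)=v_i$, and $C := \bigcap_{i=0}^p C_i$. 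Both maps are manifestly compatible with face maps (deleting the $i^{\text{th}}$ tuple on one side corresponds to precomposing with the face inclusion $F_p\hookrightarrow F_{p+1}$ on the other), so once they are well-defined and mutually inverse the lemma follows.

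The only nontrivial issue—and the main obstacle—is showing that $\eta_2$ genuinely lands in $\Hom_{\Fr}(F_{p+1},F_n)$; equivalently, that for any split partial basis $((v_0,C_0),\dots,(v_p,C_p))$ one has the free-product decomposition
\[
F_n \;=\; \langle v_0\rangle \ast \langle v_1\rangle \ast \cdots \ast \langle v_p\rangle \ast \bigcap_{i=0}^p C_i.
\]
I would prove this by induction on $p$. The base case $p=0$ is the defining condition $F_n=\langle v_0\rangle \ast C_0$. For the inductive step, assuming the statement for $p-1$ applied to $((v_0,C_0),\dots,(v_{p-1},C_{p-1}))$, I must show that $v_p$ is primitive in $D:=\bigcap_{i=0}^{p-1} C_i$ with free complement $D\cap C_p$. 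Since $v_p\in C_i$ for every $i<p$, we have $v_p\in D$; and from $F_n = \langle v_p\rangle \ast C_p$ together with the inductive decomposition, one checks (using either the Kurosh subgroup theorem applied to $C_p\le F_n$, or more elementarily a direct normal-form argument in the free product) that $D = \langle v_p\rangle \ast (D\cap C_p)$ and that $D\cap C_p = \bigcap_{i=0}^{p} C_i$. Splicing this into the inductive hypothesis yields the desired decomposition.

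Once this structural lemma is in hand, checking $\eta_1\circ\eta_2 = \mathrm{id}$ and $\eta_2\circ\eta_1=\mathrm{id}$ is formal: given $(f,C)$, the subgroup $\bigcap_i C_i$ produced by $\eta_1$ is exactly $C$ because each $C_i$ is the kernel of the canonical retraction $F_n\to\langle v_i\rangle$ killing everything else, and their intersection recovers $C$; conversely, a split partial basis is determined by the pair $(v_i, C_i)$, and these are reconstructed from $(f, \bigcap_i C_i)$ via the same formulas. Compatibility of $\eta_1,\eta_2$ with the semisimplicial face maps is immediate from the definitions, completing the identification.
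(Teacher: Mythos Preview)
Your approach is essentially the paper's: define $\eta_1$ and $\eta_2$ exactly as you do and check they are mutually inverse and compatible with face maps. The paper is terser, simply asserting in the paragraph before the lemma that the split partial basis conditions imply $\bigcap_i C_i \cong F_{n-p-1}$ and that the $v_i$ extend to a free basis; your inductive argument is an attempt to supply this.

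One concrete error: you write that ``each $C_i$ is the kernel of the canonical retraction $F_n\to\langle v_i\rangle$''. This is false. In a free product $A\ast B$ the kernel of the retraction onto $A$ is the normal closure of $B$, not $B$ itself; free factor complements are not normal. Fortunately the fact you actually need for $\eta_2\circ\eta_1=\mathrm{id}$, namely that $\bigcap_i C_i = C$ when $C_i = \langle v_j:j\neq i\rangle\ast C$, follows immediately from reduced-word normal forms in the basis $\{v_0,\ldots,v_p\}\cup(\text{basis of }C)$: an element lies in $C_i$ iff its reduced expression avoids $v_i$, hence lies in every $C_i$ iff it lies in $C$. For $\eta_1\circ\eta_2=\mathrm{id}$ you also need that for an arbitrary split partial basis one recovers $C_i = \langle v_j:j\neq i\rangle \ast \bigcap_j C_j$; this is not quite ``formal'' and requires the full force of your structural decomposition, so the Kurosh/normal-form step in your induction is doing real work and deserves to be spelled out.
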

\begin{proof}
Fix some $p \geq 0$.
The $p$-simplices of the space of $F_1$-embeddings of $F_n$ are in bijection
with elements of $\Hom_{\Fr}(F_{p+1},F_n)$.  We
will define set maps
\[
\eta_1 \colon \Hom_{\Fr}(F_{p+1},F_n) \rightarrow (\PartialBases(F_n))^p, \qquad \eta_2 \colon (\PartialBases(F_n))^p \rightarrow \Hom_{\Fr}(F_{p+1},F_n)
\]
that are compatible with the semisimplicial structure and satisfy
$\eta_1 \circ \eta_2 = \text{id}$ and $\eta_2 \circ \eta_1 = \text{id}$.  The
constructions are as follows.
\begin{compactitem}
\item Consider $(f,C) \in \Hom_{\Fr}(F_{p+1},F_n)$,
so by definition $f \colon F_{p+1} \rightarrow F_n$ is an injective homomorphism
and $C \subset F_n$ is a subgroup satisfying $F_n = f(F_{p+1}) \ast C$.
For $0 \leq i \leq p$, define $v_i \in F_n$ to be the image under $f$ of
the $(i-1)^{\text{st}}$ basis element of $F_{p+1}$ and define
$C_i \subset F_n$ to be the subgroup generated by $\Set{$v_j$}{$i \neq j$} \cup C$.
Then $\eta_1(f,C):=((v_0,C_0),\ldots,(v_{p},C_{p}))$ is a split partial basis.
\item Consider a split partial basis $((v_0,C_0),\ldots,(v_{p+1},C_{p+1}))$.
Define $f \colon F_{p+1} \rightarrow F_n$ to be the map that takes the
$i^{\text{th}}$ basis element to $v_{i-1}$.  Also, define $C = \bigcap_{i=0}^p C_i$.
Then $\eta_2(((v_0,C_0),\ldots,(v_{p},C_{p}))) := (f,C)$ is an
element of $\Hom_{\Fr}(F_{p+1},F_n)$. \qedhere
\end{compactitem}
\end{proof}

\noindent
The space $\PartialBases(F_n)$ appears in work of Wahl--Randal-Williams, where they prove the following theorem (making
essential use of work of Hatcher--Vogtmann \cite{HatcherVogtmannAut}).

\begin{theorem}[{\cite[Proposition 5.3]{WahlStability}}]
\label{theorem:partialbasesfncon}
The space $\PartialBases(F_n)$ is $\frac{n-3}{2}$-connected.
\end{theorem}

\paragraph{Putting it all together.}
We now prove Theorem \ref{maintheorem:autfinite}, which asserts that the
$\VIC(\Z/\ell, \pm 1)$-module $\HHH_k(\Aut(\ell);\bk)$ is finitely generated.

\begin{proof}[{Proof of Theorem \ref{maintheorem:autfinite}}]
We apply Theorem~\ref{theorem:finitegen} to the highly surjective
functor $\Psi$.  This theorem has three hypotheses.
The first is that the category of $\VIC(\Z/\ell, \pm 1)$-modules is locally Noetherian, which is Theorem~\ref{maintheorem:vicnoetherian}.  The second is that
$\HH_k(\Aut(F_n);\bk)$ is a finitely generated $\bk$-module for all
$k \geq 0$, which is a theorem of Culler--Vogtmann 
\cite[Corollary on p.\ 93]{CullerVogtmannAut}.
The third is that the space of $F_1$-embeddings is highly acyclic, which
follows from Lemma \ref{lemma:autidentify} and Theorem \ref{theorem:partialbasesfncon}.
We conclude that we can apply Theorem \ref{theorem:finitegen} and deduce
that $\HHH_k(\Aut(\ell);\bk)$ is a finitely generated $\VIC(\Z/\ell,\pm 1)$-module
for all $k \geq 0$, as desired.
\end{proof}

\subsection{The symplectic group: Theorem \ref{maintheorem:spfinite}}
\label{section:spfinite}

Let $\O_K$ be the ring of integers in an algebraic number field $K$, let
$\alpha \subset \O_K$ be a proper nonzero ideal, and let $\bk$ be a Noetherian
ring.  In this section, we construct
$\SI(\O_K/\alpha)$-modules $\HHH_k(\Sp(\O_K,\alpha);\bk)$ as in
Theorem \ref{maintheorem:spfinite} for all $k \geq 0$;
see Corollary \ref{corollary:constructsp}.
We then prove Theorem \ref{maintheorem:spfinite}, which says that these are finitely generated modules.

\paragraph{Categories and functors.}
As was noted in Example \ref{example:si}, for any ring $R$ the category $\SI(R)$ is
a complemented category whose monoidal structure is given by the orthogonal direct sum.  We
have
\begin{equation}
\label{eqn:siraut}
\Aut_{\SI(R)}(R^n) = \Sp_{2n}(R) \quad \quad (n \geq 0).
\end{equation}
Define a strong monoidal functor $\Psi\colon \SI(\O_K) \rightarrow \SI(\O_K/\alpha)$ by $\Psi(V) = (\O_K/\alpha) \otimes_{\O_K} V$.
This functor takes the generator $\O_K^{2}$ of $\SI(\O_K)$ to the generator $(\O_K/\alpha)^2$
of $\SI(\O_K/\alpha)$ (both are equipped with the standard symplectic form).  
Define $\Sp(\O_K,\alpha) = \Gamma(\Psi)$.  Using the identification
in \eqref{eqn:siraut}, we have
$\Sp(\O_K,\alpha)_{\O_K^{2n}} = \Sp_{2n}(\O_K,\alpha)$.

\begin{lemma}
\label{lemma:sphighlysurjective}
The functor $\Psi$ is highly surjective.
\end{lemma}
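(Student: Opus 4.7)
The plan is to parallel the proof of Lemma~\ref{lemma:glhighlysurjective} almost verbatim, since only the statement needs translation from the linear to the symplectic setting. First, I would unpack the definition of ``highly surjective'' from \S\ref{section:themachine}: one needs $\Psi$ to take the distinguished generator to the distinguished generator, and the induced map on automorphism groups of each object to be surjective. The first condition is built into the construction, because by definition $\Psi(V) = (\O_K/\alpha) \otimes_{\O_K} V$ and the standard symplectic form on $\O_K^{2}$ tensors to the standard symplectic form on $(\O_K/\alpha)^{2}$. Hence the real content is the second condition.

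Next, using the identification \eqref{eqn:siraut}, the map $\Psi_{\ast} \colon \Aut_{\SI(\O_K)}(\O_K^{2n}) \to \Aut_{\SI(\O_K/\alpha)}((\O_K/\alpha)^{2n})$ is exactly the reduction homomorphism $\Sp_{2n}(\O_K) \to \Sp_{2n}(\O_K/\alpha)$, so the lemma reduces to showing that this reduction map is surjective for every $n \geq 0$. This is the symplectic analogue of the statement invoked in the $\GL$ case.

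I would then cite strong approximation, exactly as was done in Lemma~\ref{lemma:glhighlysurjective}. The group scheme $\Sp_{2n}$ is simply connected and semisimple, so strong approximation applies (see \cite[Chapter 7]{PlatonovRapinchuk}); since $\O_K/\alpha$ is finite and the archimedean places already account for a non-compact factor, this forces $\Sp_{2n}(\O_K) \to \Sp_{2n}(\O_K/\alpha)$ to be surjective. A self-contained alternative, if one wanted to avoid quoting strong approximation, would be to observe that $\O_K/\alpha$ is a product of finite local rings, so by a standard result $\Sp_{2n}(\O_K/\alpha)$ is generated by elementary symplectic transvections $I + t \cdot E_{ij}^{\mathrm{sp}}$, each of which visibly lifts to $\Sp_{2n}(\O_K)$ by choosing an arbitrary preimage of $t$ in $\O_K$.

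There is no genuine obstacle here; the only subtlety worth flagging is making sure the lifted elementary matrices really do lie in $\Sp_{2n}(\O_K)$ (as opposed to merely $\GL_{2n}(\O_K)$), which is immediate from the explicit form of the symplectic elementary generators. With surjectivity in hand, both bullet points in the definition of ``highly surjective functor'' are verified, completing the proof.
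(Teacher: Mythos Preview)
Your proposal is correct and follows essentially the same approach as the paper: both reduce the statement to the surjectivity of $\Sp_{2n}(\O_K) \rightarrow \Sp_{2n}(\O_K/\alpha)$ and invoke strong approximation via \cite[Chapter 7]{PlatonovRapinchuk}. Your write-up is more detailed (explicitly unpacking the definition of highly surjective and offering the elementary-transvection alternative), but the core argument is identical.
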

\begin{proof}
This is equivalent to the fact that the map 
$\Sp_{2n}(\O_K) \rightarrow \Sp_{2n}(\O_K/\alpha)$
is surjective for all $n \geq 0$, which follows from strong approximation (see, e.g.,
\cite[Chapter 7]{PlatonovRapinchuk}).
\end{proof}

\noindent
As we discussed in \S \ref{section:congruence}, Lemma \ref{lemma:sphighlysurjective}
has the following corollary.

\begin{corollary}
\label{corollary:constructsp}
For all $k \geq 0$, there exists an $\SI(\O_K/\alpha)$-module
$\HHH_k(\Sp(\O_K,\alpha);\bk)$ such that
$\HHH_k(\Sp(\O_K,\alpha);\bk)_{(\O_K/\alpha)^{2n}} = \HH_k(\Sp_{2n}(\O_K,\alpha);\bk)$.
\end{corollary}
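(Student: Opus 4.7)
The plan is to obtain the corollary by directly applying the general construction of the $\tA$-module $\HHH_k(\Psi;\bk)$ associated to a highly surjective functor $\Psi\colon \tB \to \tA$, as set up in \S\ref{section:congruence}. In our setting we take $\tA = \SI(\O_K/\alpha)$ with generator $X = (\O_K/\alpha)^2$ and $\tB = \SI(\O_K)$ with generator $Y = \O_K^2$, both endowed with the standard symplectic form, and $\Psi$ is the base change functor $V \mapsto (\O_K/\alpha) \otimes_{\O_K} V$. Lemma~\ref{lemma:sphighlysurjective} tells us that $\Psi$ is highly surjective, which is exactly the hypothesis needed to invoke the construction.

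Unpacking what that construction produces, for every object $X^q$ of $\tA$ we have $\HHH_k(\Psi;\bk)_{X^q} = \HH_k(\Gamma_{Y^q}(\Psi);\bk)$, where $\Gamma_{Y^q}(\Psi)$ is the level $\Psi$ congruence subgroup of $\Aut_\tB(Y^q)$. Under the identification \eqref{eqn:siraut} the automorphism groups are $\Aut_\tB(Y^n) = \Sp_{2n}(\O_K)$ and $\Aut_\tA(X^n) = \Sp_{2n}(\O_K/\alpha)$, and since $\Psi$ sends $Y^n$ to (something canonically isomorphic to) $X^n$ and acts on automorphisms via reduction modulo $\alpha$, we have
\[
\Gamma_{Y^n}(\Psi) = \Ker\bigl(\Sp_{2n}(\O_K) \to \Sp_{2n}(\O_K/\alpha)\bigr) = \Sp_{2n}(\O_K,\alpha).
\]
Therefore $\HHH_k(\Psi;\bk)_{(\O_K/\alpha)^{2n}} = \HH_k(\Sp_{2n}(\O_K,\alpha);\bk)$, as required.

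To finish I should verify that $\HHH_k(\Psi;\bk)$ is indeed a well-defined functor on $\tA$. On morphisms we proceed as in \S\ref{section:congruence}: given an $\tA$-morphism $f\colon X^q \to X^r$, Lemma~\ref{lemma:highlysurjective}(b) (applicable since $\Psi$ is highly surjective) provides a lift $\tilde{f}\colon Y^q \to Y^r$ in $\tB$, which induces a homomorphism $\Sp_{2q}(\O_K,\alpha) \to \Sp_{2r}(\O_K,\alpha)$ and hence a map on $\HH_k$. Independence of the choice of lift uses Lemma~\ref{lemma:hhhwelldefined}: any two lifts differ by an element of $\Gamma_{Y^r}(\Psi)$, which acts on $\HH_k(\Gamma_{Y^r}(\Psi);\bk)$ by an inner automorphism and hence trivially. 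Functoriality in $f$ then follows from functoriality in $\tB$ together with the same triviality of inner automorphisms. There is no substantial obstacle here; the corollary is essentially a restatement of what the general machinery of \S\ref{section:congruence} delivers once the highly surjective functor $\Psi$ has been identified.
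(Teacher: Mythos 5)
Your proposal is correct and follows exactly the same route the paper takes: the corollary is stated as an immediate consequence of Lemma~\ref{lemma:sphighlysurjective} (that $\Psi$ is highly surjective) together with the general construction of $\HHH_k(\Psi;\bk)$ in \S\ref{section:congruence}. Your additional unpacking — identifying $\Gamma_{Y^n}(\Psi)$ with $\Sp_{2n}(\O_K,\alpha)$ via \eqref{eqn:siraut} and verifying functoriality using Lemmas~\ref{lemma:highlysurjective} and~\ref{lemma:hhhwelldefined} — simply makes explicit what the paper leaves implicit.
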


\paragraph{Identifying the space of embeddings.}
We now wish to give a concrete description of the space of $\O_K^{2}$-embeddings
of $\O_K^{2n}$.  Letting $\ialg$ be the symplectic form on $\O_K^{2n}$, a
{\bf symplectic basis} for $\O_k^{2n}$ is an ordered basis $(a_1,b_1,\ldots,a_n,b_n)$ such that
$\ialg(a_i,a_j) = \ialg(b_i,b_j) = 0$ and $\ialg(a_i,b_j) = \delta_{ij}$ for $1 \leq i,j \leq n$,
where $\delta_{ij}$ is the Kronecker delta.  A {\bf partial symplectic basis} for $\O_K^{2n}$ is
a sequence $(a_1,b_1,\ldots,a_p,b_p)$ of elements of $\O_K^{2n}$ that
can be extended to a symplectic basis $(a_1,b_1,\ldots,a_n,b_n)$.
The {\bf space of partial symplectic bases} for $\O_K^{2n}$, denoted
$\PartialSpBases_n(\O_K)$, is the semisimplicial set whose $(p-1)$-cells
are partial symplectic bases $(a_1,b_1,\ldots,a_p,b_p)$.

\begin{lemma}
\label{lemma:spidentify}
The space of $\O_K^2$-embeddings of $\O_K^{2n}$ is isomorphic to $\PartialSpBases_n(\O_K)$.
\end{lemma}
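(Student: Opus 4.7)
The plan is to mimic the pattern used for Lemmas \ref{lemma:glidentify} and \ref{lemma:autidentify}: for each $p \geq 0$, I will construct inverse set maps
\[
\eta_1 \colon \Hom_{\SI(\O_K)}(\O_K^{2(p+1)}, \O_K^{2n}) \rightarrow \PartialSpBases_n(\O_K)^p, \qquad \eta_2 \colon \PartialSpBases_n(\O_K)^p \rightarrow \Hom_{\SI(\O_K)}(\O_K^{2(p+1)}, \O_K^{2n})
\]
and check that they are compatible with the semisimplicial structures. Here $\O_K^{2(p+1)}$ carries the standard symplectic form, so the $(p+1)$-fold orthogonal direct sum $(\O_K^2)^{p+1}$ is identified with $\O_K^{2(p+1)}$ with the standard symplectic form. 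Let $(e_1,f_1,\ldots,e_{p+1},f_{p+1})$ denote the standard symplectic basis of $\O_K^{2(p+1)}$.

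First I would define $\eta_1$ by $\eta_1(\varphi) = (\varphi(e_1),\varphi(f_1),\ldots,\varphi(e_{p+1}),\varphi(f_{p+1}))$. Since $\varphi$ preserves the symplectic form, the tuple $\eta_1(\varphi)$ satisfies all the symplectic relations. To see that it is in fact a partial symplectic basis of $\O_K^{2n}$ (i.e.\ that it extends to a full symplectic basis), I would invoke Lemma~\ref{lemma:movedecomp}: the group $\Aut_{\SI(\O_K)}(\O_K^{2n}) = \Sp_{2n}(\O_K)$ acts transitively on $\Hom_{\SI(\O_K)}(\O_K^{2(p+1)}, \O_K^{2n})$, so $\varphi$ is obtained from the canonical symplectic inclusion $\O_K^{2(p+1)} \hookrightarrow \O_K^{2n}$ by applying an element of $\Sp_{2n}(\O_K)$. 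The canonical inclusion sends the standard symplectic basis of $\O_K^{2(p+1)}$ to the first $2(p+1)$ vectors of the standard symplectic basis of $\O_K^{2n}$, which trivially extends; applying an element of $\Sp_{2n}(\O_K)$ preserves this extendability.

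Next I would define $\eta_2$ on a partial symplectic basis $((a_1,b_1),\ldots,(a_{p+1},b_{p+1}))$ to be the unique $\O_K$-linear map sending $e_i \mapsto a_i$ and $f_i \mapsto b_i$. The defining relations of a partial symplectic basis make this map symplectic, hence an element of $\Hom_{\SI(\O_K)}(\O_K^{2(p+1)}, \O_K^{2n})$. Then $\eta_1 \circ \eta_2 = \mathrm{id}$ is immediate from the construction, and $\eta_2 \circ \eta_1 = \mathrm{id}$ follows because a symplectic map out of $\O_K^{2(p+1)}$ is determined by its values on the standard symplectic basis.

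Finally I would verify compatibility with face maps. A morphism $\phi \colon [\ell] \rightarrow [p]$ in $\HDelta$ induces a canonical symplectic map $\phi_{\ast} \colon \O_K^{2(\ell+1)} \rightarrow \O_K^{2(p+1)}$ sending $(e_i,f_i)$ to $(e_{\phi(i)+1},f_{\phi(i)+1})$ for $0 \le i \le \ell$ (shifting indices to match the paper's convention), so under $\eta_1$ the face $[\varphi \circ \phi_{\ast}]$ corresponds to the subtuple of $\eta_1(\varphi)$ indexed by $\phi$, which is exactly the face of $\eta_1(\varphi)$ in $\PartialSpBases_n(\O_K)$. The only point requiring any thought is the extendability in the verification that $\eta_1$ is well-defined, but this is handled cleanly by the transitivity from Lemma~\ref{lemma:movedecomp}, so I expect no serious obstacle.
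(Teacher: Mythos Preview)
Your proposal is correct and follows essentially the same approach as the paper: define the bijection by sending a symplectic map to the image of the standard symplectic basis, with the inverse sending a partial symplectic basis to the linear map determined by it. The paper's proof is much terser (it simply declares the map ``clearly'' a bijection compatible with the semisimplicial structure), whereas you spell out the inverse explicitly and justify extendability via the transitivity of Lemma~\ref{lemma:movedecomp}; this extra detail is sound and in fact fills a point the paper leaves implicit.
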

\begin{proof}
Fix some $p \geq 0$.
The $p$-simplices of the space of $\O_K^2$-embeddings of $\O_K^{2n}$ are in bijection
with elements of $\Hom_{\SI(\O_K)}(\O_K^{p+1},\O_K^{2n})$.
We can define a set map
\[\eta \colon \Hom_{\SI(\O_K)}(\O_K^{p+1},\O_K^{2n}) \rightarrow (\PartialSpBases_n(\O_K))^p\]
by letting $\eta(f)$ be the image under $f \in \Hom_{\SI(\O_K)}(\O_K^{p+1},\O_K^{2n})$
of the standard symplectic basis for $\O_K^{p+1}$.  Clearly
$\eta$ is a bijection compatible with the semisimplicial structure.
\end{proof}

\noindent
The space $\PartialSpBases_n(\O_K)$ was introduced by Charney \cite{CharneyVogtmann},
who proved the following theorem.

\begin{theorem}[{\cite[Corollary 3.3]{CharneyVogtmann}}]
\label{theorem:partialspbasescon}
The space $\PartialSpBases_n(\O_K)$ is $\frac{n-6}{2}$-connected.
\end{theorem}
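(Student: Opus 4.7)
The plan is to prove the connectivity bound by induction on $n$, following a Quillen-style argument adapted to the symplectic setting (this is essentially Charney's approach in \cite{CharneyVogtmann}). First I would note that $\PartialSpBases_n(\O_K)$ can be related to a simplicial complex whose vertices are hyperbolic pairs $(a,b)$ (i.e., $\ialg(a,b)=1$) extending to a symplectic basis, with a simplex recorded by a partial symplectic basis. The link of any $(p-1)$-simplex $(a_1,b_1,\ldots,a_p,b_p)$ is canonically isomorphic to $\PartialSpBases_{n-p}(\O_K)$ applied to the symplectic orthogonal complement $\langle a_1,b_1,\ldots,a_p,b_p\rangle^{\perp} \cong \O_K^{2(n-p)}$, which by induction is $\frac{n-p-6}{2}$-connected.

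Next I would promote highly connected links to connectivity of the ambient complex. The standard technique here is a bad-simplex argument: given a simplicial map $f \colon S^k \to \PartialSpBases_n(\O_K)$ for $k \leq \frac{n-6}{2}$, I would try to extend it to a ball by producing a fresh hyperbolic pair $(a,b)$ orthogonal to a chosen ``compression'' direction, then cone off. This extension step is exactly where one needs an input from stable range theory: $\O_K$ satisfies Bass's stable range condition $\text{SR}_3$, and this is what allows one, given any finite collection of vectors, to produce a new unimodular vector $a$ avoiding prescribed hyperplanes, and then a symplectic partner $b$ for $a$ inside the desired symplectic summand.

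The technical heart of the argument is to set up an auxiliary poset or filtration — for instance, the poset of hyperbolic summands ordered by inclusion, or a Cohen--Macaulay-type refinement — and apply Quillen's fiber lemma (or the nerve lemma) to compare its geometric realization to $\PartialSpBases_n(\O_K)$ while relating strata to the inductively-controlled smaller complexes. The main obstacle will be pinning down the exact connectivity constant $\frac{n-6}{2}$ rather than a weaker bound: the offset of $6$ (versus $4$ in the $\GL$-case of Theorem \ref{theorem:partialbasescon}) reflects the extra cost of simultaneously controlling an isotropic vector and its symplectic partner, and making this loss precise requires a careful two-step bad-simplex argument that separately extends $a_{p+1}$ and then $b_{p+1}$, each extension consuming one unit of connectivity beyond what the underlying $\VIC$-type complex consumes. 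Once that bookkeeping is in place, the inductive step goes through verbatim.
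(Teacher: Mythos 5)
The paper does not actually prove Theorem~\ref{theorem:partialspbasescon}; it simply quotes the statement from Charney's paper (her Corollary~3.3). The only additional content in the paper is the remark immediately following, which notes that the paper's definition of $\PartialSpBases_n(R)$ differs from Charney's and must be shown equivalent for Dedekind domains using \cite{ReinerUnimodular}. So there is no ``paper's proof'' to compare against beyond a citation and a translation remark. Your proposal, by contrast, is an attempted sketch of Charney's own argument, so the relevant comparison is really to her paper, not to the present one.

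As a sketch, your proposal captures the correct general ingredients (link identification, induction on rank, stable range as the engine), but it misses the shape of the actual argument and hand-waves exactly where the difficulty lies. Charney does not run a two-step bad-simplex argument directly on the symplectic complex. Instead she introduces an intermediate poset of \emph{isotropic} unimodular sequences and compares the hyperbolic-pair poset to it; her earlier GL connectivity theorem (essentially Theorem~\ref{theorem:partialbasescon}, proved via stable range in \cite{CharneyCongruence}) is fed in as input, and Quillen's poset machinery (fiber lemma, joins of links) transports connectivity from the isotropic poset to the hyperbolic one. The constant offset (``$6$'' rather than ``$4$'') falls out of that comparison, not out of an assertion that ``each extension costs one unit of connectivity''; you give no argument for that claim and ``once that bookkeeping is in place, the inductive step goes through verbatim'' is precisely the step a referee would not accept. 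Finally, your proposal does not address the definitional translation: you need to verify that the semisimplicial set $\PartialSpBases_n$ as defined here agrees (up to homotopy in the relevant range) with the complex Charney actually analyzes, which is the one point the paper explicitly flags as requiring \cite{ReinerUnimodular}.
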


\begin{remark}
Just like for $\PartialBases_n(\O_K)$ in \S \ref{section:glfinite}, the definition
of $\PartialSpBases_n(R)$ is different from Charney's, but can be
proved equivalent for Dedekind domains using the ideas from \cite{ReinerUnimodular}.
\end{remark}

\paragraph{Putting it all together.}
We now prove Theorem \ref{maintheorem:spfinite}, which asserts that the $\SI(\O/\alpha)$-module $\HHH_k(\Sp(\O_K,\alpha);\bk)$ is finitely generated.

\begin{proof}[{Proof of Theorem \ref{maintheorem:spfinite}}]
We apply Theorem \ref{theorem:finitegen} to the highly surjective
functor $\Psi$.  This theorem has three hypotheses.
The first is that the category of $\SI(\O_K/\alpha)$-modules is locally Noetherian, which is Theorem
\ref{maintheorem:sinoetherian}.  The second is that
$\HH_k(\Sp_{2n}(\O_K,\alpha);\bk)$ is a finitely generated $\bk$-module for all
$k \geq 0$, which is a theorem of Borel--Serre \cite{BorelSerreCorners}.  The
third is that the space of $\O_K^{2}$-embeddings is highly acyclic, which
follows from Lemma \ref{lemma:spidentify} and Theorem \ref{theorem:partialspbasescon}.
We conclude that we can apply Theorem \ref{theorem:finitegen} and deduce
that $\HHH_k(\Sp(\O_K,\alpha);\bk)$ is a finitely generated $\SI(\O_K/\alpha)$-module
for all $k \geq 0$, as desired.
\end{proof}

\subsection{The category of surfaces}
\label{section:surfaces}

In preparation for proving Theorem \ref{maintheorem:modfinite} (which concerns
congruence subgroups of the mapping class group), this section
is devoted to a weak complemented category $\Surf$ of surfaces.  This
category was introduced by Ivanov \cite[\S 2.5]{IvanovTwisted}; our
contribution is to endow it with a monoidal structure.
It is different from previous monoidal categories
of surfaces (for instance, in \cite{MillerMod}), which do not include morphisms
between non-isomorphic surfaces.  A related monoidal category of surfaces
was constructed independently by Wahl--Randal-Williams \cite[\S 5.6]{WahlStability}, though
their morphisms are different from ours. 

\Figure{figure:surfbasic}{SurfBasic}{The LHS shows how to compose morphisms
$(f,\tau) \colon S \rightarrow S'$ and 
$(f',\tau') \colon S' \rightarrow S''$.  The picture depicts $S''$.
The RHS shows the identity morphism $(f_0,\tau_0) \colon S \rightarrow S$, which pushes
$\partial S$ across a collar neighborhood.}{65}

\paragraph{The category $\bSurf$.}
The objects of the category $\Surf$ are pairs $(S,\eta)$ as follows.
\begin{compactitem}
\item $S$ is a compact oriented surface with one boundary component.
\item $\eta \colon [0,1] \rightarrow \partial S$ is an embedding such that $S$ lies
to the left of $\Image(\eta)$.  
We will call $\eta$ the {\bf boundary arc} of $S$.
\end{compactitem}
This endows $S$ with a natural basepoint, namely $\eta(1/2) \in \partial S$.
To keep our notation from getting out of hand, we will usually refer to $(S,\eta)$ simply as $S$, leaving $\eta$ implicit.  
For $S,S' \in \Surf$, the set $\Hom_{\Surf}(S,S')$ consists of homotopy classes of pairs
$(f,\tau)$ as follows.
\begin{compactitem}
\item $f \colon S \rightarrow \Interior(S')$ is an orientation-preserving embedding.
\item $\tau$ is a properly embedded arc in $S' \setminus \Interior(f(S))$ which goes from the basepoint of $S'$ to the image under $f$ of the basepoint of $S$.
\end{compactitem}

If $(f,\tau) \colon S \rightarrow S'$ and $(f',\tau') \colon S' \rightarrow S''$ are morphisms, then 
\[
(f',\tau') \circ (f,\tau) = (f' \circ f,\tau' \cdot f'(\tau)) \colon S \rightarrow S'',
\]
where arcs are composed as in the fundamental groupoid (this goes from left to right rather than right to left as
in function composition; see Figure \ref{figure:surfbasic}).  
The identity morphism of $S$ is the morphism $(f_0,\tau_0)$ 
depicted in Figure \ref{figure:surfbasic}; the
embedding $f_0$ ``pushes'' $\partial S$ across a collar neighborhood.

\begin{remark}
The boundary arc of an object of $\Surf$ is necessary to define the monoidal structure on $\Surf$. However, aside from providing a basepoint it plays little role in the structure of $\Surf$. Indeed, if $S, S' \in \Surf$ are objects with the same underlying surface, then it is easy to see that there exists an isomorphism $(f,\tau) \colon S \stackrel{\cong}{\rightarrow} S'$.
\end{remark}

\paragraph{Automorphisms.}
Ivanov proved the following theorem.

\begin{theorem}[{\cite[\S 2.6]{IvanovTwisted}}]
\label{theorem:surfauto}
Let $S \in \Surf$.  Then $\Aut_{\Surf}(S)$ is isomorphic to the mapping class group of $S$.
\end{theorem}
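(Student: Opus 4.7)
The plan is to construct mutually inverse group homomorphisms $\Psi \colon \MCG(S) \to \Aut_{\Surf}(S)$ and $\Phi \colon \Aut_{\Surf}(S) \to \MCG(S)$. Fix once and for all a collar $c \colon \partial S \times [0,1] \hookrightarrow S$, and let $(f_0, \tau_0)$ be the identity morphism of $S$ shown in Figure~\ref{figure:surfbasic}: the embedding $f_0$ pushes $S$ across this collar onto the subsurface $S_0 := S \setminus c(\partial S \times [0, \tfrac{1}{2}))$, and $\tau_0$ is the radial arc $c(\eta(\tfrac{1}{2}) \times [0, \tfrac{1}{2}])$.

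Define $\Psi([\phi]) = [(\phi \circ f_0, \phi \circ \tau_0)]$. Since $\phi$ fixes $\partial S$ pointwise, the arc $\phi \circ \tau_0$ runs from the basepoint of $S$ to $(\phi \circ f_0)(\text{basepoint})$ and lies in the complement of $\phi \circ f_0(S)$, so $\Psi([\phi])$ is a valid morphism. Well-definedness on isotopy classes is immediate, and comparing with the composition law depicted in Figure~\ref{figure:surfbasic} shows that $\Psi$ is a group homomorphism.

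For the inverse $\Phi$, given a representative $(f, \tau)$, I first establish that $A := S \setminus \Interior(f(S))$ is a connected annulus: a general-position argument (exploiting that $S$ has a single boundary circle) yields connectedness, and then the Euler-characteristic calculation $\chi(A) = \chi(S) - \chi(f(S)) = 0$ forces $A$ to be an annulus with $\partial A = \partial S \sqcup f(\partial S)$. The arc $\tau$ is a properly embedded arc in $A$ joining prescribed points on the two boundary circles, and such arcs form a $\Z$-torsor up to isotopy rel endpoints (distinguished by winding number around the annulus). It follows that there is a unique isotopy class of homeomorphisms $h \colon c(\partial S \times [0, \tfrac{1}{2}]) \to A$ which restrict to the identity on $\partial S$, to $f \circ f_0^{-1}$ on the inner circle $c(\partial S \times \{\tfrac{1}{2}\})$, and which carry $\tau_0$ to $\tau$ up to isotopy rel endpoints. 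Pasting such an $h$ with the homeomorphism $f \circ f_0^{-1} \colon S_0 \to f(S)$ along $\partial S_0$ yields a homeomorphism $\phi \colon S \to S$ fixing $\partial S$ pointwise, and we set $\Phi([(f, \tau)]) = [\phi]$.

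The remaining tasks are to check that $\Phi$ is well-defined on isotopy classes of morphisms, that $\Phi$ is a homomorphism, and that $\Psi \circ \Phi$ and $\Phi \circ \Psi$ are the identity; all follow from the uniqueness statement used in constructing $h$, together with the isotopy extension theorem applied to homotopies of the pair $(f, \tau)$. The principal technical obstacle is the $\Z$-torsor classification of arcs in $A$ and the verification that $\tau$ rigidly encodes the ``Dehn twist parameter'' in the extension across the collar---it is precisely this additional datum beyond $f$ that enables $\Aut_{\Surf}(S)$ to recover the full mapping class group rather than its quotient by the boundary Dehn twist.
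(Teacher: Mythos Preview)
The paper does not supply its own proof of this theorem; it simply cites Ivanov \cite[\S 2.6]{IvanovTwisted} and adds a remark that Ivanov's more general statement specialises to the one-boundary case. Your proposal therefore goes beyond what the paper does, and its outline is correct: the complement of an automorphism's image is an annulus, and the tether $\tau$ kills precisely the $\Z$ worth of ambiguity coming from boundary Dehn twists, so the pair $(f,\tau)$ determines a unique mapping class.

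Two minor points. First, your appeal to ``general position'' for the connectedness of $A = S \setminus \Interior(f(S))$ is misdirected; what you actually need is that $\partial A = \partial S \sqcup f(\partial S)$ consists of exactly two circles, no component of $A$ can be closed, and the arc $\tau$ joins the two boundary circles inside $A$, forcing $A$ to be connected. Second, the verification that $\Psi$ is a homomorphism is not just a glance at Figure~\ref{figure:surfbasic}: you need that $(f_0,\tau_0)$ really is a two-sided identity up to homotopy, so that $(\phi f_0)\circ(\psi f_0)$ is homotopic to $(\phi\psi f_0,\phi\psi(\tau_0))$. This is true but deserves a sentence. With those clarifications your argument is complete and is essentially Ivanov's.
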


\begin{remark}
One potentially confusing point here is that Ivanov allows his surfaces to have multiple
boundary components. The theorem in \cite[\S 2.6]{IvanovTwisted}
is more complicated than Theorem \ref{theorem:surfauto}, but it reduces
to Theorem \ref{theorem:surfauto} when $S$ has one boundary component.
\end{remark}

\paragraph{A bifunctor.}
Let $\Rect = [0,3] \times [0,1] \subset \R^2$.
Consider objects $S_1$ and $S_2$ of $\Surf$.  Define
$S_1 \monpro S_2$ to be the following object of $\Surf$.
For $i=1,2$, let $\eta_i\colon [0,1] \rightarrow S_i$ be the boundary arc
of $S_i$.
Then $S_1 \monpro S_2$ is the result of gluing
$S_1$ and $S_2$ to $\Rect$ as follows.
\begin{compactitem}
\item For $t \in [0,1]$, identify $\eta_1(t) \in S_1$ and $(0,t) \in \Rect$.
\item For $t \in [0,1]$, identify $\eta_2(t) \in S_2$ and $(3,1-t) \in \Rect$.
\end{compactitem}
We have $1-t$ in the above to ensure that the resulting surface is oriented.
The boundary arc of $S_1 \monpro S_2$ is $t \mapsto (1+t,0) \in \Rect$. 

\Figure{figure:bifunctor}{Bifunctor}
{The upper left figure is $S_1 \monpro S_2$, divided into the three
subsurfaces $S_1$ and $S_2$ and $\Rect$ ($\Rect$ is shaded).
The bottom left figure is $S_1' \monpro S_2'$; here the rectangle is
labeled $\Rect'$.  For $i=1,2$, the morphism
$(f_i,\tau_i) \colon S_i \rightarrow S_i'$ is depicted.  The right hand
figure shows how $S_1 \monpro S_2$ is embedded in $S_1' \monpro S_2'$.}{65}

\begin{lemma}
\label{lemma:surfbifunctor}
The operation $\monpro$ is a bifunctor on $\Surf$.
\end{lemma}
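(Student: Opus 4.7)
The plan is to first define $\monpro$ on morphisms and then verify the three axioms of a bifunctor: preservation of identities in each variable, preservation of composition in each variable, and the interchange law $(f_1' \monpro f_2') \circ (f_1 \monpro f_2) = (f_1' \circ f_1) \monpro (f_2' \circ f_2)$. Given morphisms $(f_i,\tau_i) \colon S_i \to S_i'$ for $i=1,2$, I will construct $(f_1,\tau_1) \monpro (f_2,\tau_2)$ by separately describing its embedding and its arc, as suggested by Figure \ref{figure:bifunctor}.

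For the embedding $f_1 \monpro f_2 \colon S_1 \monpro S_2 \to S_1' \monpro S_2'$, I would decompose the domain into the three pieces $S_1$, $\Rect$, and $S_2$ (using the notation from the construction of $\monpro$ on objects). On $S_i$ the map is $f_i$, composed if necessary with the identity-morphism collar push so that the image sits in $\Interior(S_i') \subset \Interior(S_1' \monpro S_2')$. To extend over $\Rect$, I would choose disjoint regular neighborhoods $N(\tau_i) \subset S_i' \setminus \Interior(f_i(S_i))$ of the arcs $\tau_i$, thickened so that they meet the rectangle $\Rect' \subset S_1' \monpro S_2'$ in small sub-rectangles abutting the basepoints. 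The union of $N(\tau_1)$, $\Rect'$, and $N(\tau_2)$ is a rectangular strip connecting $f_1(\eta_1)$ to $f_2(\eta_2)$, and the image of $\Rect$ is defined to be this strip (which can be parametrized as an embedded rectangle since $\tau_1,\tau_2$ are properly embedded arcs in surfaces with boundary). For the new arc, I take the portion of the boundary arc of $\Rect'$ from the basepoint of $S_1' \monpro S_2'$ to a suitable point on the image strip; this defines a properly embedded arc from the basepoint of $S_1' \monpro S_2'$ to the image of the basepoint of $S_1 \monpro S_2$, and its homotopy class depends only on the homotopy classes of $(f_i,\tau_i)$.

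Next, I would verify the axioms. Preservation of identities is immediate from the construction of the identity morphism via the collar push: identity morphisms correspond to arcs lying in the boundary collar, whose neighborhoods reassemble to the collar push on $S_1 \monpro S_2$. For functoriality in each variable separately, given $(f_i,\tau_i) \colon S_i \to S_i'$ and $(f_i',\tau_i') \colon S_i' \to S_i''$ with the other variable held fixed, one checks that the arc $\tau_i' \cdot f_i'(\tau_i)$ used in composition yields a regular neighborhood homotopic to the union of the individual neighborhoods; this is precisely the kind of standard surface-topology argument that shows the composition in $\Surf$ is well-defined. The interchange law reduces to observing that when both arguments vary, the construction treats the two sides of the rectangle $\Rect'$ symmetrically and the intermediate neighborhoods of $f_1'(\tau_1)$ and $f_2'(\tau_2)$ can be chosen disjoint, so composing first on each side and then taking $\monpro$ gives the same homotopy class as taking $\monpro$ first and then composing.

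The main obstacle is bookkeeping: making the embedded rectangular neighborhoods chosen at each stage compatible up to isotopy, so that the class of the constructed morphism depends only on the homotopy classes of its inputs and so that the interchange law holds on the nose. I expect this to be handled by a uniform choice: fix once and for all, for each pair $(f,\tau)$, an isotopy class of regular neighborhood of $\tau$ meeting the boundary standardly, and verify that this choice is stable under the composition formula $\tau' \cdot f'(\tau)$. Once this is set up, all axioms follow from standard uniqueness-of-regular-neighborhoods arguments together with the fact that any two orientation-preserving embeddings of a rectangle into an oriented surface agreeing on the boundary are isotopic rel boundary.
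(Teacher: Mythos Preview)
Your proposal is correct and takes essentially the same approach as the paper: both constructions restrict to $f_i$ on the $S_i$ pieces and send $\Rect$ to a thickened strip built from regular neighborhoods of the tethers $\tau_i$ together with the central rectangle $\Rect'$, with the new tether lying in $\Rect'$. The paper's proof is actually briefer than yours---it simply exhibits $(g,\lambda)$ via Figure~\ref{figure:bifunctor} and asserts well-definedness up to homotopy, without writing out the identity, composition, or interchange verifications you outline; your explicit treatment of those axioms is a reasonable expansion of what the paper leaves implicit.
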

\begin{proof}
For $i=1,2$, let $(f_i,\tau_i) \colon S_i \rightarrow S_i'$ be a
$\Surf$-morphism.  We must construct a canonical
morphism 
\[
(g,\lambda) \colon S_1 \monpro S_2 \longrightarrow S_1' \monpro S_2'.
\]
The based surface $S_1 \monpro S_2$ consists of three pieces, namely $S_1$ and $S_2$
and $\Rect$.  Identify all three of these pieces with their
images in $S_1 \monpro S_2$.  Similarly, $S_1' \monpro S_2'$ consists
of three pieces, namely $S_1'$ and $S_2'$ and a rectangle which we will call
$\Rect'$ to distinguish it from $\Rect \subset S_1 \monpro S_2$, and we will
identify these three pieces with their images in $S_1' \monpro S_2'$.
Let $g\colon S_1 \monpro S_2 \rightarrow S_1' \monpro S_2'$ be the embedding
depicted in Figure \ref{figure:bifunctor}, so $g(\Rect)$ is a regular neighborhood
in $S_1' \monpro S_2' \setminus \Interior(f_1(S_1) \cup f_2(S_2))$ of
the union of $\tau_1$ and $\tau_2$ and the arc $t \mapsto (3t,1/2)$ in $\Rect' = [0,3] \times [0,1] \subset \R^2$.  
Also, let $\lambda$ be the arc shown in Figure \ref{figure:bifunctor}.  It
is clear that $(g,\lambda)$ is well-defined up to homotopy.
\end{proof}

\paragraph{Monoidal structure.}
We now come to the following result.

\begin{lemma}
\label{lemma:surfmonoidal}
The category $\Surf$ is a monoidal category (not symmetric!) with respect to the bifunctor $\monpro$.
\end{lemma}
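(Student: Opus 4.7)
The plan is to first identify the unit object $\mathbbm{1}$ as the disk equipped with the canonical boundary arc $t \mapsto (1+t,0)$ in $\Rect$, i.e., the object $\mathbbm{1} \in \Surf$ whose underlying surface is a genus zero surface with one boundary component. Then I would exhibit explicit natural isomorphisms
\[
\alpha_{S_1,S_2,S_3} \colon (S_1 \monpro S_2) \monpro S_3 \xrightarrow{\cong} S_1 \monpro (S_2 \monpro S_3), \quad \lambda_S \colon \mathbbm{1} \monpro S \xrightarrow{\cong} S, \quad \rho_S \colon S \monpro \mathbbm{1} \xrightarrow{\cong} S,
\]
and finally verify the pentagon and triangle coherence axioms.

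For associativity, both $(S_1 \monpro S_2) \monpro S_3$ and $S_1 \monpro (S_2 \monpro S_3)$ are built by attaching $S_1$, $S_2$, $S_3$ to a union of rectangles via their boundary arcs; one configuration uses a nested pair of rectangles while the other is mirror-nested. I would choose an explicit orientation-preserving homeomorphism of the ``nested rectangles region'' that restricts to the identity on the attaching intervals $\eta_i([0,1])$ of $S_1,S_2,S_3$ and on the outgoing boundary arc of the composite. This homeomorphism, together with an arc $\tau$ inside the rectangular region connecting the two basepoints, defines $\alpha_{S_1,S_2,S_3}$. Naturality in each variable follows because given morphisms $(f_i,\tau_i) \colon S_i \rightarrow S_i'$, the induced maps on both sides of $\alpha$ agree with the bifunctoriality construction from Lemma~\ref{lemma:surfbifunctor} inside the $S_i$ pieces and differ only by an isotopy on the rectangle regions; since all arcs involved are canonically homotopic rel endpoints in the planar rectangle regions, the two compositions agree up to homotopy. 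The unitors $\lambda_S$ and $\rho_S$ are constructed analogously: gluing a disk to $S$ via a rectangle yields a surface homeomorphic to $S$, and the homeomorphism can be chosen to be supported in a neighborhood of the attaching rectangle and disk, so that it is the identity outside a collar. An arc in this collar furnishes the required $\tau$.

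For the coherence axioms, I would note that the pentagon identity compares two homotopy classes of homeomorphisms from $((S_1 \monpro S_2) \monpro S_3) \monpro S_4$ to $S_1 \monpro (S_2 \monpro (S_3 \monpro S_4))$, both supported in (and agreeing on the complement of) a planar region glued together from several rectangles. Since the planar region is simply connected and the homeomorphisms restrict to the same map on the attaching arcs of $S_1,\dots,S_4$, Alexander's trick applied to the planar region shows that the two homeomorphisms are isotopic rel the $S_i$, and the corresponding $\tau$-arcs are homotopic rel endpoints; hence the two morphisms are equal in $\Surf$. The triangle axiom reduces similarly to comparing two homeomorphisms on a planar region that agree on their boundaries. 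The main obstacle — and the only point requiring care — is the bookkeeping of the arcs $\tau$ in the definition of morphisms, since a priori different choices of $\tau$ give different morphisms; however the rectangle regions used to build the associator and unitors are simply connected and disjoint from the $S_i$, so any two candidate arcs are canonically homotopic rel endpoints in the complement of the $S_i$, which makes all coherence diagrams commute on the nose.
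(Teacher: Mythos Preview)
Your proposal is correct and follows the same approach as the paper: identify the unit as a disc, build the associator and unitors from homeomorphisms supported on the glued rectangle regions, and verify coherence by noting these regions are planar discs so any two candidate homeomorphisms (and tether arcs) are isotopic rel boundary. The paper's own proof is just a two-line sketch (``associativity is clear from Figure~\ref{figure:surfmonoidal}; the various diagrams that need to commute are all easy exercises''), so you have simply filled in the details the paper omits, including the careful point about the $\tau$-arcs being canonically homotopic in the simply connected rectangle regions.
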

\begin{proof}
Clearly a disc (with some boundary arc; any two such boundary
arcs determine isomorphic objects of $\Surf$)
is a two-sided identity for $\monpro$.  Associativity is clear from
Figure \ref{figure:surfmonoidal}.  The various diagrams that need to commute
are all easy exercises.
\end{proof}

\Figure{figure:surfmonoidal}{SurfMonoidal}{The LHS is
$(S_1 \monpro S_2) \monpro S_3$ and the RHS is
$S_1 \monpro (S_2 \monpro S_3)$.}{65}

\paragraph{Weak complemented category.}
We finally come to the main result of this section.

\begin{proposition}
\label{proposition:surfcomplemented}
The monoidal category $(\Surf,\monpro)$ is a weak complemented category with
generator a one-holed torus.
\end{proposition}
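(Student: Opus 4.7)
The plan is to verify the four axioms of a weak complemented category together with the generator property; the standard classification of compact oriented surfaces with one boundary component yields the generator claim, while the remaining axioms reduce to explicit topological arguments about tethered subsurfaces. Since gluing along the rectangle $\Rect$ in Lemma~\ref{lemma:surfbifunctor} preserves the number of boundary components and adds genera, the $n$-fold monoidal product $T^{\monpro n}$ of the one-holed torus $T$ has genus $n$ and one boundary component; the classification of surfaces then shows that every $S \in \Surf$ is isomorphic to $T^{\monpro n}$ for a unique $n \geq 0$, proving that $T$ is a generator.

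For the initial object axiom, the identity $\mathbbm{1}$ is a disc $D$ (any two choices of boundary arc yield isomorphic objects of $\Surf$ by Theorem~\ref{theorem:surfauto} applied to a disc). Given two morphisms $(f_i,\tau_i)\colon D \to S$, first use an ambient isotopy of $S$ to arrange $f_1 = f_2$; a further isotopy that drags $f_1(D)$ along an arbitrary loop in $S$ lets us homotope the tether $\tau_1$ to $\tau_2$, so $(f_1,\tau_1)$ and $(f_2,\tau_2)$ represent the same morphism. Equivalently, any tethered disc retracts along its tether to a standard disc near the basepoint of $\partial S$, so $|\Hom_\Surf(D,S)|=1$.

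For the monomorphism axiom, if $(h,\mu)\colon S' \to S''$ is a morphism and $(f_1,\tau_1),(f_2,\tau_2)\colon S_0 \to S'$ satisfy $(h,\mu)\circ(f_1,\tau_1) = (h,\mu)\circ(f_2,\tau_2)$, then a homotopy between the post-composites can be chosen to lie inside $h(S')$ since $h$ is an embedding and the end configurations lie in $h(S')$; pulling back through $h$ gives the required homotopy between $(f_1,\tau_1)$ and $(f_2,\tau_2)$. The injection axiom for the map $\Hom_\Surf(S_1 \monpro S_2, S) \hookrightarrow \Hom_\Surf(S_1,S) \times \Hom_\Surf(S_2,S)$ is similar in spirit: a morphism $(h,\mu)\colon S_1 \monpro S_2 \to S$ is determined by its restrictions to the subsurfaces $S_1, S_2 \subset S_1 \monpro S_2$, because $h(\Rect)$ is forced to be a regular neighborhood, in $S \setminus \Interior(h(S_1) \cup h(S_2))$, of the union of $\mu$ with the tethering arcs of the two restrictions, and such regular neighborhoods are unique up to isotopy rel endpoints.

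The main obstacle is the complement axiom. Given a subobject of $S$ represented by $(f,\tau)\colon S' \to S$, I would construct a complement as follows. Thicken $\tau$ to a rectangle $R \subset S \setminus \Interior(f(S'))$ which meets $\partial S$ along an arc containing the basepoint of $S$ and meets $\partial f(S')$ along the image of the boundary arc of $S'$. Define $S'' = \overline{S \setminus (f(S') \cup R)}$, with the boundary arc inherited from the portion of $\partial S$ lying on the opposite side of $R$ from the basepoint of $S$. The construction of Lemma~\ref{lemma:surfbifunctor} then produces a canonical isomorphism $S' \monpro S'' \xrightarrow{\cong} S$ in which $R$ plays the role of the gluing rectangle, and the compositions of this isomorphism with the canonical morphisms $S' \to S' \monpro S''$ and $S'' \to S' \monpro S''$ recover $(f,\tau)$ and the natural inclusion of $S''$, respectively. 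Uniqueness reduces to showing that any two such complements represent the same subobject of $S$, which follows from the uniqueness up to isotopy of regular neighborhoods of $\tau$ rel endpoints combined with the monomorphism axiom already established; the subtle point requiring care is that the tether $\tau$ rigidifies the complement enough that no nontrivial ambiguity in the gluing can arise.
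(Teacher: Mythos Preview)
Your overall approach matches the paper's: both deduce the generator claim from the classification of compact oriented surfaces with one boundary component, and both construct the complement of a subobject $(f,\tau)\colon S' \to S$ by cutting out $f(S')$ together with a thickening of the tether. The paper phrases this as the complement of an open regular neighborhood of $\partial S \cup \tau \cup f(S')$, which is equivalent to your $\overline{S \setminus (f(S') \cup R)}$. The paper leaves the first three axioms entirely implicit; you address them explicitly, and your initial-object argument (via retraction of a tethered disc along its tether) and your injection argument are essentially correct.

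Your monomorphism argument, however, has a genuine gap. You claim that a homotopy between the composites $(h,\mu)\circ(f_i,\tau_i)$ can be chosen to lie inside $h(S')$ because ``the end configurations lie in $h(S')$''. This is already false as stated: the tether of the composite is $\mu \cdot h(\tau_i)$, and the segment $\mu$ lies in $S'' \setminus \Interior(h(S'))$, running from the basepoint of $S''$ to that of $h(S')$. More seriously, even after accounting for the common prefix $\mu$, a homotopy in $S''$ between two configurations lying in $h(S')$ has no a~priori reason to remain in $h(S')$; that is exactly the non-trivial content of the monomorphism claim, not a consequence of $h$ being an embedding. One correct fix is to replace $h(S')$ by a closed regular neighborhood $N$ of $h(S') \cup \mu$: this $N$ is homeomorphic to $S'$, meets $\partial S''$ in an arc containing the basepoint, and the inclusion $N \hookrightarrow S''$ is $\pi_1$-injective. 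One then invokes the standard fact for surfaces that isotopy classes of properly embedded arcs (rel endpoints) and of $\pi_1$-injective subsurfaces in such an $N$ inject into those of the ambient surface. Your one-sentence justification does not supply any of this, so the argument as written is incomplete.
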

\begin{proof}
If $\Surface{1}{1}$ is a one-holed torus with a fixed boundary arc and
$S$ is an arbitrary object of $\Surf$, then letting $g$ be the genus of $S$ we
clearly have $S \cong \monpro_{i=1}^g \Surface{1}{1}$.
It follows that $\Surface{1}{1}$ is a generator of $(\Surf,\monpro)$.  Now
let $(f,\tau) \colon S_1 \rightarrow S_2$ be a $\Surf$-morphism.
We must construct a complement to $(f,\tau)$ which is unique up to precomposing
the inclusion map by an isomorphism.  Define $T$ to be the complement
of an open regular neighborhood in $S_2$ of $\partial S_2 \cup \tau \cup f(S_1)$.
Thus $T$ is a compact oriented surface with one boundary component.  Fix a
boundary arc of $T$.  As is shown in Figure \ref{figure:surfcomplement}, there
exists a $\Surf$-morphism $(g,\lambda)\colon T \rightarrow S_2$ such that $T$ is a complement to
$(f,\tau)$.  Uniqueness of this complement is an easy exercise.
\end{proof}

\Figure{figure:surfcomplement}{SurfComplement}{The LHS shows $f(S_1)$ and $T$ together with the
arcs $\tau$ and $\lambda$.  The RHS shows the desired morphism $T \monpro S_1 \rightarrow S_2$.}{65}
 
\subsection{The mapping class group: Theorem \ref{maintheorem:modfinite}}
\label{section:modfinite}

Fix $\ell \geq 2$, and let $\bk$ be a Noetherian ring.
In this section, we construct
$\SI(\Z/\ell)$-modules $\HHH_k(\MCG(\ell);\bk)$ as in
Theorem \ref{maintheorem:modfinite} for all $k \geq 0$;
see Corollary \ref{corollary:constructmod}.
We then prove Theorem \ref{maintheorem:modfinite},
which says that $\HHH_k(\MCG(\ell);\bk)$ is a finitely generated
$\SI(\Z/\ell)$-module for all $k \geq 0$.

\paragraph{Categories and functors.}
Let $(\Surf,\monpro)$ be the weak complemented
category of surfaces discussed in \S \ref{section:surfaces} 
(see Proposition \ref{proposition:surfcomplemented}).  For
all $g \geq 0$, fix a boundary arc of $\Surface{g}{1}$.  This
allows us to regard each $\Surface{g}{1}$ as an object of $\Surf$.
By Theorem \ref{theorem:surfauto}, we have
\begin{equation}
\label{eqn:modraut}
\Aut_{\Surf}(\Surface{g}{1}) = \MCG_g^1 \quad \quad (g \geq 0).
\end{equation}
In Example \ref{example:si} we noted that the category $\SI(\Z/\ell)$ is
a complemented category whose monoidal structure is given by the orthogonal direct sum.
There is a strong monoidal functor $\Psi\colon \Surf \rightarrow \SI(\Z/\ell)$ defined via the
formula $\Psi(S) = \HH_1(S;\Z/\ell)$;
here $\HH_1(S;\Z/\ell)$ is equipped with the symplectic algebraic intersection
pairing.
This functor takes the generator $\Surface{1}{1}$ of $\Surf$ to the generator
$\HH_1(\Surface{1}{1};\Z/\ell) \cong (\Z/\ell)^2$ for $\SI(\Z/\ell)$.
Define $\MCG(\ell)= \Gamma(\Psi)$.  Using the identification
in \eqref{eqn:modraut}, we have
$\MCG(\ell)_{(\Z/\ell)^{2g}} = \MCG_g^1(\ell)$.

\begin{lemma}
\label{lemma:modhighlysurjective}
The functor $\Psi$ is highly surjective.
\end{lemma}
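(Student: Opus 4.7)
The plan is to unpack what high surjectivity means in this setting and reduce it to a classical fact about the symplectic representation of the mapping class group. Recall that a highly surjective functor must take the chosen generator to the chosen generator and must induce surjections on all automorphism groups. The generator condition has already been noted just above the lemma statement: $\Psi$ sends the generator $\Surface{1}{1}$ of $\Surf$ to $\HH_1(\Surface{1}{1};\Z/\ell) \cong (\Z/\ell)^2$, which is the chosen generator of $\SI(\Z/\ell)$ (equipped with the algebraic intersection form, which is the standard symplectic form on $(\Z/\ell)^2$).

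What remains is to verify the surjectivity of $\Psi_{\ast}$ on automorphism groups. Using the identifications $\Aut_{\Surf}(\Surface{g}{1}) = \MCG_g^1$ from Theorem~\ref{theorem:surfauto} and $\Aut_{\SI(\Z/\ell)}((\Z/\ell)^{2g}) = \Sp_{2g}(\Z/\ell)$, the map in question becomes exactly the symplectic representation
\[
\MCG_g^1 \longrightarrow \Sp_{2g}(\Z/\ell)
\]
arising from the action of the mapping class group on $\HH_1(\Surface{g}{1};\Z/\ell)$ preserving the algebraic intersection form. This representation was already discussed in \S\ref{section:introrepresentation}, where it was noted to be surjective; the classical reference is \cite[\S 6.3.2]{FarbMargalitPrimer}.

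So the proof reduces to a single citation, and the only step that requires any care is verifying that the automorphism group identifications given by Theorem~\ref{theorem:surfauto} and the definition of $\SI(\Z/\ell)$ actually make $\Psi_{\ast}$ match the classical symplectic representation. This is immediate from how $\Psi$ is defined: it sends a mapping class $\phi \in \MCG_g^1$ to its induced automorphism $\phi_{\ast}$ on $\HH_1(\Surface{g}{1};\Z/\ell)$, which preserves the algebraic intersection pairing by naturality of cup/cap products. There is no substantive obstacle; the lemma is essentially a translation between categorical language and classical terminology.
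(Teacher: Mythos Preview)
Your proposal is correct and follows essentially the same approach as the paper: reduce high surjectivity to the surjectivity of $\MCG_g^1 \to \Sp_{2g}(\Z/\ell)$ and cite the classical result. The paper's proof spells out one extra detail, namely the factorization $\MCG_g^1 \to \Sp_{2g}(\Z) \to \Sp_{2g}(\Z/\ell)$, using \cite[\S 6.3.2]{FarbMargalitPrimer} for the first surjection and strong approximation for the second; but since the introduction already packages both steps into a single citation, your version is fine.
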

\begin{proof}
This is equivalent to the fact that the map
$\MCG_g^1 \rightarrow \Sp_{2g}(\Z/\ell)$
arising from the action of $\MCG_g^1$ on $\HH_1(\Surface{g}{1};\Z/\ell)$
is surjective for all $g \geq 0$.  This is classical.  For instance,
it can be factored as 
$\MCG_g^1 \rightarrow \Sp_{2g}(\Z) \rightarrow \Sp_{2g}(\Z/\ell)$; the map
$\MCG_g^1 \rightarrow \Sp_{2g}(\Z)$ is surjective by \cite[\S 6.3.2]{FarbMargalitPrimer},
and the map $\Sp_{2g}(\Z) \rightarrow \Sp_{2g}(\Z/\ell)$ is surjective by strong
approximation (see, e.g., \cite[Chapter 7]{PlatonovRapinchuk}).
\end{proof}

\noindent
As we discussed in \S \ref{section:congruence}, Lemma \ref{lemma:modhighlysurjective}
has the following corollary.

\begin{corollary}
\label{corollary:constructmod}
For all $k \geq 0$, there exists an $\SI(\Z/\ell)$-module
$\HHH_k(\MCG(\ell);\bk)$ such that
$\HHH_k(\MCG(\ell);\bk)_{(\Z/\ell)^{2g}} = \HH_k(\MCG_g^1(\ell);\bk)$.
\end{corollary}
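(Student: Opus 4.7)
The plan is to obtain this corollary as an immediate application of the general construction developed in \S\ref{section:congruence}. Recall that in that section, for any highly surjective functor $\Psi \colon \tB \to \tA$ between a weak complemented category $(\tB,\monpro)$ with generator $Y$ and a complemented category $(\tA,\monpro)$ with generator $X$, we constructed an $\tA$-module $\HHH_k(\Psi;\bk)$ whose value on $X^q$ is $\HH_k(\Gamma_{Y^q}(\Psi);\bk)$. The effect on morphisms is defined by lifting a morphism $f \in \Hom_\tA(X^q,X^r)$ to some $\tilf \in \Hom_\tB(Y^q,Y^r)$ (possible by Lemma \ref{lemma:highlysurjective}) and using the induced map on congruence subgroups; independence of the lift follows from Lemma \ref{lemma:hhhwelldefined}, since any two lifts differ by an element of $\Gamma_{Y^r}(\Psi)$, which acts trivially on the homology of its own kernel via inner automorphisms.

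To deduce the corollary, I will specialize this machine to $\tB = \Surf$ (with generator $Y = \Surface{1}{1}$) and $\tA = \SI(\Z/\ell)$ (with generator $X = (\Z/\ell)^2$) via the strong monoidal functor $\Psi \colon \Surf \to \SI(\Z/\ell)$ given by $S \mapsto \HH_1(S;\Z/\ell)$, equipped with the algebraic intersection form. That $\Psi$ is a strong monoidal functor taking $\Surface{1}{1}$ to the standard symplectic $(\Z/\ell)^2$ is implicit in the discussion preceding the corollary, and the crucial high surjectivity hypothesis is supplied by Lemma \ref{lemma:modhighlysurjective}. Applying the general construction therefore yields an $\SI(\Z/\ell)$-module which we denote $\HHH_k(\MCG(\ell);\bk)$.

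It remains to identify its values. By definition, $\HHH_k(\MCG(\ell);\bk)_{(\Z/\ell)^{2g}} = \HH_k(\Gamma_{\Surface{g}{1}}(\Psi);\bk)$. Using the identification $\Aut_{\Surf}(\Surface{g}{1}) = \MCG_g^1$ from \eqref{eqn:modraut} (which is Theorem \ref{theorem:surfauto}), the group $\Gamma_{\Surface{g}{1}}(\Psi)$ is the kernel of the induced representation $\MCG_g^1 \to \Sp_{2g}(\Z/\ell)$, which is exactly the level $\ell$ congruence subgroup $\MCG_g^1(\ell) = \MCG(\ell)_{(\Z/\ell)^{2g}}$ by the definition set up immediately before the lemma. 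This gives the desired equality $\HHH_k(\MCG(\ell);\bk)_{(\Z/\ell)^{2g}} = \HH_k(\MCG_g^1(\ell);\bk)$.

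Since all genuine content has been packaged into \S\ref{section:congruence} and into the verification of high surjectivity in Lemma \ref{lemma:modhighlysurjective}, there is no real obstacle here; the only point worth flagging is bookkeeping around the strong monoidal structure on $\Psi$ (natural isomorphisms $\HH_1(S_1 \monpro S_2;\Z/\ell) \cong \HH_1(S_1;\Z/\ell) \oplus \HH_1(S_2;\Z/\ell)$ as symplectic $\Z/\ell$-modules), which follows from a Mayer--Vietoris argument on the decomposition of $S_1 \monpro S_2$ given in \S\ref{section:surfaces} together with the fact that the gluing rectangle has trivial $\HH_1$. With this verified, the corollary is immediate.
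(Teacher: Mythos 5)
Your proof is correct and takes exactly the route the paper takes: the corollary is an immediate application of the construction of $\HHH_k(\Psi;\bk)$ from \S\ref{section:congruence}, with high surjectivity supplied by Lemma \ref{lemma:modhighlysurjective} and the identification of $\Gamma_{\Surface{g}{1}}(\Psi)$ with $\MCG_g^1(\ell)$ via \eqref{eqn:modraut}. One tiny wording slip in your recap of the well-definedness argument: $\Gamma_{Y^r}(\Psi)$ acts trivially \emph{on its own homology} via inner automorphisms (not ``on the homology of its own kernel''), which is what forces the two induced maps $\HH_k(\Gamma_{Y^q}(\Psi);\bk) \to \HH_k(\Gamma_{Y^r}(\Psi);\bk)$ to coincide; but this is just a restatement of Lemma \ref{lemma:hhhwelldefined} and its use, and your overall argument is sound.
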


\paragraph{Identifying the space of embeddings.}
We now wish to give a concrete description of the space of $\Surface{1}{1}$-embeddings
of $\Surface{g}{1}$.  A {\bf tethered torus system} on $\Surface{g}{1}$ is a sequence
$((f_0,\tau_0),\ldots,(f_p,\tau_p))$ as follows (see the right hand
side of Figure \ref{figure:tetheredtori}).
\begin{compactitem}
\item Each $(f_i,\tau_i)$ is a morphism from $\Surface{1}{1}$ to $\Surface{g}{1}$.
\item After homotoping the $(f_i,\tau_i)$, the following hold.  Let 
$\ast \in \partial \Surface{g}{1}$ be the basepoint.
\begin{compactitem}
\item $(f_i(\Surface{1}{1}) \cup \tau_i) \cap (f_j(\Surface{1}{1}) \cup \tau_j) = \{\ast\}$
for all distinct $0 \leq i, j \leq p$.
\item Going clockwise, the $\tau_i$ leave the basepoint $\ast$ in their natural
order (i.e., $\tau_i$ leaves before $\tau_j$ when $i < j$); see Remark \ref{remark:clockwise} below
for more on this.
\end{compactitem}
\end{compactitem}
The {\bf space of tethered tori} on $\Surface{g}{1}$, denoted $\TetheredTori_g$, is the
semisimplicial set whose $p$-simplices are tethered torus systems
$((f_0,\tau_0),\ldots,(f_p,\tau_p))$.

\begin{remark}
\label{remark:clockwise}
The condition on the order in which the $\tau_i$ leave the basepoint is needed because of the fact that the monoidal
structure on $(\Surf,\monpro)$ is not symmetric; in our proof below, a $p$-simplex of $\TetheredTori_g$
corresponds to a morphism $\monpro_{i=0}^p \Surface{1}{1} \rightarrow \Surface{g}{1}$, and the ordering
on the $\tau_i$ reflects the ordering on the $\monpro$-factors.
\end{remark}

\Figure{figure:tetheredtori}{TetheredTori}{The LHS shows a morphism
$(g,\lambda) \colon \monpro_{i=0}^2 \Surface{1}{1} \rightarrow \Surface{5}{1}$.  The
canonical morphisms 
$(h_j,\delta_j) \colon \Surface{1}{1} \rightarrow \monpro_{i=0}^2 \Surface{1}{1}$
are shown, and the union of the rectangles involved in forming $\monpro_{i=0}^2 \Surface{1}{1}$ is shaded.  The
RHS shows the associated tethered torus system.}{65}

\begin{lemma}
\label{lemma:modidentify}
The space of $\Surface{1}{1}$-embeddings of $\Surface{g}{1}$ is isomorphic to $\TetheredTori_g$.
\end{lemma}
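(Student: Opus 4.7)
The plan is to mimic the strategy used in Lemmas~\ref{lemma:glidentify} and \ref{lemma:autidentify}: for each $p \geq 0$ I will construct mutually inverse set maps between $\Hom_{\Surf}(\monpro_{i=0}^{p}\Surface{1}{1}, \Surface{g}{1})$ (the set of $p$-simplices of the space of $\Surface{1}{1}$-embeddings) and the set of $p$-simplices of $\TetheredTori_g$, and then verify that they are compatible with the face maps induced by the morphisms $[\ell] \to [p]$ in $\HDelta$.

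First I would go from morphisms to tethered tori. For $0 \leq j \leq p$, let $(h_j, \delta_j) \colon \Surface{1}{1} \to \monpro_{i=0}^{p}\Surface{1}{1}$ be the canonical morphism into the $j^{\text{th}}$ factor (the composition of the unit isomorphism with the appropriate sequence of structural morphisms given by the monoidal structure on $\Surf$ constructed in \S\ref{section:surfaces}). Given $(g,\lambda) \in \Hom_{\Surf}(\monpro_{i=0}^{p}\Surface{1}{1}, \Surface{g}{1})$, define $\eta_1(g,\lambda) = ((f_0,\tau_0),\ldots,(f_p,\tau_p))$ where $(f_j,\tau_j) = (g,\lambda)\circ(h_j,\delta_j)$. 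The picture on the left of Figure~\ref{figure:tetheredtori} makes it clear that a representative of $(g,\lambda)$ can be chosen so that the images $f_j(\Surface{1}{1})$ are disjoint away from the basepoint, the tethers $\tau_j$ are disjoint away from the basepoint, and the cyclic ordering of the $\tau_j$ leaving $\ast$ is the natural one (since in $\monpro_{i=0}^{p}\Surface{1}{1}$ the connecting rectangles occur in order from left to right along $\partial(\monpro_{i=0}^{p}\Surface{1}{1})$); thus $\eta_1(g,\lambda)$ is indeed a tethered torus system.

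Next I would construct the inverse $\eta_2$. Given a tethered torus system $((f_0,\tau_0),\ldots,(f_p,\tau_p))$ on $\Surface{g}{1}$, after homotopy I may assume the $f_j(\Surface{1}{1}) \cup \tau_j$ meet only at $\ast$ and leave the basepoint in the prescribed cyclic order. Taking a regular neighborhood in $\partial \Surface{g}{1}$ of $\ast$ together with regular neighborhoods along each $\tau_j$, I obtain an embedded copy of the union of connecting rectangles $\Rect_1,\dots,\Rect_p$ that define $\monpro_{i=0}^{p}\Surface{1}{1}$; gluing this to the $f_j(\Surface{1}{1})$ identifies a subsurface diffeomorphic to $\monpro_{i=0}^{p}\Surface{1}{1}$ together with a boundary arc, and this inclusion, equipped with the arc connecting $\ast$ to the basepoint of $\monpro_{i=0}^{p}\Surface{1}{1}$ along the bottom edge of the rectangles, gives a well-defined $\Surf$-morphism $\eta_2((f_0,\tau_0),\ldots,(f_p,\tau_p))$. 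The main technical step here is checking that the homotopy class of this morphism does not depend on the choices of regular neighborhoods, but this is a standard uniqueness-of-regular-neighborhoods argument for disjoint properly embedded arcs.

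Finally I would check that $\eta_1\circ\eta_2$ and $\eta_2\circ\eta_1$ are the identity (both reduce to the observation that the canonical maps $(h_j,\delta_j)$ exactly pick out the $j^{\text{th}}$ torus-plus-tether in the construction of $\monpro_{i=0}^{p}\Surface{1}{1}$ from its rectangular pieces), and that $\eta_1$ is natural with respect to face maps: a face map is induced by a strictly increasing $\phi\colon[\ell]\to[p]$, which corresponds on the $\monpro$-side to the evident monoidal projection $\monpro_{i=0}^{\ell}\Surface{1}{1} \to \monpro_{i=0}^{p}\Surface{1}{1}$, and on the tethered-tori side to selecting the subsequence $((f_{\phi(0)},\tau_{\phi(0)}),\ldots,(f_{\phi(\ell)},\tau_{\phi(\ell)}))$; these agree by construction. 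The only mildly subtle point is the cyclic-ordering condition, and this is exactly why the boundary arcs in the definition of $\Surf$ (and the left-to-right convention for $\monpro$ in \S\ref{section:surfaces}) were chosen as they were. I expect that organizing these homotopies cleanly---particularly showing that $\eta_2$ is well-defined on homotopy classes---will be the main obstacle, but no essentially new ideas are needed beyond standard surface topology.
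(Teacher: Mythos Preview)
Your proposal is correct and follows essentially the same approach as the paper: both construct the map from $\Hom_{\Surf}(\monpro_{i=0}^{p}\Surface{1}{1}, \Surface{g}{1})$ to tethered torus systems by composing with the canonical morphisms $(h_j,\delta_j)$ of the monoidal product, yielding exactly the tuple $((g\circ h_j,\lambda\cdot g(\delta_j)))_j$. The paper simply asserts (with reference to Figure~\ref{figure:tetheredtori}) that this gives the desired isomorphism, whereas you go further and sketch the explicit inverse $\eta_2$ via regular neighborhoods of the tethers and the compatibility with face maps; this extra detail is fine and does not diverge from the paper's argument.
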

\begin{proof}
Consider a morphism $(g,\lambda) \colon \monpro_{i=0}^p \Surface{1}{1} \rightarrow \Surface{g}{1}$.
For $0 \leq j \leq p$, let 
$(h_j,\delta_j) \colon \Surface{1}{1} \rightarrow \monpro_{i=0}^p \Surface{1}{1}$ be the
canonical map coming from the $j^{\text{th}}$ term.  Then 
$((g \circ h_0,\lambda \cdot g(\delta_0)),\ldots,(g \circ h_p,\lambda \cdot g(\delta_p)))$
is a tethered torus system (see Figure \ref{figure:tetheredtori}).  This
 gives an isomorphism between the space of $\Surface{1}{1}$-embeddings
of $\Surface{g}{1}$ and the space of tethered tori.
\end{proof}

\noindent
Using results of Hatcher--Vogtmann \cite{HatcherVogtmannTethers},
we will prove the following in \S \ref{section:tetheredtoricon}.

\begin{theorem}
\label{theorem:tetheredtoricon}
The space $\TetheredTori_g$ is $\frac{g-3}{2}$-connected.
\end{theorem}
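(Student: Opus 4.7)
The plan is to closely mimic the strategy used in the proof of Theorem~\ref{theorem:partialbasesfncon}. First, introduce an auxiliary simplicial complex $\TetheredTori_g'$ whose $p$-simplices are \emph{unordered} sets $\{(f_0,\tau_0),\ldots,(f_p,\tau_p)\}$ of pairwise disjoint tethered tori in $\Surface{g}{1}$ (i.e., satisfying the disjointness part of the definition of $\TetheredTori_g$, but dropping the clockwise-ordering condition on the tethers). Passing to $\TetheredTori_g'$ is the right move because the semisimplicial structure on $\TetheredTori_g$ is only there to record the clockwise ordering at the basepoint, which is a purely geometric datum with no effect on $p$-acyclicity beyond what is captured by the underlying simplicial complex.

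Next, invoke Hatcher--Vogtmann's work on tethered objects \cite{HatcherVogtmannTethers}, from which the $\frac{g-3}{2}$-connectivity of $\TetheredTori_g'$ can be extracted: their results show that the complex of tethered genus-one subsurfaces of an oriented surface of genus $g$ with one boundary component is $\frac{g-3}{2}$-connected. Then observe that if $\sigma = \{(f_0,\tau_0),\ldots,(f_p,\tau_p)\}$ is a $p$-simplex of $\TetheredTori_g'$, removing an open regular neighborhood of $\partial \Surface{g}{1}\cup\bigcup_i\tau_i\cup\bigcup_i f_i(\Surface{1}{1})$ yields a compact oriented surface with one boundary component of genus exactly $g-p-1$, into which every tethered torus disjoint from $\sigma$ can be isotoped. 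Hence $\Link(\sigma)\cong \TetheredTori_{g-p-1}'$, which is $\frac{g-p-4}{2}$-connected by the previous step.

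Finally, apply \cite[Proposition 7.9]{WahlStability} to the pair consisting of the semisimplicial set $\TetheredTori_g$ and its underlying simplicial complex $\TetheredTori_g'$: the two connectivity facts just established are exactly the hypotheses, and the conclusion is that $\TetheredTori_g$ is itself $\frac{g-3}{2}$-connected, as required.

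The main obstacle will be squaring the definitions with those of Hatcher--Vogtmann. Their paper works in a slightly different framework (tethers are attached at a fixed interval in $\partial \Surface{g}{1}$, and they phrase their connectivity estimates in terms of the background manifold and a ``tethering arc'' $\Delta$). One must check that their hypotheses apply when $\mathcal{X}$ is the class of genus-one subsurfaces with one boundary component, and that the basepoint/boundary-arc conventions of \S\ref{section:surfaces} match up; this is essentially a translation exercise but needs to be done carefully, paying attention to issues like orientation, clockwise versus counterclockwise, and the distinction between ``tethered chain'' and ``tethered surface'' complexes in \cite{HatcherVogtmannTethers}.
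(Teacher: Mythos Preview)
Your approach is plausible but genuinely different from the paper's, and it has one conceptual wrinkle.

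The paper does \emph{not} cite a connectivity result for tethered genus-one subsurfaces from \cite{HatcherVogtmannTethers}. Instead it introduces the \emph{space of tethered chains} $\TetheredChains_g$, whose $p$-simplices are $(p{+}1)$-tuples of pairwise disjoint triples $(\alpha,\beta,\tau)$ with $\alpha,\beta$ simple closed curves meeting once and $\tau$ a tether to $\beta$; Hatcher--Vogtmann's result (Theorem~\ref{theorem:tetheredchains}) is that $\TetheredChains_g$ is $\frac{g-3}{2}$-connected. The paper then builds a semisimplicial map $\pi\colon\TetheredChains_g\to\TetheredTori_g$ by sending a tethered chain to a regular neighborhood of $\alpha\cup\beta$ together with the initial segment of the tether, and exhibits a set-theoretic section $\phi$ with $\pi\circ\phi=\text{id}$. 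Since $\pi$ admits a section it is surjective on homotopy groups, so $\TetheredTori_g$ inherits the connectivity of $\TetheredChains_g$. No unordered complex, no link computation, and no appeal to Wahl's proposition.

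Regarding your proposal: the step invoking \cite[Proposition~7.9]{WahlStability} is misplaced. That proposition passes from a simplicial complex to the semisimplicial set of \emph{all} orderings of its simplices, which is why the link hypothesis is needed. But $\TetheredTori_g$ is not that object: the clockwise condition on tethers singles out a \emph{unique} ordering of any unordered collection of disjoint tethered tori (the paper makes the analogous remark for tethered chains). Hence the forgetful map $\TetheredTori_g\to\TetheredTori_g'$ is already a bijection on simplices of every dimension, and the geometric realizations are homeomorphic; no link argument is required. Your argument therefore reduces to: extract the $\frac{g-3}{2}$-connectivity of the tethered-subsurface complex directly from \cite{HatcherVogtmannTethers}. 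Whether that can be done cleanly is exactly the translation issue you flag; the paper sidesteps it by using tethered chains, for which the citation is unambiguous, and paying instead the small price of the $\pi$/$\phi$ construction.
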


\paragraph{Putting it all together.}
We now prove Theorem \ref{maintheorem:modfinite}, which asserts that the
$\SI(\Z/\ell)$-module $\HHH_k(\MCG(\ell);\bk)$ is finitely generated.

\begin{proof}[{Proof of Theorem \ref{maintheorem:modfinite}}]
We apply Theorem \ref{theorem:finitegen} to the highly surjective
functor $\Psi$.  This theorem has three hypotheses.
The first is that the category of $\SI(\Z/\ell)$-modules is locally Noetherian, which is Theorem
\ref{maintheorem:sinoetherian}.  The second is that
$\HH_k(\MCG_g^1(\ell);\bk)$ is a finitely generated $\bk$-module for all
$k \geq 0$.  This follows
easily from the fact that the moduli space of curves is a quasi-projective variety; see
\cite[p.\ 127]{FarbMargalitPrimer} for a discussion.
The third is that the space of $\Surface{1}{1}$-embeddings is highly acyclic, which
follows from Lemma \ref{lemma:modidentify} and Theorem \ref{theorem:tetheredtoricon}.
We conclude that we can apply Theorem \ref{theorem:finitegen} and deduce
that $\HHH_k(\MCG(\ell);\bk)$ is a finitely generated $\SI(\Z/\ell)$-module
for all $k \geq 0$, as desired.
\end{proof}

\subsection{The space of tethered tori}
\label{section:tetheredtoricon}

Our goal is to prove Theorem \ref{theorem:tetheredtoricon}, which asserts
that $\TetheredTori_g$ is $\frac{g-3}{2}$-connected.  We will deduce
this from work of Hatcher--Vogtmann.  Recall that we have endowed
each $\Surface{g}{1}$ with a basepoint $\ast \in \partial \Surface{g}{1}$.
A {\bf tethered chain} on $\Surface{g}{1}$ is a triple $(\alpha,\beta,\tau)$ as follows.
\begin{compactitem}
\item $\alpha$ and $\beta$ are oriented properly embedded
simple closed curves on $\Surface{g}{1}$ that intersect once
transversely with a positive orientation.  
\item $\tau$ is a properly embedded arc that starts at $\ast$, ends
at a point of $\beta \setminus \alpha$, and
is otherwise disjoint from $\alpha \cup \beta$.  
\end{compactitem}
We will identify homotopic tethered chains.
The {\bf space of tethered chains}, denoted $\TetheredChains_g$, is the
semisimplicial set whose $p$-simplices are sequences
$((\alpha_0,\beta_0,\tau_0),\ldots,(\alpha_p,\beta_p,\tau_p))$
of tethered chains with the following properties.
\begin{compactitem}
\item They can be homotoped such that they only intersect at $\ast$.
\item Going clockwise, the $\tau_i$ leave the basepoint $\ast$ in their natural
order (i.e., $\tau_i$ leaves before $\tau_j$ when $i < j$).
\end{compactitem}
We remark that any unordered sequence of tethered chains that satisfies the first
condition can be uniquely ordered such that it satisfies the second.
The complex $\TetheredChains_g$ was introduced by Hatcher--Vogtmann
\cite{HatcherVogtmannTethers}, who proved the following theorem.

\begin{theorem}[{Hatcher--Vogtmann, \cite{HatcherVogtmannTethers}}]
\label{theorem:tetheredchains}
The space $\TetheredChains_g$ is $\frac{g-3}{2}$-connected.
\end{theorem}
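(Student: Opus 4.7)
The plan is to prove this by induction on $g$, following the Hatcher--Vogtmann framework for connectivity of semisimplicial sets built from tethered geometric objects on surfaces. For $g \leq 2$ the bound $\frac{g-3}{2}$ is negative, so only non-emptiness (for $g=2$) needs to be checked, which is immediate.

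For the inductive step, the central geometric observation I would exploit is that cutting $\Surface{g}{1}$ along the union $\alpha \cup \beta \cup \tau$ of a single tethered chain produces a compact oriented surface of genus $g-1$ with one boundary component, carrying a natural induced boundary arc. Consequently, the link of a vertex in $\TetheredChains_g$ is isomorphic to $\TetheredChains_{g-1}$, and more generally the link of a $p$-simplex is $\TetheredChains_{g-p-1}$, which by induction is $\frac{g-p-4}{2}$-connected. This recursive link structure is exactly what drives Hatcher--Vogtmann's connectivity machinery.

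Given a simplicial map $f \colon S^k \to \TetheredChains_g$ with $k \leq \frac{g-3}{2}$, I would extend $f$ over $D^{k+1}$ in two stages. First, forget the tethers and work in the (untethered) chain complex $\mathcal{C}h_g$ whose $p$-simplices are systems of pairwise disjoint oriented chains; the high connectivity of $\mathcal{C}h_g$ follows from the connectivity of the complex of non-separating curves due to Harer and Ivanov, together with a standard join/link argument for ordered refinements. Second, reintroduce the tethers by exploiting the fact that the space of admissible tether systems for a given chain configuration is governed by an arc complex based at $\ast$ in the complement of the chains, which is highly connected by Hatcher's work on surface arc complexes. Combining the two stages via the standard Hatcher--Vogtmann bad-simplex surgery argument, using the recursive link computation above to handle the induction, yields the desired extension.

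The main obstacle will be the cyclic-ordering condition on the tethers $\tau_i$ around the basepoint $\ast$: unlike for a standard arc complex, the ordering constraint gives additional combinatorial rigidity that can block naive surgery moves near $\ast$. I expect this to be resolved, as in \cite{HatcherVogtmannTethers}, by interleaving tether-rotation moves with arc surgery and invoking connectivity of the appropriate ordered-arc complex at $\ast$. Once this technical step is established, the induction closes and produces the claimed $\frac{g-3}{2}$-connectivity of $\TetheredChains_g$.
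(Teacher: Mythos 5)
The paper does not prove Theorem~\ref{theorem:tetheredchains}; it cites the result directly from Hatcher--Vogtmann \cite{HatcherVogtmannTethers} and uses it as a black box to deduce the connectivity of $\TetheredTori_g$ in Theorem~\ref{theorem:tetheredtoricon}. There is therefore no internal proof against which your sketch can be compared, and you should be aware that you are attempting to reconstruct an external result rather than a proof contained in this paper.

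On the merits of the sketch itself: the structural observations are reasonable --- in particular, identifying the link of a $p$-simplex of $\TetheredChains_g$ with $\TetheredChains_{g-p-1}$ is the right local picture, and the general Hatcher--Vogtmann strategy of comparing an untethered complex with its tethered version via arc-complex fibers is the right framework. But there are genuine gaps. The claim that connectivity of the untethered chain complex ``follows from the connectivity of the complex of non-separating curves together with a standard join/link argument'' glosses over the fact that a chain is a pair of curves meeting once transversely, not a single curve; passing from the curve complex to the chain complex is itself a nontrivial connectivity theorem (a Harer-type argument) and cannot be dismissed as a join. More importantly, the cyclic-ordering constraint on the tethers around the basepoint $\ast$ is not a peripheral nuisance to be fixed afterward by ``tether-rotation moves'': it determines the actual combinatorics of the local complex at $\ast$, and controlling it is the technical heart of \cite{HatcherVogtmannTethers}. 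Your sketch flags this obstacle without resolving it, so what you have is a plausible roadmap, not a proof, and it should not be presented as a replacement for the citation.
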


\begin{proof}[{Proof of Theorem \ref{theorem:tetheredtoricon}}]
We begin by defining a map $\pi^0 \colon \TetheredChains_g^0 \rightarrow \TetheredTori_g^0$.
Fix once and for all a tethered chain
$(A,B,\Gamma)$ in $\Surface{1}{1}$.
Consider a tethered chain $(\alpha,\beta,\tau) \in \TetheredChains_g^0$.  
Let $T$ be a closed regular neighborhood
of $\alpha \cup \beta$.  Choosing $T$ small enough, we can assume that
$\tau$ only intersects $\partial T$ once.  Write $\tau = \tau' \cdot \tau''$, where
$\tau'$ is the segment of $\tau$ from the basepoint of $\Surface{g}{1}$ to $\partial T$
and $\tau''$ is the segment of $\tau$ from $\partial T$ to $\beta$.  Using
the change of coordinates principle of \cite[\S 1.3]{FarbMargalitPrimer}, there
exists an orientation-preserving homeomorphism $f \colon \Surface{1}{1} \rightarrow T$ such
that $f(A) = \alpha$ and $f(B) = \beta$ and $f(\Gamma) = \tau''$.  
By \cite[Proposition 2.8]{FarbMargalitPrimer} (the ``Alexander method''), these conditions determine $f$ up to homotopy.
The pair $(f,\tau')$ is a tethered torus, and we define 
$\pi^0((\alpha,\beta,\tau)) = (f,\tau') \in \TetheredTori_g^0$.  
It is clear that this is well-defined and surjective (but not injective).

The map $\pi^0$ extends over the higher-dimensional simplices of $\TetheredChains_g$
in an obvious way to define a map $\pi \colon \TetheredChains_g \rightarrow \TetheredTori_g$.
Arbitrarily pick a set map $\phi^0 \colon \TetheredTori_g^0 \rightarrow \TetheredChains_g^0$ 
such that $\pi^0 \circ \phi^0 = \text{id}$.  If $((f_0,\tau_0),\ldots,(f_p,\tau_p))$ is a simplex
of $\TetheredTori_g$ and $(\alpha_i,\beta_i,\tau_i') = \phi^0((f_i,\tau_i))$ for $0 \leq i \leq p$,
then by construction we have $\alpha_i,\beta_i \subset \Image(f_i)$.  Since the images of the $f_i$
are disjoint, we conclude that $((\alpha_0,\beta_0,\tau_0'),\ldots,(\alpha_p,\beta_p,\tau_p'))$ is
a simplex of $\TetheredChains_g$.  Thus $\phi^0$ extends over the
higher-dimensional simplices of $\TetheredTori_g$ to define a map
$\phi \colon \TetheredTori_g \rightarrow \TetheredChains_g$ satisfying 
$\pi \circ \phi = \text{id}$.  This implies that $\pi$ induces a surjective map
on all homotopy groups, so by Theorem \ref{theorem:tetheredchains} we deduce
that $\TetheredTori_g$ is $\frac{g-3}{2}$-connected.
\end{proof}

\begin{footnotesize}
\noindent
\begin{tabular*}{\linewidth}[t]{@{}p{\widthof{Department of Mathematics}+0.5in}@{}p{\linewidth - \widthof{Department of Mathematics} - 0.5in}@{}}
{\raggedright
Andrew Putman\\
Department of Mathematics\\
Rice University, MS 136 \\
6100 Main St.\\
Houston, TX 77005\\
~\\
{\it Current address: }\\
Department of Mathematics\\
University of Notre Dame\\
279 Hurley Hall\\
South Bend, IN 46556\\
{\tt andyp@nd.edu}}
&
{\raggedright
Steven V Sam\\
Department of Mathematics\\
University of California, Berkeley\\
933 Evans Hall\\
Berkeley, CA 94720\\
~\\
{\it Current address: }\\
Department of Mathematics\\
University of Wisconsin, Madison\\
480 Lincoln Drive\\
Madison, WI 53706\\
{\tt svs@math.wisc.edu}}
\end{tabular*}\hfill
\end{footnotesize}

\end{document}